\newtheorem{bigtheorem}{Theorem}
\newtheorem{theorem}{Theorem}[section]
\newtheorem{lemma}[theorem]{Lemma}
\newtheorem{proposition}[theorem]{Proposition}
\newtheorem{corollary}[theorem]{Corollary}
\theoremstyle{remark}
\newtheorem{remark}[theorem]{Remark}
\newtheorem{question}[theorem]{Question}
\theoremstyle{definition}
\newtheorem{definition}[theorem]{Definition}
\newtheorem*{ack}{Acknowledgements}
\numberwithin{equation}{section}
\renewcommand{\O}{\mathcal{O}}
\newcommand{\X}{\mathcal{X}}
\newcommand{\A}{\mathcal{A}}
\newcommand{\B}{\mathcal{B}}
\newcommand{\F}{\mathbb{F}}
\newcommand{\Z}{\mathbb{Z}}
\newcommand{\Q}{\mathbb{Q}}
\renewcommand{\H}{\mathrm{H}}
\newcommand{\R}{\mathrm{R}}
\newcommand{\Kh}{K^h}
\newcommand{\mmu}{\boldsymbol{ \mu}}
\newcommand{\Gm}{\mathbb{G}_{\textrm{m}}}
\newcommand{\Aff}{\mathbb{A}}
\newcommand{\m}{\mathfrak{m}}
\newcommand{\et}{{\textnormal{\'et}}}
\newcommand{\p}{\mathfrak{p}}
\newcommand{\q}{\mathfrak{q}}
\newcommand{\V}{\mathcal{V}}
\renewcommand{\P}{\mathbb{P}}
\DeclareMathOperator{\gr}{gr}
\DeclareMathOperator{\K}{K}
\DeclareMathOperator{\inv}{inv}
\DeclareMathOperator{\Br}{Br}
\DeclareMathOperator{\fil}{fil}
\DeclareMathOperator{\cores}{cores}
\DeclareMathOperator{\res}{res}
\DeclareMathOperator{\Tr}{Tr}
\DeclareMathOperator{\Spec}{Spec}
\DeclareMathOperator{\rsw}{rsw}
\DeclareMathOperator{\sw}{sw}
\DeclareMathOperator{\Pic}{Pic}
\DeclareMathOperator{\Gal}{Gal}
\DeclareMathOperator{\cd}{cd}
\DeclareMathOperator{\Hom}{Hom}
\DeclareMathOperator{\Proj}{Proj}
\DeclareMathOperator{\Sym}{Sym}
\DeclareMathOperator{\Frac}{Frac}
\DeclareMathOperator{\Char}{char}
\DeclareMathOperator{\dlog}{dlog}
\DeclareMathOperator{\Frob}{Frob}
\DeclareMathOperator{\Supp}{Supp}
\DeclareMathOperator{\card}{card}
\DeclareMathOperator{\Fil}{Fil}
\DeclareMathOperator{\file}{Ev} 
\newcommand{\film}{\widetilde{\fil}} 
\newcommand{\tv}[3]{{[\overrightarrow{#2#3}]_{#1}}} 
\newcommand{\evmap}[1]{|#1|} 
\title{Evaluating the wild Brauer group}
\author{Martin Bright}
\address{Mathematisch Instituut \\ Niels Bohrweg 1 \\ 2333 CA Leiden \\ Netherlands}
\email{m.j.bright@math.leidenuniv.nl}
\author{Rachel Newton}
\address{Department of Mathematics\\
King's College London\\
Strand\\ London WC2R 2LS\\
UK}
\email{rachel.newton@kcl.ac.uk}
\dedicatory{In memory of Sir Peter Swinnerton-Dyer}
\subjclass[2020]{Primary 14F22; Secondary 14G12, 14G20, 14F30}
\begin{document}

\begin{abstract}Classifying elements of the Brauer group of a variety $X$ over a $p$-adic field by the $p$-adic accuracy needed to evaluate them gives a filtration on $\Br X$.
We relate this filtration to that defined by Kato's Swan conductor. The refined Swan conductor controls how the evaluation maps vary on $p$-adic discs:
this provides a geometric characterisation of the refined Swan conductor. We give applications to rational points on varieties over number fields, including failure of weak approximation for varieties admitting a non-zero global $2$-form.
\end{abstract}

\maketitle

\section{Introduction}
Let $k$ be a $p$-adic field with ring of integers $\O_k$, uniformiser $\pi$ and residue field $\F$, and let $X/k$ be a smooth geometrically irreducible variety. 
The most na\"ive filtration on the Brauer group $\Br X$, and the one we aim to understand, is that given by evaluation of elements of $\Br X$ at the $k$-points of $X$.
If $\A\in \Br X$ has order coprime to $p$, then~\cite[\S5]{bad} shows that the evaluation map for $\A$ factors through reduction to the special fibre. 
In this article, we describe the variation of the evaluation map in the considerably more complicated case of elements of order a power of $p$ in $\Br X$.

To define the evaluation filtration, fix a smooth model $\X/\O_k$ having geometrically integral special fibre $Y/\F$ with function field $F$.
Given $\A \in \Br X$, one can ask whether the evaluation map $\evmap{\A} :\X(\O_k) \to \Br k$ factors through the reduction map $\X(\O_k) \to \X(\O_k / \pi^i)$ for any $i\geq 1$.
We first define some notation. 

Let $k'$ be a finite extension of $k$ of ramification index $e(k'/k)$, with ring of integers $\O_{k'}$ and uniformiser $\pi'$. For $r\geq 1$ and $P \in \X(\O_{k'})$, let $B(P,r)$ be the set of points $Q\in \X(\O_{k'})$ such that $Q$ has the same image as $P$ in $\X(\O_{k'}/(\pi')^r)$. Define 
\begin{align*}
\file_n \Br X &= \{\A\in \Br X\mid \forall\; k'/k \textrm{ finite, } \forall\; P\in \X(\O_{k'}), & \\ 
& \hspace{5em} \evmap{\A} \textrm{ is constant on } B(P,e(k'/k)(n+1))\} & (n\geq 0), \\
\file_{-1} \Br X &= \{\A\in \Br X\mid \forall\; k'/k \textrm{ finite, } \evmap{\A} \textrm{ is constant on }  \X(\O_{k'})\}, & \\
\file_{-2} \Br X &= \{\A\in \Br X\mid  \forall\; k'/k \textrm{ finite, }  \evmap{\A} \textrm{ is zero on }  \X(\O_{k'})\}.& 
\end{align*}
Let $\{\fil_n\Br X\}_{n\geq 0}$ denote the filtration given by Kato's Swan conductor (see Definition~\ref{def:Swan}) and, for $n \ge 1$, write $\rsw_{n}(\A)$ for the refined Swan conductor of $\A\in \fil_n\Br X$ (see Definition~\ref{def:rsw}). We have $\rsw_{n}(\A)=[\alpha, \beta]_{\pi,n}$ for some $(\alpha,\beta)\in\Omega^2_F\oplus\Omega^1_F$.
In Section~\ref{sec:tame} we will define a residue map $\partial \colon \fil_0 \Br X \to \H^1(Y,\Q/\Z)$.

\begin{bigtheorem}\label{thm:main}
Let $k$ be a finite extension of $\Q_p$.
Let $X$ be a smooth, geometrically irreducible variety over $k$, and let $\X \to\Spec \O_k$ be a smooth model of $X$. 
Suppose that the special fibre $Y$ of $\X$ is geometrically irreducible. Then
\begin{enumerate}
\item\label{it:tame-2}  $\file_{-2} \Br X=\{\A\in\fil_0\Br X\mid \partial\A=0 \}$;
\item\label{it:tame-1}$\file_{-1} \Br X=\{\A\in\fil_0\Br X\mid \partial\A\in \H^1(\F,\Q/\Z) \}$;
\item\label{it:tame0} $\file_{0} \Br X=\fil_0\Br X$;
\item\label{it:wild} for $n \ge 1$, $\file_n \Br X = \{\A\in \fil_{n+1}\Br X\mid \rsw_{n+1}(\A)\in[\Omega^2_F, 0]_{\pi,n+1}\}.$
\end{enumerate}
\end{bigtheorem}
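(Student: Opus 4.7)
The plan for item~\ref{it:wild} is to prove the two inclusions separately, with both relying on an explicit formula that expresses the variation of the evaluation map on a $p$-adic disc in terms of the refined Swan conductor. Deriving this formula is essentially the geometric characterisation of $\rsw$ advertised in the abstract, and it will be the engine of the argument.

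For the inclusion $\supseteq$, I would fix $\A \in \fil_{n+1}\Br X$ with $\rsw_{n+1}(\A) = [\alpha, 0]_{\pi, n+1}$. Given a finite extension $k'/k$ of ramification $e = e(k'/k)$ and points $P, Q \in \X(\O_{k'})$ with $Q \in B(P, en+1)$, the aim is to compute $\evmap{\A}(Q) - \evmap{\A}(P) \in \Br k'$. Working in local coordinates at the reduction $\bar P \in Y$, the displacement $Q - P$ should be encoded by a cotangent-like datum of $\pi$-adic depth $en+1$, and I would derive a pairing formula that outputs $[\alpha,\beta]_{\pi,n+1}$ applied to this datum. The key algebraic fact will be that the $\Omega^2_F$-valued component $\alpha$ is annihilated when one of its two slots is filled by a single tangent direction, so $\beta = 0$ forces $\evmap{\A}(Q) = \evmap{\A}(P)$.

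For the inclusion $\subseteq$, I would argue contrapositively. If $\A \notin \fil_{n+1}\Br X$, then the Kato Swan conductor of $\A$ exceeds $n+1$, and a suitable version of the variation formula---combined with sufficiently ramified base change to realise the relevant differential---will produce points $Q \in B(P, en+1)$ with $\evmap{\A}(Q) \neq \evmap{\A}(P)$. If instead $\A \in \fil_{n+1}\Br X$ but $\rsw_{n+1}(\A) = [\alpha, \beta]_{\pi, n+1}$ with $\beta \neq 0$, then the same formula yields nonzero variation: the $\beta$-term pairs nontrivially with a carefully chosen one-dimensional tangent direction, and any obstruction to detecting this at the residue level can be removed by passing to $k'/k$ of large enough ramification.

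The main obstacle will be the derivation of the variation formula itself, which translates Kato's algebraic definition of $\rsw$ into an honest pairing with geometric $p$-adic tangent data. This necessitates a careful analysis of the Artin--Schreier--Witt symbols underlying the refined Swan conductor and their behaviour upon restriction to $\O_{k'}$-points, together with a verification that the normalisation of the bracket $[\cdot, \cdot]_{\pi, n+1}$ matches the natural pairing arising from the disc geometry. The factor $e(k'/k)$ in the disc radius is forced by compatibility of this formula with base change, and the tame cases in items~\ref{it:tame-2}--\ref{it:tame0} (corresponding to $n = 0$) should provide both a sanity check and the starting point for any induction on~$n$.
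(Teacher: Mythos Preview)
Your broad strategy for item~\ref{it:wild} matches the paper's: the inclusion $\supseteq$ follows from a variation formula (Theorem~\ref{thm:onelemma_intro}\eqref{onelemma_intro_1} in the paper), and the reverse inclusion is established contrapositively by producing points over an extension where the evaluation map is visibly non-constant. But two aspects of your plan are either too optimistic or aimed in the wrong direction.

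First, the variation formula is not a one-line ``algebraic fact'' about $2$-forms needing two slots. In the paper the formula $\inv\A(Q)-\inv\A(P)=\frac{1}{p}\Tr_{\F/\F_p}\beta_{P_0}(\tv{n}{P}{Q})$ is proved by an induction on $\sw(\A)$ via a chain of blowups at $P_0$: each blowup lowers the Swan conductor at the exceptional divisor (Lemmas~\ref{lem:downby1} and~\ref{lem:downby2}), and one tracks the refined Swan conductor through the chain using Kato's results and the differential calculations of Sections~\ref{sec:blowups}--\ref{sec:Pn}, until the Swan conductor hits zero and the residue map of Section~\ref{sec:tame} takes over. The heuristic you describe is correct as an \emph{outcome} but does not constitute a proof; the actual mechanism is geometric, not purely algebraic.

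Second, for the reverse inclusion your suggestion to use ``sufficiently ramified base change to realise the relevant differential'' is misdirected. Under a ramified extension the $\beta$-component of $\rsw$ is rescaled by the ramification index (Lemma~\ref{lem:rsw-basechange}), which does nothing to help you find a point where $\beta$ is non-zero. The obstruction is that $\beta\in\Omega^1_F$ may vanish at every $\F$-point of $Y$ (or $Y(\F)$ may be empty). The paper resolves this by passing to an \emph{unramified} extension large enough, via Lang--Weil, that the zero locus of $\beta$ (or of $\alpha$, in the other case) is avoided by some rational point $P_0$; the variation formula at that point then witnesses non-constancy (Lemma~\ref{lem:main2}). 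The same Lang--Weil argument is needed again, in a slightly different form involving torsors, for the reverse inclusions in items~\ref{it:tame-2} and~\ref{it:tame-1}, which you pass over too quickly.
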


\begin{remark}
\begin{enumerate}
\item By definition of the refined Swan conductor, \[\fil_{n}\Br X\subset  \{\A\in \fil_{n+1}\Br X\mid \rsw_{n+1}(\A)\in[\Omega^2_F, 0]_{\pi,n+1}\},\]
with equality if $p\nmid n+1$. See Lemma~\ref{lem:dbna} for more details.
\item In the case of $\H^1(K) = \H^1(K,\Q/\Z)$, where $K$ is the function field of $X$, Kato's filtration and the refined Swan conductor have been extensively studied in the literature, and are closely related to ramification theory (see Section~\ref{sec:comparisons}).
We believe that Theorem~\ref{thm:main} is the first geometric characterisation of Kato's filtration on $\H^2(K) = \Br K$, with Theorem~\ref{thm:onelemma_intro} below giving a geometric description of the refined Swan conductor.
The modified version of Kato's filtration featuring in Theorem~\ref{thm:main} does not seem to have appeared elsewhere, though it is analogous to the ``non-logarithmic'' version of Kato's filtration on $\H^1(K$) defined by Matsuda~\cite{Matsuda} in equal characteristic.
\item The reason for considering points over finite extensions of $k$, instead of just over $k$ itself, is that the filtration obtained is better behaved.
(A function that is non-constant on points over some field extension can be constant on the rational points, simply because there are ``too few'' points of $Y(\F)$: see~\cite[Remark~5.20] {bad} for an example.)
\item A consequence of Theorem~\ref{thm:main} is that the evaluation filtration does not change if $Y$ is replaced by a non-empty open subset.
\item In fact, our proof of Theorem~\ref{thm:main} shows that it remains true if we modify the definition of $\file_n \Br X$ by restricting to unramified finite extensions $k'/k$ instead of all finite extensions, see Corollaries~\ref{cor:Evnr} and~\ref{cor:Evnr2}.
\item The model $\X$ is not assumed to be proper.
If $X$ has bad reduction, for example admitting a proper model whose special fibre has several components of multiplicity $1$, then the filtrations $\fil_n$ and $\file_n$ are defined separately for each component, by looking at the smooth model obtained by restricting to the smooth locus of that component.
The relationship between the filtrations corresponding to different components is in general complicated.  However, in Section~\ref{sec:blowups} we shall study the specific case in which one component arises from blowing up the model in a smooth point of another component, which will be a central ingredient of our proofs.
\item The inclusion $\fil_n \Br X[p] \subset \file_n \Br X[p]$ is implicit in work of Uematsu~\cite{U}, at least in the case when $k$ contains a primitive $p$th root of unity.
\item Yamazaki~\cite{Y} has proved a result very similar to Theorem~\ref{thm:main} in the case that $X$ is a smooth proper curve.
In that case, one can extend the Brauer--Manin pairing to the Picard group $\Pic X$.
Yamazaki defines a filtration on $\Pic X$ by considering the kernels of reduction modulo powers of $\pi$, and shows that the induced filtration on $\Br X$ coincides with Kato's filtration.
(When $X$ is a curve, the group $\Omega^2_F$ is trivial, so our filtration in Theorem~\ref{thm:main} also coincides with Kato's, by definition of the refined Swan conductor.)
\item Sato and Saito~\cite{SS} have shown that $\file_{-2} \Br X$ coincides with the image of $\Br\X$ in $\Br X$ when $\X$ is regular and proper over $\O_k$, but without the assumption of smoothness.
(They also assume that $\X$ satisfies purity for the Brauer group, but this is now known to hold for all regular schemes~\cite{Kestutis}.)
In Corollary~\ref{cor:brix}, we will show how our results give a new proof of this when $\X$ is smooth over $\O_k$.
\end{enumerate}
\end{remark}

In order to prove Theorem~\ref{thm:main}, we examine the behaviour of the evaluation maps on the graded pieces of Kato's filtration on the Brauer group. The results of this study for $n \ge 1$ are summarised in Theorem~\ref{thm:onelemma_intro} below; for a stronger and more detailed statement, see Theorem~\ref{thm:onelemma}. In order to state Theorem~\ref{thm:onelemma_intro}, we need to introduce some more notation. Let $P\in \X(\O_k)$ and let $P_0$ denote the image of $P$ in $Y(\F)$. Elements in the image of the reduction map $B(P,n)\to \X(\O_k/\pi^{n+1})$ can be identified with tangent vectors in $T_{P_0}Y$ (see Lemma~\ref{lem:tangent}). Write $\tv{n}{P}{Q}$ for the tangent vector corresponding to the image in $\X(\O_k/\pi^{n+1})$ of a point $Q\in B(P,n)$.
In the statement of the following theorem, we denote by $x/p$ the image of $x\in\F_p$ under the identification of $\Z/p$ with the $p$-torsion in $\Q/\Z$. The integer $e$ is the absolute ramification index of $k$ and we set $e'=ep/(p-1)$.

\begin{bigtheorem}\label{thm:onelemma_intro}Let $k$ be a finite extension of $\Q_p$.
Let $X$ be a smooth, geometrically irreducible variety over $k$, and let $\X \to\Spec \O_k$ be a smooth model of $X$. 
Suppose that the special fibre $Y$ of $\X$ is geometrically irreducible.
Let $n>0$, 
let $\A\in\fil_n \Br X$, and let $\rsw_{n}(\A) = [\alpha, \beta]_{\pi,n}$ for some $(\alpha,\beta)\in \Omega^2_F \oplus \Omega^1_F$.
Let $P\in \X(\O_k)$, and let $P_0 \in Y(\F)$ be the reduction of $P$.
Then $\alpha$ and $\beta$ are regular at $P_0$ and we have the following description of the evaluation map $\evmap{\A} \colon \X(\O_k) \to \Br k$.  
\begin{enumerate}
\item \label{onelemma_intro_1} For $Q \in B(P,n)$,
\[
\inv \A(Q) = \inv \A(P) + \frac{1}{p}\Tr_{\F/\F_p} \beta_{P_0}(\tv{n}{P}{Q}).
\]
In particular, if $\beta_{P_0}\neq 0$ then $\evmap{\A}$ takes $p$ distinct values on $B(P,n)$.

\item \label{surj2-intro} If $\beta=0$ and $\alpha_{P_0}\neq 0$ then there exists $Q \in B(P,1)$ such that $\evmap{\A}$ takes $p$ distinct values on $B(Q,n-1)$.

\end{enumerate}
\end{bigtheorem}

\begin{remark} 
\begin{enumerate}
\item Case~\eqref{surj2-intro} is only possible if $p\mid n$: see Lemma~\ref{lem:dbna}.
\item Complementing the formula in~\eqref{onelemma_intro_1}, Theorem~\ref{thm:onelemma}\eqref{rla} gives a description of the evaluation map in the case where $\beta=0$ and $n>2$. See also Lemma~\ref{lem:p=n=2} for the case where $\beta=0$ and $p=n=2$.
\item If $\A$ has order $p^t$ in the Brauer group then evaluating $\A$ at points in $\X(\O_k)$ gives values in $\Br k[p^t]\cong p^{-t}\Z/\Z$. Theorem~\ref{thm:onelemma}\eqref{surj3} gives sufficient conditions for all $p^t$ possible values to be obtained.
\end{enumerate}
\end{remark}

Elements in $\Br X$ of order coprime to $p$ have been thoroughly treated in the literature, in particular by Colliot-Th\'el\`ene--Saito~\cite{CTS}, Colliot-Th\'el\`ene--Skorobogatov~\cite{CTSk} and Bright~\cite{bad}.  The computation of the evaluation map in the coprime to $p$ case is greatly aided by the fact that the map factors through reduction to the special fibre. In a similar way, Theorem~\ref{thm:onelemma_intro} enables the computation of the evaluation map for Brauer group elements of order a power of $p$.   Thus it facilitates a systematic treatment of Brauer--Manin obstructions, which will have both theoretical and computational implications for the study of rational points on varieties. Some first consequences of Theorem~\ref{thm:onelemma_intro} are outlined below (see Theorems~\ref{thm:potential} and~\ref{thm:app}).

\subsection*{Applications to the Brauer--Manin obstruction}\label{sec:BM}
Manin~\cite{Manin} introduced the use of the Brauer group to study rational points on varieties over number fields.
Let $V$ be a smooth, proper, geometrically irreducible variety over a number field $L$.  The evaluation maps $\evmap{\B} \colon V(L_v) \to \Br L_v$ for each $\B \in \Br V$ and place $v$ of $L$ combine to give a pairing
\[
\Br V \times V(\mathbb{A}_L) \to \Q/\Z,
\]
where $\mathbb{A}_L$ denotes the ring of ad\`eles of $L$.  Manin observed that the diagonal image of $V(L)$ is contained in the right kernel of this pairing, denoted $V(\mathbb{A}_L)^{\Br}$.  If $V(\mathbb{A}_L)$ is non-empty but $V(\mathbb{A}_L)^{\Br}$ is empty, then there is a Brauer--Manin obstruction to the Hasse principle; if $V(\mathbb{A}_L)^{\Br}$ is not equal to the whole of $V(\mathbb{A}_L)$, then there is a Brauer--Manin obstruction to weak approximation.

The following question posed by Swinnerton-Dyer 
~\cite[Question~1]{CTSk} is of great relevance to the computation of $V(\mathbb{A}_L)^{\Br}$. 
\begin{question}[Swinnerton-Dyer]\label{q:S-D}
Let $L$ be a number field and let $S$ be a finite set of places of $L$ containing the Archimedean places. Let $\V$ be a smooth projective $\O_S$-scheme with geometrically integral fibres, and let $V/L$ be the generic fibre. Assume that $\Pic\bar{V}$ is finitely generated and torsion-free. 
Is there an open and closed set $Z \subset \prod_{v \in S} V(L_v)$ such that
\begin{equation}\label{eq:BMset}
V(\mathbb{A}_L)^{\Br} = Z \times \prod_{v \notin S} V(L_v)?
\end{equation}
\end{question}
Informally: does the Brauer--Manin obstruction involve only the places of bad reduction and the Archimedean places? 

In~\cite[Theorem~3.1]{CTSk} Colliot-Th\'{e}l\`{e}ne and Skorobogatov prove that if the transcendental Brauer group of $V$ (meaning the image of the natural map $\Br V\to \Br\bar{V}$) is finite and $S$ contains all the primes dividing its order then the answer to Question~\ref{q:S-D} is yes. 
Skorobogatov and Zarhin show in \cite{SZ} that the transcendental Brauer groups of Abelian varieties and K3 surfaces over number fields are finite; their question as to whether this is true for smooth projective varieties more generally remains open (see \cite[Question 1]{SZ}).

Before we address Question~\ref{q:S-D}, let us introduce one further question. 

\begin{question}[Wittenberg]\label{q:W}
 If a smooth projective variety $V$ over a number field $L$ satisfies the
   Hasse principle and weak approximation over all finite extensions, does it follow that $\H^2(V,\O_V)=0$?
\end{question}

Question~\ref{q:W} is one of a family of questions posed by Olivier Wittenberg in a private discussion with Jean-Louis Colliot-Th\'{e}l\`{e}ne in 2010, as follows: if a smooth projective variety $V$ over a number field $L$ satisfies the
   Hasse principle and weak approximation over all finite extensions, does
   it follow that $V$ is rationally connected? Does it at least follow that
   $\H^i(V,\O_V)=0$ for all $i>0$? Does it at least follow that $\H^2(V,\O_V)=0$? Note that the analogue of Question~\ref{q:W} with $\H^2(V,\O_V)$ replaced by $\H^1(V,\O_V)$ has a positive answer -- this follows from \cite[Corollary~2.4]{Harari}, for example. 
   
   Our next result owes its existence to Wittenberg's suggestion that our methods could be used to address Question~\ref{q:W}. It gives a positive answer to that question, and also shows that the answer to Question~\ref{q:S-D} is no, in general. In fact, it shows that if $\H^0(V,\Omega^2_V) \neq 0$ then every prime of good ordinary reduction is involved in a Brauer--Manin obstruction over an extension of the base field.

\begin{bigtheorem}\label{thm:potential}
Let $V$ be a smooth, proper variety over a number field $L$ with $\H^0(V,\Omega^2_V) \neq 0$.
Let $\p$ be a prime of $L$ at which $V$ has good ordinary reduction, with residue characteristic $p$.
Then there exist a finite extension $L'/L$, a prime $\p'$ of $L'$ lying over $\p$, and an element $\A \in \Br V_{L'}\{p\}$ such that the evaluation map $\evmap{\A} \colon V(L'_{\p'}) \to \Br L'_{\p'}$ is non-constant.
In particular, if $V(\mathbb{A}_{L'})\neq \emptyset$ then $\A$ obstructs weak approximation on $V_{L'}$.
\end{bigtheorem}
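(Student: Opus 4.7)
My plan is to produce a $p$-torsion Brauer class $\A$ on $V_{L'}$, for some finite extension $L'/L$, whose refined Swan conductor at a prime $\p'$ above $\p$ has non-vanishing $\alpha$-component at some rational point of the special fibre, and then to apply Theorem~\ref{thm:onelemma_intro}\eqref{surj2-intro} to conclude that the associated evaluation map on a $p$-adic ball is non-constant. To set things up: after replacing $L$ by a finite extension (and $\p$ by a prime above, still of good ordinary reduction), I may assume that $V$ extends to a smooth proper model $\V$ over $\O_{L_\p}$ with geometrically integral special fibre $Y/\F$. Flat base change for the smooth proper morphism $\V\to \Spec\O_{L_\p}$ shows that $\H^0(\V,\Omega^2_{\V/\O_{L_\p}})$ is free, and its reduction modulo $\p$ recovers $\H^0(Y,\Omega^2_{Y/\F})$; thus a nonzero global $2$-form on $V_{L_\p}$ produces a nonzero $\bar\omega\in \H^0(Y,\Omega^2_Y)$.

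Next, I exploit the ordinary hypothesis. For ordinary $Y$, the Bloch--Gabber--Kato theorem together with the fact that the Cartier operator acts as a Frobenius-semilinear automorphism on $\H^0(Y,\Omega^2_Y)$ implies that, after possibly enlarging $\F$ to a finite extension $\F'/\F$ (i.e.\ making an unramified extension of $k=L_\p$), the natural map
\[
\H^0(Y_{\F'},\Omega^2_{Y,\log})\otimes_{\F_p}\F'\xrightarrow{\ \sim\ } \H^0(Y_{\F'},\Omega^2_Y)
\]
is an isomorphism. Hence I obtain a nonzero logarithmic $2$-form $\omega'\in \H^0(Y_{\F'},\Omega^2_{Y,\log})$, and I choose a closed point $P_0\in Y_{\F'}$ at which $\omega'_{P_0}\neq 0$.

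The heart of the argument is to lift $\omega'$ to a suitable Brauer class on $V_{L'}$, where $L'$ incorporates the unramified extension above and, if necessary, a primitive $p$-th root of unity. Working in the henselization $\O^h_{\V,P_0}$, I would write $\omega'$ locally as $\dlog u\wedge\dlog v$ for units $u,v$ in $\O^h_{Y,P_0}$, lift these to units $U,V\in\O^h_{\V,P_0}$, and form a Milnor symbol $\{1+\pi^n U',V\}_p$ for a suitable lift $U'$ of $u$ and an integer $n\geq 1$ with $p\nmid n$. By Kato's explicit description of the graded pieces $\fil_n/\fil_{n-1}$ (see Section~\ref{sec:tame} and the references therein), this symbol has refined Swan conductor $[c\cdot \omega',0]_{\pi,n}$ for some nonzero scalar $c$. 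A \emph{global} Brauer class $\A\in \Br X[p]$ realising this local datum is produced either by gluing in an \'etale-local neighbourhood of the generic point of $Y_{\F'}$ and invoking purity for $\Br\V$ on the smooth model, or equivalently by using the Bloch--Kato--Illusie identification of a summand of $\H^2_{\et}(X,\mu_p)$ with $\H^0(Y_{\F'},\Omega^2_{Y,\log})$ in the ordinary case and projecting to $\Br X[p]$. This passage from a prescribed local symbol to a global Brauer class with the desired refined Swan conductor is the main technical obstacle and relies on detailed knowledge of the image of $\rsw_n$ in Kato's filtration.

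Finally, by Hensel's lemma applied to the smooth model $\V$, I pick a point $P\in V(L'_{\p'})$ reducing to $P_0$. Since $\rsw_n(\A)=[\omega',0]_{\pi,n}$ with $\alpha_{P_0}=\omega'_{P_0}\neq 0$ and $\beta=0$, Theorem~\ref{thm:onelemma_intro}\eqref{surj2-intro} yields a point $Q\in B(P,1)$ on which $\evmap{\A}$ takes $p$ distinct values over $B(Q,n-1)$. Consequently $\evmap{\A}\colon V(L'_{\p'})\to \Br L'_{\p'}$ is non-constant, and when $V(\mathbb{A}_{L'})\neq\emptyset$ the class $\A$ provides a Brauer--Manin obstruction to weak approximation on $V_{L'}$, as required.
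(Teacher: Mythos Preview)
Your strategy has the right shape, but the crucial step---producing a \emph{global} class in $\Br X[p]$ with the prescribed refined Swan conductor---is not justified, and the local calculation preceding it is also incorrect. For the local symbol: by~\cite[Proposition~4.1 and Lemma~4.3]{K} (see also the proof of Lemma~\ref{lem:cancup}), a class of the form $\{1+\pi^m U',V\}_p$ with $p\nmid m$ has refined Swan conductor whose $\beta$-component is a nonzero multiple of $\bar{U}'\dlog\bar{V}$, not zero; indeed Lemma~\ref{lem:dbna} forces $p\mid n$ whenever $\rsw_n=[\alpha,0]$ with $\alpha\neq 0$. More seriously, neither of your proposed globalisation mechanisms is made to work: gluing a henselian-local symbol and invoking purity does not produce a class in $\Br X$ with controlled Swan conductor along all of $Y$, and your appeal to a ``Bloch--Kato--Illusie identification'' does not, as stated, pin down the refined Swan conductor of the resulting class.

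The paper avoids this difficulty by not trying to prescribe $\rsw_n$ at all. Instead it uses the Bloch--Kato vanishing-cycles spectral sequence and~\cite[Theorem~0.7(iii)]{BK}, which says precisely that ordinarity forces $\gr^0 \H^2(\bar{V}_\p,\Q_p)\neq 0$; this already encodes the hypothesis $\H^0(V,\Omega^2_V)\neq 0$. One then extracts a class $\alpha\in\H^2(\bar V,\Z/p^r)$ (via proper base change) with nonzero image in $\gr^0$, descends it to a finite extension $L'$ containing $\mmu_{p^r}$, and shows---using the injectivity of Lemma~\ref{lem:tauinj}---that its image in $\Br\Kh$ does not lie in $\fil_0$. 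At that point Theorem~\ref{thm:main} (not Theorem~\ref{thm:onelemma_intro}) gives non-constancy of the evaluation map after a further finite extension. The upshot is that one only needs $\sw(\A)>0$, not any specific value of $\rsw_n(\A)$, and this is exactly what the Bloch--Kato machinery delivers globally.
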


It has been conjectured that a smooth, projective variety over a number field $L$ has good ordinary reduction at a positive density set of primes of $L$: Joshi~\cite[Conjecture~3.1.1]{joshi} attributes this conjecture to Serre.
In the cases of Abelian surfaces and K3 surfaces, it is known to be true~\cite{BZ,joshi}.

The assumption that $\H^0(V,\Omega^2_V) \neq 0$ implies, via Hodge theory, that the second Betti number $b_2(\bar{V})$ and geometric Picard number $\rho(\bar{V})$ are not equal. Since $\Br\bar{V}$ contains a copy of $(\Q/\Z)^{b_2-\rho}$ (see~\cite[Proposition~5.2.9]{CTSbook}), this implies that there exists a finite extension $L'/L$ such that the transcendental Brauer group of $V_{L'}$ is non-trivial. On the other hand, if the transcendental Brauer group is trivial then~\cite[Theorem~3.1]{CTS} shows that the answer to Question~\ref{q:S-D} is yes.

Having seen in Theorem~\ref{thm:potential} that the places involved in the Brauer--Manin obstruction need not be confined to the places of bad reduction and the Archimedean places, one may be interested in the following question:
\begin{question}\label{q:S}
Let $V$ be a smooth, proper, geometrically irreducible variety over a number field $L$ such that $\Pic\bar{V}$ is finitely generated and torsion-free. Does there exist a finite set $S$ of places of $L$ and a closed set $Z\subset \prod_{v \in S} V(L_v)$ such that 
\begin{equation}\label{eq:BMset2}
V(\mathbb{A}_L)^{\Br} = Z \times \prod_{v \notin S} V(L_v)?
\end{equation}
If so, can one give an explicit description of such a set $S$?
\end{question}

The assumption in Questions~\ref{q:S-D} and~\ref{q:S} that $\Pic\bar{V}$ be finitely generated and torsion-free is necessary: for example, in the case of an elliptic curve $E$ over $\mathbb{Q}$ with finite Tate--Shafarevich group and trivial Mordell--Weil group, \cite[Proposition~6.2.4]{SBook} shows that 
\[E(\mathbb{A}_\mathbb{Q})^{\Br}=E(\mathbb{R})^0\times \prod_{p\textrm{ prime}}\{\mathcal{O}_E\},\]
 where $E(\mathbb{R})^0$ denotes the connected component of the identity in $E(\mathbb{R})$. This contradicts the description of \eqref{eq:BMset2}. More generally, note that non-trivial torsion in $\Pic\bar{V}$ implies that the abelianisation of $\pi_1^{\et}(\bar{V})$ is non-trivial. For a smooth, proper, geometrically integral variety $V$ over a number field $L$ satisfying $V(L)\neq\emptyset$ and $\pi_1^{\et}(\bar{V})\neq 0$, Harari \cite[\S2]{Harari} has shown that for any finite set $\Sigma$ of places of $L$, the variety $V$ does not satisfy weak approximation outside $\Sigma$. The proof uses a descent obstruction which, in the case of an abelian covering, is coarser than the Brauer--Manin obstruction. It follows that in this setting the Brauer--Manin set is not of the form described in \eqref{eq:BMset2}.

If one assumes that 
the transcendental Brauer group of $V$ is finite in Question~\ref{q:S} then it follows from the Hochschild--Serre spectral sequence that the quotient of $\Br V$ by the image of $\Br L$ is finite; therefore the existence of the finite set $S$ 
is a consequence of the Albert--Brauer--Hasse--Noether Theorem and the continuity of the Brauer--Manin pairing. The finiteness of the quotient of  $\Br V$ by the image of $\Br L$ also implies that the Brauer--Manin set is open as well as closed. On the other hand, without the finiteness assumption on the transcendental Brauer group, the existence of the finite set $S$ in Question~\ref{q:S} is not \emph{a priori} obvious.

Theorem~\ref{thm:app} below gives a positive answer to Question~\ref{q:S}, without any finiteness assumption on the transcendental Brauer group of $V$. One consequence of this theorem is that for a K3 surface over $\Q$ the only places that play a r\^{o}le in the Brauer--Manin obstruction are the Archimedean places, the primes of bad reduction, and the prime $2$ (see Remark~\ref{rmk:fewerplaces}).

\begin{bigtheorem}\label{thm:app} 
Let $L$ be a number field.  Let $V$ be a smooth, proper, geometrically irreducible variety over $L$.
Assume $\Pic\bar{V}$ is finitely generated and torsion-free. 
Then there is a finite set of places $S$ of $L$ such that, for all $\A \in \Br V$ and all places $\p \notin S$, the evaluation map $\evmap{\A} \colon V(L_\p) \to \Br L_\p$ is constant. Furthermore, the set $S$ can be taken to consist of the following places of $L$:
\begin{enumerate}
\item Archimedean places;
\item places of bad reduction for $V$;
\item\label{ram2} places $\p$ satisfying $e_\p \ge p-1$, where $e_{\p}$ is the absolute ramification index of $\p$ and $p$ is the residue characteristic of $\p$;  
\item\label{omega-1} places $\p$ for which, for any smooth proper model $\V \to\Spec \O_\p$ of $V$, the group $\H^0(\V(\p),\Omega^1_{\V(\p)})$ is non-zero, where $\V(\p)$ is the special fibre of $\V$.
\end{enumerate}
\end{bigtheorem}

\begin{remark}
\begin{enumerate}

\item James Newton observed that the set in~\eqref{omega-1} is contained in the set consisting of places that are ramified in $L/\Q$ and places lying above $2$. Indeed, let $\p$ be a place of $L$ with $e_{\p/p}=1$ and residue field $\F_\p$ of characteristic $p>2$. Since $\Pic\bar{V}$ is torsion-free, we have $\H^1_{\et}(\bar{V}, \Z/p\Z) = 0$. Hence, the integral comparison theorem of Fontaine--Messing (\cite{FM}, see also~\cite[Theorems~3.1.3.1 and 3.2.3]{BM}) shows that the de Rham cohomology group $\H^1_{\mathrm{dR}}(\V(\p)/\F_\p)$ is zero. Now by the degeneration of the Hodge--de Rham spectral sequence for $\V(\p)$ (Deligne--Illusie~\cite[Corollary~2.5]{DI}), this implies $\H^0(\V(\p),\Omega^1_{\V(\p)})=0$.
\item In some cases of interest, such as when $V$ is a K3 surface, one can show that the set in~\eqref{omega-1} is always empty -- see Remark~\ref{rmk:fewerplaces}.
\end{enumerate}
\end{remark}

\subsection*{Outline of the paper}
Section~\ref{sec:Kato} contains some technical results and background relating to Kato's refined Swan conductor.
In Section~\ref{sec:tame} we define a residue map $\partial \colon \fil_0 \Br X \to \H^1(Y,\Q/\Z)$ and use it to describe the evaluation maps for elements of $\fil_0\Br X$. The main body of the paper, Sections~\ref{sec:compare-rsw}--\ref{sec:endgames}, is concerned with the proof of Theorem~\ref{thm:onelemma_intro}. Its proof will involve a chain of blowups with an associated decreasing sequence of Swan conductors at the exceptional divisors. Eventually, we will obtain Swan conductor zero, whereupon evaluations are controlled by the residue map, as in Section~\ref{sec:tame}.
Section~\ref{sec:compare-rsw} contains some technical results that will be used in Section~\ref{sec:endgames} to relate the refined Swan conductor of a Brauer group element to that of its residue along the exceptional divisor. In Section~\ref{sec:blowups}, we show how information about the refined Swan conductor is retained under blowups. Section~\ref{sec:Pn} gathers some calculations pertaining to the exceptional divisors of these blowups. Section~\ref{sec:tangent} relates lifts of points to tangent vectors and shows how to keep track of them when blowing up. In Section~\ref{sec:endgames} we bring everything together to complete the proof of Theorem~\ref{thm:onelemma_intro}. 
In Section~\ref{sec:main} we prove Theorem~\ref{thm:main}.
In Section~\ref{sec:comparisons} we compare various other filtrations in the literature with our modified version of Kato's filtration which gives rise to the evaluation filtration on the Brauer group. Section~\ref{sec:BMO} is concerned with applications to the Brauer--Manin obstruction and contains the proofs of Theorems~\ref{thm:potential} and \ref{thm:app}.

\subsection*{Notation}
If $A$ is an Abelian group and $n$ a positive integer, then $A[n]$ and $A/n$ denote the kernel and cokernel, respectively, of multiplication by $n$ on $A$.
If $\ell$ is prime, then $A\{\ell\}$ denotes the $\ell$-power torsion subgroup of $A$.

We use extensively the notation introduced in~\cite[\S 1]{K}.  In particular, the notation $(\Z/n)(r)$ has a particular meaning in characteristic $p$.
Write $n = p^s m$ with $p \nmid m$.  For any scheme $S$ smooth over a field of characteristic $p$, the object $(\Z/n)(r)$ of the bounded derived category $D^b(S_\et)$ is defined by
\[
(\Z/n)(r) := \mmu_m^{\otimes r} \oplus W_s \Omega^r_{S,\log}[-r].
\]
Here $W_s \Omega^\bullet_{S,\log}$ is the logarithmic de Rham--Witt complex of Illusie: see~\cite[I.5.7]{I}.
We further use Kato's notation
\[
\H^q_n(R) := \H^q(R_\et, (\Z/n)(q-1)), \quad \H^q(R) := \varinjlim_{n} \H^q_n(R)
\]
whenever either $n$ is invertible in $R$, or $R$ is smooth over a field of characteristic $p$.

The definition of $(\Z/n)(r)$ ensures that, in any characteristic, we have an exact triangle
\begin{equation}\label{eq:kummer}
(\Z/n)(1) \to \Gm \xrightarrow{n} \Gm \to (\Z/n)(1)[1]
\end{equation}
where the part prime to the characteristic is the Kummer sequence, and the $p$-part in characteristic $p$ is~\cite[Proposition~I.3.23.2]{I}.
Given $a \in R^\times = \H^0(R,\Gm)$, we denote the image of $a$ in $\H^1(R,(\Z/n)(1))$ by $\{ a \}$.  The exact triangle~\eqref{eq:kummer} further shows that, for a field $K$ of any characteristic, we have $\H^2(K) = \Br K$.

\begin{ack}
We thank Evis Ieronymou for many valuable comments and suggestions on previous drafts of this article. We thank Daniel Loughran for several helpful comments and references that have improved the introduction. We thank Bhargav Bhatt, Bas Edixhoven, Christopher Lazda, Yu Min, James Newton, Alexei Skorobogatov and Olivier Wittenberg for useful conversations. We are very grateful to the anonymous referee(s) who suggested the geometric approach that we have now adopted, following Kato in~\cite{K}, and made numerous other suggestions that have improved the paper and its exposition.
We made substantial progress during the workshops \emph{Rational Points 2017} and \emph{Rational Points 2019} at Franken-Akademie Schloss Schney, and during the 2019 trimester programme \emph{\`{A} la red\'{e}couverte des points rationnels/Reinventing rational points} at the Institut Henri Poincar\'{e}, and we are very grateful to the organisers of these events.
Rachel Newton was supported by EPSRC grant EP/S004696/1 and EP/S004696/2, and UKRI Future Leaders Fellowship MR/T041609/1 and MR/T041609/2.
\end{ack}

\section{Kato's refined Swan conductor}\label{sec:Kato}

In this section, we gather some technical results from~\cite{K} relating to Kato's refined Swan conductor, extending them as necessary. For this section only, $K$ denotes a Hensel\-ian discrete valuation field with ring of integers $\O_K$ and residue field $F$ of characteristic $p>0$. 
Let $\pi$ be a uniformiser in $\O_K$ and denote by $\m_K$ the maximal ideal of $\O_K$.
Our goal is to describe, for each $q \ge 1$, the following constructions. 
\begin{itemize}
\item An increasing filtration $\{\fil_n \H^q(K)\}_{n \ge 0}$ on $\H^q(K)$ (Definition~\ref{def:Swan}). The \emph{Swan conductor} $\sw(\chi)$ of $\chi \in \H^q(K)$ is then defined to be the smallest $n \ge 0$ such that $\chi \in \fil_n \H^q(K)$.
The prime-to-$p$ part of $\H^q(K)$ is entirely contained in $\fil_0 \H^q(K)$, as are all elements split by an unramified extension of $K$ (Proposition~\ref{prop:kato61}(1)).
\item An injection $\lambda_\pi \colon \H^q(F) \oplus \H^{q-1}(F) \to \H^q(K)$ whose image coincides with $\fil_0 \H^q(K)$ (see \eqref{eq:lambda-def} and Proposition~\ref{prop:kato61}(1)). We also define $\iota^q_{n} \colon \H^q_n(F) \to \H^q_n(K)$ to be the first component of $\lambda_\pi$ (restricted to $\H^q_n(F)$), see Section~\ref{sec:iota-local}. The ``residue map'' $\partial \colon \fil_0 \H^q(K) \to \H^{q-1}(F)$ is defined to be the inverse of $\lambda_\pi$ followed by projection to the second component (Definition~\ref{def:residue}).
\item For each $r \ge 1$, a surjection $\delta_r \colon W_r \Omega^{q-1}_F \to \H^q_{p^r}(F)$ (Definition~\ref{def:delta}). Following Kato, we sometimes also denote the composition $\lambda_\pi \circ \delta_r$ simply by $\lambda_\pi$.
\item For each $n \ge 1$, an injection (the \emph{refined Swan conductor}), denoted $\rsw_n$, from $\fil_{n} \H^q(K) / \fil_{n-1} \H^q(K)$ to an object that is essentially $\Omega^q_F \oplus \Omega^{q-1}_F$ (Definition~\ref{def:rsw}).
\end{itemize}
The principal case of interest will be when $K$ is the Henselisation of the function field of a variety $X$, and we will be interested in the case $q=2$, for which $\H^q(K) = \Br(K)$.
In the details of our proofs in future sections, we will also need the equal-characteristic case (when $K$ is the function field of a variety in characteristic $p$, Henselised at a prime divisor).  Moreover, we will need the maps $\lambda_\pi$ and $\delta_r$ in a more general context, as defined in Section~\ref{sec:lambda-smooth}.

\subsection{Vanishing cycles and the Swan conductor}\label{sec:V}
In order to define the Swan conductor, we need to deal not only with fields but also with more general rings.
The following definition serves this purpose.

Let $A$ be a ring over $\O_K$, and let $i,j$ be the inclusions of the special and generic fibres, respectively, into $\Spec A$.  Denote $R = A / \m_K A$.
If $\Char K>0$, assume that $A \otimes_{\O_K} K$ is smooth over some field, so that $(\Z/n)(q-1)$ is defined.
Define
\[
V^q_n(A) := \H^q(R_\et, i^* \R j_* (\Z/n)(q-1))
\]
and $V^q(A) := \varinjlim_n V^q_n(A)$. Note that, in the case when $K$ has characteristic zero, $(\Z/n)(q-1)$
is simply a sheaf for the \'{e}tale topology. 

As an example, in the case $A=\O_K$ with $\Char K=0$ we can identify $j_*(-)$ with $\H^0(K_{nr},-)$, where $K_{nr}$ is the maximal unramified extension of $K$.  The spectral sequence implicit in the definition of $V^q_n(\O_K)$ is the inflation-restriction spectral sequence for the extension $K_{nr}/K$, and we find $V^q_n(\O_K) = \H^q_n(K)$.

The construction is functorial in the following sense.  
Let $K'/K$ be an extension of Henselian discrete valuation fields;
let $\O_{K'}$ be the ring of integers of $K'$ and let $F'$ be the residue field.
Suppose that we have a commutative diagram
\[
\begin{CD}
A @>>> A' \\
@AAA @AAA \\
\O_K @>>> \O_{K'}
\end{CD}
\]
where again $A' \otimes_{\O_{K'}} K' (= A' \otimes_{\O_K} K)$ is smooth over some field.
Let $R' = A'/\m_{K'}A'$ and let $i',j'$ be the inclusions of the special and generic fibres, respectively, of $\Spec A' \to \Spec \O_{K'}$.
We have a commutative diagram
\[
\begin{CD}
\Spec R' @>{i'}>> \Spec A' @<{j'}<< \Spec(A' \otimes_{\O_K} K) \\
@V{g}VV @V{f}VV @V{h}VV \\
\Spec R @>{i}>> \Spec A @<{j}<< \Spec(A \otimes_{\O_K} K) \\
\end{CD}
\]
of schemes.
Define
\[
V^q_n(A') := \H^q(R'_\et, (i')^* \R (j')_* (\Z/n)(q-1)).
\]
Applying $(i')^*$ to the natural base-change map
\[
f^* \R j_* (\Z/n)(q-1) \to \R j'_* (\Z/n)(q-1)
\]
and using $(i')^* f^* = g^* i^*$ gives a map
\[
g^* i^* \R j_* (\Z/n)(q-1) \to (i')^* \R j'_* (\Z/n)(q-1)
\]
and so, by adjunction, a natural map $V^q_n(A) \to V^q_n(A')$ for all $q,n$.

In the case of the field $K$, we have product maps
\[
\H^q_n(K) \times (K^\times)^{\oplus r} \to \H^{q+r}_n(K)
\]
defined by $(\chi,a_1,\dotsc,a_r) \mapsto \chi \cup \{ a_1 \} \cup \dotsb \cup \{ a_r \}$, where $\{-\}$ denotes the Kummer map $K^\times/(K^\times)^n \to \H^1(K,\mmu_n)$ as in~\eqref{eq:kummer}.
This construction can be generalised to $V_n^q(A)$ as follows.
The natural map of sheaves
\[
\R j_* (\Z/n)(q-1) \to i_* i^* \R j_* (\Z/n)(q-1).
\]
gives a natural map
\begin{equation}\label{eq:HtoV}
\H^q_n(A \otimes_{\O_K} K) = \H^q(A, \R j_* (\Z/n)(q-1)) \to V^q_n(A)
\end{equation}
for all $q,n$, which Gabber~\cite{Gabber} (see also~\cite[Theorem~09ZI]{stacks-project}) has proved to be an isomorphism if $(A, \m_K A)$ is Henselian.
(Note that this generalises the observation $V^q_n(\O_K) = \H^q_n(K)$ above.)
In that case, we mimic the construction just described and define a product
\begin{equation}\label{eq:Vcup}
\begin{gathered}
V^q_n(A) \times ((A \otimes_{\O_K} K)^\times)^{\oplus r} \to V^{q+r}_n(A) \\
(\chi, a_1, \dotsc, a_r) \mapsto \{ \chi, a_1, \dotsc, a_r \}
\end{gathered}
\end{equation}
using the Kummer map $(A \otimes_{\O_K} K)^\times \to \H^1(A \otimes_{\O_K} K, (\Z/n)(1))$ and the cup product
\[
\H^q_n(A \otimes_{\O_K} K) \times \H^1(A \otimes_{\O_K} K, (\Z/n)(1))^{\oplus r} \to \H^{q+r}_n(A \otimes_{\O_K} K).
\]
For general $A$, let $A^{(h)}$ denote the Henselisation at the ideal $\m_K A$; then the natural map $V^q_n(A) \to V^q_n(A^{(h)})$ is an isomorphism, because the stalks of $i^* \R j_* (\Z/n)(q-1)$ do not change when $A$ is replaced by $A^{(h)}$. 
We deduce $V^q_n(A) \cong \H^q(A^{(h)} \otimes_{\O_K} K)$,
which allows us to define the product~\eqref{eq:Vcup} for $A$ as well.
The products for different $n$ are compatible and so give rise to a product
\[
V^q(A) \times ((A \otimes_{\O_K} K)^\times)^{\oplus r} \to V^{q+r}(A).
\]

Taking $A=\O_K[T]$, we can now define Kato's Swan conductor.

\begin{definition}\label{def:Swan}[Kato {\cite[\S 2]{K}}]
The increasing filtration $\{ \fil_n \H^q(K) \}_{n \ge 0}$ is defined by
\[
\chi \in \fil_n \H^q(K) \Longleftrightarrow \{ \chi, 1 + \pi^{n+1} T \} = 0 \text{ in } V^{q+1}(\O_K[T]).
\]
For $\chi \in \H^q(K)$, define the \emph{Swan conductor} $\sw(\chi)$ to be the smallest $n \ge 0$ satisfying $\chi \in \fil_n \H^q(K)$.
\end{definition}

\begin{remark}
For $r \ge 1$, the map $\H^q_{p^r}(K) \to \H^q(K)$ allows us to pull the filtration back to $\H^q_{p^r}$.
By~\cite[Proposition~1.8]{K}, the map $V^{q+1}_{p^r}(\O_K[T]) \to V^{q+1}(\O_K[T])$ is injective, showing that $\chi \in \H^q_{p^r}(K)$ lies in $\fil_n \H^q_{p^r}(K)$ if and only if $\{\chi, 1 + \pi^{n+1} T\}=0$ in $V^{q+1}_{p^r}(\O_K[T])$.
\end{remark}

\begin{remark}
There is an equivalent definition using only Galois cohomology.  Namely, Kato~\cite[Proposition~6.3]{K} states that $\chi \in \H^q(K)$ lies in $\fil_n \H^q(K)$ if and only if, for any Henselian discrete valuation field $L/K$ such that $\O_K \subset \O_L$ and $\m_L = \O_L \m_K$, we have $\{ \chi_L, 1 + \pi^{n+1} \O_L \} = 0$ in $\H^{q+1}(L)$.
\end{remark}

\begin{remark}
Suppose that $K$ is a finite extension of $\Q_p$ and take $q=1$.  The filtration on $\H^1(K) = \Hom(\Gal(\bar{K}/K),\Q/\Z)$ coincides with that induced by the upper ramification filtration on $\Gal(\bar{K}/K)$.  This is an exercise in local class field theory, using the fact that the local reciprocity map identifies the upper ramification filtration on the Galois group with the unit filtration on $K^\times$.
\end{remark}

\begin{remark}
If $K$ is a finite extension of $\Q_p$ then the filtration on $\H^2(K) = \Br(K)$ is uninteresting: we have $\fil_0 \H^2(K) = \H^2(K)$.  This follows, for example, from Proposition~\ref{prop:kato61} and is related to the fact that every element of the Brauer group of $K$ is split by an unramified extension.
The filtration on $\Br(K)$ is only interesting when the residue field $F$ is not perfect.
\end{remark}

\subsection{The maps $\iota^q_n$ and $\lambda_\pi$}\label{sec:lambda}

\subsubsection{$\iota^q_n$ in the Henselian local case}\label{sec:iota-local}

Let $A$ be a Henselian local ring with fraction field $L$ and residue field $\ell$ of characteristic $p>0$.
In~\cite[\S 1.4]{K}, Kato defines a homomorphism (the ``canonical lifting'')
\[
\iota^q_n \colon \H^q_n(\ell) \to \H^q_n(L)
\]
for all positive integers $n$, which we now recall.
If $n$ is coprime to $p$, define $\iota^q_n$ simply as the composite
\[
\H^q(\ell, (\Z/n)(q-1)) \cong \H^q(A_\et, (\Z/n)(q-1)) \to \H^q(L, (\Z/n)(q-1)).
\]
If $n=p^s$ is a power of $p$, then this works only for $q=1$; in the other cases, we have not defined a sheaf $(\Z/n)(q-1)$ on $A_\et$.
In those cases we define $\iota^q_n$ by the formula
\[
\iota^q_n(\{ \chi, \bar{a}_1, \dotsc, \bar{a}_{q-1} \}) =
\{ \iota^1_n(\chi), a_1, \dotsc, a_{q-1} \}
\]
for $\chi \in \H^1_n(\ell)$ and $a_1, \dotsc, a_{q-1} \in A^\times$;
Kato has proved that this characterises a well-defined homomorphism.

\subsubsection{$\lambda_\pi$ in the Henselian DVR case}\label{sec:lambda-dvr}

We now return to our Henselian discrete valuation field $K$ with ring of integers $\O_K$ and residue field $F$.
For a fixed uniformiser $\pi \in \O_K$, we define 
\begin{equation}\label{eq:lambda-def}
\lambda_\pi \colon \H^q_n(F) \oplus \H^{q-1}_n(F) \to \H^q_n(K)
\end{equation}
as
\[
\lambda_\pi(\chi,\chi') = \iota^q_n(\chi) + \{ \iota^{q-1}_n(\chi'), \pi \}.
\]
Kato has proved that this is injective.  An important relationship between $\lambda_\pi$ and the filtration $\fil_n$ is given by the first part of the following proposition.
\begin{proposition}[Kato, {\cite[Proposition~6.1]{K}}]\label{prop:kato61}
Let $p=\Char F>0$. 
\begin{enumerate}
\item $\fil_0 \H^q(K)$ coincides with the image of $\lambda_\pi$, and furthermore
\[
\fil_0\H^q(K) = \H^q(K)(\textrm{non-p}) \oplus \ker(\H^q(K)\{p\} \to \H^q(K_{nr})\{p\}),
\]
where $(\textrm{non-p})$ denotes the prime-to-$p$ part and $K_{nr}$ the maximal unramified extension of $K$.
\item We have $\H^q(K) = \fil_0 \H^q(K)$ if $[F:F^p]=p^c < \infty$ and $q>c+1$.
\end{enumerate}
\end{proposition}

\subsubsection{The smooth-over-DVR case}\label{sec:lambda-smooth}

We will also need a version of $\lambda_\pi$ for more general rings.  Specifically, we need a definition that works for the ring $\O_K[T]$ in order to define the refined Swan conductor; and in order to prove the main result of Section~\ref{sec:tame} we need a definition that works for the coordinate ring of an affine piece of our smooth model $\X$.  Both of these are rings smooth over a Henselian discrete valuation ring.
So let $A$ be a ring smooth over $\O_K$, with $R = A/\m_K A$;
we seek an injective homomorphism
\[
\H^q_{p^r}(R) \oplus \H^{q-1}_{p^r}(R) \to V^q_{p^r}(A)
\]
coinciding with the definition of $\lambda_\pi$ in Section~\ref{sec:lambda-dvr} in the case $A=\O_K$.

In~\cite[\S 1.9]{K}, Kato treats the case $r=1$, which is all that is needed in order to define the refined Swan conductor.
The extra case we will need in Section~\ref{sec:tame} is for $r > 1$ in the case of mixed characteristic.
Assume therefore $\Char K=0$.
We will now explain how to define a homomorphism
\[
\lambda_\pi \colon \H^q_{p^r}(R) \oplus \H^{q-1}_{p^r}(R) \to V^q_{p^r}(A)
\]
for all $q \ge 2$ and $r \ge 1$ (Definition~\ref{def:lambda}). 
In Lemma~\ref{lem:lambda-comparison} we prove that our definition of $\lambda_\pi$ coincides with that of~\eqref{eq:lambda-def} in the case when $A=\O_K$ and $n=p^r$. 
We closely follow~\cite[\S1.9]{K} throughout, though we believe this contains a sign error which we will correct below.

Since $R$ has $p$-cohomological dimension $\le 1$ (see~\cite[X, Th\'eor\`eme~5.1]{SGA43}), the spectral sequence calculating $V^q_{p^r}(A)$ reduces to a short exact sequence
\begin{equation}\label{eq:infres1}
0 \to \H^1(R_\et, i^* \R^{q-1} j_* (\Z/p^r)(q-1)) \xrightarrow{\alpha} V^q_{p^r}(A) \to \H^0(R_\et, i^* \R^q j_* (\Z/p^r)(q-1)) \to 0.
\end{equation}
Following Bloch and Kato in~\cite{BK}, write $M_r^{q-1} = i^* \R^{q-1} j_* (\Z/p^r)(q-1)$.
By~\cite[Theorem~1.4]{BK}, there is a finite decreasing filtration $\{ U^m M_r^{q-1} \}_{m \ge 0}$ on $M_r^{q-1}$ with $U^0 M_r^{q-1} = M_r^{q-1}$ and with graded pieces $\gr^m(M_r^{q-1}) = U^m M_r^{q-1} / U^{m+1} M_r^{q-1}$ that can be described as follows: there is an isomorphism
\begin{equation}\label{eq:gr0}
\gr^0(M_r^{q-1}) \cong W_r\Omega^{q-1}_{R,\log} \oplus W_r \Omega^{q-2}_{R,\log}
\end{equation}
and, for $m \ge 1$, a surjection
\begin{equation}\label{eq:rhom}
\rho_m \colon \Omega^{q-2}_R \oplus \Omega^{q-3}_R \to \gr^m(M^{q-1}_r).
\end{equation}
(For $i<0$ we set $\Omega^i_R=0$.)

\begin{lemma}\label{lem:lambda-def}
Suppose $q \ge 2$. The natural map 
\[
\H^1(R_\et,M^{q-1}_r) \to \H^1(R_\et,\gr^0(M^{q-1}_r)) \cong \H^1(R_\et, W_r\Omega^{q-1}_{R,\log} \oplus W_r \Omega^{q-2}_{R,\log})
\]
is an isomorphism.
\end{lemma}
\begin{proof}
We first show $\H^1(R_\et,\gr^m(M^{q-1}_r))=0$ for $m \ge 1$, using~\eqref{eq:rhom}.
On the one hand, we have $\H^1(R_\et,\Omega^i_R)=0$ for $i \ge 0$ because $\Omega^i_R$ is a coherent sheaf on the affine scheme $\Spec R$.
On the other hand, if we let $K_m = \ker(\rho_m)$, then $\H^2(R_\et,K_m)=0$ because $\cd_p(R) \le 1$.
The long exact sequence in cohomology coming from the short exact sequence
\[
0 \to K_m \to \Omega^{q-2}_R \oplus \Omega^{q-3}_R \to \gr^m(M^{q-1}_r) \to 0
\]
shows $\H^1(R_\et,\gr^m(M^{q-1}_r))=0$ for $m \ge 1$.

A simple induction now gives $\H^1(R_\et, U^m M^{q-1}_r)=0$ for $m \ge 1$, and the short exact sequence 
\[
0 \to U^1 M_r^{q-1} \to M_r^{q-1} \to \gr^0(M_r^{q-1}) \to 0
\]
completes the proof.
\end{proof}

The group $\H^1(R_\et, W_r\Omega^{q-1}_{R,\log} \oplus W_r \Omega^{q-2}_{R,\log})$ appearing in Lemma~\ref{lem:lambda-def} is, by definition of the sheaf $(\Z/n)(q-1)$, equal to $\H^q_{p^r}(R) \oplus \H^{q-1}_{p^r}(R)$.

\begin{definition}\label{def:lambda}
Suppose $\Char K=0$.
For $q \ge 2$, define
\[
\lambda_\pi \colon \H^q_{p^r}(R) \oplus \H^{q-1}_{p^r}(R) \to V^q_{p^r}(A)
\]
to be $(-1)^{q-1}$ times the inverse of the isomorphism of Lemma~\ref{lem:lambda-def}, composed with the map labelled $\alpha$ in~\eqref{eq:infres1}.  Moreover, define
\[
\iota^q_{p^r} \colon \H^q_{p^r}(R) \to V^q_{p^r}(A)
\]
to be the first component of $\lambda_\pi$.
\end{definition}

\begin{remark}
In the case $A=\O_K$, the \'etale cohomology groups become Galois cohomology: the sequence~\eqref{eq:infres1} becomes
\begin{multline*}
0 \to \H^1(F, \H^{q-1}(K_{nr}, (\Z/p^r)(q-1))) \to \H^q(K, (\Z/p^r)(q-1)) \\ \xrightarrow{\res} \H^q(K_{nr}, (\Z/p^r)(q-1))
\end{multline*}
where $K_{nr}$ is the maximal unramified extension of $K$.
The map $(-1)^{q-1}\lambda_\pi$ factors as
\begin{multline*}
\H^1(F, W_r \Omega^{q-1}_{\log}(F^s) \oplus W_r \Omega^{q-2}_{\log}(F^s)) \xleftarrow{\cong}
\H^1(F, \H^{q-1}(K_{nr}, (\Z/p^r)(q-1))) \\ \to \H^q(K, (\Z/p^r)(q-1)).
\end{multline*}
(Here $F^s$ is a separable closure of $F$, and we identify $\Gal(K_{nr}/K) \cong \Gal(F^s/F)$ without further comment.)
\end{remark}

\subsubsection{The map $\delta_r$}

We keep the notation of the previous subsection: $A$ is a ring smooth over $\O_K$, and $R=A/\m_K A$.
By~\cite[\S1.3]{K} (see also~\cite[Lemme 2]{CTSS} and~\cite[I.3.3]{I}), there is an exact sequence
\begin{equation}\label{eq:F-1}
0\to W_r\Omega^{q-1}_{\log}\to W_r\Omega^{q-1}\xrightarrow{C^{-1}-1} W_r\Omega^{q-1}/dV^{r-1}\Omega^{q-2}\to 0
\end{equation}
of sheaves on $R_\et$.
Here we have abused notation by using $C^{-1}$ to denote the map denoted $\mathrm{F}$ in~\cite[\S1.3]{K} and~\cite[\S I.2.17]{I}, which coincides with the inverse Cartier operator when $r=1$. The map $V$ is the Verschiebung defined in~\cite[\S I.1]{I}.
By definition, we have $\H^q_{p^r}(R) = \H^1(R_\et, W_r\Omega^{q-1}_{\log})$.

\begin{definition}\label{def:delta}
We define
\[
\delta_r \colon W_r\Omega_R^{q-1}/dV^{r-1}\Omega_R^{q-2} \to \H^q_{p^r}(R)
\]
to be the boundary map in the long exact sequence of cohomology corresponding to~\eqref{eq:F-1}.
We also use $\delta_r$ to denote the map $W_r\Omega_R^{q-1} \to \H^q_{p^r}(R)$ obtained by precomposing with the natural surjection $W_r\Omega_R^{q-1}\to W_r\Omega_R^{q-1}/dV^{r-1}\Omega_R^{q-2}$.
\end{definition}

Following Kato, we sometimes use $\lambda_\pi$ to denote the composition
\begin{equation}\label{eq:lambda-comp}
W_r \Omega^{q-1}_R \oplus W_r \Omega^{q-2}_R \xrightarrow{\delta_r}
\H^q_{p^r}(R) \oplus \H^{q-1}_{p^r}(R) \xrightarrow{\lambda_\pi} V^q_{p^r}(A).
\end{equation}

The following lemma will be used in the proofs of Lemmas~\ref{lem:lambda-comparison} and~\ref{lem:lambda-basechange}.
\begin{lemma}\label{lem:delta-wedge}
Let $w \in W_r \Omega^{q-1}_R$ and $a \in R^\times$.  Let $\dlog(a) \in W_r \Omega^1_{R,\log}$ be as in~\cite[\S I.3.23]{I} and let
\[
f_a \colon \H^q_{p^r}(R) \to \H^{q+1}_{p^r}(R)
\]
be the homomorphism $\H^1(R_\et, W_r \Omega^{q-1}_{R,\log}) \to \H^1(R_\et, W_r \Omega^q_{R,\log})$
induced by
\[
W_r \Omega^{q-1}_{R,\log} \to W_r \Omega^q_{R,\log}, \qquad x \mapsto x \wedge \dlog(a).
\]
Then we have
\[
\delta_r(w \wedge \dlog(a)) = f_a(\delta_r(w)) = \delta_r(w) \cup \{ a \}.
\]
\end{lemma}
In other words, we can take the wedge product ``inside the $\H^1$''.
\begin{proof}
The sequences~\eqref{eq:F-1} give the rows of a diagram of sheaves on $R_\et$ as follows:
\[
\begin{CD}
0 @>>> W_r \Omega^{q-1}_{\log} @>>> W_r \Omega^{q-1} @>{C^{-1}-1}>> W_r \Omega^{q-1}/dV^{r-1}\Omega^{q-2} @>>> 0 \\
@. @V{\wedge \dlog\bar{a}}VV @V{\wedge \dlog\bar{a}}VV @V{\wedge  \dlog\bar{a}}VV \\
0 @>>> W_r \Omega^{q}_{\log} @>>> W_r \Omega^{q} @>{C^{-1}-1}>> W_r \Omega^{q}/dV^{r-1}\Omega^{q-1} @>>> 0.
\end{CD}
\]
It is easy to check that this diagram commutes, using $C^{-1}(\dlog(a))=\dlog(a)$ (see \cite[\S I.3.23]{I}).
Taking cohomology now gives the first claimed equality.  The second is the final statement of~\cite[\S 1.3]{K}.
\end{proof}

\subsubsection{Compatibility of the two definitions of $\lambda_\pi$}
We can now prove that the definition of $\lambda_\pi$ in Definition~\ref{def:lambda} is compatible with that described in \S\ref{sec:lambda-dvr}.

\begin{lemma}\label{lem:lambda-comparison}
In the case $\Char K=0$, $A=\O_K$ and $q \ge 2$, the map $\lambda_\pi$ defined in Definition~\ref{def:lambda} agrees with that defined in~\S\ref{sec:lambda-dvr}.
\end{lemma}
\begin{proof}
We will prove this by induction on $q$, by showing that our map $\lambda_\pi$ of Definition~\ref{def:lambda} satisfies the characterisation given in~\S\ref{sec:lambda-dvr}.
Let $\iota^1_{p^r} \colon \H^1(F) \to \H^1(K)$ be the natural map defined in \S\ref{sec:iota-local}.
Recall that, for $q \ge 2$, the notation $\iota^q_{p^r}$ refers to the first component of $\lambda_\pi$.
Specifically, we will show:
\begin{enumerate}
\item\label{lambda-1} for $\chi \in \H^1_{p^r}(F)$ and $a \in \O_K^\times$, we have $\iota^2_{p^r}(\{ \chi, \bar{a} \}) = \{ \iota^1_{p^r}(\chi), a \}$;
\item\label{lambda-2} for $\chi \in \H^1_{p^r}(F)$, we have $\lambda_\pi(0, \chi) = \{\iota^1_{p^r}(\chi), \pi \}$;
\item\label{lambda-3} for $q \ge 2$, $\chi \in \H^q_{p^r}(F)$ and $a \in \O_K^\times$, we have $\iota^q_{p^r}(\{ \chi, \bar{a} \}) = \{ \iota^{q-1}_{p^r}(\chi), a \}$;
\item\label{lambda-4} for $q \ge 2$, $\chi \in \H^{q-1}_{p^r}(F)$ and $a \in \O_K^\times$, we have $\lambda_\pi(0, \{ \chi, \bar{a} \}) = - \{ \lambda_\pi(0,\chi), a \}$.
\end{enumerate}
Then~(\ref{lambda-1}) and~(\ref{lambda-2}) prove the claim for $q=2$; statement~(\ref{lambda-3}) shows that the first component of $\lambda_\pi$ agrees with the previous definition of $\iota^q_{p^r}$ in \S\ref{sec:iota-local} for all $q \ge 2$; and all four statements together imply the final part of the characterisation, namely $\lambda_\pi(0,\chi) = \{ \iota^{q-1}_{p^r}(\chi), \pi \}$ for $\chi \in \H^{q-1}_{p^r}(F)$.

We prove~(\ref{lambda-3}) and~(\ref{lambda-4}) first.
Let $q \ge 2$ and let $a$ be an element of $\O_K^\times$.  Let $\bar{a} \in F^\times$ be the reduction of $a$ and let $\{\bar{a}\}$ be its class in $\H^1(F,(\Z/p^r)(1))$.  We claim that the following diagram commutes.
\begin{equation*}\label{eq:lambdacd1}
\begin{CD}
\H^q(K, (\Z/p^r)(q-1))@>{\phantom{longlabel}\{a\} \cup \phantom{longlabel}}>>\H^{q+1}(K, (\Z/p^r)(q))\\
@AAA @AAA\\
\H^1(F, \H^{q-1}(K_{nr}, (\Z/p^r)(q-1)))@>{\phantom{label}\{a\} \cup\phantom{label}}>> \H^1(F, \H^{q}(K_{nr}, (\Z/p^r)(q)))\\
@V{\cong}VV @VV{\cong}V\\
\H^1(F, W_r \Omega^{q-1}_{\log}(F^s))\oplus @>{(\dlog\bar{a} \wedge, \dlog\bar{a} \wedge)}>> \H^1(F, W_r \Omega^{q}_{\log}(F^s))\oplus  \\
 \H^1(F, W_r \Omega^{q-2}_{\log}(F^s)) @.  \H^1(F, W_r \Omega^{q-1}_{\log}(F^s))\\
@| @|\\
\H^q_{p^r}(F) \oplus \H^{q-1}_{p^r}(F) @>{(\cup (-1)^{q-1}\{\bar{a}\},\cup (-1)^{q}\{\bar{a}\})}>> \H^{q+1}_{p^r}(F) \oplus \H^{q}_{p^r}(F)
\end{CD}
\end{equation*}
Here the horizontal maps are as follows. The first horizontal map is cup product with the class of $a$ in $\H^1(K, (\Z/p^r)(1))$.
The second horizontal map is induced by cup product with the class of $a$ in $\H^1(K_{nr}, (\Z/p^r)(1))$.
The third horizontal map is that induced on cohomology by the homomorphism $\omega \mapsto \dlog\bar{a} \wedge \omega$ on each factor.
The fourth horizontal map is given by cup products as written. 
That the top square commutes is~\cite[\S1.2, Lemma~2 (2)]{K3} (and it is in order to apply that lemma that we have put the cup products on the left).
That the middle square commutes is shown by the formula after~\cite[Corollary~1.4.1]{BK}.
The bottom square commutes by Lemma~\ref{lem:delta-wedge} and the anti-commutativity of the wedge product. 

In this diagram, the map going upwards from bottom left to top left is, by definition, $(-1)^{q-1} \lambda_\pi$, and that from bottom right to top right is $(-1)^q \lambda_\pi$.  Let $\chi \in \H^q_{p^r}(F)$.  The commutativity of the diagram gives
\begin{align*}
(-1)^q \lambda_\pi(\chi \cup (-1)^{q-1}\{\bar{a}\}, 0) &=
(-1)^{q-1} \{ a \} \cup \lambda_\pi(\chi,0) \\
- \lambda_\pi(\chi \cup \{\bar{a}\}, 0) &= (-1)^{q-1} (-1)^q \lambda_\pi(\chi,0) \cup \{ a \} \\
\lambda_\pi(\chi \cup \{\bar{a}\}, 0) &= \lambda_\pi(\chi,0) \cup \{ a \},
\end{align*}
proving (\ref{lambda-3}).  Similarly, for $\chi \in \H^{q-1}_{p^r}(F)$ we have
\begin{align*}
(-1)^q \lambda_\pi(0, \chi \cup (-1)^q\{\bar{a}\}) &=
(-1)^{q-1} \{ a \} \cup \lambda_\pi(0,\chi) \\
\lambda_\pi(0, \chi \cup \{\bar{a}\}) &= (-1)^{q-1} (-1)^q \lambda_\pi(0, \chi) \cup \{ a \} \\
\lambda_\pi(0, \chi \cup \{\bar{a}\}) &= -\lambda_\pi(0,\chi) \cup \{ a \},
\end{align*}
proving (\ref{lambda-4}).

The case $q=1$ is similar but easier.  The sequence~\eqref{eq:infres1} is simply the inflation-restriction sequence, and one checks that the following diagram commutes.
\begin{equation*}\label{eq:lambdacd2}
\begin{tikzcd}[column sep = huge]
 \H^1(K, \Z/p^r)\arrow[r,"\{a\} \cup"]& \H^2(K, (\Z/p^r)(1))\\
 \H^1(F,\Z/p^r)\arrow[d, equal]\arrow[u, "\inf"]\arrow[r,"a"] & \H^1(F, \H^1(K_{nr}, (\Z/p^r)(1)))\ar[u] \ar[d,"\cong"]\\
 \H^1(F,\Z/p^r)\arrow[r, "{(\dlog\bar{a},0)}"]\arrow[d, "\epsilon"]
 & \H^1(F, W_r \Omega^1_{\log}(F^s) \oplus W_r \Omega^0_{\log}(F^s))\arrow[d,equal]\\
 \H^1_{p^r}(F)\arrow[r,"{(\cup\{\bar{a}\},0)}"] &\H^2_{p^r}(F) \oplus\H^1_{p^r}(F)
 \end{tikzcd}
\end{equation*}
The bottom left vertical map is induced by the isomorphism $\epsilon: \Z/p^r \to W_r \Omega^0_{\log}(F^s)$ (see~\cite[Proposition~3.28]{I}). The first horizontal map is cup product with the class of $a$ in $\H^1(K,(\Z/p^r)(1))$, the second horizontal map is that induced on cohomology by sending $1$ to the class of $a$ in $\H^1(K_{nr}, (\Z/p^r)(1))$, the third horizontal map is that induced on cohomology by $1 \mapsto (\dlog\bar{a},0)$, and the fourth horizontal map is the cup product map as written.

The left-hand column, from bottom to top, is the map $\iota^1_{p^r}$, and the right-hand column from bottom to top is $-\lambda_\pi$. Let $\chi \in \H^1_{p^r}(F)$. Commutativity of this diagram gives
\begin{align*}
-\lambda_\pi(\chi \cup \{\bar{a}\},0) &= \{a\} \cup \iota^1_{p^r}(\chi) \\
\lambda_\pi(\chi \cup \{ \bar{a}\}, 0) &= \iota^1_{p^r}(\chi) \cup \{ a \}
\end{align*}
proving (\ref{lambda-1}).

Finally, we have a third diagram
\begin{equation*}\label{eq:lambdacd3}
\begin{tikzcd}
 \H^1(K, \Z/p^r)\arrow[r, "{\{\pi\} \cup}"]& \H^2(K, (\Z/p^r)(1))\\
 \H^1(F,\Z/p^r)\arrow[r, "\pi"]\arrow[u, "\inf"]\arrow[d, equal]& \H^1(F, \H^1(K_{nr}, (\Z/p^r)(1)))\arrow[u] \arrow[d,"\cong"]\\
 \H^1(F,\Z/p^r)\arrow[r, "{(0, \epsilon)}"] & \H^1(F, W_r \Omega^1_{\log}(F^s) \oplus W_r \Omega^0_{\log}(F^s)) 
 \end{tikzcd}
\end{equation*}
in which the bottom horizontal map is induced by the canonical isomorphism $\epsilon\colon \Z/p^r \to W_r \Omega^0_{\log}(F^s)$. 
Again, this commutes by~\cite[formula after Corollary~1.4.1]{BK} and~\cite[\S1.2, Lemma~2 (2)]{K3}.
For $\chi \in \H^1_{p^r}(F)$, commutativity of this diagram gives $-\lambda_\pi(0,\chi) = \{\pi\} \cup \iota^1_{p^r}(\chi)$, proving (\ref{lambda-2}) and completing the proof of the lemma.
\end{proof}

\subsubsection{Change of ring}
We finish this section on $\lambda_\pi$ by describing how it behaves with respect to base change.

Let $K'/K$ be an extension of Henselian discrete valuation fields of characteristic zero with finite ramification index.
Let $\O_{K'}$ be the ring of integers of $K'$ and let $F'$ be the residue field.
Suppose that we have a commutative diagram
\[
\begin{CD}
A @>{\phi}>> A' \\
@AAA @AAA \\
\O_K @>>> \O_{K'}
\end{CD}
\]
where $A$ is smooth over $\O_K$ and $A'$ is smooth over $\O_{K'}$.
Let $R' = A'/\m_{K'}A'$ and let $i',j'$ be the inclusions of the special and generic fibres, respectively, of $\Spec A' \to \Spec \O_{K'}$.
As described in Section~\ref{sec:V}, there are natural maps $\phi_* \colon V^q_n(A) \to V^q_n(A')$.
Let $\bar{\phi} \colon R \to R'$ be the map on residue rings induced by $\phi$, and
$\bar{\phi}_* \colon \H^q_n(R) \to \H^q_n(R')$ and $\bar{\phi}_* \colon W_r \Omega^q_R \to W_r \Omega^q_{R'}$ the induced maps.
Let $\pi'$ be a uniformiser in $\O_{K'}$.

\begin{lemma}\label{lem:lambda-basechange}
In the situation described above, let $e$ be the ramification index of $K'/K$ and define $\bar{a} \in F'$ to be the reduction of $\phi(\pi)(\pi')^{-e}$.
Then, for all $q \ge 2$ and $r \ge 1$, the following diagram commutes:
\begin{equation}\label{eq:2.4}
\begin{tikzcd}[column sep=huge]
W_r \Omega^{q-1}_{R'} \oplus W_r \Omega^{q-2}_{R'} \arrow[r,"\delta_r"] &
\H^q_{p^r}(R') \oplus \H^{q-1}_{p^r}(R') \arrow[r,"\lambda_{\pi'}"] & V^q_{p^r}(A') \\
W_r \Omega^{q-1}_{R} \oplus W_r \Omega^{q-2}_{R} \arrow[r,"\delta_r"]
\arrow{u}[']{(\alpha,\beta) \mapsto (\bar{\phi}_*\alpha + \bar{\phi}_*\beta \wedge \dlog(\bar{a}), e \bar{\phi}_*\beta)}
& \H^q_{p^r}(R) \oplus \H^{q-1}_{p^r}(R) \arrow[r,"\lambda_{\pi}"] 
\arrow{u}[']{(\alpha,\beta) \mapsto (\bar{\phi}_*\alpha + \{ \bar{\phi}_*\beta, \bar{a} \}, e \bar{\phi}_*\beta)}
& V^q_{p^r}(A) \arrow[u,"\phi_*"']
\end{tikzcd}
\end{equation}
\end{lemma}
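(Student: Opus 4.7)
The plan is to treat the two squares of~\eqref{eq:2.4} separately. The right-hand square (involving $\lambda_\pi,\lambda_{\pi'}$) is essentially a Kummer-theoretic computation, while the left-hand square (involving $\delta_r$) follows from compatibility of the logarithmic de Rham--Witt presentation of $\H^q_{p^r}$ with symbols. The anchor for both arguments is the factorisation $\phi(\pi) = u \cdot (\pi')^e$ for some unit $u \in (A')^\times$ reducing to $\bar{a} \in F'$; inside Kummer cohomology this yields $\{\phi(\pi)\} = e\{\pi'\} + \{u\}$, which already foreshadows the two summands appearing in the vertical arrows.

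For the right-hand square, I would unpack the definition of $\lambda_\pi$ given in Section~\ref{sec:lambda}: it is the composite of the inverse of $\H^1(R,M^{q-1}_r) \xrightarrow{\sim} \H^1(R,\gr^0 M^{q-1}_r) = \H^q_{p^r}(R) \oplus \H^{q-1}_{p^r}(R)$ with the edge map of~\eqref{eq:infres1}. In the case $A = \O_K$ the second summand is given explicitly by cup product with the Kummer class $\{\pi\}$, and I would reduce the general case to this by Henselising along $\m_K A$ and using Gabber's isomorphism~\eqref{eq:HtoV}. Then, under $\phi_*$ applied to a class in the image of $\lambda_\pi$, the Kummer class $\{\pi\}$ is replaced by $\{\phi(\pi)\} = e\{\pi'\} + \{u\}$; since cup product with $\{u\}$ reduces to the symbol $\{-,\bar{a}\}$ on the residue ring, this yields exactly the prescribed map $(\alpha,\beta) \mapsto (\bar{\phi}_*\alpha + \{\bar{\phi}_*\beta,\bar{a}\},\, e\bar{\phi}_*\beta)$.

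For the left-hand square, the content reduces to the identity $\delta_r(\omega \wedge \dlog \bar{a}) = \{\delta_r(\omega),\bar{a}\}$ in $\H^q_{p^r}(R')$ for $\omega \in W_r\Omega^{q-1}_{R'}$, together with naturality of $\delta_r$ under $\bar{\phi}_*$ and the factor of $e$ in the second coordinate (the only place the ramification index enters directly). The symbol identity is standard for the logarithmic de Rham--Witt complex: it expresses compatibility of the presentation~\eqref{eq:F-1} with the cup-product structure coming from Kummer theory, and can be extracted from the properties of $\dlog$ in~\cite[I.3.23]{I}.

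The main obstacle will be the bookkeeping required to verify that $\phi_*$ on $V^q_{p^r}$ respects the Bloch--Kato filtration on $M^{q-1}_r$ in the correct way, so that one really is computing $\lambda_{\pi'}$ on the image rather than something that only becomes $\lambda_{\pi'}$ modulo a lower piece of the filtration. To handle this I would pass to the Henselisations of $A$ and $A'$ along $\m_K A$ and $\m_{K'}A'$, where the filtration agrees with the inflation filtration coming from Galois cohomology of the function fields, and appeal to the functoriality of the Bloch--Kato description of $M^{q-1}_r$ under flat maps of smooth $\O_K$-algebras. Once this compatibility is established, the two preceding steps fit together to give commutativity of the whole diagram~\eqref{eq:2.4}.
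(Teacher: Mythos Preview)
Your overall strategy and the treatment of the left-hand square are correct and match the paper: the identity $\delta_r(\omega \wedge \dlog\bar{a}) = \{\delta_r(\omega), \bar{a}\}$ is established exactly as you suggest, by checking that wedge product with $\dlog\bar{a}$ commutes with $C^{-1}-1$ in the sequence~\eqref{eq:F-1}.

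For the right-hand square, however, your proposed reduction to the case $A = \O_K$ via Henselisation does not work as stated: Henselising $A$ along $\m_K A$ yields $A^{(h)}$, which still has residue ring $R$, not $F$, so you never arrive at $\O_K$. More seriously, invoking the cup-product description of $\lambda_\pi$ (``the second summand is cup with $\{\pi\}$'') risks circularity: in the paper, the agreement of the sheaf-theoretic $\lambda_\pi$ of Section~\ref{sec:lambda} with Kato's original cup-product definition is precisely Lemma~\ref{lem:lambda-comparison}, which is proved \emph{after} and \emph{using} the present lemma. The paper instead works directly at the level of the sheaf $M^{q-1}_r = i^* \R^{q-1} j_* \Z/p^r(q-1)$: one checks from the definition that $\phi_*$ respects the Bloch--Kato filtration, and then computes the induced map on $\gr^0$ using the explicit symbol formula after~\cite[Corollary~1.4.1]{BK}. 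On a generator $\{\tilde{x}_1, \dotsc, \tilde{x}_{q-2}, \pi\}$ the key step is exactly your factorisation $\phi(\pi) = a(\pi')^e$, giving
\[
\phi_*\{\tilde{x}_1, \dotsc, \tilde{x}_{q-2}, \pi\} = \{\phi(\tilde{x}_1), \dotsc, \phi(\tilde{x}_{q-2}), a\} + e\{\phi(\tilde{x}_1), \dotsc, \phi(\tilde{x}_{q-2}), \pi'\},
\]
which under the Bloch--Kato isomorphism for $\gr^0$ yields the map $(\alpha,\beta) \mapsto (\bar{\phi}_*\alpha + \bar{\phi}_*\beta \wedge \dlog\bar{a},\, e\bar{\phi}_*\beta)$ on logarithmic de Rham--Witt forms. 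This is the same computation you have in mind, but carried out on $\gr^0(M^{q-1}_r)$ without the detour through Henselisation or any appeal to a cup-product formula for $\lambda_\pi$ that is not yet available.
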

\begin{proof}
We go through the steps of the construction of $\lambda_\pi$.
Let $g \colon \Spec R' \to \Spec R$ be the morphism corresponding to $\bar{\phi}$.
The natural map
\[
i^* \R j_* (\Z/n)(q-1) \to g_* (i')^* \R j'_* (\Z/n)(q-1)
\]
of sheaves on $R_\et$ induces a map between the sequences~\eqref{eq:infres1} for $A$ and $A'$.
The definition of the Bloch--Kato filtration on $M^{q-1}_r = i^* \R^{q-1} j_* (\Z/p^r)(q-1)$ shows that the map $\phi_*: M^{q-1}_{r,A} \to M^{q-1}_{r,A'}$ respects the filtration, so induces a map on $\gr^0$.

The sheaf $M^{q-1}_{r,A}$ is locally generated by symbols; we now explain what this means.  Let $t_1, \dotsc, t_{q-1}$ be local sections of $i^* j_* \Gm$.  The Kummer sequence allows us to push these forward into $M^1_{r,A}$, and the cup product of the resulting classes is a local section of $M^{q-1}_{r,A}$ which we denote by $\{ t_1, \dotsc, t_{q-1} \}$.
Bloch and Kato prove that the resulting ``symbols'' locally generate the sheaf $M^{q-1}_{r,A}$ in the \'etale topology.

The isomorphism $\gr^0(M_r^{q-1}) \to W_r\Omega^{q-1}_{R,\log} \oplus W_r \Omega^{q-2}_{R,\log}$
 of~\eqref{eq:gr0} is described in the formula after~\cite[Corollary~1.4.1]{BK}:
 if $x_1, \dotsc, x_{q-1}$ are local sections of $\Gm$ on $R_\et$ and $\tilde{x}_1, \dotsc, \tilde{x}_{q-1}$ are any lifts of the $x_i$ to $i^* (\Gm)_A$, then
we have
\begin{align*}
\{ \tilde{x}_1, \dotsc, \tilde{x}_{q-1} \} &\mapsto (\dlog x_1 \wedge \dotsb \wedge \dlog x_{q-1}, 0) \\
\{ \tilde{x}_1, \dotsc, \tilde{x}_{q-2}, \pi \} &\mapsto (0, \dlog x_1 \wedge \dotsb \wedge \dlog x_{q-2}).
\end{align*}
 
Working in $M^{q-1}_{r,A'}$ we have
\begin{align*}
\phi_* \{ \tilde{x}_1, \dotsc, \tilde{x}_{q-2}, \pi \}
&= \{ \phi(\tilde{x}_1), \dotsc, \phi(\tilde{x}_{q-2}), \phi(\pi) \} \\
&= \{ \phi(\tilde{x}_1), \dotsc, \phi(\tilde{x}_{q-2}), a (\pi')^e \} \\
&= \{ \phi(\tilde{x}_1), \dotsc, \phi(\tilde{x}_{q-2}), a \} + e \{ \phi(\tilde{x}_1), \dotsc, \phi(\tilde{x}_{q-2}), \pi' \}
\end{align*}
where $a=\phi(\pi)(\pi')^{-e}$.
Therefore the isomorphisms~\eqref{eq:gr0} for $A$ and $A'$ satisfy the following commutative diagram:
\[
\begin{CD}
\gr^0(M^{q-1}_{r,A'}) @>>> W_r \Omega^{q-1}_{R',\log} \oplus W_r \Omega^{q-2}_{R',\log} \\
@AAA @AA{(\alpha,\beta) \mapsto (\bar{\phi}_*\alpha + \bar{\phi}_*\beta \wedge \dlog\bar{a}, e \bar{\phi}_*\beta)}A \\
\gr^0(M^{q-1}_{r,A}) @>>> W_r \Omega^{q-1}_{R,\log} \oplus W_r \Omega^{q-2}_{R,\log} \\
\end{CD}
\]
The rest of the proof follows from Lemma~\ref{lem:delta-wedge}.
\end{proof}

\subsection{The refined Swan conductor}

Equipped with the map $\lambda_\pi$ for the ring $\O_K[T]$, we can now define Kato's refined Swan conductor. We follow the exposition in~\cite[4.5--4.6]{Borger}. 

Firstly, let us define the group in which the refined Swan conductor lives. 
To start with, consider the scheme $\Spec \O_K$ with the divisor $D$ given by the closed point.  The coherent sheaf $\Omega^1_{\O_K}(\log D)$ of differentials with log poles at $D$ is, concretely, the $\O_K$-submodule of $\Omega^1_K$ generated by $\Omega^1_{\O_K}$ together with $\dlog\pi = d\pi/\pi$ for any uniformiser $\pi$; this submodule is independent of the choice of $\pi$.  (See~\cite[\href{https://stacks.math.columbia.edu/tag/0FMU}{Section 0FMU}]{stacks-project} for more details on differentials with log poles.)
This sheaf fits into a short exact sequence
\[
0 \to \Omega^1_{\O_K} \to \Omega^1_{\O_K}(\log D) \xrightarrow{\res} \O_K \to 0,
\]
and a choice of uniformiser $\pi$ gives a splitting $a \mapsto a \dlog\pi$.
Now define $\omega^1_F$ to be the pullback of this coherent sheaf under the closed immersion $\Spec F \to \Spec \O_K$.  Concretely, we have $\omega^1_F = \Omega^1_{\O_K}(\log D) \otimes_{\O_K} F$.
(A more direct definition is that $\omega^1_F$ is the sheaf of absolute K\"ahler differentials on the log scheme obtained by equipping $\Spec F$ with the ``standard log point'' log structure coming from its embedding in $\Spec \O_K$, see \cite[1.1, 1.7]{KatoLog}.)
For $q\geq 1$, let $\omega^q_F$ denote the $q$th exterior power of $\omega^1_F$. The map $\Omega^{q-1}_F\to\omega^q_F$ given by $\eta\mapsto \eta\wedge \dlog \pi$ yields a splitting of the natural exact sequence
\[0\to\Omega^q_F\to \omega^q_F \xrightarrow{\res} \Omega^{q-1}_F\to 0.\]
The refined Swan conductor at level $n$ will be an element of $\m_K^{-n}\otimes_{\O_K} \omega^q_F$, which is also non-canonically isomorphic to $\Omega^q_F \oplus \Omega^{q-1}_F$.

From \eqref{eq:lambda-comp} applied to $A=\O_K[T]$, we have the map $\lambda_\pi \colon \Omega^q_{F[T]} \oplus \Omega^{q-1}_{F[T]} \to V^{q+1}(\O_K[T])$.
Kato~\cite[Theorem~5.1]{K} proves the following: if $\chi$ is an element of $\fil_n \H^q(K)$ for $n \ge 1$, then there exists a unique $(\alpha,\beta) \in \Omega^q_F \oplus \Omega^{q-1}_F$ such that
\begin{equation}\label{eq:rsw}
\{ \chi, 1 + \pi^n T \} = \lambda_\pi(T\alpha, T\beta) \text{ in } V^{q+1}(\O_K[T]).
\end{equation}
Note that here $\lambda_\pi$ really means $\lambda_\pi \circ \delta_1$, as in \eqref{eq:lambda-comp}.

\begin{definition}\label{def:rsw} 
Let $n \ge 1$.
Given $\chi \in \fil_n \H^q(K)$, the \emph{refined Swan conductor} of $\chi$ is
\[
\rsw_{n}(\chi) = \pi^{-n}(\alpha+\beta\wedge \dlog \pi ) \in \m_K^{-n}\otimes_{\O_K} \omega^q_F 
\]
where $\alpha,\beta$ are as in~\eqref{eq:rsw}. We will often write $[\alpha, \beta]_{\pi, n}$ as shorthand for $\pi^{-n}(\alpha+\beta\wedge \dlog \pi )$.
\end{definition}
For $n\geq 1$, the refined Swan conductor defines an injective homomorphism
\[\rsw_n: \fil_n \H^q(K)/\fil_{n-1} \H^q(K)\hookrightarrow \m_K^{-n}\otimes_{\O_K} \omega^q_F\]
as shown in \cite[Corollary~5.2]{K}.

\begin{remark} 
The pair $(\alpha, \beta)$ in~\eqref{eq:rsw} depends on the choice of uniformiser $\pi$; however,
$\rsw_n(\chi)$ is independent of the choice of $\pi$, as stated by Kato~\cite[Definition~5.3]{K}.
This motivates the choice of $\m_K^{-n}\otimes_{\O_K} \omega^q_F$ as the target group and the definition of $\rsw_n(\chi)$.
The precise dependence of $(\alpha,\beta)$ can also be seen as a consequence of Lemma~\ref{lem:rsw-basechange}.
\end{remark}

We now prove several auxiliary results about the refined Swan conductor.

\begin{lemma}\label{lem:rsw-basechange} 
Suppose $\Char K=0$.
Let $K'/K$ be a finite extension of Henselian discrete valuation fields of ramification index $e$. Let $\pi'$ be a uniformiser in $K'$, let $F'$ be the residue field of $K'$ and define $\bar{a} \in F'$ to be the reduction of $\pi(\pi')^{-e}$.  
Let $\chi$ be an element of $\fil_n \H^q(K)$, and let
\[
\res \colon \H^q(K) \to \H^q(K')
\]
be the restriction map.
Then $\res(\chi)$ lies in $\fil_{en} \H^q(K')$. Furthermore, for $n\geq 1$, setting $\rsw_{n}(\chi) = [\alpha, \beta]_{\pi, n}$ we have
\[
\rsw_{en}(\res(\chi)) = [\bar{a}^{-n}(\alpha + \beta \wedge \dlog\bar{a}), \bar{a}^{-n} e \beta]_{\pi', en}.
\]
\end{lemma}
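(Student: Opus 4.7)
The plan is to derive the formula by base-changing the defining equation~\eqref{eq:rsw} of the refined Swan conductor from $V^{q+1}(\O_K[T])$ to $V^{q+1}(\O_{K'}[T])$, applying Lemma~\ref{lem:lambda-basechange} to handle $\lambda_\pi$, and then rescaling the indeterminate so that the result appears in the standard form.

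First, I would record~\eqref{eq:rsw} for $\chi \in \fil_n \H^q(K)$ with $\rsw_n(\chi) = [\alpha, \beta]_{\pi, n}$:
\[
\{\chi, 1 + \pi^n T\} = \lambda_\pi(T\alpha, T\beta) \quad \text{in } V^{q+1}(\O_K[T]),
\]
and set $a := \pi (\pi')^{-e} \in \O_{K'}^\times$, whose reduction in $F'$ is $\bar a$. Then I would push both sides forward along $\phi \colon \O_K[T] \to \O_{K'}[T]$, $T \mapsto T$. By naturality of the cup product the left-hand side becomes $\{\res \chi, 1 + a^n (\pi')^{en} T\}$, and since the induced residue map $\bar\phi \colon F[T] \to F'[T]$ is the identity on $T$, Lemma~\ref{lem:lambda-basechange} transforms the right-hand side into $\lambda_{\pi'}\bigl(T(\alpha + \beta \wedge \dlog \bar a),\, eT \beta\bigr)$.

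Next, I would apply the $\O_{K'}$-algebra isomorphism $\psi \colon \O_{K'}[T] \xrightarrow{\sim} \O_{K'}[T']$, $T \mapsto a^{-n} T'$, which induces an isomorphism $\psi_* \colon V^{q+1}(\O_{K'}[T]) \xrightarrow{\sim} V^{q+1}(\O_{K'}[T'])$. The left-hand side becomes $\{\res \chi, 1 + (\pi')^{en} T'\}$, and by naturality of $\lambda_{\pi'}$ (the case $K'=K$, $e=1$ of Lemma~\ref{lem:lambda-basechange}) the right-hand side becomes
\[
\lambda_{\pi'}\bigl(T' \cdot \bar a^{-n}(\alpha + \beta \wedge \dlog \bar a),\; T' \cdot e \bar a^{-n} \beta\bigr),
\]
since the induced map on residue rings $F'[T] \to F'[T']$ sends $T$ to $\bar a^{-n} T'$. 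Finally, to verify that $\res \chi \in \fil_{en} \H^q(K')$, I would compose with the ring homomorphism $\O_{K'}[T'] \to \O_{K'}[T'']$, $T' \mapsto \pi' T''$: the left-hand side becomes $\{\res \chi, 1 + (\pi')^{en+1} T''\}$, while the right-hand side vanishes because $\pi' T''$ reduces to zero in $F'[T'']$ and $\lambda_{\pi'}(0,0) = 0$. By the uniqueness part of \cite[Theorem~5.1]{K} applied at level $en$, the previous displayed equation then identifies $\rsw_{en}(\res \chi)$ with the pair claimed in the lemma.

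The main technical point I anticipate is careful bookkeeping in the change-of-variable step: the isomorphism $\psi_*$ must be implemented at the level of $V^{q+1}(\O_{K'}[T])$ rather than after passing to residue fields, so that the unit $a^{-n} \in \O_{K'}^\times$ is reduced to $\bar a^{-n} \in F'^\times$ only when it enters the arguments of $\lambda_{\pi'}$. Once that is handled, everything else follows by routine manipulation of the symbols $\{\cdot, \cdot\}$ and of $\lambda$ via Lemma~\ref{lem:lambda-basechange}.
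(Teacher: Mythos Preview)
Your proof is correct and follows essentially the same route as the paper: push the defining equation through Lemma~\ref{lem:lambda-basechange}, then apply the change of variable $T \mapsto a^{\pm n} T$ (the paper uses $T \mapsto a^n T$, you use the inverse, which is immaterial). The only difference is that the paper obtains $\res(\chi) \in \fil_{en} \H^q(K')$ by citing~\cite[Proposition~6.3]{K}, whereas you extract it directly from the computation via the further substitution $T' \mapsto \pi' T''$; your argument is slightly more self-contained but otherwise equivalent.
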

\begin{proof}
That $\res(\chi)$ lies in $\fil_{en} \H^q(K')$ follows from the characterisation of $\fil_n$ given in~\cite[Proposition~6.3]{K}.
Lemma~\ref{lem:lambda-basechange} gives
\[
\lambda_{\pi'} ( T(\alpha + \beta \wedge \dlog\bar{a}), e T \beta )
= \{ \res{\chi}, 1 + \pi^n T \} = \{ \res{\chi}, 1 + (\pi')^{en} a^n T \},
\]
where $a = \pi(\pi')^{-e}$.
Applying Lemma~\ref{lem:lambda-basechange} a second time to the automorphism of $\O_K[T]$ defined by $T \mapsto a^n T$ proves the claimed formula.
\end{proof}

The following lemma is implicit in~\cite[Proposition~5.4]{K}, which is stated without proof.  For completeness, we provide a proof here.

\begin{lemma}\label{lem:dbna} 
Let $\chi$ be an element of $\fil_n \H^q(K)$ with $\rsw_{n}(\chi) = [\alpha,\beta]_{\pi, n}$.
Then $d\alpha=0$ and $d\beta = (-1)^q n\alpha$.
\end{lemma}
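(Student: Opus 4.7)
The strategy is to exploit the defining equation
\[
\{\chi,\,1+\pi^n T\}=\lambda_\pi(T\alpha,\,T\beta) \qquad\text{in } V^{q+1}(\O_K[T])
\]
by cupping with additional symbols and extracting the desired differential identities via comparison of coefficients.

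First I would pull back to the localisation $\O_K[T,T^{-1}]$, where $T$ becomes a unit (and in particular $-\pi^n T$ is a unit in the generic fibre of the Henselisation). Applying the Steinberg relation $\{1-x,x\}=0$ with $x=-\pi^n T$, and expanding $\{1+\pi^n T,\,-\pi^n T\}=0$, one obtains
\[
\{1+\pi^n T,\,-T\}\;+\;n\,\{1+\pi^n T,\,\pi\}\;=\;0.
\]
Cupping with $\chi$ and substituting~\eqref{eq:rsw} produces the identity
\[
\lambda_\pi(T\alpha,T\beta)\cup\{-T\}\;+\;n\,\lambda_\pi(T\alpha,T\beta)\cup\{\pi\}\;=\;0 \qquad\text{in } V^{q+2}(\O_K[T,T^{-1}]).
\]

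Next I would translate this identity back into a statement about differential forms, using explicit formulas for how $\lambda_\pi$ interacts with cup products. These formulas are implicit in the diagrams of the proof of Lemma~\ref{lem:lambda-comparison}: for a unit $u$ with reduction $\bar u$, cupping $\lambda_\pi(x,y)$ with $\{u\}$ corresponds, up to a sign depending on $q$, to wedging each coordinate with $\dlog\bar u$; cupping with $\{\pi\}$ shifts the first coordinate into the second (with a sign). Expanding $\{-T\}=\{-1\}+\{T\}$ and computing, the identity becomes an equation in $\Omega^{q+1}_{F[T,T^{-1}]}\oplus\Omega^q_{F[T,T^{-1}]}$, modulo the kernel of $\delta_r$ (that is, modulo the image of $C^{-1}-1$). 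By the uniqueness clause of Kato's Theorem~5.1, $\lambda_\pi$ is injective on the $T$-linear components of $\Omega^\bullet_{F[T,T^{-1}]}$, so coefficients may legitimately be compared: the component not involving $\dlog\pi$ yields $d\alpha=0$, while the $\dlog\pi$-component yields $d\beta=(-1)^q n\alpha$.

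The main obstacle is the meticulous sign-tracking in the cup-product formulas for $\lambda_\pi$, whose signs depend on $q$ and interact directly with the $(-1)^q$ in the conclusion; once the signs are pinned down, the identities fall out mechanically from comparison of coefficients. As a sanity check, the two equations together are precisely the condition that $\rsw_n(\chi)=\pi^{-n}(\alpha+\beta\wedge\dlog\pi)\in\m_K^{-n}\otimes_{\O_K}\omega^q_F$ be $d$-closed in the logarithmic de~Rham complex $\omega^\bullet_F$; this is a conceptually natural requirement, consistent with the uniformiser-independence of the refined Swan conductor (cf.\ Lemma~\ref{lem:rsw-basechange}).
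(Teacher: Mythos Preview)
Your approach is correct and is essentially the same as the paper's: the paper also cups the defining relation with $-\pi^n T$ via the Steinberg identity, computes the cup products with $T$, $\pi$, and $-1$ separately through explicit formulas for $\lambda_\pi$ (isolated as Lemma~\ref{lem:cupT}), and concludes by injectivity. The only minor differences are that the paper passes to the Henselisation of $\O_K[T]$ at $\m_K\O_K[T]$ (rather than $\O_K[T,T^{-1}]$) so that $T$ becomes a unit while the map from $V^{q+2}(\O_K[T])$ remains injective, and invokes~\cite[Lemma~3.8]{K} (rather than the uniqueness in Theorem~5.1) for the final comparison of $T$-linear components.
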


We first prove a lemma whose first part will be used in the proof of Lemma~\ref{lem:dbna} and whose second part will be used in the proof of Lemma~\ref{lem:rsw-same} below. We use $C$ to denote the Cartier operator.  Recall the definition of $\delta_1 \colon \Omega^q_R \to \H^{q+1}_p(R)$ from Definition~\ref{def:delta}.

\begin{lemma}\label{lem:cupT}
Let $R = F(T)$ and let $\alpha$ be an element of $\Omega^{q-1}_F$ for some $q \ge 2$. 
\begin{enumerate}
\item \label{Ta} We have $\delta_1(T d\alpha) =  (-1)^q \{\delta_1(T \alpha) , T \} \in \H^{q+1}_p(R).$ 
\item \label{Ca} Suppose $d\alpha=0$, so that $C(\alpha)$ is defined. Then $\delta_1(T C(\alpha)) = \delta_1(T^p \alpha)$.
\end{enumerate}
\end{lemma}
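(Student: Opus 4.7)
The plan is to deduce both identities from the Leibniz rule together with the characteristic property of $\delta_1$ coming from the exact sequence~\eqref{eq:F-1}, namely that $\delta_1$ annihilates the image of $C^{-1}-1$ and factors through $\Omega^\bullet_R/d\Omega^{\bullet-1}_R$.

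For~\eqref{Ta}, I would begin with the Leibniz identity
\[
d(T\alpha) = dT\wedge\alpha + T\,d\alpha \in \Omega^q_R.
\]
Since $\delta_1 \colon \Omega^q_R \to \H^{q+1}_p(R)$ factors through $\Omega^q_R/d\Omega^{q-1}_R$, the exact form $d(T\alpha)$ is killed, giving $\delta_1(T\,d\alpha) = -\delta_1(dT\wedge\alpha)$. Graded commutativity rewrites $dT\wedge\alpha = (-1)^{q-1}\alpha\wedge dT = (-1)^{q-1}(T\alpha)\wedge\dlog T$. The claim then follows from the compatibility
\[
\delta_1(\omega\wedge\dlog x) = \{\delta_1(\omega), x\} \qquad (\omega\in\Omega^{q-1}_R,\ x\in R^\times),
\]
which supplies the sign $(-1)^q = -(-1)^{q-1}$. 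To verify this compatibility, I would argue exactly as in the proof of Lemma~\ref{lem:lambda-basechange}: wedging with $\dlog x$ defines a morphism from the sequence~\eqref{eq:F-1} in degree $q-1$ to the sequence in degree $q$ and hence commutes with the connecting map $\delta_1$; on the log subsheaf, wedging with $\dlog x$ corresponds, under the Bloch--Kato--Gabber identification $\H^{q+1}_p(R)\cong \H^1(R_\et, \Omega^q_{R,\log})$, to cup product with the Kummer class $\{x\}\in \H^1(R,\Z/p(1))$.

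For~\eqref{Ca}, the calculation
\[
d(T^p\alpha) = pT^{p-1}dT\wedge\alpha + T^p\,d\alpha = 0
\]
in characteristic $p$ (using $d\alpha=0$) shows $T^p\alpha$ is closed, so both $C(T^p\alpha)$ and $C(\alpha)$ are defined. The Cartier operator satisfies $C(f^p\omega) = fC(\omega)$ on closed forms, giving $C(T^p\alpha) = T\,C(\alpha)$. Since $C^{-1}$ is (modulo exact forms) inverse to the Cartier isomorphism, this translates to
\[
C^{-1}(T\,C(\alpha)) \equiv T^p\alpha \pmod{d\Omega^{q-2}_R}.
\]
The exactness of~\eqref{eq:F-1} at the middle term, i.e.\ $\delta_1\circ(C^{-1}-1)=0$, then yields $\delta_1(T\,C(\alpha)) = \delta_1(T^p\alpha)$ as required.

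The main technical hurdle is the compatibility invoked in part~\eqref{Ta}, relating wedging with $\dlog x$ in $\Omega^\bullet_{R,\log}$ to cupping with the Kummer class of $x$ in $\H^\bullet_p(R)$; once this is in place, the rest reduces to short manipulations via the Leibniz rule, the Cartier operator, and the sequence~\eqref{eq:F-1}.
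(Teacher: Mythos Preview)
Your proposal is correct and follows essentially the same route as the paper's proof: Leibniz plus $\delta_1(d\,\cdot\,)=0$ for part~\eqref{Ta}, and $\delta_1\circ(C^{-1}-1)=0$ together with $C(T^p\alpha)=T\,C(\alpha)$ for part~\eqref{Ca}. The one difference is that the compatibility $\delta_1(\omega\wedge\dlog x)=\{\delta_1(\omega),x\}$ you propose to re-derive is already recorded as the last formula of~\cite[\S1.3]{K}, so the paper simply cites it rather than reproving it.
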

\begin{proof}
By~\cite[1.3.2]{K}, the subgroup $d \Omega^{q-1}_R$ is in the kernel of $\delta_1 \colon \Omega^q_R \to \H^{q+1}_p(R)$, so
\[
0 = \delta_1(d(T\alpha)) = \delta_1(T d\alpha) + \delta_1(dT \wedge \alpha)
\]
and therefore
\[
\delta_1(T d\alpha) = - \delta_1(dT \wedge \alpha) = (-1)^q \delta_1(T \alpha \wedge dT/T) = (-1)^q \{\delta_1(T \alpha) , T \}
\]
by the last formula of~\cite[\S1.3]{K}, proving~(\ref{Ta}). 

To prove~(\ref{Ca}), note firstly that $d(T^p \alpha) = T^p d\alpha = 0$, so that $C(T^p \alpha)$ is defined.
The image of $C^{-1}-1$ is in the kernel of $\delta_1$, so we have
\[
0 = \delta_1((C^{-1}-1)(C(T^p \alpha))) = \delta_1(T^p \alpha - C(T^p \alpha))
\]
and therefore
\[
\delta_1(T^p \alpha) = \delta_1(C(T^p \alpha)) = \delta_1(T C(\alpha)). \qedhere
\]
\end{proof}

\begin{proof}[Proof of Lemma~\ref{lem:dbna}] 
By definition of $\rsw_{n}$, we have $\{ \chi, 1+\pi^n T \} = \lambda_\pi(T\alpha, T\beta)$ in $V^{q+1}(\O_K[T])$.
We would like to take the cup product with $-\pi^n T$, but as this is not a unit in $K[T]$ we first have to pass to a larger ring.
Let $A$ be the Henselisation of the localisation of $\O_K[T]$ at the ideal $\m_K 
\O_K[T]$. 
By~\cite[1.8.1]{K}, the natural map $V^{q+2}(\O_K[T]) \to V^{q+2}(A) = \H^{q+2}(A \otimes_{\O_K} K)$ is injective. Working in $V^{q+2}(A)$, we compute
\begin{align}
0 &= \{ \chi, 1+\pi^n T, - \pi^n T \} \notag \\
&= \{ \lambda_\pi(T\alpha, T\beta), - \pi^n T \} \notag \\
&= \{ \lambda_\pi(T\alpha, T\beta), T \} + n \{ \lambda_\pi(T\alpha, T\beta), \pi \} + \{ \lambda_\pi(T\alpha, T\beta), -1 \}. \label{threebits}
\end{align}
The last term in~\eqref{threebits} is zero; this follows from Lemma~\ref{lem:delta-wedge} and $\dlog(-1)=0$.
Let $\iota_p^q \colon \H^q_p(F(T)) \to V^q(A)$ be the canonical lifting map of Section~\ref{sec:iota-local}, which is the first component of $\lambda_\pi$.
By Lemma~\ref{lem:cupT}\eqref{Ta} and the definition~\eqref{eq:lambda-def}, the first term of~\eqref{threebits} is 
\begin{align*}
\{ \lambda_\pi(T\alpha,T\beta), T \}
&= \{ \iota^{q+1}_p(\delta_1(T\alpha)),T \} + \{ \iota^q_p(\delta_1(T\beta)),\pi,T \} \\
&= \{ \iota^{q+1}_p(\delta_1(T\alpha)),T \} - \{ \iota^q_p(\delta_1(T\beta)),T,\pi \} \\
&= \iota^{q+2}_p \{ \delta_1(T\alpha),T \} - \{ \iota^{q+1}_p \{ \delta_1(T\beta),T\} ,\pi \} \\
&= (-1)^{q+1} \big( \iota^{q+2}_p(\delta_1(Td\alpha)) + \{ \iota^{q+1}_p(\delta_1(Td\beta)), \pi \} \big) \\
&= (-1)^{q+1} \lambda_\pi(Td\alpha, T d\beta).
\end{align*}

For the middle term of~\eqref{threebits} we have
\[
\{ \lambda_\pi(T\alpha, T\beta), \pi \} = \{ \iota_p^{q+1}(\delta_1(T\alpha)), \pi \} + \{ \iota_p^q(\delta_1(T\beta)), \pi, \pi \} \\
= \lambda_\pi(0, T\alpha),
\]
because again $\{ \iota_p^q(\delta_1(T\beta)),-1\}=0$ and $\{ \pi, -\pi \} = 0$.
This produces
\[
(-1)^{q+1} \lambda_\pi(Td\alpha, Td\beta) + n \lambda_\pi(0, T\alpha) = 0,
\]
in $V^{q+2}(A)$ and therefore also in $V^{q+2}(\O_K[T])$.
The result now follows from the injectivity of $\lambda_\pi$ and~\cite[Lemma~3.8]{K}.
\end{proof}

We conclude this subsection with a description of the refined Swan conductor of $p\chi$ when $\sw(\chi)$ is sufficiently large, in the $\Char(K)=0$ setting.

\begin{lemma}\label{lem:rsw-same} 
Suppose $\Char(K)=0$, let $e$ be the absolute ramification index of $K$, and set $e'=ep/(p-1)$. Let $u=p/\pi^e$ and let $\bar{u}\in F^\times$ be its reduction.
Let $\chi\in \H^q(K)$ have $\sw(\chi)=n>0$ and write $\rsw_{n}(\chi)=[\alpha,\beta]_{\pi, n}$.
\begin{enumerate}
\item \label{swdown} Suppose $n\geq e'-1$. Then $p\chi\in\fil_{n-e}\H^q(K)$. 
\item \label{rswsame} Suppose $n> e'$. Then $\rsw_{n-e}(p\chi)=[\bar{u}\alpha,\bar{u}\beta]_{\pi,n-e}$ and consequently, $\sw(p\chi)=n-e$.
\item\label{rswe'} Suppose $n=e'$. Then $d\alpha=d\beta=0$ and \[\rsw_{e'-e}(p\chi)=\Bigl[\bar{u}\alpha + C(\alpha),\bar{u}\beta+C(\beta)\Bigr]_{\pi,e'-e}.\] 
\end{enumerate}
\end{lemma}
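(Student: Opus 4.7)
My plan is to compute $\{p\chi, 1+\pi^m T\} = \{\chi, (1+\pi^m T)^p\}$ in $V^{q+1}(\O_K[T])$ via the binomial expansion, choosing $m = n-e+1$ for part~(1) and $m = n-e$ for parts~(2) and~(3), and to compare the outcome with the characterisation of $\rsw$ given in~\eqref{eq:rsw}. Expand $(1+\pi^m T)^p = 1 + u\pi^{m+e}T + \sum_{i=2}^{p-1}\binom{p}{i}\pi^{im}T^i + \pi^{pm}T^p$. Each middle binomial coefficient has $p$-adic valuation one, so the $i$-th middle summand has $\pi$-valuation $e + im$, while the extremes have valuations $m+e$ and $pm$. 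A direct valuation check, using $e' = ep/(p-1)$, then governs which terms survive modulo $\pi^{n+1}$.

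For~(1), take $m = n-e+1$. Under $n \ge e'-1$, every non-constant term of $(1+\pi^{n-e+1}T)^p$ has $\pi$-valuation $\ge n+1$, so $(1+\pi^{n-e+1}T)^p = 1 + \pi^{n+1}h(T)$ for some $h \in \O_K[T]$. Pulling back the relation $\{\chi, 1+\pi^{n+1}T\} = 0$ (which holds by $\chi \in \fil_n$) along the $\O_K$-algebra map $T \mapsto h(T)$ yields $\{p\chi, 1+\pi^{n-e+1}T\} = 0$, establishing $p\chi \in \fil_{n-e}\H^q(K)$.

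For~(2), take $m = n-e$. When $n > e'$, the $i=1$ term has valuation exactly $n$ and all other terms have valuation $\ge n+1$, so $(1+\pi^{n-e}T)^p = (1+u\pi^n T) \cdot g$ with $g \in 1 + \pi^{n+1}A^{(h)}$, where $A^{(h)}$ is the Henselisation of $\O_K[T]$ along $\pi$. In $A^{(h)}$ the factor $1+u\pi^n T$ becomes a unit; since $V^{q+1}(\O_K[T]) \to V^{q+1}(A^{(h)})$ is an isomorphism, the cup product splits, and the contribution of $g$ vanishes exactly as in~(1). The contribution of $1+u\pi^n T$ is computed by applying Lemma~\ref{lem:lambda-basechange} to the $\O_K$-algebra endomorphism $\phi\colon T \mapsto uT$ of $\O_K[T]$: since $\phi(\pi)=\pi$, the $\dlog\bar a$ correction vanishes, giving $\{\chi, 1+u\pi^n T\} = \phi_*\{\chi,1+\pi^n T\} = \lambda_\pi(T(\bar u\alpha), T(\bar u\beta))$. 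This identifies $\rsw_{n-e}(p\chi) = [\bar u\alpha, \bar u\beta]_{\pi,n-e}$, which is nonzero because $\bar u$ is a unit and $(\alpha,\beta)\neq (0,0)$, so $\sw(p\chi) = n-e$ exactly.

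For~(3), take $m = n-e = e'-e$. Now $p(n-e) = n$, so the $T^p$ term also survives, while the middle-term valuation inequality still holds (this uses $(p-1)\mid e$, which is forced by $n = e' \in \Z$). Thus $(1+\pi^{n-e}T)^p = (1+\pi^n(uT+T^p))\cdot g$ with $g \in 1+\pi^{n+1}A^{(h)}$, and the factor $1+\pi^n(uT+T^p)$ is the image of $1+\pi^n T$ under the $\O_K$-algebra endomorphism $\phi\colon T \mapsto uT + T^p$. Lemma~\ref{lem:lambda-basechange} yields $\phi_*\lambda_\pi(T\alpha,T\beta) = \lambda_\pi((\bar u T + T^p)\alpha,(\bar u T + T^p)\beta)$. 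Since $n=e'\equiv 0 \pmod p$, Lemma~\ref{lem:dbna} forces $d\alpha=0$ and $d\beta=0$, so $C(\alpha)$ and $C(\beta)$ are defined, and Lemma~\ref{lem:cupT}\eqref{Ca} gives $\delta_1(T^p\alpha) = \delta_1(TC(\alpha))$ (and similarly for $\beta$). Translating to $\lambda_\pi$ produces $\rsw_{e'-e}(p\chi) = [\bar u\alpha + C(\alpha), \bar u\beta + C(\beta)]_{\pi,e'-e}$.

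The main technical obstacle is legitimising the cup-product manipulations with the non-global-unit $1+u\pi^n T$ (and $1+\pi^n(uT+T^p)$): this is why one must pass to $A^{(h)}$, which is invisible at the level of $V^{q+1}$. A secondary concern is that Lemma~\ref{lem:cupT}\eqref{Ca} is stated at the level of $\delta_1 \colon \Omega^q_R \to \H^{q+1}_p(R)$; its promotion to the equality $\lambda_\pi(T^p\alpha, 0) = \lambda_\pi(TC(\alpha),0)$ in $V^{q+1}_p(\O_K[T])$ (and similarly in the second coordinate) is however immediate from the factorisation of $\lambda_\pi$ through $\delta_1$.
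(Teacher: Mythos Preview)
Your proof is correct and follows essentially the same binomial-expansion strategy as the paper. The only cosmetic difference is in how you handle $\{\chi,1+\pi^n T''\}$ for parts~(2)--(3): the paper substitutes $T\mapsto T''=\big((1+\pi^{n-e}T)^p-1\big)/\pi^n\in\O_K[T]$ in one step and invokes~\cite[6.3.1]{K} to obtain $\lambda_\pi(\bar T''\alpha,\bar T''\beta)$, whereas you factor multiplicatively in $A^{(h)}$ and then apply Lemma~\ref{lem:lambda-basechange} to the simpler substitution $T\mapsto uT$ (resp.\ $T\mapsto uT+T^p$). Your route is slightly longer but has the virtue of being self-contained; note, however, that the detour through $A^{(h)}$ is avoidable, since Lemma~\ref{lem:lambda-basechange} applies equally well to the endomorphism $T\mapsto T''$ of $\O_K[T]$ directly.
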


\begin{proof}
To prove~\eqref{swdown}, let $m=n-e$.
Then
\begin{align*}
\{p\chi, 1+\pi^{m+1}T\} &= \{\chi, (1+\pi^{m+1}T)^{p}\}\\
&= \{\chi, 1+\pi^{n+1}T'\}
\end{align*}
where $T'=\frac{(1+\pi^{m+1}T)^{p}-1}{\pi^{n+1}}=a_1T+a_2T^2+\dots + a_p T^p$ with $a_1,\dots,a_p\in \O_K$. Therefore, $T'\in\O_K[T]$. Since $\sw(\chi)=n$, we have $\{\chi, 1+\pi^{n+1}T'\}=0$ in $V^{q+1}(\O_K[T])$, by the functoriality of $V^{q+1}(\cdot )$. Therefore, $\sw(p\chi)\leq m=n-e$, which completes the proof of~\eqref{swdown}.

Now we move on to prove~\eqref{rswsame} and~\eqref{rswe'}. We are assuming that $n\in\Z$ satisfies $n\geq e'>e$, whereby $n-e\geq 1$. Similarly to the calculation above, we have
\begin{equation}\label{eq:rsw2}
 \{p\chi, 1+\pi^{n-e}T\}
= \{\chi, 1+\pi^{n}T''\}
 \end{equation}
 where $T''=\frac{(1+\pi^{n-e}T)^{p}-1}{\pi^{n}}=b_1T+b_2T^2+\dots + b_p T^p$ with $b_1=p/\pi^e\in \O_K^\times$ and $b_2,\dots,b_{p-1}\in\pi\O_K$. If $n>e'$ then $b_p\in\pi\O_K$; if $n=e'$ then $b_p=1$. 
Now~\cite[6.3.1]{K} gives
\begin{equation}\label{eq:lambda}
\{ \chi, 1+\pi^n T'' \} =
\lambda_\pi(\alpha \bar{b}_1 T, \beta \bar{b}_1 T ) +
\lambda_\pi(\alpha \bar{b}_p T^p, \beta \bar{b}_p T^p).
\end{equation}

If $n>e'$ then $\bar{b}_p=0$. Thus,~\eqref{rswsame} follows from~\eqref{eq:rsw2} and~\eqref{eq:lambda}.
Finally, suppose $n=e'$ and therefore $b_p=1$. Since $n=ep/(p-1)\in\Z$, we have $p\mid n$, whereby Lemma~\ref{lem:dbna} yields $d\alpha=d\beta=0$. Now Lemma~\ref{lem:cupT}\eqref{Ca} shows that $\lambda_\pi(\alpha T^p,\beta T^p)=\lambda_\pi(C(\alpha)T, C(\beta)T)$,
giving \eqref{rswe'}.
\end{proof}

\begin{remark}\label{remark:ea0} In the context of Lemma~\ref{lem:rsw-same}, suppose $\beta\wedge d\bar{u}=0$ as happens, for example, if $d\bar{u}=0$.
One perhaps surprising consequence of Lemma~\ref{lem:rsw-same} in this setting is that for $\chi\in \H^q(K)$ with $\sw(\chi)=n>e'$ and $\rsw_{n}(\chi)=[\alpha,\beta]_{\pi, n}$, we have $e\alpha=0$, where $e$ is the absolute ramification index of $K$. This can be seen by applying Lemma~\ref{lem:dbna} twice: applying it to $\chi$ yields $d\beta=(-1)^q n\alpha$, and applying it to $p\chi$ yields $\bar{u}d\beta=(-1)^q(n-e)\bar{u}\alpha$.
For $\chi\in \H^q(K)$ with $\sw(\chi)=e'$, we obtain the more complicated formula $d(C(\beta))=(-1)^{q+1}e(\bar{u}\alpha+C(\alpha))$, which reduces to $e\alpha=0$ if $\alpha$ and $\beta$ are exact.   
\end{remark}

\subsection{The residue map $\partial$}\label{sec:residue}

Let $n \ge 1$.
By Proposition~\ref{prop:kato61}, the image of
\[
\lambda_\pi \colon \H^q_{n}(F) \oplus \H^{q-1}_{n}(F) \to \H^q_{n}(K) = \Br K[n]
\]
coincides with $\fil_0 \H^q_{n}(K)$.

\begin{definition}\label{def:residue}
Define a homomorphism 
\[
\partial \colon \fil_0 \H^q_{n}(K) \to \H^{q-1}_{n}(F)
\]
to be the inverse of $\lambda_\pi$ followed by projection onto the second factor, c.f.~\cite[\S 7.5]{K}. Its inductive limit is a homomorphism $\fil_0 \H^q(K) \to \H^{q-1}(F)$, which we also denote by $\partial$. We will refer to $\partial$ as the \emph{residue map}.
\end{definition} 

\subsection{Comparison with the classical residue map on the Brauer group}

For a Henselian discrete valuation field $K$ of characteristic zero with perfect residue field, there is a standard definition of a residue map on $\Br K$, as in for example~\cite[\S XII.3]{Serre},~\cite[\S 1.1]{CTSD}, or~\cite[\S1.4.3]{CTSbook}, where it is called the Witt residue. In our setting, this definition carries over unchanged to define a residue map
\[
\partial' \colon \Br(K_{nr}/K) \to \H^1(F ,\Q/\Z).
\]
We will now recall this definition and verify that it is compatible with ours. For the rest of this section, assume $\Char K=0$.  First note that $\Br(K_{nr}/K)=\fil_0 \H^2(K)$, as follows from Proposition~\ref{prop:kato61}(1) and~\cite[p.~35]{CTSbook}. 

Let $\delta \colon \H^1(F,\Q/\Z) \to \H^2(F,\Z)$ be the connecting map coming from the short exact sequence $0 \to \Z \to \Q \to \Q/\Z \to 0$ of Galois modules.
It is an isomorphism.
Let $\partial'$ be the composite map
\[
\H^2(K_{nr}/K, K_{nr}^\times) \xrightarrow{v} \H^2(K_{nr}/K, \Z) \cong \H^2(F,\Z) \xleftarrow{\delta} \H^1(F,\Q/\Z),
\]
where $v \colon K_{nr}^\times \to \Z$ is the valuation.
Let $A$ be the ring of integers in $K$, and let $\iota'$ be the composite of the natural maps
\[
\Br F \xleftarrow{\sim} \Br A \to \Br(K_{nr}/K).
\]
By the same argument as~\cite[\S XII.3, Theorem~2]{Serre} and the remark following it, the sequence
\begin{equation}\label{eq:residue}
0 \to \Br F \xrightarrow{\iota'} \Br(K_{nr}/K) \xrightarrow{\partial'} \H^1(F,\Q/\Z) \to 0
\end{equation}
is exact.

To state the following proposition, we make use of the exact triangle~\eqref{eq:kummer}
of complexes of sheaves on the \'etale site of any field, for any $n \ge 1$.
Recall also the ``canonical lifting'' map $\iota_n^2 \colon \H^2_n(F) \to \fil_0 \H^2_n(K)$, which is the first component of $\lambda_\pi$ (see Section~\ref{sec:lambda-dvr}).

\begin{proposition}\label{prop:residues}
For any integer $n \ge 1$, the following diagram commutes:
\begin{equation}\label{eq:residues}
\begin{CD}
\H^2_n(F) @>{\iota^2_n}>> \fil_0 \H^2_n(K) @>\partial>> \H^1_n(F) \\
@VVV @VVV @VVV \\
\Br F @>{\iota'}>> \Br(K_{nr}/K) @>{\partial'}>> \H^1(F,\Q/\Z)
\end{CD}.
\end{equation}
Here the two left-hand vertical maps come from the triangle~\eqref{eq:kummer}, and the right-hand one from the natural inclusion $\Z/n \to \Q/\Z$.
\end{proposition}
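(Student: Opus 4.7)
The plan is to verify the two squares of~\eqref{eq:residues} separately: the left square by naturality of the Kummer triangle~\eqref{eq:kummer} under inflation, and the right square by reducing to an explicit computation on a generating set of $\fil_0\H^2(K,\Z/n(1))$ provided by $\lambda_\pi$ via Kato's~\cite[Proposition~6.1(1)]{K}.

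For the left square, note that by construction (see Section~\ref{sec:lambda}), in the case $A=\O_K$ the canonical lifting map $\iota_n^2$ arises as the edge map in the inflation-restriction sequence~\eqref{eq:infres1} for $\Gal(K_{nr}/K)\cong G_F$ acting on $\H^2(K,\Z/n(1))$. On the other hand, $\iota'$ is the composition $\Br F\xleftarrow{\sim}\Br\O_K\to\Br K$ of the purity isomorphism with restriction to the generic fibre, which under the same Galois-theoretic identification coincides with inflation from $G_F$ into $\Gal(K^{\mathrm{sep}}/K)$. Commutativity of the left square therefore reduces to the compatibility of inflation with the connecting map of the Kummer triangle~\eqref{eq:kummer}, which is a formal consequence of the functoriality of~\eqref{eq:kummer} with respect to the morphism $\Spec K\to\Spec\O_K$, combined with the Hensel/Gabber isomorphisms $\H^\ast(\O_K,\Gm)\cong\H^\ast(F,\Gm)$ and $\H^\ast(\O_K,\Z/n(1))\cong\H^\ast(F,\Z/n(1))$.

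For the right square, Kato's~\cite[Proposition~6.1(1)]{K} identifies $\fil_0\H^2(K,\Z/n(1))$ with the image of $\lambda_\pi$; by diagrams~\eqref{eq:lambdacd2} and~\eqref{eq:lambdacd3} every element has the form
\[
\lambda_\pi(\alpha,\chi)=\iota_n^2(\alpha)\pm\{\pi\}\cup\inf(\chi),
\]
with $\alpha\in\H^2(F,\Z/n(1))$, $\chi\in\H^1(F,\Z/n)$, and $\inf(\chi)\in\H^1(K,\Z/n)$ the inflation. By linearity it suffices to check commutativity on each summand. On $\iota_n^2(\alpha)$, we have $\partial\circ\iota_n^2=0$ by the definition of $\partial$ in Section~\ref{sec:residue} and $\partial'\circ\iota'=0$ by exactness of~\eqref{eq:residue}, so commutativity of the left square settles this case. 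On $\{\pi\}\cup\inf(\chi)$, the definition of $\partial$ gives $\partial(\{\pi\}\cup\inf(\chi))=\chi$, so it remains to verify
\[
\partial'\bigl(\{\pi\}\cup\inf(\chi)\bigr)=\iota(\chi)\in\H^1(F,\Q/\Z),
\]
where $\iota\colon\Z/n\hookrightarrow\Q/\Z$ is the natural inclusion. This is the classical Witt-residue computation for a symbol algebra: representing $\{\pi\}\cup\inf(\chi)\in\H^2(K_{nr}/K,K_{nr}^\times)$ by a cocycle $(\sigma,\tau)\mapsto\pi^{c(\sigma,\tau)}$ where $c$ represents $\iota(\chi)$, applying $v$ yields the class of $c$ in $\H^2(G_F,\Z)=\H^2(K_{nr}/K,\Z)$, which equals $\delta(\iota(\chi))$, and applying $\delta^{-1}$ recovers $\iota(\chi)$; see for instance~\cite[\S1.4.3]{CTSbook} or~\cite[\S XII.3]{Serre}. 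The main obstacle will be the careful bookkeeping for the left square, matching Kato's edge-map construction of $\iota_n^2$ to the purity/restriction description of $\iota'$.
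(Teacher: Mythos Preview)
Your overall strategy --- decompose $\fil_0 \H^2(K,\Z/n(1))$ via $\lambda_\pi$ into the $\iota_n^2(\alpha)$ and $\{\iota_n^1(\chi),\pi\}$ components and check each square on each component --- is the same as the paper's. Your right-square argument on the $\{\pi\}$-component is correct and matches the paper's use of the section $\chi\mapsto\delta\chi\cup\pi$ of $\partial'$; the cocycle computation you sketch is what the paper packages as Lemma~\ref{lem:cup-boundary}.

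The gap is in your left-square argument when $p\mid n$. You invoke ``functoriality of the Kummer triangle~\eqref{eq:kummer} with respect to $\Spec K\to\Spec\O_K$'' together with an isomorphism ``$\H^*(\O_K,\Z/n(1))\cong\H^*(F,\Z/n(1))$'', but the triangle~\eqref{eq:kummer} is only set up over fields, and when $p\mid n$ there is no evident sheaf $\Z/n(1)$ on $\Spec\O_K$ restricting both to $\mmu_n$ on $K$ and to $W_r\Omega^1_{\log}[-1]$ on $F$. You can take the cone of multiplication by $n$ on $\Gm$ over $\O_K$ and use $\H^*(\O_K,\Gm)\cong\H^*(F,\Gm)$ to get an isomorphism on hypercohomology, but then you must still identify the induced map $\H^2(F,\Z/n(1))\to\H^2(K,\mmu_n)$ with Kato's $\iota_n^2$ --- and $\iota_n^2$ is defined not by pullback from $\O_K$ but via the Bloch--Kato symbol embedding $W_r\Omega^1_{\log}(F^s)\hookrightarrow\H^1(K_{nr},\mmu_{p^r})$ followed by the Hochschild--Serre edge map. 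That identification is exactly the content you flag at the end as ``the main obstacle'', and your proposed functoriality argument does not supply it.

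The paper closes this gap by a different route: it reduces to generators $\{\chi,\bar a\}$ of $\H^2_{p^r}(F)$ (with $\chi\in\H^1(F,\Z/p^r)$ and $\bar a\in F^\times$) and applies the identity $t(\chi\cup u(a))=\delta\chi\cup a$ of Lemma~\ref{lem:cup-boundary}, once over $F$ and once over $K$, to show directly that both paths around the left square send $\{\chi,\bar a\}$ to $\delta\chi\cup a$ in $\Br(K_{nr}/K)$. The same lemma also handles the right square, so the argument is uniform across both.
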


We first prove a lemma on cup products.

\begin{lemma}\label{lem:cup-boundary}
Let $L$ be a field, and let $n$ be a positive integer.  Let $u \colon L^\times \to \H^1(L,(\Z/n)(1))$ and $t \colon \H^2(L,(\Z/n)(1)) \to \Br L$ be the maps coming from the triangle~\eqref{eq:kummer}.
Let $\delta \colon \H^1(L,\Z/n) \to \H^2(L,\Z)$ be the connecting map coming from the short exact sequence $0 \to \Z \to \Z \to \Z/n \to 0$ of Galois modules.
For $\chi \in \H^1(L,\Z/n)$ and $a \in L^\times$, we have $\delta\chi \cup a = t(\chi \cup u(a))$.
\end{lemma}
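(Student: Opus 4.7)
My plan is to prove this by a direct 2-cocycle computation in Galois cohomology; the statement is an instance of the standard compatibility between cup products and connecting homomorphisms of short exact sequences of coefficients.

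For the left-hand side, I would choose a 1-cocycle representative $\chi \colon \Gal(L^{\mathrm{sep}}/L) \to \Z/n$ and lift it to an integer-valued 1-cochain $\tilde\chi$. The quantity $c(g_1,g_2) := \tilde\chi(g_1) + \tilde\chi(g_2) - \tilde\chi(g_1g_2)$ takes values in $n\Z$, and $(g_1,g_2) \mapsto c(g_1,g_2)/n$ represents $\delta\chi \in \H^2(L,\Z)$. Cupping with $a \in L^\times = \H^0(L,\Gm)$ then produces $(g_1,g_2) \mapsto a^{c(g_1,g_2)/n}$ as a cocycle representative of $\delta\chi \cup a \in \Br L$.

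For the right-hand side, I would fix $b \in L^{\mathrm{sep}}$ with $b^n = a$. Then $u(a) \in \H^1(L,\mu_n)$ is represented by the Kummer cocycle $g \mapsto g(b)/b$; using the pairing $\Z/n \otimes \mu_n \to \mu_n$, the cup product $\chi \cup u(a) \in \H^2(L,\mu_n)$ is represented by the 2-cocycle built from $\tilde\chi$ and $g \mapsto g(b)/b$ in the standard way. Since $t$ is induced by the inclusion $\mu_n \hookrightarrow \Gm$, $t(\chi \cup u(a)) \in \Br L$ is represented by the same 2-cocycle viewed with values in $\Gm$.

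The remaining step is to exhibit an explicit coboundary relating the two 2-cocycles in $\Br L$. The natural candidate is the 1-cochain $f \colon g \mapsto b^{\tilde\chi(g)}$ valued in $(L^{\mathrm{sep}})^\times$; computing $df$, and using both $g(b) = (g(b)/b)\cdot b$ and $b^n = a$, splits the result as a product of a factor of the form $(g(b)/b)^{\tilde\chi(\cdot)}$ (from the Galois action on $b$) and a factor of the form $a^{c(g_1,g_2)/n}$ (from $b^n = a$), exhibiting the two cocycles as cohomologous. The only real obstacle is bookkeeping: the cup product and coboundary sign conventions must be chosen consistently throughout, but once that is done the verification is a one-line manipulation.
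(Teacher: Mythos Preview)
Your cocycle argument is fine when $n$ is invertible in $L$, and indeed this case is exactly what the paper cites from~\cite[proof of Proposition~4.7.1]{GS}. The gap is that the lemma is stated for an arbitrary field $L$ and an arbitrary positive integer $n$, with $\Z/n(1)$ defined as in the paper's notation section: when $L$ has characteristic $p>0$ and $n=p^r$, the object $\Z/p^r(1)$ is \emph{not} $\mmu_{p^r}$ but rather $W_r\Omega^1_{\log}[-1]$, and the triangle~\eqref{eq:kummer} is the sequence
\[
0 \to (L^s)^\times \xrightarrow{p^r} (L^s)^\times \xrightarrow{\dlog} W_r\Omega^1_{L^s,\log} \to 0.
\]
In this situation your step ``fix $b \in L^{\mathrm{sep}}$ with $b^n=a$'' fails outright: $x^{p^r}-a$ is purely inseparable, so such a $b$ exists in $L^{\mathrm{sep}}$ only when $a$ is already a $p^r$th power in $L$. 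Thus the Kummer-cocycle computation you propose simply does not apply, and this is precisely the case needed later (the lemma is applied to the residue field $F$ of characteristic $p$).

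The paper handles $n=p^r$ by observing that the displayed sequence is isomorphic to $(0 \to \Z \to \Z \to \Z/p^r \to 0) \otimes_\Z (L^s)^\times$, so that $t$ is identified with the connecting map for this tensored sequence; the identity then reduces to the general compatibility of cup product with boundary maps~\cite[Proposition~3.4.8]{GS}. If you want to retain your explicit style, you could instead compute directly with the $\dlog$ description of $u$ and the boundary map $t$, but you must abandon the Kummer cocycle.
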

Note that this definition of $\delta$ agrees with the previous one when $\H^1(L,\Z/n)$ is considered as a subgroup of $\H^1(L,\Q/\Z)$.
\begin{proof}
It suffices to prove the lemma separately for $n$ invertible in $L$, and for $n=p^r$ where $p>0$ is the characteristic of $L$ and $r \ge 1$.  For $n$ invertible in $L$, we have $(\Z/n)(1) = \mmu_n$, the triangle~\eqref{eq:kummer} is the Kummer sequence, and the lemma is proved in~\cite[proof of Proposition~4.7.1]{GS} .

For $n = p^r$, let $L^s$ be a separable closure of $L$.  The triangle~\eqref{eq:kummer} is the short exact sequence
\[
0 \to (L^s)^\times \xrightarrow{p^r} (L^s)^\times \xrightarrow{\dlog} W_r \Omega^1_{L^s,\log} \to 0
\]
of Galois modules, $u$ is the map $\dlog \colon L^\times \to W_r \Omega^1_{L,\log}$, and $t$ is the boundary map $\H^1(L, W_r \Omega^1_{L^s,\log}) \to \H^2(L,(L^s)^\times)$.
Note that the above sequence is isomorphic to that obtained by taking the short exact sequence $0 \to \Z \to \Z \to \Z/p^r \to 0$ and forming the tensor product with $(L^s)^\times$.  The result then follows from~\cite[Proposition~3.4.8]{GS}.
\end{proof}

\begin{proof}[Proof of Proposition~\ref{prop:residues}]
We first express $\iota'$ in terms of Galois cohomology.
The strict Henselisation $A^\textrm{sh}$ is the ring of integers in $K_{nr}$ and has residue field $F^s$, a separable closure of $F$.
The Hochschild--Serre spectral sequence, together with $\Pic(A^\textrm{sh}) = \Br(A^\textrm{sh}) = 0$ \cite[Corollary~IV.1.7]{M}, gives an isomorphism 
\[
\H^2(K_{nr}/K, (A^\textrm{sh})^\times) \cong \Br A,
\]
compatible with the usual isomorphisms
\[
\H^2(F,(F^s)^\times) \cong \Br F \quad \text{and} \quad \H^2(K_{nr}/K, K_{nr}^\times) \cong \Br(K_{nr}/K).
\]
So $\iota'$ is identified with the composite
\[
\H^2(F,(F^s)^\times) \xleftarrow{\sim} \H^2(K_{nr}/K, (A^\textrm{sh})^\times) \to \H^2(K_{nr}/K, K_{nr}^\times).
\]

Both rows of the diagram~\eqref{eq:residues} are split exact sequences:
the map $\chi \mapsto \{ \iota^1_n(\chi), \pi \}$ is (by definition) a section of $\partial$; and
the map $\chi \mapsto \delta\chi \cup \pi$ is a section of $\partial'$.
(Here we identify the absolute Galois group of $F$ with $\Gal(K_{nr}/K)$.)
It is therefore enough to show that the following diagram commutes:
\[
\begin{CD}
\H^2_n(F) @>{\iota^2_n}>> \fil_0 \H^2_n(K) @<{\chi \mapsto \{ \iota^1_n(\chi), \pi \}}<< \H^1_n(F) \\
@VVV @VVV @VVV \\
\H^2(F,(F^s)^\times) @>{\iota'}>> \H^2(K_{nr}/K, K_{nr}^\times) @<{\chi \mapsto \delta\chi \cup \pi}<< \H^1(F,\Q/\Z)
\end{CD}.
\]
That the right-hand square commutes follows from Lemma~\ref{lem:cup-boundary} applied to $K$.  (Note that $\iota^1_n$ is simply the identification of Galois groups just mentioned)

For the left-hand square, $\iota^2_n$ is defined separately in Section~\ref{sec:lambda-dvr} for $n$ invertible in $F$, and for $n=p^r$.
If $n$ is invertible in $F$, then the commutativity follows immediately from the definition and the Kummer sequence on $A$.
For $n=p^r$,
it suffices to prove it for elements $\{ \chi, \bar{a} \}$ where $\chi \in \H^1(F,\Z/n)$ and $\bar{a} \in F^\times$.  By definition, we have $\{ \chi, \bar{a} \} = \chi \cup u(\bar{a})$, so Lemma~\ref{lem:cup-boundary} shows that the image of this element in $\H^2(F,(F^s)^\times)$ is equal to $\delta\chi \cup \bar{a}$; applying $\iota'$ gives $\delta\chi \cup a$, where $a \in A^\times$ is a lift of $\bar{a}$ and we have as before identified $\Gal(F^s/F)$ with $\Gal(K_{nr}/K)$.  On the other hand, first applying $\iota^2_n$ gives $\{ \iota^1_n(\chi), a \} = \chi \cup u(a)$ and Lemma~\ref{lem:cup-boundary} again shows that the image in $\H^2(K_{nr}/K, K_{nr}^\times)$ is $\delta\chi \cup a$, as desired.
\end{proof}

\section{The tame part}\label{sec:tame}

We return to the situation of the introduction.  Let $k$ be a finite extension of $\Q_p$ with ring of integers $\O_k$, uniformiser $\pi$ and residue field $\F$.  Let $X/k$ be a smooth, geometrically irreducible variety over $k$, and let $\X$ be a smooth $\O_k$-model of $X$ having geometrically irreducible special fibre $Y$.
Denote by $K$ the function field of $X$ and by $F$ the function field of $Y$.
Let $\Kh$ be the field of fractions of a Henselisation of the discrete valuation ring $\O_{\X,Y}$.

The natural map $\Br X \to \Br \Kh$ allows us to pull back Kato's definition of the Swan conductor, and the associated filtration, to $\Br X$.
In this section we look at the smallest piece $\fil_0 \Br X$ of Kato's filtration on $\Br X$.
By Proposition~\ref{prop:kato61} and~\cite[Corollaire~1.3]{GIII}, this is the same as the subgroup of $\Br X$ consisting of those elements whose image in $\Br \Kh$ is split by an unramified extension of $\Kh$.
Equivalently, such an element is split by a finite extension $L/K$, where $L$ is the field of fractions of a discrete valuation ring \'etale over $\O_K = \O_{\X,Y}$.
To see this equivalence, note that the maximal unramified extension $\Kh_{nr}$ of $\Kh$ is the field of fractions of a strict Henselisation of $\O_K$, and therefore is the colimit of all such extensions $L/K$. Recall the residue map $\partial$ defined in Definition~\ref{def:residue}. We will denote the composition $\fil_0 \Br X \to \fil_0 \H^2(\Kh) \xrightarrow{\partial} \H^1(F)$ also by $\partial$.
Recall that $\partial\colon \Br k \to \H^1(\F,\Q/\Z)$ is an isomorphism, by a standard calculation of local class field theory.
The main result of this section is the following.

\begin{proposition}\label{prop:tame}
\begin{enumerate}
\item\label{residue} If $\A\in\fil_0 \Br X$, then $\partial(\A)\in\H^1(Y,\Q/\Z) \subset \H^1(F,\Q/\Z)$.
\item\label{tame} Let $P \in \X(\O_k)$ reduce to a point $P_0 \in Y(\F)$.  Then the following diagram commutes:
\[
\begin{CD}
\fil_0 \Br X @>{\partial}>> \H^1(Y,\Q/\Z) \\
@V{P^*}VV @VV{P_0^*}V \\
\Br k @>{\partial}>{\cong}> \H^1(\F,\Q/\Z)
\end{CD} .
\]
\end{enumerate}
\end{proposition}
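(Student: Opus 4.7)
For \ref{residue}, the plan is to show that $\partial\A$ is unramified at every irreducible divisor of $Y$; smoothness of $Y$ over $\F$ together with purity will then force $\partial\A \in \H^1(Y,\Q/\Z)$. Fix such a divisor $V$ and pass to the $2$-dimensional regular local ring $R = \O_{\X,V}$, whose height~$1$ primes are $(\pi)$ (corresponding to $Y$) together with primes coming from irreducible divisors of $\X$ through $V$ distinct from $Y$; since $Y$ is the only component of the special fibre, the latter divisors all have their generic points in $X$. Because $\A \in \Br X$ and $\X$ is regular, purity for the Brauer group~\cite{Kestutis} yields vanishing of the residue of $\A$ at every height~$1$ prime of $R$ other than $(\pi)$. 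Identifying $\partial$ with the classical Witt residue via Proposition~\ref{prop:residues}, zero-composability of the Gersten/Kato residue complex on the regular local ring $R$ then gives $\partial_V(\partial\A) = 0$, which is the required unramifiedness at $V$.

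For \ref{tame}, the plan is again to exploit Proposition~\ref{prop:residues} to reduce to the classical Witt residue $\partial'$, which is manifestly natural in morphisms of Henselian pairs. I would reduce to $n$-torsion and use~\cite[Proposition~6.1(1)]{K} to write each $\A \in \fil_0\Br X[n]$ in the form $\iota^2_n(\alpha) + \{\iota^1_n(\chi),\pi\}$ in $\fil_0\H^2(\Kh,\Z/n(1))$, so that by definition $\partial\A = \chi$. The section $P$ induces a local ring homomorphism $\phi\colon\O_{\X,P_0}^h \to \O_k$ sending $\pi$ to $\pi$, and under $\phi$ the canonical lifting maps $\iota^1_n$, $\iota^2_n$ together with cup product by $\pi$ are natural by construction (compare the base-change calculation of Lemma~\ref{lem:lambda-basechange} with ramification index $1$ and trivial $\bar a$). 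This would give $P^*\A = \iota^2_n(P_0^*\alpha) + \{\iota^1_n(P_0^*\chi),\pi\}$ in $\Br k$, whereupon Proposition~\ref{prop:residues} applied to the pair $(k,\F)$ yields $\partial(P^*\A) = P_0^*\chi = P_0^*(\partial\A)$.

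The main technical obstacle lies in \ref{tame}: the point $P$ does not give a morphism of Henselian DVRs $\O_{\X,Y}^h \to \O_k$, so the naturality of $\iota^1_n$, $\iota^2_n$ cannot be read off directly from their definitions in \S\ref{sec:lambda}. I expect to handle this by routing through the $2$-dimensional Henselian regular local ring $A = \O_{\X,P_0}^h$, using the Henselisation of its localisation at $(\pi)$ as the intermediate DVR: this DVR admits compatible maps from $\O_{\X,Y}^h$ and into $\O_k$ preserving the uniformiser, and the constructions of $\iota^i_n$ and the cup product extend to it in the manner of Section~\ref{sec:Kato}, making the desired naturality precise.
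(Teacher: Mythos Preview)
For~\ref{tame}, your plan is right in outline and matches the paper's, but your proposed fix does not work: the section $P$ gives $\O_{\X,P_0}^h \to \O_k$, but this does \emph{not} factor through the localisation at $(\pi)$, since any regular parameter $x$ at $P_0$ transverse to $Y$ lies outside $(\pi)$ yet has $P^*(x) \in \m_k$. Hence there is no ring map from your intermediate DVR into $\O_k$. The paper's resolution is to work with the groups $V^q_{p^r}(A)$ of \S\ref{sec:V}--\ref{sec:lambda} for smooth $\O_k$-algebras $A$, where $\lambda_\pi$ is defined and Lemma~\ref{lem:lambda-basechange} gives its functoriality for any morphism of such algebras; applying this directly to $P \colon A \to \O_k$, with $A$ an affine neighbourhood of $P_0$, yields~\ref{tame}.

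For~\ref{residue}, your Gersten-type argument is fine for prime-to-$p$ torsion, and the paper simply cites classical results there. For $p^r$-torsion it fails: in Kato's conventions $\H^0_{p^r}(\kappa(V)) = \H^0(\kappa(V), \Z/p^r(-1)) = 0$ in characteristic $p$, so any residue map $\partial_V \colon \H^1_{p^r}(F) \to \H^0_{p^r}(\kappa(V))$ is identically zero and cannot detect wild ramification. What is actually needed is $\sw_V(\partial\A) = 0$; this can be extracted by the method of Lemma~\ref{lem:new7.1} (choose an auxiliary height-one prime $\p_2 \ne (\pi)$ in $\O_{\X,V}$ and conclude $\sw_{\bar\p_2}(\partial\A) \le \sw_{\p_2}(\A) = 0$), but that argument uses the top-degree Kato complex after a reduction, not the degree-$2$ complex you invoke. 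The paper instead argues sheaf-theoretically: Lemma~\ref{lem:integral}, via the injectivity of Lemma~\ref{lem:tauinj}, shows that if $\A$ comes from $V^2_{p^r}(A)$ for an affine open $\Spec A \subset \X$, then $\partial(\A)$ already lies in $\H^1_{p^r}(A/\m_k A)$.
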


The following corollary is immediate.

\begin{corollary}\label{cor:tame}
\begin{enumerate}[(i)]
\item For $\A \in \fil_0 \Br X$, the evaluation map $\evmap{\A} \colon \X(\O_k) \to \Br k$ depends only on $\partial(\A)$.
\item \label{constantmodp} For $\A \in \fil_0 \Br X$ and $P \in X(k)$ reducing to a smooth point $P_0 \in Y(\F)$, the evaluation $\A(P)$ depends only on $P_0$.
\end{enumerate}
\end{corollary}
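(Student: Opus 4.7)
The plan is to deduce both parts directly from the commutative square in Proposition~\ref{prop:tame}(\ref{tame}), combined with the fact (a standard consequence of local class field theory, already recalled just before the statement) that the lower horizontal arrow $\partial \colon \Br k \to \H^1(\F,\Q/\Z)$ is an isomorphism. Unwinding the diagram, for any $P \in \X(\O_k)$ with reduction $P_0 \in Y(\F)$ and any $\A \in \fil_0 \Br X$, one reads off the formula
\[
\evmap{\A}(P) \;=\; \partial^{-1}\bigl(P_0^*(\partial \A)\bigr) \in \Br k,
\]
and both assertions fall out of this single identity.

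For part~(i), I would fix $\A$ and observe that the right-hand side depends on $\A$ only through $\partial \A \in \H^1(Y,\Q/\Z)$; hence the whole function $\evmap{\A} \colon \X(\O_k) \to \Br k$ is determined by $\partial \A$.

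For part~(ii), I would start with $P \in X(k)$ whose reduction is a smooth point $P_0 \in Y(\F)$, and first promote $P$ to an $\O_k$-point of $\X$: since $\X/\O_k$ is smooth and $\O_k$ is Henselian, Hensel's lemma applied at the smooth point $P_0$ provides the (unique) extension. With $\A$ fixed, the displayed formula then makes it clear that $\A(P)$ depends only on $P_0$.

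I do not foresee any obstacle: Proposition~\ref{prop:tame} does all the substantive work, and what remains is purely formal manipulation of a commutative square whose lower edge is an isomorphism, plus the standard lifting of smooth points via Hensel's lemma.
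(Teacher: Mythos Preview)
Your argument is correct and is exactly what the paper intends: the corollary is stated as ``immediate'' from Proposition~\ref{prop:tame}, and your unwinding of the commutative square together with the isomorphism $\partial \colon \Br k \xrightarrow{\sim} \H^1(\F,\Q/\Z)$ is precisely that immediate deduction. One small remark: in part~(ii) the phrase ``$P$ reduces to $P_0$'' already means that $P$ extends to an $\O_k$-point of $\X$ with special fibre $P_0$ (uniquely, by separatedness), so the appeal to Hensel's lemma is unnecessary---but this does not affect the validity of your argument.
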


Proposition~\ref{prop:tame} will be used in the proof of Theorem~\ref{thm:onelemma_intro}, in combination with the following lemma. We use $\frac{1}{p}$ to denote the map $ \H^1(\F,\Z/p)\to\H^1(\F,\Q/\Z) $ induced by identifying $\Z/p$ with the $p$-torsion in $\Q/\Z$.

\begin{lemma}\label{lem:inv_trace}
Let $\delta_1 \colon \F \to \H^1(\F,\Z/p)$ be the Artin--Schreier map. Suppose that $A \in \Br k[p]$ satisfies $\partial(A) =\frac{1}{p} \delta_1(x)$, with $x \in \F$.
Then $\inv(A) = \frac{1}{p}\Tr_{\F/\F_p}(x)$.
\end{lemma}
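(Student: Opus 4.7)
The plan is to reduce the lemma to a standard computation in local class field theory by combining three ingredients.

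First, I would use Proposition~\ref{prop:residues} to translate Kato's residue $\partial$ into the classical Witt residue $\partial'$.  Since $\partial(A) = \frac{1}{p}\delta_1(x) \in \H^1(\F,\Z/p)$ is nonzero only after embedding into $\H^1(\F,\Q/\Z)$, the class $A$ lies in $\fil_0\Br k = \Br(k^{nr}/k)$, and Proposition~\ref{prop:residues} identifies $\partial'(A)$ with the image of $\frac{1}{p}\delta_1(x)$ in $\H^1(\F,\Q/\Z)$.

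Second, I would invoke the standard description of $\inv$ on the unramified part of $\Br k$: the composition
\[
\fil_0\Br k \xrightarrow{\partial'} \H^1(\F,\Q/\Z) \xrightarrow{\ \mathrm{ev}_{\mathrm{Frob}}\ } \Q/\Z
\]
coincides with the invariant map, where $\mathrm{ev}_{\mathrm{Frob}}$ is evaluation at the arithmetic Frobenius of $\Gal(\F^s/\F)$.  This is classical (see e.g.\ Serre, \emph{Local Fields}, Chapter~XIII), noting that $\partial'$ was defined precisely via the canonical isomorphism $\H^2(\Gal(k^{nr}/k),\Z) \cong \H^1(\F,\Q/\Z)$ used to normalise $\inv$.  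Hence $\inv(A) = \partial'(A)(\mathrm{Frob})$.

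Third, I would compute $\delta_1(x)(\mathrm{Frob})$ directly from the Artin--Schreier sequence $0 \to \F_p \to \overline{\F} \xrightarrow{F-1} \overline{\F} \to 0$: writing $|\F| = p^n$ and choosing $y \in \overline{\F}$ with $y^p - y = x$, the connecting homomorphism sends $x$ to the cocycle $\sigma \mapsto \sigma(y)-y$.  Iterating $y^p = y+x$ gives $y^{p^n} = y + \sum_{i=0}^{n-1} x^{p^i}$, so $\mathrm{Frob}(y)-y = \Tr_{\F/\F_p}(x)$.  Combining the three steps yields $\inv(A) = \frac{1}{p}\Tr_{\F/\F_p}(x)$.

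The only delicate point is ensuring that the sign and normalisation conventions used by Kato in defining $\partial$ via $\lambda_\pi$ agree with the conventions of local class field theory that underlie $\inv = \mathrm{ev}_{\mathrm{Frob}} \circ \partial'$; this compatibility is exactly what Proposition~\ref{prop:residues} is designed to supply, so no additional bookkeeping should be needed.
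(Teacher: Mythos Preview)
Your proposal is correct and follows essentially the same route as the paper: identify $\inv(A)$ with evaluation of the residue at Frobenius, then compute $\delta_1(x)(\mathrm{Frob}) = \Tr_{\F/\F_p}(x)$ from the Artin--Schreier description. The paper's proof is slightly more terse---it writes ``by definition, $\inv(A)$ is obtained by evaluating $\partial(A)$ at the $q$-power Frobenius'' without explicitly routing through Proposition~\ref{prop:residues}---but your added care about the $\partial$ versus $\partial'$ compatibility is harmless and arguably clearer.
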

\begin{proof}
Let $q=p^r$ be the cardinality of $\F$.
By definition, $\inv(A)$ is obtained by evaluating $\partial(A) \in \H^1(\F,\Q/\Z) = \Hom(\Gal(\bar{\F}/\F), \Q/\Z)$ at the $q$-power Frobenius element $\Frob \in \Gal(\bar{\F}/\F)$.
On the other hand, the $1$-cocycle $\delta_1(x)$ is defined as follows: let $y \in \bar{\F}$ be such that $y^p-y=x$; then, for $\sigma \in \Gal(\bar{\F}/\F)$, we define $\delta_1(x)(\sigma) = \sigma(y)-y \in \Z/p$.
Combining these definitions gives
\[
\inv(A) = \frac{1}{p}( \Frob(y)-y)
=\frac{1}{p}\Bigl(  (y^{p^r} - y^{p^{r-1}}) + \dotsb + (y^p-y)\Bigr)
= \frac{1}{p}\Tr_{\F/\F_p}(x).
\qedhere
\]
\end{proof}

In Section~\ref{sec:tame_p} we prove Proposition~\ref{prop:tame} for $\A\in \Br X[p^r]$. The result for Brauer group elements of order prime to $p$ follows from comparison with the classical residue map (see Proposition~\ref{prop:residues}) and well-known properties of the latter: see for example~\cite[Proposition~5.1]{bad}, together with~\cite[Theorem~1.4.14 and Theorem~2.3.5]{CTSbook} for comparing the various different residue maps.

\subsection{Evaluation of tame elements of $p$-power order}\label{sec:tame_p}

We first prove a lemma.

\begin{lemma}\label{lem:tauinj} Let $i\colon Y\to \X$ be the inclusion of the special fibre and let $j\colon X\to\X$ be the inclusion of the generic fibre. Let $\tau \colon \Spec F \to Y$ be the inclusion of the generic point.  Let $q,r \ge 1$.  Then the map
\[
i^* \R^q j_* (\Z/p^r)(q-1) \to \tau_* \tau^* i^* \R^q j_* (\Z/p^r)(q-1)
\]
of sheaves on $Y$ is injective.
\end{lemma}
\begin{proof}
We use induction on $r$.  For the case $r=1$, it suffices to prove the statement after adjoining a $p$th root of unity to the base field $k$, and then this is~\cite[Proposition~6.1(i)]{BK}.

For any $q,m$, the sheaf $\tau^* i^* \R^q j_* (\Z/p^r)(m)$ on $F_\et$ is the sheaf corresponding to the $\Gal(F^s/F)$-module $\H^q(\Kh_{nr}, (\Z/p^r)(m))$.
Consider the long exact sequence in cohomology on $\Kh_{nr}$ coming from the short exact sequence
\begin{equation}\label{eq:pr}
0 \to \Z/p^{r-1} \to \Z/p^r \to \Z/p \to 0
\end{equation}
of Galois modules.
We have a commutative diagram
\[
\begin{CD}
\K_{q-1}(\Kh_{nr}) @= \K_{q-1}(\Kh_{nr}) \\
@VVV @VVV \\
\H^{q-1}(\Kh_{nr}, (\Z/p^r)(q-1)) @>>> \H^{q-1}(\Kh_{nr}, (\Z/p)(q-1))
\end{CD}
\]
in which the vertical maps are the Galois symbols, which are surjective by~\cite[\S 5]{BK}; this shows that the bottom map is surjective.
It follows that the long exact sequence of cohomology of~\eqref{eq:pr} gives
\[
0 \to \H^q(\Kh_{nr}, (\Z/p^{r-1})(q-1)) \to \H^q(\Kh_{nr}, (\Z/p^r)(q-1)) \to \H^q(\Kh_{nr}, (\Z/p)(q-1)).
\]
Consider this as a sequence of sheaves on $F_\et$.
Applying $\tau_*$ gives the bottom row of the following commutative diagram of sheaves on $Y$.
\[
{\small
\begin{tikzcd}[column sep = tiny]
& i^* \R^q j_* (\Z/p^{r-1})(q-1) \arrow[r] \arrow[d]& i^* \R^q j_* (\Z/p^r)(q-1) \arrow[r]\arrow[d] & i^* \R^q j_* (\Z/p)(q-1)\arrow[d] \\
0 \arrow[r]&  \tau_* \tau^* i^* \R^q j_* (\Z/p^{r-1})(q-1) \arrow[r] & \tau_* \tau^* i^* \R^q j_* (\Z/p^r)(q-1) \arrow[r] & \tau_* \tau^* i^* \R^q j_* (\Z/p)(q-1)
\end{tikzcd}
}
\]
By induction, the two outer vertical maps are injective, and therefore the middle one is as well.
\end{proof}

To prove Proposition~\ref{prop:tame}(\ref{residue}),
we will prove a result for general $q$, in the case that $\X$ is affine.

\begin{lemma}\label{lem:integral}
Suppose that $\X = \Spec A$ is affine, and define $R = A / \m_k A$.
Let $r \ge 1$ and $q \ge 2$.
Let $\chi$ be an element of $\fil_0 \H^q_{p^r}(\Kh)$, whereby $\chi = \lambda_\pi(\alpha,\beta)$ for a unique $(\alpha,\beta) \in \H^q_{p^r}(F) \oplus \H^{q-1}_{p^r}(F)$.
If $\chi$ lies in the image of $V^q_{p^r}(A)$, then $(\alpha,\beta)$ lies in the image of $\H^q_{p^r}(R) \oplus \H^{q-1}_{p^r}(R)$.
\end{lemma}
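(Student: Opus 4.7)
The plan is to use the functoriality of $V^q_{p^r}(\cdot)$ under the natural $\O_k$-algebra homomorphism $\phi \colon A \to \O_{\Kh}$, where $\O_{\Kh}$ is the ring of integers of $\Kh$, so as to reduce the question to a diagram chase on the short exact sequences~\eqref{eq:infres1}. Both $R$ and $F$ have $p$-cohomological dimension at most one (the former is invoked in Section~\ref{sec:lambda}; the latter is classical for any field of characteristic $p$), so the relevant spectral sequence degenerates for both rings, producing a commutative diagram
\[
\begin{CD}
0 @>>> \H^1(R_\et, M^{q-1}_{r,A}) @>>> V^q_{p^r}(A) @>>> \H^0(R_\et, N^q_A) @>>> 0 \\
@. @VVV @VVV @VVV \\
0 @>>> \H^1(F_\et, M^{q-1}_{r,\O_{\Kh}}) @>>> \H^q_{p^r}(\Kh) @>>> \H^0(F_\et, N^q_{\O_{\Kh}}) @>>> 0,
\end{CD}
\]
where $N^q := i^* \R^q j_* \Z/p^r(q-1)$ and the middle identification on the bottom row comes from Gabber's isomorphism~\eqref{eq:HtoV}.

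By the construction of $\lambda_\pi$ in Section~\ref{sec:lambda}, the element $\chi = \lambda_\pi(\alpha,\beta)$ lies in the image of $\H^1(F_\et, M^{q-1}_{r,\O_{\Kh}})$ inside $\H^q_{p^r}(\Kh)$, and so projects to zero in $\H^0(F_\et, N^q_{\O_{\Kh}})$. Given a lift $\tilde\chi \in V^q_{p^r}(A)$ of $\chi$, its image $\bar\chi$ in $\H^0(R_\et, N^q_A)$ therefore maps to zero in $\H^0(F_\et, N^q_{\O_{\Kh}})$. The right-hand vertical arrow is induced by the adjunction $N^q_A \to \tau_* \tau^* N^q_A$ (with $\tau^* N^q_A = N^q_{\O_\Kh}$), which is injective on sheaves by Lemma~\ref{lem:tauinj} and hence injective on global sections. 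Thus $\bar\chi = 0$, and by exactness of the top row $\tilde\chi$ lifts to some $(\alpha',\beta') \in \H^1(R_\et, M^{q-1}_{r,A}) \cong \H^q_{p^r}(R) \oplus \H^{q-1}_{p^r}(R)$. One last diagram chase completes the proof: the image of $(\alpha',\beta')$ in $\H^q_{p^r}(F) \oplus \H^{q-1}_{p^r}(F)$ maps under $\lambda_\pi$ to $\chi = \lambda_\pi(\alpha,\beta)$, and the injectivity of $\lambda_\pi$ (see~\cite[\S1.4]{K} and Lemma~\ref{lem:lambda-comparison}) forces these two preimages to agree.

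The main technical point will be checking that the filtration-theoretic construction of $\lambda_\pi$ and the Bloch--Kato filtration on $M^{q-1}_r$ are compatible under $\phi_*$, so that the diagram chase on $\H^1(-, M^{q-1}_r)$ really does translate to a diagram chase on $\H^q_{p^r}(-) \oplus \H^{q-1}_{p^r}(-)$. Because the Bloch--Kato filtration is defined symbolically, this is formal; the identification of the spectral-sequence $\lambda_\pi$ on the (non-smooth) ring $\O_{\Kh}$ with Kato's original construction from~\cite[\S1.4]{K} is provided by Lemma~\ref{lem:lambda-comparison}.
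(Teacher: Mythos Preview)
Your proof is correct and follows essentially the same approach as the paper: set up the commutative ladder of short exact sequences~\eqref{eq:infres1} for $A$ and for $\O_{\Kh}$, invoke Lemma~\ref{lem:tauinj} for injectivity of the right-hand vertical map, and diagram-chase using the injectivity of $\lambda_\pi$. The paper's write-up is terser (it simply asserts that injectivity of the right-hand vertical map suffices), while you spell out the chase and the compatibility of $\lambda_\pi$ with $\phi_*$ more explicitly, but the substance is identical.
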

\begin{proof}
Let $\lambda_\pi \colon \H^q_{p^r}(R) \oplus \H^{q-1}_{p^r}(R) \to V^q_{p^r}(A)$ be the map defined in Definition~\ref{def:lambda}.
The sequences~\eqref{eq:infres1} give a commutative diagram as follows.
\begin{equation}\label{eq:Psi}
\begin{tikzcd}
0 \arrow[r]& \H^q_{p^r}(R) \oplus \H^{q-1}_{p^r}(R)\ar[d, "a"] \arrow[r, "{\lambda_\pi}"]& V^q_{p^r}(A)\ar[d, "b"] \arrow[r]& \H^0(R, i^* \R^q j_* (\Z/{p^r})(q-1))\ar[d, "c"] \\
0 \arrow[r]& \H^q_{p^r}(F) \oplus \H^{q-1}_{p^r}(F) \arrow[r, "{\lambda_\pi}"]&  \H^q_{p^r}(\Kh) \arrow[r, "{\res}"]& \H^0(F, \H^q_{p^r}(\Kh_{nr})) 
\end{tikzcd}
\end{equation}
By assumption $\chi = \lambda_\pi(\alpha,\beta)$ lies in the image of $b$.
To show that $(\alpha,\beta)$ lies in the image of $a$, it is enough to prove that $c$ is injective; but this follows from Lemma~\ref{lem:tauinj}.
\end{proof}

\begin{proof}[Proof of Proposition~\ref{prop:tame}]
We first prove part (\ref{residue}). 
Let $\A$ lie in $\fil_0 \Br X[p^r]$; then $\partial(\A)$ lies in $\H^1(F, \Z/p^r)$ and we must show that it actually lies in $\H^1(Y, \Z/p^r)$.
By~\cite[Corollaire~I.10.3]{SGA1}, this subgroup consists of all classes in $\H^1(F, \Z/p^r)$ such that the corresponding torsor is unramified on $Y$;
this condition may be checked on an affine cover of $Y$.
Let $\Spec A$ be any affine open subset of $\X$ that meets $Y$.
Lift $\A$ (using the Kummer sequence) to $\H^2(X,(\Z/p^r)(1))$.
Looking at~\eqref{eq:HtoV} for the morphisms $\Spec\O_{\X,Y} \to \Spec A \to \X$ shows that
the map $\H^2(X,(\Z/p^r)(1)) \to \H^2(\Kh, (\Z/p^r)(1)) = V^2_{p^r}(\O_{\X,Y})$  factors through $V^2_{p^r}(A)$,
and so Lemma~\ref{lem:integral} shows that $\partial(\A)$ lies in $\H^1(\Spec(A/\m_k A), \Z/p^r)$.
The affine schemes $\Spec(A/\m_k A)$ arising in this way cover $Y$, proving the statement.

Part (\ref{tame}) now follows easily from Lemma~\ref{lem:lambda-basechange}.
\end{proof}

We conclude this section with an alternative description of the kernel of $\partial$. 

The natural map $\Br K \to \Br \Kh$ allows us to extend the definition of the classical residue map $\partial'$ to $\Br(\Kh_{nr}/K) = \fil_0 \Br K$. The following lemma is a generalisation of a result of~\cite[\S 1.1]{CTSD} to the case of imperfect residue field.
\begin{lemma}\label{lem:unramd}
The kernel of $\partial' \colon \fil_0 \Br K \to \H^1(F,\Q/\Z)$ coincides with the image of $\Br \O_{\X,Y} \to \Br K$.
\end{lemma}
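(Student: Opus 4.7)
The plan is to establish both inclusions by pulling back to the Henselisation $\O_{\X,Y}^h$ of $\O_{\X,Y}$ and combining~\eqref{eq:residue} with absolute purity for the Brauer group on the regular scheme $\Spec \O_{\X,Y}$ (\cite{Kestutis}).

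For the forward inclusion, I would factor $\Br \O_{\X,Y} \to \Br K$ through $\Br \O_{\X,Y}^h \to \Br \Kh$. Since $\O_{\X,Y}^h$ is a Henselian local ring with residue field $F$, the natural map $\Br \O_{\X,Y}^h \to \Br F$ is an isomorphism (a standard fact for Henselian local rings, essentially Milne~\cite{M}, IV); under this identification $\Br \O_{\X,Y}^h \to \Br \Kh$ coincides with the map $\iota'$ from~\eqref{eq:residue}. The image therefore lies in $\fil_0 \Br \Kh$ and is killed by $\partial'$, and since both $\fil_0 \Br K$ and the residue on it are defined via pullback to $\Br \Kh$, the same conclusion transfers to the image of $\Br \O_{\X,Y}$ in $\Br K$.

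For the reverse inclusion, I would invoke absolute purity \cite{Kestutis} for the regular $1$-dimensional scheme $\Spec \O_{\X,Y}$, yielding an exact sequence
\[
0 \to \Br \O_{\X,Y} \to \Br K \xrightarrow{\partial_{\mathrm{Gab}}} \H^1(F, \Q/\Z),
\]
with $\partial_{\mathrm{Gab}}$ the coboundary in the localization sequence supported on $\Spec F$. Given $\A \in \fil_0 \Br K$ with $\partial'(\A) = 0$, the restriction $\A|_{\Kh}$ lies in $\Br \O_{\X,Y}^h$ by~\eqref{eq:residue}, so the analogous purity sequence for $\O_{\X,Y}^h$ gives $\partial_{\mathrm{Gab}}^h(\A|_{\Kh}) = 0$. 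Functoriality of the localization sequence under the flat pullback $\Spec \O_{\X,Y}^h \to \Spec \O_{\X,Y}$ then forces $\partial_{\mathrm{Gab}}(\A) = 0$, so that $\A \in \Br \O_{\X,Y}$ by the purity sequence above.

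The key subtlety, and what I expect to be the main obstacle, is that the Witt-style residue $\partial'$ and Gabber's residue $\partial_{\mathrm{Gab}}$ are defined in quite different ways, and a direct comparison on $\fil_0 \Br K$ would require an analogue of Proposition~\ref{prop:residues} in the DVR setting. The argument above sidesteps this by using each residue only in its natural context --- $\partial'$ at the Henselian level via~\eqref{eq:residue}, and $\partial_{\mathrm{Gab}}$ via the purity sequence --- and linking them through the geometric fact that $\A|_{\Kh}$ lifts to $\Br \O_{\X,Y}^h$. A more hands-on alternative, closer to the perfect residue field proof of~\cite[\S1.1]{CTSD}, would be to lift $\A|_{\Kh}$ to a class on an \'etale neighbourhood $U \to \Spec \O_{\X,Y}$ and apply faithfully flat descent, using that $U \times_{\Spec \O_{\X,Y}} U$ is regular so that its Brauer group injects into that of its function field.
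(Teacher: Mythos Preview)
Your strategy is essentially the paper's: pass to the Henselisation, use~\eqref{eq:residue} to lift $\A|_{\Kh}$ to $\Br\O_{\X,Y}^h$, and then pull this back via a functorial exact sequence whose kernel is $\Br\O_{\X,Y}$. The one genuine slip is the sequence you attribute to~\cite{Kestutis}. Absolute purity for the Brauer group concerns closed subschemes of codimension $\ge 2$ and says nothing about a DVR; what the localisation sequence for $\Gm$ on $\Spec\O_{\X,Y}$ actually gives is
\[
0 \to \Br \O_{\X,Y} \to \Br K \to \H^3_{\Spec F}(\Spec \O_{\X,Y},\Gm),
\]
and when $F$ is imperfect the target is strictly larger than $\H^1(F,\Q/\Z)$: its associated graded consists of $\H^2(F,\Z)\cong\H^1(F,\Q/\Z)$ together with a piece coming from $\H^0(F,\Br\Kh_{nr})$, and the latter is precisely what detects classes outside $\fil_0\Br K$. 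Fortunately your diagram chase never uses the identification of the target, only exactness and the fact that $\H^3_{\Spec F}$ is unchanged under Henselisation (excision for \'etale cohomology with supports), so with this correction it goes through.

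The paper avoids local cohomology altogether. Following Grothendieck~\cite[\S2]{GIII}, it uses the sequence $0\to\Gm\to j_*\Gm\to i_*\Z\to 0$ on $\Spec\O_{\X,Y}$ together with the Leray spectral sequence for $j$ (and $R^1 j_*\Gm=0$) to identify $\H^2(\O_{\X,Y},j_*\Gm)$ with $\ker(\Br K\to\Br\Kh_{nr})=\fil_0\Br K$, and thus obtains directly an exact sequence
\[
0\to\Br\O_{\X,Y}\to\fil_0\Br K\to\H^2(F,\Z).
\]
Doing the same over $\O_{\X,Y}^h$ gives a sequence with the \emph{same} target $\H^2(F,\Z)$, and the resulting commutative square is exactly your comparison diagram. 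So your argument and the paper's are the same diagram chase, run through different (but equivalent) presentations of the boundary map.
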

\begin{proof}
Let $i \colon \Spec F \to \Spec \O_{\X,Y}$ and $j \colon \Spec K \to \Spec \O_{\X,Y}$ be the inclusions of the special and generic points, respectively.
As in~\cite[\S 2]{GIII}, where the case of perfect residue field is treated, the short exact sequence
\[
0 \to \Gm \to j_* \Gm \to i_* \Z \to 0
\]
of sheaves on $\Spec \O_{\X,Y}$ gives rise to an exact sequence
\[
0 \to \Br \O_{\X,Y} \to \H^2(\O_{\X,Y}, j_* \Gm) \to \H^2(F,\Z).
\]
The Leray spectral sequence shows that $\H^2(\O_{\X,Y}, j_* \Gm)$ is the kernel of the natural map $\Br K \to \Br \Kh_{nr}$.
Applying the same construction to the Henselisation $A = \O_{\X,Y}^h$ gives a commutative diagram with exact rows
\[
\begin{CD}
0 @>>> \Br \O_{\X,Y} @>>> \Br(\Kh_{nr}/K) @>>> \H^2(F,\Z) \\
@. @VVV @VVV @| \\
0 @>>> \Br A @>>> \Br(\Kh_{nr}/\Kh) @>>> \H^2(F,\Z)
\end{CD}.
\]
If $\alpha \in \fil_0 \Br K$ satisfies $\partial'(\alpha)=0$, then the exact sequence~\eqref{eq:residue} shows that the image of $\alpha$ in $\Br\Kh$ lies in the image of $\iota'$, which is the image of $\Br A$.  From the above diagram it then follows that $\alpha$ lies in the image of $\Br \O_{\X,Y}$.
\end{proof}

\begin{corollary}\label{cor:brix}
The kernel of $\partial \colon \fil_0 \Br X \to \H^1(Y,\Q/\Z)$ coincides with the image of the natural map $\Br\X \to \Br X$.
\end{corollary}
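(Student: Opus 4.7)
The plan is to show that both the kernel of $\partial$ and the image of $\Br\X\to\Br X$, when viewed inside $\Br K$, coincide with $\Br X\cap\Br\O_{\X,Y}$. Smoothness of $\X$ and $X$ over the regular base $\O_k$ implies both are regular, so the maps $\Br\X\to\Br K$ and $\Br X\to\Br K$ are injective; this is what lets us make the comparison inside $\Br K$.

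For the right-hand side, I would invoke the purity theorem for the Brauer group of regular schemes cited as \cite{Kestutis}. The codimension-one points of $\X$ are precisely the codimension-one points of $X$ together with the generic point of $Y$, so purity identifies $\Br\X$ with the intersection of the subgroups $\Br\O_{\X,y}\subset\Br K$ as $y$ ranges over these codimension-one points. By purity for $X$ itself, this intersection simplifies to $\Br X\cap\Br\O_{\X,Y}$.

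For the left-hand side, take $\A\in\fil_0\Br X$ and let $\A_{\Kh}\in\fil_0\Br\Kh$ be its image. By Proposition~\ref{prop:residues} applied with $K$ replaced by $\Kh$, the residue $\partial(\A)\in\H^1(Y,\Q/\Z)\subset\H^1(F,\Q/\Z)$ agrees with the classical residue $\partial'(\A_{\Kh})$. Since that classical residue factors through $\Br K$, we have $\partial(\A)=0$ iff the image of $\A$ in $\Br K$ has vanishing classical residue. Lemma~\ref{lem:unramd} then identifies this with the condition that the image of $\A$ in $\Br K$ lies in $\Br\O_{\X,Y}$. Conversely, any $\A\in\Br X$ whose image in $\Br K$ lies in $\Br\O_{\X,Y}$ automatically belongs to $\fil_0\Br X$, since elements coming from $\Br\O_{\X,Y}$ pull back to unramified, hence $\fil_0$, elements of $\Br\Kh$. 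Thus the kernel of $\partial$ on $\fil_0\Br X$ equals $\Br X\cap\Br\O_{\X,Y}$, and combining with the previous paragraph yields the corollary.

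The only substantive obstacle is the purity theorem for $\X$: the prime-to-$p$ case is classical, but $p$-power torsion on a regular scheme in mixed characteristic requires the recent work of \cite{Kestutis}. Once that is granted, the argument is a direct assembly of Proposition~\ref{prop:residues}, Lemma~\ref{lem:unramd}, and purity.
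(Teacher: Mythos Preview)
Your proof is correct and follows essentially the same approach as the paper's: both use purity \cite{Kestutis} to identify the image of $\Br\X$ with $\Br X\cap\Br\O_{\X,Y}$, then combine Lemma~\ref{lem:unramd} with Proposition~\ref{prop:residues} to match this with the kernel of $\partial$. Your write-up is slightly more expansive (spelling out regularity, injectivity into $\Br K$, and the converse inclusion via $\fil_0$), but the logical skeleton is identical.
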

\begin{proof}
The purity theorem~\cite[Theorem~1.2]{Kestutis} shows $\Br \X = \Br X \cap \Br \O_{\X,Y}$, with the intersection taking place inside $\Br K$.  (This particular case of the purity theorem was proved by Gabber~\cite[Theorem~2.2]{Gabber-inj}.)
By Lemma~\ref{lem:unramd}, this consists of those elements of $\fil_0 \Br X$ lying in the kernel of $\partial'$, and by Proposition~\ref{prop:residues} this coincides with the kernel of $\partial$. 
\end{proof}

\section{Comparisons of some (refined) Swan conductors}\label{sec:compare-rsw}


In this section we begin working towards the proof of Theorem~\ref{thm:onelemma_intro}.
Our main tool will be to blow up the model in a smooth point; this results in a new component of the special fibre, corresponding to a new discrete valuation and so to a new Swan conductor and refined Swan conductor.
This and the following sections are devoted to studying the effect of blowing up on the Swan conductor and refined Swan conductor; this study will lead in Section~\ref{sec:endgames} to an inductive proof of Theorem~\ref{thm:onelemma_intro}.
We begin by recalling some more notions from Kato's paper~\cite{K}.

\subsection{Unramified elements}\label{sec:unram}
Let $X$ be a normal irreducible scheme with function field $K$. For $x \in X$, let $K_x$ be the field of fractions of the Henselisation $\O_{X,x}^h$ of $\O_{X,x}$ and let $\kappa(x)$ denote the residue field at $x$. Following \cite[\S 1.5]{K}, we say that an element $\chi$ of $\H^q(K)$ is unramified on $X$ if for any $x \in X$, the image of $\chi$ in $\H^q(K_x)$ belongs to the image of the canonical lifting map $\iota^q(\O_{X,x}^h): \H^q(\kappa(x)) \to \H^q(K_x)$, which is the first component of $\lambda_\pi$ (see Section~\ref{sec:iota-local}). In the case $q=1$, $\chi$ is unramified on $X$ in this sense if and only if $\chi$ belongs to $\H^1(X,\Q/\Z) \subset \H^1 (K)$. 

\subsection{(Refined) Swan conductors in a geometric setting}\label{sec:geomswan}
As in Subsection~\ref{sec:unram}, let $X$ be a normal irreducible scheme with function field $K$ and let $\chi\in \H^q(K)$. For $\p \in X^1=\{x\in X \mid \dim \O_{X,x}=1\}$, the field $K_\p=\Frac(\O_{X,\p}^h)$ is a Henselian discrete valuation field. Let $\chi_\p$ denote the image of $\chi$ in $\H^q(K_\p)$. Following~\cite[Part~II]{K}, we denote by $\sw_\p(\chi)$ the Swan conductor of $\chi_\p$. Now let $n\geq 1$ and suppose that $\chi_\p$ lies in $\fil_n\H^q(K_\p)$ (i.e.\ $\sw_{\p}(\chi)\leq n$). We denote by $\rsw_{\p,n}(\chi)$ the refined Swan conductor $\rsw_n(\chi_\p)$. For an irreducible subset $Z\subset X$ of codimension one with generic point $z$, we let $\sw_Z (\chi)=\sw_z (\chi)$ and $\rsw_{Z,n}(\chi)=\rsw_{z,n}(\chi)$ for $n\geq 1$.

\subsection{Comparisons}
Blowing up a smooth model leads to a new model having two components in its special fibre.
Along each of these components, a given element of the Brauer group of the generic fibre has a Swan conductor and a refined Swan conductor.
In order to understand how these relate to each other, we put ourselves in the following more general setting and follow~\cite[Section~7]{K}.

For the rest of this section, let $A$ be an excellent regular local ring with field of fractions $K$ and residue field $\ell$ of characteristic $p>0$ such that $[\ell:\ell^p]=p^c$. Let $(\pi_i)_{1\leq i\leq r}$ be part of a regular system of parameters of $A$, let $\p_i=\pi_i A\in \Spec A$ and let $\overline{\{\p_i\}}$ denote the closure of $\{\p_i\}$ in $\Spec A$. Let $R_i=A/\p_i$ and let $\kappa(\p_i)=\Frac(R_i)$ denote the residue field at $\p_i$. For $j\in\{1, \dotsc, r\}$, let $D_j$ denote the divisor $\sum_{i\neq j}(\overline{\{\p_i\}}\cap \overline{\{\p_j\}})$ on $\overline{\{\p_j\}}=\Spec R_j$. We use $\Omega_{R_j}
^\cdot(\log D_j)$ to denote the $\F_p$-subalgebra of $\Omega_{\kappa(\p_j)}^\cdot$ generated by $\Omega_{R_j}^\cdot$ and elements of the form $\dlog f$ such that $f\in R_j$ and $\Supp(R_j/f)\subset D_j$. Let $q\geq 0$ and let $\chi\in \H^q(K)$ be unramified on $\Spec A\setminus \bigcup_{i=1}^r \overline{\{\p_i\}}$. Let $n_i=\sw_{\p_i}\chi$ for $1\leq i\leq r$. 

We now state some results of Kato that will be used here and in the proof of Theorem~\ref{thm:onelemma_intro} in Section~\ref{sec:endgames}.
The first is an integrality statement.  As defined, the refined Swan conductor at $\p_j$ is given by
\begin{equation}\label{eq:rsw_j}
\rsw_{\p_j}(\chi) = \pi_j^{-n_j}(\alpha + \beta \wedge \dlog \pi_j)
\end{equation}
with $\alpha \in \Omega^q_{\kappa(\p_j)}$ and $\beta \in \Omega^{q-1}_{\kappa(\p_j)}$.  The following theorem states that $\alpha,\beta$ are in fact integral outside $D_j$ and describes their possible poles along $D_j$. (Note that the $\alpha, \beta$ occurring in Theorem~\ref{thm:7.1} coincide with those in \eqref{eq:rsw_j} when $r=1$, and in other cases they are rescaled by $\prod_{i\neq j}\pi_i^{n_i}$.)

\begin{theorem}[Kato, {\cite[Theorem~7.1]{K}}]\label{thm:7.1}
Let $j \in \{1, \dotsc, r\}$ and assume $n_j \geq 1$. Write
\[\rsw_{\p_j}(\chi)=\Big(\prod_{i=1}^r\pi_i^{-n_i}\Big)\cdot (\alpha+\beta\wedge\dlog\pi_j)\]
with $\alpha\in \Omega^q_{\kappa(\p_j)}$ and $\beta\in\Omega^{q-1}_{\kappa(\p_j)}$. Then we have
\begin{align*}
\alpha\in \Omega^q_{R_j}(\log D_j), \qquad \beta\in \Omega^{q-1}_{R_j}(\log D_j).
\end{align*}
\end{theorem}

The following proposition describes how the refined Swan conductors associated to the different $\p_j$ are related to each other.
Retain the notation of Theorem~\ref{thm:7.1}. For a subset $s\subset\{1,\dotsc, r\}$, let $|s|=\card(s)$ and let $s(1),\dots , s(|s|)$ denote the elements of $s$ ordered so that $s(1)<\dots <s(|s|)$. Theorem~\ref{thm:7.1} allows one to write
\[\rsw_{\p_j}(\chi)=\Big(\prod_{i=1}^r\pi_i^{-n_i}\Big)\cdot \sum_{s}\omega_s(j)\wedge\dlog \pi_{s(1)}\wedge\dots \wedge \dlog\pi_{s(|s|)}\]
with $\omega_s(j)\in\Omega^{q-|s|}_{R_j}$, where $s$ ranges over all subsets of $\{1,\dotsc, r\}$. 

\begin{proposition}[Kato, {\cite[Proposition~7.3]{K}}]\label{prop:7.3}
Let $R=A/(\p_1+\dotsc +\p_r)$ and let $j_1,j_2\in\{1, \dotsc , r\}$ be such that $n_{j_1}\geq 1$ and $n_{j_2}\geq 1$. Then
for each $s\subset\{1,\dotsc, r\}$ the images of $\omega_s(j_1)$ and $\omega_s(j_2)$ in $\Omega^{q-|s|}_{R}$ coincide.
\end{proposition}

We now state a definition and a theorem concerning blowups.

\begin{definition}[Kato, {\cite[Definition~7.4]{K}}]\label{def:7.4} Recall that $\ell$ is the residue field of $A$.
We say that $\chi$ is strongly clean with respect to $A$ if for any
$j$ such that $n_j \geq 1$, the image of $\omega_s(j)$ in $\Omega^{q-|s|}_\ell$ under $\Omega^{q-|s|}_{R_j}\to \Omega^{q-|s|}_\ell$ is not
zero for some $s$.
\end{definition}

\begin{theorem}[Kato, {\cite[Theorem~8.1]{K}}]\label{thm:8.1}
Let $f:X\to\Spec A$ be the blowup at the closed point of $\Spec A$, and let $\nu\in X$ be the generic point of the exceptional divisor. Then 
\[\sw_\nu (f^*\chi)\leq \sum_{i=1}^r n_i\]
with equality if and only if $\chi$ is strongly clean with respect to $A$.
\end{theorem}
In fact, Theorem~\ref{thm:8.1} is only the first part of Kato's statement, but it will suffice for our purposes. 

An important case that is missing from Theorem~\ref{thm:7.1} and Proposition~\ref{prop:7.3} is the case $n_j=0$.
In that case, there is no refined Swan conductor at $\p_j$.
However, in its place we have the isomorphism $\H^q(\kappa(\p_j)) \oplus \H^{q-1}(\kappa(\p_j)) \to \fil_0\H^q(K_{\p_j})$ given by $\lambda_{\pi_j}$ (see Proposition~\ref{prop:kato61}(1)).
If we write $\chi = \lambda_{\pi_j}(\chi_1,\chi_2)$, then we can ask about the Swan conductors and refined Swan conductors of $\chi_1,\chi_2$ at the prime ideals of $R_j$ corresponding to the $\p_i$.
The following lemma, which we see as an analogue of Theorem~\ref{thm:7.1} in the case $n_j=0$, deals with the Swan conductors.

\begin{lemma}\label{lem:new7.1}
Let $j \in \{1, \dotsc, r\}$ and assume $n_j = 0$, whereby there exist unique $\chi_1 \in \H^q(\kappa(\p_j))$ and $\chi_2 \in \H^{q-1}(\kappa(\p_j))$
such that $\chi = \lambda_{\pi_j}(\chi_1, \chi_2)$ in $\H^q(K_{\p_j})$.
Denote by $\bar{\p}_i$ the image of $\p_i$ in $R_j=A/\p_j$.
Then, for $i \neq j$, we have $\sw_{\bar{\p}_i}(\chi_1) \le n_i$ and $\sw_{\bar{\p}_i}(\chi_2) \le n_i$. \end{lemma}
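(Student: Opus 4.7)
The plan is to reduce to a two-dimensional Henselian setting and then to apply the characterisation of the Swan conductor via cup product with $1+\pi^{n+1}T$. Since $A/(\pi_i,\pi_j)$ is regular, hence a domain, the ideal $\p_i+\p_j$ is prime of height $2$, so set $R:=A^h_{\p_i+\p_j}$. Then $R$ is a two-dimensional Henselian regular local ring with regular parameters $\pi_i,\pi_j$, and $R/\pi_jR$ is a Henselian discrete valuation ring with uniformiser $\bar\pi_i$; under the canonical identification $R/\pi_jR=B^h_{\bar{\p}_i}$, this quotient is precisely the ring used to define $\sw_{\bar{\p}_i}$. The uniformiser-preserving inclusions $A^h_{\p_i}\to R^h_{\pi_iR}$ and $A^h_{\p_j}\to R^h_{\pi_jR}$ preserve Swan conductors of $\chi$, and the naturality of $\lambda_{\pi_j}$ (the unramified case of Lemma~\ref{lem:lambda-basechange}, with $e=1$ and $\bar a=1$) shows that the elements $\chi_1$ and $\chi_2$ of the statement, once pulled back to $\Frac(R/\pi_jR)$, coincide with the components of the decomposition $\chi=\lambda_{\pi_j}(\chi_1,\chi_2)$ obtained in $\H^q(\Frac R^h_{\pi_jR})$ from $\sw_{\pi_jR}(\chi)=0$. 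Thus it suffices to prove the bound in this two-dimensional setup.

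Next, I would establish a cup-product compatibility for $\lambda_{\pi_j}$. Using the formula $\lambda_{\pi_j}(\chi_1,\chi_2)=\iota^q(\chi_1)+\{\iota^{q-1}(\chi_2),\pi_j\}$ from~\S\ref{sec:lambda}, the graded commutativity of the cup product, and the fact that the canonical lifting $\iota$ commutes with cupping by units $u\in R^h_{\pi_jR}[T]^\times$ whose image $\bar u\in(R/\pi_jR)[T]^\times$ supplies the cup product on the residue side, one obtains an identity of the shape
\[
\{\lambda_{\pi_j}(\chi_1,\chi_2),u\}=\lambda_{\pi_j}\bigl(\{\chi_1,\bar u\},\pm\{\chi_2,\bar u\}\bigr),
\]
valid in the appropriate $V^{q+1}$-group. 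Applying this with $u=1+\pi_i^{n_i+1}T$, so that $\bar u=1+\bar\pi_i^{n_i+1}T$, the left-hand side equals $\{\chi,1+\pi_i^{n_i+1}T\}$, which vanishes by the hypothesis $\sw_{\pi_iR}(\chi)\le n_i$. The injectivity of $\lambda_{\pi_j}$ on the relevant $V$-groups (\cite[Proposition~6.1(1)]{K} together with the injectivity statements of~\cite[\S 1.8]{K}) then forces $\{\chi_1,\bar u\}=0$ and $\{\chi_2,\bar u\}=0$, which are precisely the defining conditions for $\sw_{\bar{\p}_i}(\chi_1)\le n_i$ and $\sw_{\bar{\p}_i}(\chi_2)\le n_i$.

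The main obstacle will be matching up the various $V^{q+1}$-groups: the vanishing $\{\chi,1+\pi_i^{n_i+1}T\}=0$ provided by the hypothesis naturally lives in $V^{q+1}(R^h_{\pi_iR}[T])$, built over the Henselian DVR $R^h_{\pi_iR}$, while the decomposition $\chi=\lambda_{\pi_j}(\chi_1,\chi_2)$ naturally lives in $V^{q+1}(R^h_{\pi_jR}[T])$, built over $R^h_{\pi_jR}$; inside the latter, $\pi_i$ is a unit and its Swan-conductor information must be extracted by a different route. Bridging the two will require passing through a common refinement, most conveniently the cohomology of $\Frac R(T)$, into which both $V$-groups inject by the Gabber-type results recalled in~\S\ref{sec:V}, or equivalently by tracking the behaviour of the Bloch--Kato filtration on $M^{q-1}_r$ underlying $\lambda_\pi$ under the two Henselisations. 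With this bookkeeping in place, the remainder is the formal naturality of $\iota$ and of the cup product.
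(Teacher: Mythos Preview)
Your reduction to a two-dimensional Henselian local ring is in the right spirit, but the heart of your argument has a genuine gap that your last paragraph does not resolve. The element $\{\chi,1+\pi_i^{n_i+1}T\}$ lives in $\H^{q+1}(K(T))$, and the hypothesis $\sw_{\p_i}(\chi)\le n_i$ tells you only that its image in $V^{q+1}(R^h_{\p_i}[T])$ vanishes. The $\lambda_{\pi_j}$-decomposition, on the other hand, takes place in $V^{q+1}(R^h_{\p_j}[T])$. There is no natural map between these two $V$-groups; both receive restriction maps \emph{from} $\H^{q+1}(K(T))$, not \emph{into} it. (Recall that under Gabber's identification, $V^{q+1}(R^h_{\p_i}[T])$ is the $\H^{q+1}$ of the generic fibre of a further Henselisation along $\pi_i$, which is a strictly larger field than $K(T)$.) So your proposed ``common refinement'' goes the wrong way: vanishing in one localisation does not imply vanishing in $\H^{q+1}(K(T))$, and hence says nothing about the other localisation. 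No amount of Bloch--Kato filtration bookkeeping fixes this, because the obstruction is not a compatibility issue but a genuine local-to-global problem.

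The paper's proof avoids this by a completely different mechanism. After reducing (following Kato) to $\dim A=2$, $A$ complete, $q\le c+2$, and crucially $\H^{c+1}_p(\ell)\neq 0$, one passes to \emph{top-degree} cohomology $\H^{c+3}(K)$ by cupping $\{\chi,1+a\pi_2^{n+1}\}$ with further units $b_1,\dots,b_{c+2-q}$. In that degree one has Kato's complex
\[
\H^{c+3}(K)\xrightarrow{(\partial_\q)}\bigoplus_{\q\in(\Spec A)^1}\H^{c+2}(\kappa(\q))\xrightarrow{(\partial'_\q)}\H^{c+1}(\ell),
\]
a reciprocity law relating the residues at \emph{all} height-one primes simultaneously. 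The test element has $\partial_\q=0$ for every $\q\neq\p_1$ (at $\p_2$ by the Swan-conductor hypothesis, elsewhere by unramifiedness), so the complex forces $\partial'_{\p_1}\partial_{\p_1}(t)=0$; and since $\partial'_{\p_1}$ is an isomorphism in this degree by~\cite[Theorem~3(3)]{KatoGC}, one gets $\partial_{\p_1}(t)=0$. Computing $\partial_{\p_1}(t)$ via the $\lambda_{\pi_1}$-decomposition then yields exactly the cup-product vanishings $\{\chi_2,1+\bar a\bar\pi_2^{n+1},\bar b_1,\dots\}=0$ (and similarly for $\chi_1$ with one $b_i=\pi_1$), from which~\cite[Proposition~6.5]{K} gives the Swan-conductor bounds. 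The Kato complex is the missing ingredient that lets information flow from $\p_2$ to $\p_1$; your direct cup-product approach has no substitute for it.
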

\begin{proof}
As in Kato's proof of~Theorem~\ref{thm:7.1}, we reduce to the following situation: $\dim(A)=2$; $A$ is complete; $\H_p^{c+1}(\ell) \neq 0$ where $p^c=[\ell:\ell^p]$; and the order of $\chi$ is a power of $p$. The reduction to $\dim(A)=2$ comes from replacing $A$ by $A_\q$, where $\q$ corresponds to a height one prime ideal of $R_j$. By~\cite[Lemma~7.7]{K}, there exists a field extension $\ell'/\ell$ such that $\H^{c+1}_p(\ell')\neq 0$ (where $c$ is defined by $p^c=[\ell':\ell'^p]$) and a $p$-basis of $\ell$ remains a $p$-basis of $\ell'$, implying in particular that $\ell'/\ell$ is separable. Applying~\cite[Lemma 1]{K4} to $A/\m^s$ for $s\geq 1$ and taking the inverse limit shows the existence of a complete two-dimensional regular local ring $A'$ which is flat over $A$ and has residue field $A' \otimes \ell= \ell'$. Now the stability of the Swan conductor under well-behaved extensions of Henselian discrete valuation fields described in~\cite[Lemma~6.2 and Proposition~6.3]{K} shows we can replace $A$ by $A'$. By Proposition~\ref{prop:kato61}(1) we may replace $\chi$ by its $p$-primary component and so assume that the order of $\chi$ is a power of $p$. 

Our assumption that $\dim(A)=2$ implies $r\leq 2$. The statement for $r=1$ is empty so assume $r=2$, $\sw_{\p_1}(\chi)=0$, $\sw_{\p_2}(\chi)=n \ge 0$.
 The discrete valuation corresponding to $\bar{\p}_2$ on $K_{\p_1}$ has residue field $\ell$ with $[\ell:\ell^p]=p^c$, so by Proposition~\ref{prop:kato61}(2) the result is automatically true for $q>c+2$; we therefore assume $q \le c+2$.
 Kato~\cite[\S 7.5]{K} defines a complex that in our situation becomes
\begin{align}\label{eq:complex}
\H^{c+3}(K)\xrightarrow{(\partial_\q)_\q} \bigoplus_{\q\in\Spec( A)^1} \H^{c+2}(\kappa(\q))\xrightarrow{(\partial_\q')_\q} \H^{c+1}(\ell).
\end{align}
 Note that $\fil_0 \H^{c+3}(K_\q)= \H^{c+3}(K_\q)$; this follows from~\cite[Corollary~2.5]{K} in the case $\Char\kappa(\q)=0$ and from Proposition~\ref{prop:kato61}(2) if $ \Char \kappa(\q)=p$. Thus the residue map of Definition~\ref{def:residue} is defined on the whole of $\H^{c+3}(K_\q)$. Composing it with the natural map $\H^{c+3}(K)\to \H^{c+3}(K_\q)$ gives the map $\partial_\q$ in~\eqref{eq:complex}. The map $\partial_\q'$ is defined similarly.

Consider the element
\[
t = \{ \chi, 1 + a \pi_2^{n+1}, b_1, \dotsc, b_{c+2-q} \} \in \H^{c+3}(K),
\]
for arbitrary $a \in A$ and $b_1, \dotsc, b_{c+2-q} \in A[1/\pi_1\pi_2]^\times$. Since $\sw_{\p_2}(\chi)=n$, Definition~\ref{def:Swan} shows that $t$
becomes $0$ in $\H^{c+3}(K_{\p_2})$. Now let $\q\in\Spec( A)^1\setminus\{ \p_1,\p_2\}$. Since $t$ is unramified on $\Spec A \setminus ( \overline{\{\p_1\}} \cup  \overline{\{\p_2\}})$, we have $\partial_{\q}(t)=0$ by \cite[\S 7.5]{K}. Therefore, $\partial_{\q}(t)=0$ for all height one prime ideals $\q \neq \p_1$. Now Kato's complex~\eqref{eq:complex} gives $\partial'_{\p_1} \partial_{\p_1}(t)=0$. Moreover, the $p$-primary part of $\partial'_{\p_1} \colon \H^{c+2}(\kappa(\p_1)) \to \H^{c+1}(\ell)$ is an isomorphism by~\cite[Theorem~3 (3)]{KatoGC} and so it follows that $\partial_{\p_1}(t)=0$.

For $\p_1$, the definition of $\lambda_\pi$ in Section~\ref{sec:lambda-dvr} gives
\begin{align*}
t &= \{ \lambda_{\pi_1}(\chi_1, \chi_2), 1 + a \pi_2^{n+1}, b_1, \dotsc, b_{c+2-q} \} \\
&= \{ \iota^q(\chi_1), 1 + a \pi_2^{n+1}, b_1, \dotsc, b_{c+2-q} \}
+ \{ \iota^{q-1}(\chi_2), \pi_1, 1 + a \pi_2^{n+1}, b_1, \dotsc, b_{c+2-q} \}
\end{align*}
in $\H^{c+3}(K_{\p_1})$.  If the $b_i$ are all units in $A$ then the properties of $\iota^q$ (see Section~\ref{sec:iota-local}) give
\begin{align*}
t &=\iota^{c+3}\bigl(\{\chi_1,1 + \bar{a} \bar{\pi}_2^{n+1},  \bar{b}_1, \dotsc,  \bar{b}_{c+2-q}\}\bigr)\\
&\qquad+\{\iota^{c+2}\bigl((-1)^{c+1-q}\{\chi_2, 1 +  \bar{a} \bar{ \pi}_2^{n+1},  \bar{b}_1, \dotsc,  \bar{b}_{c+2-q}\}\bigr),\pi_1\}\\
&=\lambda_{\pi_1}\bigl(\{\chi_1,1 + \bar{a} \bar{\pi}_2^{n+1},  \bar{b}_1, \dotsc,  \bar{b}_{c+2-q}\},(-1)^{c+1-q} \{\chi_2, 1 +  \bar{a} \bar{ \pi}_2^{n+1},  \bar{b}_1, \dotsc,  \bar{b}_{c+2-q}\}\bigr),
\end{align*}
where the bars denote images in $A/\p_1$.
Now, recalling that the residue map $\partial_{\p_1}$ is given by the inverse of $\lambda_{\pi_1}$ followed by projection on the second factor, we obtain
\[
0=\partial_{\p_1}(t) = (-1)^{c+1-q} \{\chi_2, 1 + \bar{a} \bar{\pi}_2^{n+1}, \bar{b}_1, \dotsc, \bar{b}_{c+2-q} \}
\]
in $\H^{c+2}(\kappa(\p_1))$. In other words, $\chi_2$ annihilates $U^{n+1} \K_{c+3-q}(\kappa(\p_1))$, where the unit filtration is defined using the discrete valuation corresponding to $\bar{\p}_2$. 
By~\cite[Proposition~6.5]{K}, this means that $\sw_{\bar{\p}_2}(\chi_2) \le n$.

A similar argument with $b_1=\pi_1$ gives $\sw_{\bar{\p}_2}(\chi_1) \le n$.
\end{proof}

Our next result is an analogue of Proposition~\ref{prop:7.3} in the case $n_j=0$.  For simplicity we state it only in the case $r=2$.
Suppose $\sw_{\p_1}(\chi)=0$ and $\sw_{\p_2}(\chi)=n \geq 1$.
Write $\chi = \lambda_{\pi_1}(\chi_1, \chi_2)$ in $\H^q(K_{\p_1})$, with $\chi_1 \in \H^q(\kappa(\p_1))$ and $\chi_2 \in \H^{q-1}(\kappa(\p_1))$. Applying~Theorem~\ref{thm:7.1} with $j=2$, we can write
\[\rsw_{\p_2,n}(\chi) =  \big[ \eta + \omega \wedge \dlog\bar{\pi}_1 , \eta' + \omega' \wedge \dlog\bar{\pi}_1 \big]_{\pi_2,n}\]
where $\eta\in\Omega^q_{R_2}; \omega,\eta'\in \Omega^{q-1}_{R_2}; \omega'\in  \Omega^{q-2}_{R_2}$ and $\bar{\pi}_1$ denotes the image of $\pi_1$ in $R_2=A/\p_2$.

\begin{lemma}\label{lem:new7.3}
We have
\begin{gather*}
\rsw_{\bar{\p}_2,n}(\chi_1) = [\bar{\eta} , -\bar{\eta'}]_{\bar{\pi}_2,n}, \\
\rsw_{\bar{\p}_2,n}(\chi_2) =  [-\bar{\omega}, \bar{\omega'}]_{\bar{\pi}_2,n},
\end{gather*}
where the bars denote reductions modulo $\p_1$.
\end{lemma}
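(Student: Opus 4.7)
The plan is to view the lemma as a compatibility statement between Kato's refined Swan conductor at $\p_2$ and the residue decomposition $\chi \mapsto (\chi_1, \chi_2)$ at $\p_1$. The starting point is Lemma~\ref{lem:new7.1} applied with $j=1$, which guarantees $\sw_{\bar\p_2}(\chi_i) \le n$ for $i = 1,2$, so the refined Swan conductors $\rsw_{\bar\p_2,n}(\chi_1) = [\mu_1,\mu_1']_{\bar\pi_2,n}$ and $\rsw_{\bar\p_2,n}(\chi_2) = [\mu_2,\mu_2']_{\bar\pi_2,n}$ in the conclusion are well-defined (with the understanding that they may be zero if the true Swan conductor is strictly smaller than $n$).

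The next step is to unpack the defining equation
\[
\{\chi, 1+\pi_2^n T\} = \lambda_{\pi_2}\bigl(T(\eta+\omega\wedge\dlog\bar\pi_1),\ T(\eta'+\omega'\wedge\dlog\bar\pi_1)\bigr)
\]
in $V^{q+1}(A_{\p_2}^h[T])$, using the identities $\lambda_{\pi_2}(a,b) = \iota^{q+1}_p(\delta_1(a)) + \{\iota^q_p(\delta_1(b)),\pi_2\}$, $\delta_1(\gamma \wedge \dlog\bar\pi_1) = \{\delta_1(\gamma),\bar\pi_1\}$, and the compatibility $\iota^{j+1}_p(\{-,\bar\pi_1\}) = \{\iota^j_p(-),\pi_1\}$ recalled from~\cite[\S 1.4]{K}. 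This rewrites the right-hand side as a sum of four symbol terms in $\pi_1$ and $\pi_2$. In parallel, I would expand the left-hand side using the decomposition $\chi = \iota^q_p(\chi_1) + \{\iota^{q-1}_p(\chi_2), \pi_1\}$ valid in $\H^q(K_{\p_1})$, and substitute the defining equations of $\rsw_{\bar\p_2,n}(\chi_i)$ for $i=1,2$ after lifting via $\iota$; this yields a second sum of four symbol terms, this time involving the unknowns $\mu_i, \mu_i'$.

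Matching the two expansions term by term will identify $(\mu_1,\mu_1',\mu_2,\mu_2')$ with $(\bar\eta,-\bar\eta',-\bar\omega,\bar\omega')$. To carry out this matching rigorously, I would follow the strategy already used in the proof of Lemma~\ref{lem:new7.1}: cup both sides with a generic tuple of units $b_1,\dots,b_{c+1-q} \in A[1/\pi_1\pi_2]^\times$ to obtain an equality in $\H^{c+3}(K_{\p_2}(T))$, then apply Kato's residue map $\partial_{\p_1}$ and invoke the injectivity of its $p$-primary part from~\cite[Theorem~3(3)]{KatoGC}, combined with the injectivity of $\lambda_{\bar\pi_2}$ on $\fil_n / \fil_{n-1}$ from~\cite[Corollary~5.2]{K}, to pick off individual differential-form components. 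I expect the main obstacle to lie in the careful bookkeeping of signs arising from graded commutativity of cup product: each swap of a symbol $\pi_1$ past a class of degree $q-1$ or $q-2$ contributes a sign $(-1)^{q-1}$ or $(-1)^{q-2}$, and it is precisely these sign conventions that must match up to produce the minus signs on $\bar\eta'$ and $\bar\omega$ in the stated formulas.
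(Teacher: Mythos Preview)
There is a genuine gap. The decomposition $\chi = \iota^q_p(\chi_1) + \{\iota^{q-1}_p(\chi_2),\pi_1\}$ holds only in $\H^q(K_{\p_1})$, whereas the defining equation for $\rsw_{\p_2,n}(\chi)$ lives in $V^{q+1}(A_{\p_2}^h[T])$. These are localisations at \emph{different} primes, and there is no common ambient group in which you can substitute the $\p_1$-decomposition into the $\p_2$-equation. Likewise, ``apply $\partial_{\p_1}$ to an equality in $\H^{c+3}(K_{\p_2}(T))$'' is not meaningful: $\partial_{\p_1}$ is defined on classes over $K$ (or $K_{\p_1}$), not over the Henselisation at $\p_2$. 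The proposed term-by-term matching therefore has no space in which to take place.

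The paper supplies exactly the missing bridge. After the usual reductions ($\dim A=2$, $A$ complete, $\H^{c+1}_p(\ell)\neq 0$), it forms the \emph{global} element
\[
t=\{\chi,\,1+a\pi_2^{\,n},\,b_1,\dots,b_{c+2-q}\}\in\H^{c+3}(K)
\]
with $a\in A$ and $b_i\in A[1/\pi_1\pi_2]^\times$, and applies Kato's reciprocity complex \eqref{eq:complex} to obtain $\partial'_{\p_1}\partial_{\p_1}(t)+\partial'_{\p_2}\partial_{\p_2}(t)=0$. The term $\partial'_{\p_2}\partial_{\p_2}(t)$ is computed from the known $\rsw_{\p_2,n}(\chi)$ and involves $\eta,\omega,\eta',\omega'$; the term $\partial'_{\p_1}\partial_{\p_1}(t)$ is computed from the $\p_1$-decomposition and involves the unknown $\rsw_{\bar\p_2,n}(\chi_i)$. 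Equating them and invoking Lemma~\ref{lem:c+1} (not the injectivity result you cite) isolates the individual form components; varying which $b_i$ equal $\pi_1$ or $\pi_2$ picks off $\bar\omega'$, $\bar\omega$, $\bar\eta'$, $\bar\eta$ in turn. The reciprocity law is the key idea absent from your plan.
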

\begin{proof}
As in the proof of Lemma~\ref{lem:new7.1}, we reduce to the case where $\dim(A)=2$; $A$ is complete; $\H_p^{c+1}(\ell) \neq 0$ where $c$ is defined by $p^c=[\ell:\ell^p]$; and the order of $\chi$ is a power of $p$. Our assumption that $\sw_{\p_2}(\chi)=n \geq 1$ implies that $q\leq c+2$. This follows from Proposition~\ref{prop:kato61}(2) because $K_{\p_2}$ has residue field $\kappa(\p_2)$ with $[\kappa(\p_2):\kappa(\p_2)^p]=p^{c+1}$ (see~\cite[Lemma~7.2]{K}, for example).

Consider the element
\[
t = \{ \chi, 1 + a \pi_2^n, b_1, \dotsc, b_{c+2-q} \} \in \H^{c+3}(K)
\]
for arbitrary $a \in A$ and $b_1, \dotsc, b_{c+2-q} \in A[1/\pi_1\pi_2]^\times$. Since $t$ is unramified on $\Spec A \setminus ( \overline{\{\p_1\}} \cup  \overline{\{\p_2\}})$, we have $\partial_{\q}(t)=0$ for all $\q\in \Spec(A)^1\setminus\{\p_1,\p_2\}$ by \cite[\S 7.5]{K}. Therefore, Kato's complex~\eqref{eq:complex} gives 
\begin{equation}\label{eq:complex2}
\partial'_{\p_1}\partial_{\p_1}(t) + \partial'_{\p_2}\partial_{\p_2}(t) = 0.
\end{equation}

Beware that in what follows, for $x\in A$, the notation $\bar{x}$ will sometimes denote the reduction of $x$ modulo $\p_1$ and sometimes it will denote the reduction of $x$ modulo $\p_2$. In each new context, we will state which reduction is meant by the notation. The notation $\bar{\bar{x}}$ will always mean the image of $x$ in $\ell$, in other words its reduction modulo $\p_1+\p_2$.

In $\H^{c+3}(K_{\p_2})$, by definition of the refined Swan conductor (Definition~\ref{def:rsw}) we have
\[\{\chi,1 + a \pi_2^n\}=\lambda_{\pi_2}(\bar{a} (\eta + \omega \wedge \dlog\bar{\pi}_1), \bar{a} (\eta' + \omega' \wedge \dlog\bar{\pi}_1)),\] where the bars denote reduction modulo $\p_2$. Therefore, by definitions of $t$ and of $\lambda_\pi$ (Section~\ref{sec:lambda-dvr} and \eqref{eq:lambda-comp}),
\begin{align*}
t &= \{ \lambda_{\pi_2}(\bar{a} (\eta + \omega \wedge \dlog\bar{\pi}_1), \bar{a} (\eta' + \omega' \wedge \dlog\bar{\pi}_1)), b_1, \dotsc, b_{c+2-q} \}\\
&= \{\iota^{q+1}(\delta_1(\bar{a} (\eta + \omega \wedge \dlog\bar{\pi}_1))), b_1, \dotsc, b_{c+2-q}\}\\
&\qquad+\{\iota^q(\delta_1(\bar{a} (\eta' + \omega' \wedge \dlog\bar{\pi}_1))),\pi_2, b_1, \dotsc, b_{c+2-q}\}
\end{align*}
in $\H^{c+3}(K_{\p_2})$, where again the bars denote reduction modulo $\p_2$. 

Suppose first that $b_1,\dots,b_{c+2-q}\in A^\times$. Then Lemma~\ref{lem:delta-wedge} gives
\begin{align*}
\partial_{\p_2}(t) &= (-1)^{c-q}\{\delta_1(\bar{a} (\eta' + \omega' \wedge \dlog\bar{\pi}_1)), \bar{b}_1, \dotsc, \bar{b}_{c+2-q} \}\\
&=(-1)^{c-q}\{\delta_1(\bar{a} \eta'),\bar{b}_1, \dotsc, \bar{b}_{c+2-q}\} \\
& \qquad \qquad + (-1)^{c-q}\{\delta_1(\bar{a}\omega' \wedge \dlog\bar{\pi}_1), \bar{b}_1, \dotsc, \bar{b}_{c+2-q} \}\\
&=(-1)^{c-q}\{\delta_1(\bar{a} \eta'),\bar{b}_1, \dotsc, \bar{b}_{c+2-q}\} + (-1)^{c-q}\{\delta_1(\bar{a}\omega'), \bar{\pi}_1, \bar{b}_1, \dotsc, \bar{b}_{c+2-q} \}\\
&=(-1)^{c-q}\{\delta_1(\bar{a} \eta'),\bar{b}_1, \dotsc, \bar{b}_{c+2-q}\} +\{\delta_1(\bar{a}\omega'), \bar{b}_1, \dotsc, \bar{b}_{c+2-q},\bar{\pi}_1 \}
\end{align*}
in $\H^{c+2}(\kappa(\p_2))$, where the bars denote reduction modulo $\p_2$. Therefore, 
\begin{equation}\label{eq:p2}
\partial'_{\p_2}\partial_{\p_2}(t)=\{\delta_1(\bar{\bar{a}}\bar{\omega'}), \bar{\bar{b}}_1, \dotsc, \bar{\bar{b}}_{c+2-q}\}=\delta_1(\bar{\bar{a}}\bar{\omega'}\wedge  \dlog\bar{\bar{b}}_1\wedge \dotsc\wedge \dlog\bar{\bar{b}}_{c+2-q})
\end{equation}
in $\H^{c+1}(\ell)$, where $\bar{\bar{x}}$ denotes the image of $x$ in $\ell$ and $\bar{\omega'}$ denotes the image of $\omega'$ in $\Omega^{q-2}_\ell$. 

On the other hand, under the assumption that $b_1,\dots,b_{c+2-q}\in A^\times$, a calculation similar to the one in the proof of Lemma~\ref{lem:new7.1} yields
\[
\partial_{\p_1}(t) = (-1)^{c+1-q} \{\chi_2, 1 + \bar{a} \bar{\pi}_2^n, \bar{b}_1, \dotsc, \bar{b}_{c+2-q} \}
\]
in $\H^{c+2}(\kappa(\p_1))$, where this time the bars denote reduction modulo $\p_1$.
Now write $\rsw_{\bar{\p}_2,n}(\chi_2) = [\alpha,\beta]_{\bar{\pi}_2,n}$ for some $\alpha\in \Omega^{q-1}_\ell, \beta\in\Omega^{q-2}_\ell$, where again the bars denote reduction modulo $\p_1$. By definition of the refined Swan conductor, this means $\{ \chi_2, 1+\bar{a} \bar{\pi}_2^n \} = \lambda_{\bar{\pi}_2}(\bar{\bar{a}} \alpha,\bar{\bar{a}} \beta)$, where $\bar{x}$ denotes the reduction of $x$ modulo $\p_1$ and $\bar{\bar{x}}$ denotes the image of $x$ in $\ell$. Thus, with the same convention for the bar notation,
\begin{align*}
\partial_{\p_1}(t) &= (-1)^{c+1-q} \{\lambda_{\bar{\pi}_2}(\bar{\bar{a}} \alpha,\bar{\bar{a}} \beta), \bar{b}_1, \dotsc, \bar{b}_{c+2-q} \}\\
&=(-1)^{c+1-q}\{\iota^{q}(\delta_1(\bar{\bar{a}} \alpha)), \bar{b}_1, \dotsc, \bar{b}_{c+2-q} \} \\
& \qquad \qquad +(-1)^{c+1-q} \{\iota^{q-1}(\delta_1(\bar{\bar{a}} \beta)), \bar{\pi}_2, \bar{b}_1, \dotsc, \bar{b}_{c+2-q} \}\\
&=(-1)^{c+1-q}\{\iota^{q}(\delta_1(\bar{\bar{a}} \alpha)), \bar{b}_1, \dotsc, \bar{b}_{c+2-q} \}-\{\iota^{q-1}(\delta_1(\bar{\bar{a}} \beta)), \bar{b}_1, \dotsc, \bar{b}_{c+2-q}, \bar{\pi}_2 \}
\end{align*}
in $\H^{c+2}(\kappa(\p_1))$.
 We obtain
\begin{equation}\label{eq:p1}
\partial'_{\p_1}\partial_{\p_1}(t) = -\{ \delta_1(\bar{\bar{a}}\beta), \bar{\bar{b}}_1, \dotsc, \bar{\bar{b}}_{c+2-q} \}
= -\delta_1(\bar{\bar{a}} \beta\wedge \dlog\bar{\bar{b}}_1 \wedge \dotsb \wedge \dlog\bar{\bar{b}}_{c+2-q} )
\end{equation}
in $\H^{c+1}(\ell)$. Now combine \eqref{eq:complex2}, \eqref{eq:p2}, \eqref{eq:p1} and Lemma~\ref{lem:c+1} below to obtain $\beta=\bar{\omega'}$.

The other equalities are obtained by taking some of the $b_i$ equal to $\pi_1$, $\pi_2$.
\end{proof}

\begin{lemma}\label{lem:c+1}
Let $\ell$ be a field of characteristic $p>0$ with $[\ell:\ell^p]=p^c$ and $\H^{c+1}_p(\ell) \neq 0$.  Let $\alpha \in \Omega^q_\ell$ be such that, for all $a \in \ell$ and all $b_1, \dotsc, b_{c-q} \in \ell^\times$, 
\[
\delta_1(a \alpha \wedge \dlog b_1 \wedge \dotsb \wedge \dlog b_{c-q})=0 
\]
in $\H^{c+1}_p(\ell)$. Then $\alpha=0$.
\end{lemma}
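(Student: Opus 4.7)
The natural approach is to choose a $p$-basis of $\ell$, expand $\alpha$ in the resulting basis of $\Omega^q_\ell$, and then use the freedom to choose the $b_i$ to isolate a single coefficient of $\alpha$. The hypothesis $[\ell:\ell^p]=p^c$ ensures that $\ell$ admits a $p$-basis $t_1,\dots,t_c$; the differentials $dt_{i_1}\wedge\dots\wedge dt_{i_q}$ (for strictly increasing tuples) then form a basis of $\Omega^q_\ell$ as an $\ell$-module, and in particular $\Omega^c_\ell = \ell\cdot dt_1\wedge\dots\wedge dt_c$ is free of rank one.

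Assume for contradiction that $\alpha\neq 0$, and write $\alpha = \sum_I a_I\, dt_{i_1}\wedge\dots\wedge dt_{i_q}$ where $I=\{i_1<\dots<i_q\}$ runs over $q$-subsets of $\{1,\dots,c\}$ and $a_I\in\ell$. Fix an $I_0$ with $a_{I_0}\neq 0$, let $J_0=\{j_1<\dots<j_{c-q}\}$ be its complement, and take $b_k=t_{j_k}$. For any $q$-subset $I\neq I_0$ we have $I\cap J_0\neq\emptyset$, so $dt_I\wedge \dlog b_1\wedge\dots\wedge\dlog b_{c-q}=0$ in $\Omega^c_\ell$; the only surviving term is the one indexed by $I_0$, and a short computation yields
\[
a\alpha\wedge\dlog b_1\wedge\dots\wedge\dlog b_{c-q} = \pm\frac{a\,a_{I_0}}{t_{j_1}\cdots t_{j_{c-q}}}\,dt_1\wedge\dots\wedge dt_c.
\]
As $a$ runs over $\ell$ the scalar $\pm a\,a_{I_0}/(t_{j_1}\cdots t_{j_{c-q}})$ runs over all of $\ell$, because $a_{I_0}\neq 0$. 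Thus the hypothesis forces $\delta_1(x\cdot dt_1\wedge\dots\wedge dt_c)=0$ for every $x\in\ell$, i.e.\ the restriction of $\delta_1$ to $\Omega^c_\ell$ vanishes.

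To conclude, I will show that this last statement contradicts $\H^{c+1}_p(\ell)\neq 0$, by verifying that $\delta_1\colon\Omega^c_\ell/d\Omega^{c-1}_\ell\to\H^{c+1}_p(\ell)$ is surjective in top degree. The exact sequence~\eqref{eq:F-1} of étale sheaves on $\Spec\ell$ gives, on taking Galois cohomology,
\[
\Omega^c_\ell/d\Omega^{c-1}_\ell\xrightarrow{\delta_1}\H^{c+1}_p(\ell)\to\H^1(\ell,\Omega^c_{\ell^s}),
\]
and the term on the right vanishes: the Galois group fixes each $dt_i$, so $\Omega^c_{\ell^s}$ is isomorphic, as an additive Galois module, to the structure sheaf $\ell^s$ via the trivialisation induced by $dt_1\wedge\dots\wedge dt_c$, and $\H^1(\ell,\ell^s)=0$ by additive Hilbert~90. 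The main technical point of the proof is precisely this surjectivity assertion; once it is in hand, the argument above yields $\H^{c+1}_p(\ell)=0$, contradicting the hypothesis, and thus $\alpha=0$.
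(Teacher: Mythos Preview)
Your proof is correct and follows essentially the same route as the paper's: both reduce to showing that $\alpha\neq 0$ would force $\delta_1$ to vanish on all of $\Omega^c_\ell$, contradicting surjectivity onto $\H^{c+1}_p(\ell)$. The only difference is cosmetic: the paper phrases the first step abstractly via the perfect pairing $\Omega^q_\ell\times\Omega^{c-q}_\ell\to\Omega^c_\ell$ and the fact that the elements $a\,\dlog b_1\wedge\dots\wedge\dlog b_{c-q}$ generate $\Omega^{c-q}_\ell$, whereas you make this explicit with a $p$-basis. Your justification of the surjectivity of $\delta_1$ via the long exact sequence and additive Hilbert~90 is a detail the paper simply asserts.
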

\begin{proof}
By~\cite[\href{https://stacks.math.columbia.edu/tag/07P2}{Lemma 07P2}]{stacks-project}, $\Omega^1_\ell$ is a $\ell$-vector space of dimension $c$.
By linear algebra, $\Omega^c_\ell$ has dimension $1$, and the wedge product $\Omega^q_\ell \times \Omega^{c-q}_\ell \to \Omega^c_\ell$ is a perfect $\ell$-linear pairing.
The elements $a \dlog b_1 \wedge \dotsb \wedge \dlog b_{c-q}$ for $a \in \ell$ and $b_1, \dotsc, b_{c-q} \in \ell^\times$ generate $\Omega^{c-q}_\ell$; so, if $\alpha \neq 0$, then the hypothesis implies that $\delta_1(\Omega^c_\ell)=0$.
But $\delta_1 \colon \Omega^c_\ell \to \H^{c+1}_p(\ell)$ is surjective, so this contradicts $\H^{c+1}_p(\ell) \neq 0$. 
\end{proof}

\section{Blowing up}\label{sec:blowups}

In this section, we show how information about the refined Swan conductor $[\alpha, \beta]_{\pi,n}$ is retained under blowups. Namely, in Lemma~\ref{lem:blowup}, we show that after a blowup at a point $P_0$ on the special fibre, one can read off $\alpha_{P_0}$ and $\beta_{P_0}$ from the residues at logarithmic poles of some relevant differentials.

First, we introduce some notation.


\subsection{Residues}
Let $q\geq 1$, let $X\to S$ be a morphism of schemes and let $D\subset X$ be an effective Cartier divisor.
Suppose that $X$ \'{e}tale locally along $D$ looks like $D\times \mathbb{A}^1$, in the sense that the de Rham complex of log poles is defined for $D\subset X$ over $S$ (see~\cite[\href{https://stacks.math.columbia.edu/tag/0FMV}{Definition 0FMV}]{stacks-project}); this is for example true if $D$ is a smooth prime divisor in a smooth variety $X$ over a field.
Recall (from~\cite[\href{https://stacks.math.columbia.edu/tag/0FMU}{Section 0FMU}]{stacks-project}, for example)
the exact sequence
\begin{equation}\label{eq:residuediff}
0 \to \Omega^q_{X/S} \to \Omega^q_{X/S}(\log D) \xrightarrow{\rho} \Omega^{q-1}_D \to 0
\end{equation}
of sheaves on $X$, where $\rho$ is the residue map:
on a sufficiently small open set $U$, we have $D\cap U=(f)$ for some rational function $f$ and any section $\omega$ of $\Omega^q(\log D)$ on $U$ can be written as $\omega = \eta + g\, \dlog f$ with $\eta \in \Omega^q(U)$ and $g \in \Omega^{q-1}(U)$; then $\rho(\omega)$ is defined to be $g|_D$, which is independent of the choices made.

\subsection{Residues after a blowup}
For the remainder of this section, let $Y$ be a smooth variety of dimension $m$ over a field $L$, $P_0 \in Y(L)$ a point, $R=\O_{Y,P_0}$ the local ring at $P_0$ with maximal ideal $\m$.  Let $f \colon \widetilde{Y} \to \Spec R$ be the blowup at $P_0$;
by definition, we have $\widetilde{Y} = \Proj S$, where $S$ is the graded ring $R \oplus \bigoplus_{i \ge 1} \m^i$, with the grading putting the factor $\m^i$ in degree $i$.
Let $Z = f^{-1}(P_0)$ be the exceptional fibre, isomorphic to $\P^{m-1}_L$.
There is a natural homomorphism $\m \to S$ sending $x \in \m$ to the element $x^{(1)} \in S$, which is $x$ considered as an element of the degree-1 factor of $S$.
This induces a natural homomorphism $\m/\m^2 \to \H^0(Z,\O(1))$, which we also denote by $x \mapsto x^{(1)}$.

We begin by identifying some natural isomorphisms.

The map $d \colon \m/\m^2 \to \Omega^1_{Y/L}|_{P_0} = \Omega^1_{R/L} \otimes_R L$ is an isomorphism; let 
\begin{equation}\label{eq:psi}
\psi \colon \Omega^1_{Y/L}|_{P_0} \to \H^0(Z,\O_Z(1))
\end{equation}
be the inverse of this isomorphism composed with the map $x \mapsto x^{(1)}$ defined above (so $\psi(dx) = x^{(1)}$).

By~\cite[Theorem~II.8.13]{Hartshorne}, there is a short exact sequence of sheaves
\[
0 \to \Omega^1_{Z/L}(2) \to \O_Z(1)^m \to \O_Z(2) \to 0
\]
on $Z$, giving a short exact sequence
\[
0 \to \H^0(Z,\Omega^1_{Z/L}(2)) \to \H^0(Z,\O_Z(1))^m \to \H^0(Z,\O_Z(2)).
\]
Observe that $\H^0(Z,\O_Z(1))^m$ can be identified with $\H^0(Z,\O_Z(1))^{\otimes 2}$: one concrete way to see this is by choosing an $L$-basis (comprising $m$ basis vectors) for $\H^0(Z,\O_Z(1))$.
Having made this identification, the map to $\H^0(Z,\O_Z(2))$ is given by $x \otimes y \mapsto xy$, and so its kernel is naturally (up to a choice of sign) identified with $\bigwedge^2 \H^0(Z,\O_Z(1))$ under the embedding $x \wedge y \mapsto (x \otimes y) - (y \otimes x)$.
Combining this with the isomorphism $\psi$ gives an isomorphism
\begin{equation}\label{eq:varphi}
\varphi \colon \Omega^2_{Y/L}|_{P_0} \to \H^0(Z,\Omega^1_{Z/L}(2)).
\end{equation}
The proof of~\cite[Theorem~II.8.13]{Hartshorne} leads to the following explicit description of $\varphi$: if $x_1, \dotsc, x_m$ form a basis for $\m/\m^2$, then 
\[
\varphi(dx_i \wedge dx_j) = (x_j^{(1)})^2 d(x_i^{(1)} / x_j^{(1)}).
\]

We are now ready to state the main result of this section.

\begin{lemma}\label{lem:blowup}
Fix $x \in \m \setminus \m^2$ and let $\pi_Z = f^* x$.
\begin{enumerate}
\item\label{it:piZ} The element $\pi_Z$ is a local parameter in $\O_{\widetilde{Y},Z}$.
\item\label{it:rswZ} Take $\beta \in \Omega^1_{R/L}$ and denote by $\beta_{P_0}$ the image of $\beta$ in $\Omega^1_{Y/L}|_{P_0}$.
Then $(f^* \beta)/\pi_Z$ lies in $\Omega^1_{\O_{\widetilde{Y},Z}}(\log Z)$ and has residue $\psi(\beta_{P_0}) / x^{(1)}$.
\item\label{it:swZ} If furthermore $\beta_{P_0} \neq 0$, then $(f^* \beta)/\pi_Z^r$ does not lie in $\Omega^1_{\O_{\widetilde{Y},Z}}(\log Z)$ for $r>1$.
\item\label{it:rswZ2} Take $\alpha \in \Omega^2_{R/L}$ and denote by $\alpha_{P_0}$ the image of $\alpha$ in $\Omega^2_{Y/L}|_{P_0}$.
Then $(f^* \alpha)/\pi_Z^2$ lies in $\Omega^2_{\O_{\widetilde{Y},Z}}(\log Z)$ and has residue $\varphi(\alpha_{P_0}) / (x^{(1)})^2$. 
\item\label{it:swZ2} If furthermore $\alpha_{P_0} \neq 0$, then $(f^* \alpha)/\pi_Z^r$ does not lie in $\Omega^2_{\O_{\widetilde{Y},Z}}(\log Z)$ for $r>2$.
\end{enumerate}
\end{lemma}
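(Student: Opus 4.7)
I plan to prove all five parts by a direct local calculation on one affine chart of the blowup. Extend $x$ to a regular system of parameters $x = x_1, x_2, \ldots, x_m$ of $\m$. The blowup $\tilde Y$ is then covered by affine charts, and on the chart $U_1 = \Spec R[y_2,\ldots,y_m]$ with $y_j = x_j/x$, the map $f$ is given by $f^*x_1 = x$ and $f^*x_j = x y_j$ for $j \geq 2$. The exceptional divisor $Z \cap U_1$ is cut out by the single equation $x = 0$, and the generic point of $Z$ lies in $U_1$; this immediately gives~\eqref{it:piZ}. Moreover, on $Z \cap U_1$ the restrictions $y_j|_Z$ are the standard affine coordinates on the projective space $Z = \P(\m/\m^2)$ in the chart where $x_1^{(1)} \neq 0$, so that $y_j|_Z = x_j^{(1)}/x_1^{(1)}$.

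For~\eqref{it:rswZ} and~\eqref{it:swZ}, write $\beta = \sum_i g_i\, dx_i$ with $g_i \in R$. Using $d(f^*x_j) = y_j\,dx + x\,dy_j$ for $j \geq 2$, one obtains
\[
f^*\beta = \Big( g_1 + \sum_{j \geq 2} g_j y_j \Big) dx + \sum_{j \geq 2} x g_j\, dy_j,
\]
so $(f^*\beta)/\pi_Z$ equals the regular $1$-form $\sum_{j \geq 2} g_j\, dy_j$ plus $\bigl(g_1 + \sum_{j \geq 2} g_j y_j\bigr)\,\dlog x$, which has regular coefficient. This proves membership in $\Omega^1_{\O_{\tilde Y, Z}}(\log Z)$, and reading off the residue gives $g_1(P_0) + \sum_{j \geq 2} g_j(P_0)\,(x_j^{(1)}/x_1^{(1)}) = \psi(\beta_{P_0})/x^{(1)}$ on $Z \cap U_1$. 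For~\eqref{it:swZ}, if $\beta_{P_0} \neq 0$ then the polynomial $g_1(P_0) + \sum g_j(P_0) y_j$ is a nonzero element of $L[y_2,\ldots,y_m]$, so the coefficient of $dx$ in $f^*\beta$ has $v_Z$-valuation exactly $0$; dividing by $\pi_Z^r$ makes this coefficient have valuation $-r$, and membership in $\Omega^1(\log Z)$ demands this be $\geq -1$.

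The strategy for~\eqref{it:rswZ2} and~\eqref{it:swZ2} is analogous. Using $d(f^*x_i) \wedge d(f^*x_j) = x(y_i\, dx \wedge dy_j - y_j\, dx \wedge dy_i) + x^2\, dy_i \wedge dy_j$ for $2 \leq i < j$ and $d(f^*x_1) \wedge d(f^*x_j) = x\, dx \wedge dy_j$, one obtains a decomposition
\[
(f^*\alpha)/\pi_Z^2 = \eta_0 \wedge \dlog x + \xi_0
\]
with $\eta_0 \in \Omega^1_{U_1}$ and $\xi_0 \in \Omega^2_{U_1}$ computed explicitly from the coefficients of $\alpha$; this establishes the first half of~\eqref{it:rswZ2}, with residue $\eta_0|_Z$. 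Plugging $dx_i \wedge dx_j$ into the formula $\varphi(dx_i \wedge dx_j) = (x_j^{(1)})^2\, d(x_i^{(1)}/x_j^{(1)})$, trivializing $\Omega^1_Z(2)$ on $U_1 \cap Z$ by the section $(x_1^{(1)})^{-2}$, and rewriting everything in the $y$-coordinates then matches $\eta_0|_Z$ term-by-term with $\varphi(\alpha_{P_0})/(x^{(1)})^2$. Finally, for~\eqref{it:swZ2}, if $\alpha_{P_0} \neq 0$ then $\eta_0|_Z \neq 0$ by injectivity of $\varphi$, so the coefficient of some $dx \wedge dy_j$ term in $(f^*\alpha)/\pi_Z^2$ has $v_Z$-valuation exactly $-1$; multiplication by $\pi_Z^{2-r}$ drops it below $-1$ for $r > 2$, obstructing log-form membership. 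The main technical obstacle throughout is tracking the signs and the chart-level identification of $\Omega^1_Z(2)$ needed to reconcile $\eta_0|_Z$ with $\varphi(\alpha_{P_0})/(x^{(1)})^2$; once this bookkeeping is fixed, the matching of terms is mechanical.
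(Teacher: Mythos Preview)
Your proof is correct and follows essentially the same approach as the paper's own proof: both extend $x$ to a regular system of parameters, work on the affine chart of the blowup where $x^{(1)}$ is inverted, and compute $f^*\beta$ and $f^*\alpha$ explicitly in terms of the chart coordinates (your $y_j$ are the paper's $u_j = x_j^{(1)}/x^{(1)}$). The only minor difference is that for parts~\eqref{it:swZ} and~\eqref{it:swZ2} you argue directly via the valuation of the $dx$- (respectively $dx \wedge dy_j$-) coefficient, whereas the paper deduces non-membership from the nonvanishing of the residue already computed; these are equivalent.
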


\begin{proof}
Since the statements concern $\O_{\widetilde{Y},Z}$, we may work on the affine piece $U$ of $\widetilde{Y}$ obtained by inverting $x^{(1)}$.  Denote by $f_U$ the restriction of $f$ to $U$.
We use standard facts about blowups: see, for example, \cite[\href{https://stacks.math.columbia.edu/tag/052P}{Section 052P}]{stacks-project}.
Extend $x$ to a basis $x=x_1, x_2, \dotsc, x_m$ for $\m/\m^2$; then we have $U = \Spec R'$, where
\[
R' = R[u_2, \dotsc, u_m] / (xu_2 - x_2, \dotsc, xu_m - x_m)
\]
and $u_i = x_i^{(1)}/x^{(1)}$.
The ideal $\m R'$ is principal, generated by the image of $x$, proving~(\ref{it:piZ}).  In what follows we will often abuse notation and identify $R$ with its image in $R'$.

We can write $\beta = a_1 dx_1 + \dotsb + a_m dx_m$ with $a_1, \dotsc, a_m \in R$.
Then
\[
f_U^*(\beta) = a_1 dx_1 + a_2 (u_2 dx_1 + x_1 du_2) + \dotsb + a_m (u_m dx_1 + x_1 du_m).
\]
and so (writing $\pi_Z = x = x_1$ in $R'$)
\[
f_U^*(\beta)/\pi_Z = (a_2 du_2 + \dotsb + a_m du_m) + (a_1 + a_2 u_2 + \dotsb + a_m u_m) d\pi_Z/\pi_Z.
\]
By definition, this lies in $\Omega^1_{\O_{\widetilde{Y},Z}}(\log Z)$.  Since each $a_i$ lies in $R$, the restriction of $a_i$ to $Z$ is the constant function $a_i(P_0)$.
The residue of $f_U^*(\beta)/\pi_Z$ is therefore
\begin{align*}
a_1(P_0) + a_2(P_0) u_2|_Z + \dotsb a_m(P_0) u_m|_Z
&= (a_1(P_0) x_1^{(1)} + \dotsc a_m(P_0) x_m^{(1)} ) / x_1^{(1)} \\
&= \psi(\beta_{P_0}) / x_1^{(1)}
\end{align*}
proving~(\ref{it:rswZ}).
Statement~(\ref{it:swZ}) follows easily: if $(f^* \beta)/\pi_Z^r$ were to lie in $\Omega^1_{\O_{\widetilde{Y},Z}}(\log Z)$ with $r>1$, then $(f^* \beta)/\pi_Z$ would lie in $\Omega^1_{\O_{\widetilde{Y},Z}}$; but this is not the case, since by~\eqref{it:rswZ} its residue is non-zero.

To prove~(\ref{it:rswZ2}), write $\alpha = \sum_{i > j} a_{ij} dx_i \wedge dx_j$ with $a_{ij} \in R$.
We treat the terms separately.

For terms with $j=1$, we have
\[
f_U^*(a_{i1} dx_i \wedge dx_1) = a_{i1} d(x_1 u_i) \wedge dx_1 = a_{i1} x_1 du_i \wedge dx_1
\]
and so
\[
\pi_Z^{-2} f_U^*(a_{i1} dx_i \wedge dx_1) = a_{i1} du_i \wedge d\pi_Z/\pi_Z,
\]
which has residue
\[
a_{i1}(P_0) d(x_i^{(1)}/x_1^{(1)}) = \varphi(a_{i1}(P_0) dx_i \wedge dx_1) / (x_1^{(1)})^2.
\]
as required.  For terms with $i,j \neq 1$, we have
\begin{align*}
f_U^*(a_{ij} dx_i \wedge dx_j) &= a_{ij} d(x_1 u_i) \wedge d(x_1 u_j) \\
&= a_{ij}( x_1^2 du_i \wedge du_j + x_1 u_i dx_1 \wedge du_j + x_1 u_j du_i \wedge dx_1 )  
\end{align*}
and so
\[
\pi_Z^{-2} f_U^*(a_{ij} dx_i \wedge dx_j)
= a_{ij} du_i \wedge du_j + a_{ij}( u_j du_i - u_i du_j) \wedge d\pi_Z/\pi_Z,
\]
with residue
\[
a_{ij}(P_0) u_j^2 d(x_i^{(1)}/x_j^{(1)}) = \varphi(a_{ij}(P_0) dx_i \wedge dx_j) / (x_1^{(1)})^2.
\]
This proves~(\ref{it:rswZ2}), and~(\ref{it:swZ2}) follows as in the first case.
\end{proof}

\section{Some calculations for $\mathbb{P}^n$}\label{sec:Pn}
This section collects some calculations for projective space. In Section~\ref{sec:diffPn} we study some spaces of differentials on projective space with poles of bounded (logarithmic) order along a hyperplane, and show to what extent they are determined by their residues. This will be used in Section~\ref{sec:endgames} in the proof of Theorem~\ref{thm:onelemma_intro} in conjunction with a result of Kato (Proposition~\ref{prop:7.3}) to transfer information about the refined Swan conductor along a chain of blowups. The relevant projective spaces will be the exceptional divisors of these blowups. In Section~\ref{sec:SwanPn} we describe the graded pieces of Kato's filtration by Swan conductor on $\H^1(E\setminus Z, \Q/\Z)$, where $E$ is projective space over a field of characteristic $p$ and $Z$ is a hyperplane. This will be used in Section~\ref{sec:endgames} in the proof of Theorem~\ref{thm:onelemma_intro} once our successive blowups have reduced the Swan conductor to zero, and Proposition~\ref{prop:tame} has reduced our task to computing the residue map $\partial$ via Lemma~\ref{lem:inv_trace}.

\subsection{Differentials with logarithmic poles along a hyperplane in $\P^n$}\label{sec:diffPn}
Let $L$ be a field and let $\P^n=\P^n_L$ have coordinates $X_0, \dotsc, X_n$.
Let $H \subset \P^n$ be the hyperplane defined by $X_0=0$.

\begin{lemma}\label{lem:logdiff}
\begin{enumerate}
\item\label{logdiff1} The homomorphism
\begin{equation}\label{eq:rho}
\H^0(\P^n, \Omega^1(\log H)(H)) \xrightarrow{\rho} \H^0(H,\O(1)),
\end{equation}
obtained by forming the tensor product with $\O(H)$ in~\eqref{eq:residuediff} and applying $\O(H)|_H \cong \O_H(1)$, is an isomorphism.
$\H^0(\P^n, \Omega^1(\log H)(H))$ has a basis $\{ d(X_i/X_0) : 1 \le i \le n \}$
satisfying $\rho(d(X_i/X_0)) = -X_i|_H$.
\item\label{logdiff2} $\H^0(\P^n, \Omega^1(2H))$ has a basis consisting of the $n(n+1)/2$ elements
\[
\{ (X_i^2/X_0^2)d(X_j/X_i) = -(X_j^2/X_0^2)d(X_i/X_j) : 0\le i<j \le n \}.
\]
The $n$ of these with $i=0$ make up the aforementioned basis of the subspace $\H^0(\P^n, \Omega^1(\log H)(H))$.
The natural morphism of sheaves $\Omega^1_{\P^n} \to \iota_* \Omega^1_H$ (where $\iota$ is the inclusion of $H$ in $\P^n$) gives rise to a short exact sequence
\[
0 \to \H^0(\P^n, \Omega^1(\log H)(H)) \to \H^0(\P^n,\Omega^1(2H)) \xrightarrow{\omega \mapsto \omega|_H} \H^0(H,\Omega^1(2)) \to 0
\]
in which, for $0<i<j \le n$, the basis element $(X_i^2/X_0^2) d(X_j/X_i)$ maps to $X_i^2 d(X_j/X_i) \in \H^0(H,\Omega^1(2))$.
\item\label{logdiff3} The homomorphism
\[
\H^0(\P^n,\Omega^2(\log H)(2H)) \xrightarrow{\rho} \H^0(H,\Omega^1(2)),
\]
obtained by forming the tensor product with $\O(2H)$ in~\eqref{eq:residuediff} and using $\O(2H)|_H \cong \O_H(2)$, is an isomorphism.
$\H^0(\P^n,\Omega^2(\log H)(2H))$ has a basis consisting of the $n(n-1)/2$ elements 
\[
\{ d(X_i/X_0) \wedge d(X_j/X_0) : 1 \le i<j \le n \}
\]
satisfying $\rho(d(X_i/X_0) \wedge d(X_j/X_0)) = X_i^2 d(X_j/X_i)$.
\end{enumerate}
\end{lemma}

\begin{proof}
We will repeatedly use the short exact sequence
\begin{equation}\label{eq:omega-seq}
0 \to \Omega^1_{\P^n} \to \O_{\P^n}(-1)^{n+1} \to \O_{\P^n} \to 0
\end{equation}
of~\cite[Theorem~II.8.13]{Hartshorne} which, together with the standard calculation of the cohomology groups $\H^i(\P^n,\O(r))$, lets us calculate the cohomology of $\Omega^1(r)$ (or, equivalently, $\Omega^1(rH)$).
For $r>0$,
taking the tensor product with $\O(r)$ and taking cohomology gives the exact sequence
\begin{multline}\label{eq:omega-long-seq}
0 \to \H^0(\P^n,\Omega^1(r)) \to \H^0(\P^n, \O(r-1))^{n+1} \xrightarrow{\alpha}
\H^0(\P^n, \O(r)) \to \\ \to \H^1(\P^n, \Omega^1(r)) \to 0,
\end{multline}
where we have used $\H^1(\P^n,\O(r-1))=0$.
Identifying $\H^0(\P^n,\O(r-1))$ with the space of homogeneous polynomials of degree $r-1$,
the map $\alpha$ is given by
\[
\alpha(f_0, \dotsc, f_n) = f_0 X_0 + \dotsb + f_n X_n.
\]

To prove~(\ref{logdiff1}), 
take $r=1$; then $\alpha$ is an isomorphism, showing that $\H^0(\P^n,\Omega^1(1))$ and $\H^1(\P^n,\Omega^1(1))$ are both zero.
It then follows easily that 
\[
\rho \colon \H^0(\P^n, \Omega^1(\log H)(H)) \to \H^0(H,\O(1))
\]
is an isomorphism.
We have
\[
d(X_i/X_0) = (X_i/X_0) \dlog (X_i/X_0) = -(X_i/X_0) \dlog (X_0/X_i)
\]
showing $\rho(d(X_i/X_0)) = -X_i|_H$. The elements $-X_i|_H$ form a basis of $\H^0(H,\O(1))$, showing that the $d(X_i/X_0)$ form a basis for $\H^0(\P^n, \Omega^1(\log H)(H))$.

To prove~(\ref{logdiff2}), take $r=2$; then a basis for the kernel of $\alpha$ is given by the $n(n+1)/2$ elements having $X_i$ in the $j$th position and $-X_j$ in the $i$th position, for $i < j$.  Therefore $\H^0(\P^n,\Omega^1(2H))$ has dimension $n(n+1)/2$.

We can now prove by induction on $n$ that the claimed elements do indeed form a basis.  For $n=1$ the dimension is $1$ and it is clear. Assuming the statement to be true for $\P^{n-1}$, the elements $X_i^2 d(X_j/X_i)$ with $0<i<j\le n$ form a basis for $\H^0(H,\Omega^1(2))$.
Now, of the claimed basis elements $(X_i^2/X_0^2)d(X_j/X_i)$, those with $i=0$ form a basis for $\H^0(\P^n,\Omega^1(\log H)(H))$ by~(\ref{logdiff1}) and restrict to zero on $H$.
Those with $i>0$ map bijectively onto our basis for $\H^0(H,\Omega^1(2))$, showing that all together they form a basis for $\H^0(\P^n,\Omega^1(2H))$.

To prove~(\ref{logdiff3}), we first verify 
$\rho(d(X_i/X_0) \wedge d(X_j/X_0)) = X_i^2 d(X_j/X_i)$
by writing
\begin{align*}
d(X_i/X_0) \wedge d(X_j/X_0) &= (X_iX_j/X_0^2) \dlog(X_i/X_0) \wedge \dlog(X_j/X_0) \\
&= (X_iX_j/X_0^2) \dlog(X_j/X_0) \wedge \dlog(X_0/X_i) \\
&= (X_iX_j/X_0^2) (\dlog(X_j/X_i) - \dlog(X_0/X_i))\wedge \dlog(X_0/X_i) \\
&= (X_i^2/X_0^2) d(X_j/X_i)\wedge \dlog(X_0/X_i)
\end{align*}
which, since $X_0/X_i$ is a local parameter at $H$, gives the claimed result.

Given the exact sequence
\[
0 \to \H^0(\P^n,\Omega^2(2H)) \to \H^0(\P^n,\Omega^2(\log H)(2H))
\xrightarrow{\rho} \H^0(H,\Omega^1(2)),
\]
and that the elements $X_i^2d(X_j/X_j)$ form a basis for $\H^0(H,\Omega^1(2))$ by~(\ref{logdiff2}), it is now enough to show $\H^0(\P^n,\Omega^2(2H))=0$.
To see this, consider the short exact sequence
\[
0 \to \Omega^2_{\P^n} \to \wedge^2 (\O_{\P^n}(-1)^{n+1}) \to \Omega^1_{\P^n} \to 0
\]
arising from~\eqref{eq:omega-seq}; for details see~\cite[\href{https://stacks.math.columbia.edu/tag/0FUK}{Lemma 0FUK}]{stacks-project} and its proof.
As remarked there, the middle term is isomorphic to the direct sum of $n(n+1)/2$ copies of $\O(-2)$.
Twisting and taking global sections gives a short exact sequence
\[
0 \to \H^0(\P^n,\Omega^2(2)) \to \H^0(\P^n,\O)^{n(n+1)/2} \to \H^0(\P^n,\Omega^1(2)) \to \H^1(\P^n,\Omega^2(2))
\]
in which the last term vanishes by \emph{loc.\ cit.}.
Now comparing dimensions shows $\H^0(\P^n,\Omega^2(2))=0$, completing the proof.
\end{proof}

\subsection{(Refined) Swan conductors for $\H^1(E\setminus Z, \Q/\Z)$}\label{sec:SwanPn}

Let $\F$ be a field of characteristic $p>0$, let $E=\P_\F^m$ and let $Z\subset E$ be a hyperplane. Let $\kappa(E)_Z$ be the field of fractions of the Henselisation of $\O_{E,Z}$. Let $\frac{1}{p}$ denote the canonical map $\H^1_p(\kappa(E)_Z)\to \H^1(\kappa(E)_Z, \Q/\Z)$ induced by identifying $\Z/p$ with the $p$-torsion in $\Q/\Z$.

\begin{lemma}\label{lem:H1EminusZ}
Let $E=\P^m_\F$ with homogeneous coordinates $X_0, \dotsc, X_m$ and let $Z$ be the hyperplane $X_0=0$.
Write $\pi = X_0/X_1$.
Let $\kappa(E)_Z$ be the field of fractions of the Henselisation of $\O_{E,Z}$ and
consider the filtration on $\H^1(E\setminus Z, \Q/\Z)$ obtained by pulling back $\{ \fil_n \H^1(\kappa(E)_Z) \}$.
\begin{enumerate}
\item\label{it:fil0EZ} $\fil_0 \H^1(E\setminus Z, \Q/\Z)$ consists of the constant classes $\H^1(\F,\Q/\Z)$.
\item For $p \nmid n$, there is an isomorphism
\[
\H^0(Z,\O(n)) \to \gr_n \H^1(E\setminus Z, \Q/\Z)
\]
given by $F \mapsto \frac{1}{p}\delta_1(\widetilde{F}/X_0^n)$, where $\widetilde{F} \in \H^0(E,\O(n))$ is any lift of $F$ and $\delta_1 \colon \kappa(E)_Z \to \H^1_p(\kappa(E)_Z)$ is the Artin--Schreier map. 
The refined Swan conductor is given by
\[
\rsw_n\left(\frac{1}{p}\delta_1(\widetilde{F}/X_0^n)\right) = \big[ -d(F/X_1^n) , n F/X_1^n  \big]_{\pi, n}.
\]
\end{enumerate}
\end{lemma}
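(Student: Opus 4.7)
The strategy for~\eqref{it:fil0EZ} is purity for étale $\pi_1$. By Kato's characterisation of $\fil_0$ (see~\cite[\S 1]{K} and the discussion in Section~\ref{sec:tame} above), an element of $\fil_0 \H^1(\kappa(E)_Z, \Q/\Z)$ is unramified along $Z$, so an element of $\fil_0\H^1(E\setminus Z, \Q/\Z)$ represents a $\Q/\Z$-valued character of $\pi_1^{\et}(E\setminus Z)$ that is unramified at the generic point of $Z$ and (a fortiori) at all other codimension-one points of $E$. By Zariski--Nagata purity on the smooth variety $E$, such a character extends uniquely to one of $\pi_1^{\et}(E)$; since $E = \P^n_\F$ gives $\pi_1^{\et}(E) = \pi_1^{\et}(\Spec\F)$, this character comes from $\H^1(\F, \Q/\Z)$. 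The reverse inclusion is immediate, since the pullback of any constant class is everywhere unramified.

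For part~(2), I would first check well-definedness modulo $\fil_{n-1}$: two lifts $F, F'$ of $\bar F$ to $\H^0(E,\O(n))$ differ by $X_0 G$ with $G \in \H^0(E,\O(n-1))$, and $\frac{1}{p}\delta_1(G/X_0^{n-1})$ has pole of order at most $n-1$ at $Z$, hence Swan conductor at most $n-1$ by the standard Artin--Schreier bounds. To compute $\rsw_n(\chi)$ for $\chi = \frac{1}{p}\delta_1(F/X_0^n)$, I would use the defining identity $\{\chi, 1+\pi^n T\} = \lambda_\pi(T\alpha, T\beta)$. Writing $g = F/X_1^n$ so that $F/X_0^n = g\pi^{-n}$, and expanding $\dlog(1+u) = du/(1+u)$ at $u = \pi^n T$, the leading-order contribution is
\[
g\pi^{-n}\dlog(1+\pi^n T) = n g T \dlog\pi + g\,dT + (\text{$\pi$-regular terms}),
\]
where the $\pi$-regular remainder vanishes in the natural image in $V^2_p(\O_{\kappa(E)_Z}[T])$. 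Matching against $\lambda_\pi(T\alpha, T\beta) = \delta_1(T\alpha + T\beta \wedge \dlog\pi)$ and using $g\,dT = d(gT) - T\,dg \equiv -T\,dg \pmod{\ker\delta_1}$, one reads off $\alpha = -d\bar g = -d(\bar F/X_1^n)$ and $\beta = n\bar g = n\bar F/X_1^n$, as claimed.

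Injectivity of $\bar F \mapsto \chi \bmod \fil_{n-1}$ then follows from the general injectivity of $\rsw_n$ on $\fil_n/\fil_{n-1}$ (Definition~\ref{def:rsw} and~\cite[Corollary~5.2]{K}): when $\bar F \neq 0$, the component $\beta = n\bar F/X_1^n$ is nonzero because $p \nmid n$. For surjectivity, I would identify $\H^1(E\setminus Z, \Z/p)$ with $\O(E\setminus Z)/(F-1)\O(E\setminus Z)$ via Artin--Schreier, where $\O(E\setminus Z) = \F[X_1/X_0, \dotsc, X_n/X_0]$ is a polynomial ring. The Swan conductor at $Z$ of $\frac{1}{p}\delta_1(h)$ coincides with the total degree of a Frobenius-reduced representative, so every class in $\fil_n$ is represented by an $h$ of degree at most $n$; the degree-$n$ part is naturally $\H^0(Z, \O(n))$ via $F \mapsto F/X_0^n$. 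Finally, $\gr_n\H^1(E\setminus Z, \Q/\Z)$ is annihilated by $p$ when $p \nmid n$ because the refined Swan conductor factors through an $\F$-vector space killed by $p$.

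The main obstacle I expect is the careful sign- and leading-order bookkeeping in the $\rsw_n$ identity, together with the verification that the equal-characteristic identification of $V^2_p$ with the Artin--Schreier symbol group, on which the manipulation $g\,dT \equiv -T\,dg$ is most naturally carried out, is compatible with Kato's map $\lambda_\pi$ as defined in Section~\ref{sec:lambda}.
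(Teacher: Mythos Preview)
Your argument for~\eqref{it:fil0EZ} has a gap. It is not true that every element of $\fil_0 \H^1(\kappa(E)_Z,\Q/\Z)$ is unramified along~$Z$: by Kato's characterisation~\cite[Proposition~6.1]{K}, $\fil_0$ is generated by the unramified classes \emph{together with} all prime-to-$p$ torsion, and the latter may be tamely ramified at~$Z$. Your purity argument therefore only handles the $p$-primary part. The paper deals with the prime-to-$p$ part separately, using the Hochschild--Serre sequence and the vanishing of $\H^1(\bar E\setminus\bar Z,\Q/\Z)[\text{prime-to-}p]$ (since $\bar E\setminus\bar Z\cong\mathbb{A}^m$). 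This is an easy fix, but as written the argument is incomplete.

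For part~(2), your heuristic calculation of $\rsw_n$ is essentially a rederivation of~\cite[Lemma~3.7]{K}, which the paper simply cites; the formal manipulation $\{\delta_1(g\pi^{-n}),1+\pi^nT\}=\delta_1(g\pi^{-n}\dlog(1+\pi^nT))$ is correct in equal characteristic via the formula at the end of~\cite[\S1.3]{K}, but you should say so rather than leave it implicit. Your surjectivity argument is genuinely different from the paper's: you use the explicit Artin--Schreier description $\H^1(E\setminus Z,\Z/p)\cong\F[u_1,\dots,u_m]/(\phi-1)$ and a degree argument, whereas the paper invokes~\cite[Theorem~7.1]{K} to force regularity of the refined Swan conductor at every point of~$Z$. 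Your route is more elementary and avoids that deeper input, but the assertion that the Swan conductor of $\tfrac{1}{p}\delta_1(h)$ equals the degree of a Frobenius-reduced representative needs justification; the cleanest way is to feed your own $\rsw_n$ formula back in (the component $n\bar F/X_1^n$ is nonzero precisely when the top-degree part of~$h$ survives and $p\nmid n$), so be careful to order the argument so that this is not circular.
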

\begin{proof}
There is an exact sequence
\[
0 \to \H^1(\F,\Q/\Z) \to \H^1(E \setminus Z, \Q/\Z) \to \H^1(\bar{E} \setminus \bar{Z}, \Q/\Z)
\]
where $\bar{E},\bar{Z}$ are base changes of $E,Z$ to a separable closure of $\F$.
By Proposition~\ref{prop:kato61}, the piece $\fil_0 \H^1(E\setminus Z, \Q/\Z)$ is generated by the prime-to-$p$ torsion in $\H^1(E \setminus Z, \Q/\Z)$ together with the elements unramified at $Z$, that is, the image of $\H^1(E, \Q/\Z)$.
We have $E= \P^m_\F$ and $E \setminus Z \cong \mathbb{A}^m_\F$.
Therefore $\H^1(\bar{E} \setminus \bar{Z}, \Q/\Z)$ has no prime-to-$p$ torsion, showing that the prime-to-$p$ torsion in $\H^1(E \setminus Z, \Q/\Z)$ all comes from $\H^1(\F,\Q/\Z)$.
Moreover, $\H^1(\bar{E},\Q/\Z)$ is trivial, showing that the image of $\H^1(E,\Q/\Z)$ also coincides with $\H^1(\F,\Q/\Z)$.
This proves~(\ref{it:fil0EZ}).

Since the filtration on $\H^1(E \setminus Z, \Q/\Z)$ is obtained by pulling back that on $\H^1(\kappa(E)_Z)$, the resulting map
\[
\gr_n \H^1(E \setminus Z, \Q/\Z) \to \gr_n \H^1(\kappa(E)_Z)
\]
is injective.
By~\cite[Lemma~3.6]{K}, for $p\nmid n$ there is a surjection
\[
h: \kappa(Z) \to \gr_n \H^1(\kappa(E)_Z), \qquad x \mapsto\frac{1}{p} \delta_1(\tilde{x} \pi^{-n}).
\]
By~\cite[Lemma~3.7]{K}, the resulting element has refined Swan conductor
\[
\rsw_n\left(\frac{1}{p}\delta_1(\tilde{x} \pi^{-n})\right) = [-dx , nx ]_{\pi, n}.
\]
In particular, this shows that $h$ is an isomorphism for $p\nmid n$.
We claim that the image of $\H^0(Z,\O(n))$ under the injective map $F \mapsto F/X_1^n$ corresponds under the isomorphism $h$ to $\gr_n \H^1(E \setminus Z, \Q/\Z)$.

Indeed, we have $h(F/X_1^n) = \frac{1}{p}\delta_1(\widetilde{F}/X_0^n)$,  which is unramified outside $Z$.
On the other hand, if $\chi$ is an element of $\gr_n \H^1(E \setminus Z, \Q/\Z)$ then we write $\chi = \frac{1}{p}\delta_1(\tilde{x}\pi^{-n})$ and consider its refined Swan conductor.
By~Theorem~\ref{thm:7.1} applied to the local rings of all points in $Z$, we see that $x$ is regular on $Z$ apart from a pole of order at most $n$ along $X_1=0$.  (For points where $\pi$ is a local equation for $Z$ this follows immediately; at other points of $Z$ a simple change of variables is needed.)
Thus $x$ is of the form $F/X_1^n$ for some $F \in \H^0(Z,\O(n))$, as claimed.
\end{proof}

\section{Tangent vectors}\label{sec:tangent}
We return to the setting and notation of Theorem~\ref{thm:onelemma_intro}, wherein $k$ is a $p$-adic field with ring of integers $\O_k$, uniformiser $\pi$ and residue field $\F$, and $X/k$ is a smooth geometrically irreducible variety with smooth model $\X/\O_k$, and geometrically irreducible special fibre $Y$.
For $r\geq 1$ we write $q_r$ for the reduction map $\X(\O_k) \to \X(\O_k / \pi^r \O_k)$, where $\pi$ denotes a uniformiser of $k$. For $P \in \X(\O_{k})$ we use $B(P,r)$ to denote the set $q_r^{-1}(q_r(P))$ of points $Q\in \X(\O_{k})$ such that $Q$ has the same image as $P$ in $\X(\O_{k}/\pi^r)$. We write $P_0$ for the image of $P$ in $Y(\F)$.

In Lemma~\ref{lem:tangent} we collect some well-known facts relating lifts of points to tangent vectors. In Lemma~\ref{lem:blowuptv} we show how to keep track of these tangent vectors when blowing up our scheme $\X$ at a point on the special fibre.

\begin{lemma}\label{lem:tangent}
There is a function $B(P,r) \to T_{P_0}(Y)$, which we denote as $Q \mapsto \tv{r}{P}{Q}$, depending on the choice of uniformiser $\pi$ and with the following properties.
\begin{enumerate}
\item\label{tr1} The function factors as $q_{r+1}$ followed by a bijection from $q_{r+1}(B(P,r))$ to $T_{P_0}(Y)$.
\item\label{tr2} For a point $Q \in B(P,r)$ and a regular function $f \in \O_{\X,P_0}$, we have
\begin{equation}\label{eq:tv}
f(Q) \equiv f(P) + \pi^r df_{P_0}(\tv{r}{P}{Q}) \pmod{\pi^{r+1}},
\end{equation}
where $df \in \Omega^1_{\O_{Y,P_0}/\F}$ is the derivative of $f$ restricted to $Y$, $df_{P_0} \in \Omega^1_Y|_{P_0}$ is
its image in the stalk at $P_0$, and $\Omega^1_{Y}|_{P_0}$ is naturally identified with the $\F$-linear dual of $T_{P_0}(Y)$.
\item\label{tr3} Let $k'/k$ be a finite extension, with $\F'/\F$ the extension of residue fields, and let $X'$ and $Y'$ be the base changes of $X$ to $k'$ and $Y$ to $\F'$, respectively.
Let $P' \in X'(k')$ be the base change of $P$.
Fix a uniformiser $\pi'$ in $k'$ and write $\pi = c (\pi')^\varepsilon$ with $c \in \O_{k'}^\times$, so that $\varepsilon$ is the ramification index of $k'/k$.
Let $\bar{c}$ denote the image of $c$ in $\F^\times$.
Then the diagram
\[
\begin{CD}
q_{r+1}(B(P,r)) @>{\tv{r}{P}{\cdot}}>> T_{P_0}(Y) \\
@VVV @VV{\times \bar{c}^r}V \\
q_{\varepsilon r+1}(B(P',\varepsilon r)) @>{\tv{\varepsilon r}{P'}{\cdot}}>> T_{P_0}(Y') \\
\end{CD}
\]
commutes, where we identify $T_{P_0}(Y')$ with $T_{P_0}(Y) \otimes_\F \F'$. 
\end{enumerate}
\end{lemma}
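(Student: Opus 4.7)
The plan is to define $\tv{r}{P}{Q}$ directly by the formula in~\eqref{eq:tv}, identify it with an $\F$-derivation, and then use the infinitesimal smoothness of $\X/\O_k$ to get the bijection in~\eqref{tr1}. Concretely, given $Q\in B(P,r)$ and $f\in\O_{\X,P_0}$, the local homomorphisms $P^\sharp,Q^\sharp\colon \O_{\X,P_0}\to\O_k$ agree modulo $\pi^r$, so $f(Q)-f(P)\in\pi^r\O_k$, and I would set
\[ D_Q(f):=\pi^{-r}\bigl(f(Q)-f(P)\bigr)\bmod\pi\;\in\;\F. \]
A short calculation, using $Q^\sharp(g)\equiv P^\sharp(g)\pmod{\pi}$, checks the Leibniz rule $D_Q(fg)=f(P_0)D_Q(g)+g(P_0)D_Q(f)$; moreover $D_Q(\pi)=0$, so $D_Q$ factors through $\O_{Y,P_0}=\O_{\X,P_0}/(\pi)$ and thus corresponds to an element $\tv{r}{P}{Q}\in T_{P_0}(Y)$. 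Equation~\eqref{eq:tv} is then nothing but the defining relation of $D_Q$, proving~\eqref{tr2}.

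Since $D_Q$ depends only on $Q$ modulo $\pi^{r+1}$, the assignment $\tv{r}{P}{\cdot}$ factors through $q_{r+1}$. To get bijectivity of $q_{r+1}(B(P,r))\to T_{P_0}(Y)$, I would invoke standard infinitesimal deformation theory: since $\X\to\Spec\O_k$ is smooth, the fibre of the truncation $\X(\O_k/\pi^{r+1})\to\X(\O_k/\pi^r)$ over $q_r(P)$ is a torsor under $T_{P_0}(Y)\otimes_\F(\pi^r\O_k/\pi^{r+1}\O_k)$, trivialised by $\pi^r\mapsto 1$, and the difference of any two lifts is precisely $D_Q$. This gives injectivity. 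For surjectivity, given $v\in T_{P_0}(Y)$ one first constructs the unique $\bar Q_{r+1}\in\X(\O_k/\pi^{r+1})$ refining $q_r(P)$ with $D=v$, and then lifts it to some $Q\in\X(\O_k)$ via the smoothness lifting property; such a $Q$ lies in $B(P,r)$ and satisfies $\tv{r}{P}{Q}=v$.

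Part~\eqref{tr3} is a direct substitution. Applying~\eqref{eq:tv} to $f\in\O_{\X,P_0}$ over $\O_{k'}$ after rewriting $\pi^r=c^r(\pi')^{er}$ yields
\[ f(Q')-f(P')\equiv c^r(\pi')^{er}\,df_{P_0}(\tv{r}{P}{Q})\pmod{(\pi')^{e(r+1)}}. \]
Since $e\ge 1$, this congruence refines the one modulo $(\pi')^{er+1}$, whose right-hand coefficient is by definition $df_{P_0}(\tv{er}{P'}{Q'})$. Comparing the two for all $f$ gives $\tv{er}{P'}{Q'}=\bar c^{\,r}\tv{r}{P}{Q}$ in $T_{P_0}(Y')=T_{P_0}(Y)\otimes_\F \F'$, as claimed.

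The main obstacle is the torsor statement invoked in~\eqref{tr1}: one has to match the explicit derivation $D_Q$ constructed above with the abstract tangent-sheaf torsor structure on infinitesimal lifts coming from the conormal sequence of the smooth morphism $\X\to\Spec\O_k$, and in particular verify that our normalisation by $\pi^r$ intertwines the two via the trivialisation $\pi^r\O_k/\pi^{r+1}\O_k\cong\F$. Once unwound, this is routine, but it is the one place where care with signs and canonical identifications is warranted.
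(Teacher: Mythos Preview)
Your argument is correct, and it takes a genuinely different route from the paper's proof. The paper proceeds by choosing an explicit embedding of an affine neighbourhood of $P_0$ into $\Aff^n_{\O_k}$, writing $Q = P + \pi^r\mathbf{v}$ in coordinates, and using the Taylor expansion and the Jacobian criterion to see that the reduction $\bar{\mathbf{v}}$ lies in $T_{P_0}(Y)$; surjectivity then comes from Hensel's Lemma in coordinates, and the independence of the embedding is deduced \emph{a posteriori} from the characterising property~\eqref{tr2}. By contrast, you define the derivation $D_Q$ intrinsically, verify the Leibniz rule and the vanishing on $(\pi)$ directly, and then appeal to the standard deformation-theoretic torsor description of lifts along a square-zero thickening for a smooth morphism. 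Your approach has the advantage of making the coordinate-independence manifest from the outset, and the identification of $D_Q$ with the torsor difference (via the trivialisation $\pi^r\O_k/\pi^{r+1}\O_k \cong \F$, $\pi^r \mapsto 1$) is exactly the compatibility check you flag at the end. The paper's approach, on the other hand, makes the later explicit calculations in Section~7 (e.g.\ Lemma~\ref{lem:blowuptv}) slightly more transparent, since those are carried out in local coordinates anyway.
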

\begin{proof}
One explicit way to see this is as follows.  
Write $d = \dim X$.  Since $\X \to \O_k$ is smooth at $P_0$, there is a neighbourhood of $P_0$ that embeds into $\Aff^n_{\O_k}$ as the zero set of $n-d$ polynomials $f_1, \dots, f_{n-d}$.
Such an embedding induces an embedding of the tangent space $T_P(\X)$ into $T_P(\Aff^n) \cong \O_k^n$. 
Consider a point $Q \in \Aff^n(\O_k)$ that is congruent to $P$ modulo $\pi^r$; we can write $Q = P + \pi^r \mathbf{v}$, where $\mathbf{v} \in T_P(\Aff^n_{\O_k})$ is a vector.
Using the Taylor expansion, the condition that $q_{r+1}(Q)$ lie in $\X$ can be written as
\begin{equation}\label{eq:taylor}
(f_1(Q), \dotsc, f_{n-d}(Q))
= (f_1(P), \dotsc, f_{n-d}(P)) + \pi^r \mathbf{J}(P) \mathbf{v} \equiv \mathbf{0} \pmod{\pi^{r+1}},
\end{equation}
where $\mathbf{J}$ is the $(n-d)\times n$ Jacobian matrix of partial derivatives of the $f_i$.
Let $\bar{\mathbf{v}} \in \F^n$ be the reduction of $\mathbf{v}$ modulo $\pi$; the reduction of $\mathbf{J}(P)$ modulo $\pi$ is $\mathbf{J}(P_0)$.
The condition~\eqref{eq:taylor} is equivalent to $\mathbf{J}(P_0) \bar{\mathbf{v}} = 0$, which simply says that $\bar{\mathbf{v}}$ lies in the tangent space $T_{P_0}(Y)$; because $Y$ is smooth at $P_0$, this is an $\F$-vector space of dimension $d$.
So every point $Q \in B(P,r)$ gives rise to a vector $\bar{\mathbf{v}} \in T_{P_0}(Y)$, and we define $\tv{r}{P}{Q} = \bar{\mathbf{v}}$.
Conversely, every $\bar{\mathbf{v}} \in T_{P_0}(Y)$ gives a solution to~\eqref{eq:taylor}, which by Hensel's Lemma lifts to a point of $B(P,r)$.
This defines the bijection of~(\ref{tr1}).

For~(\ref{tr2}), take $Q \in B(P,r)$ and write as before $Q = P + \pi^r \mathbf{v}$, where $\mathbf{v} \in \O_k^n$ has reduction $\bar{\mathbf{v}}$ lying in $T_{P_0}(Y)$.  The function $f$ extends to a regular function on a neighbourhood of $P_0$ in $\Aff^n_{\O_k}$, and we denote the extension also by $f$.  Taylor expansion gives
\[
f(Q) \equiv f(P) + \pi^r \nabla f(P) \cdot \mathbf{v} \pmod{\pi^{r+1}}.
\]
This depends only on $\bar{\mathbf{v}}$, and the restriction of $\nabla f(P)$ to $T_{P_0}(Y)$ is $df_{P_0}$, proving~(\ref{tr2}).
Also, property~(\ref{tr2}) characterises the bijection and does not depend on the embedding used to define it, showing that the bijection itself does not depend on the embedding.

The statement~(\ref{tr3}) follows easily from the definitions using
\[
P + \pi^r \mathbf{v} = P + c^r (\pi')^{\varepsilon r} \mathbf{v}. \qedhere
\]
\end{proof}

\begin{remark}
The canonical bijection is between $q_{r+1}(B(P,r))$ and the vector space $T_{P_0}(Y) \otimes_\F (\m^r / \m^{r+1})$, instead of $T_{P_0}(Y)$, as can be seen by applying~(\ref{tr3}) with $k=k'$.  See, for example, \cite[\S III.5]{SGA1}.
That gives a bijection independent of the choice of $\pi$.  However, since we will use the formula~(\ref{tr2}), 
we opt for the explicit rather than the canonical choice.
\end{remark}

\subsection{Tangent vectors and blowups}\label{sec:tangentblowup}
Let $\m$ denote the maximal ideal of $\O_{Y, P_0}$.
Let $f \colon \widetilde{\X} \to \X$ be the blowup of $\X$ at $P_0$ and let $E$ be the exceptional divisor, isomorphic to $\P^m_\F$.  Let $\widetilde{Y}$ denote the strict transform of $Y$. The linear form $\pi^{(1)} \in \H^0(E,\O(1))$ cuts out a hyperplane in $E$ which is $E \cap \widetilde{Y}$. Its complement $U$ is naturally isomorphic to $\Spec \Sym(\m/\m^2)$, the affine space corresponding to the vector space $\m/\m^2$. To make this explicit, choose a system of local parameters $\pi, x_1, \dotsc, x_m$ in $\O_{\X,P_0}$; then $\pi^{(1)}, x_1^{(1)} \dotsc, x_m^{(1)}$ give a system of projective coordinates on $E$, so the $u_i = x_i^{(1)}/\pi^{(1)}$ for $1\leq i\leq m$ restrict to a system of affine coordinates on $U$.
This means that, for every point $O \in U(\F)$, there is a natural isomorphism $\theta$ from $\m/\m^2$ to the cotangent space $\m_O / \m_O^2$.  (In explicit coordinates, $\theta(dx_i)=du_i$.)

\begin{lemma}\label{lem:blowuptv}
Let $Q \in B(P,r)$, let $\widetilde{P},\widetilde{Q}$ be the strict transforms of $P,Q$, respectively, and let $\widetilde{P}_0,\widetilde{Q}_0\in E(\F)$ be their respective reductions.
\begin{enumerate}
\item \label{blowupdu}  If $r>1$, then $\widetilde{Q} \in B(\widetilde{P},r-1)$ and the dual map $\theta^\vee \colon T_{\widetilde{P}_0} U \to T_{P_0} Y$ sends $\tv{r-1}{\widetilde{P}}{\widetilde{Q}}$ to $\tv{r}{P}{Q}$. In explicit coordinates, $du_i(\tv{r-1}{\widetilde{P}}{\widetilde{Q}})=dx_i(\tv{r}{P}{Q})$ for $1\leq i\leq m$.
\item \label{blowuptv1} If $r=1$ then $u_i(\widetilde{Q}_0)=dx_i(\tv{1}{P}{Q})$ for $1\leq i\leq m$.
\end{enumerate}
\end{lemma}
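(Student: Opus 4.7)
The plan is to reduce everything to an explicit computation on the affine chart $U$ of $\tilde{\X}$, by comparing two applications of Lemma~\ref{lem:tangent}(\ref{tr2}): one on $\X$ at $P_0$, and one on $\tilde{\X}$ at $\tilde{P}_0$. As a harmless preliminary normalisation, I would replace each local parameter $x_i \in \O_{\X,P_0}$ by $x_i - x_i(P)$. This does not alter the image of $x_i$ in $\m/\m^2$, and so it changes neither the isomorphism $\theta$ nor the coordinates $u_i$ on $U$; but it ensures that $x_i(P) = 0$ and hence $u_i(\tilde{P}) = 0$.

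Next I would observe that for any $Q \in \X(\O_k)$ reducing to $P_0$, the rational function $u_i = x_i/\pi$ is regular at $Q$, since $x_i(Q) \in \pi\O_k$. Therefore $Q$ lifts uniquely to a section $\tilde{Q} \in \tilde{\X}(\O_k)$ factoring through the chart $U$, with $u_i(\tilde{Q}) = x_i(Q)/\pi$. For $Q \in B(P,r)$, Lemma~\ref{lem:tangent}(\ref{tr2}) applied on $\X$ at $P_0$ with $f = x_i$ gives
\[
x_i(Q) \equiv x_i(P) + \pi^r\, dx_i(\tv{r}{P}{Q}) \pmod{\pi^{r+1}},
\]
and dividing through by $\pi$ yields the key congruence
\[
u_i(\tilde{Q}) \equiv u_i(\tilde{P}) + \pi^{r-1}\, dx_i(\tv{r}{P}{Q}) \pmod{\pi^r}. \tag{$\ast$}
\]

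For part~(\ref{blowupdu}), the congruence $(\ast)$ immediately implies $\tilde{Q} \in B(\tilde{P}, r-1)$ when $r>1$. A second application of Lemma~\ref{lem:tangent}(\ref{tr2}), now on $\tilde{\X}$ at $\tilde{P}_0$ with $f = u_i$, gives
\[
u_i(\tilde{Q}) \equiv u_i(\tilde{P}) + \pi^{r-1}\, du_i(\tv{r-1}{\tilde{P}}{\tilde{Q}}) \pmod{\pi^r}.
\]
Matching with $(\ast)$ and reducing modulo $\pi$ reads off $du_i(\tv{r-1}{\tilde{P}}{\tilde{Q}}) = dx_i(\tv{r}{P}{Q})$ in $\F$ for each $i$. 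Since by definition $\theta$ sends $dx_i$ to $du_i$, the dual isomorphism $\theta^\vee$ sends $\tv{r-1}{\tilde{P}}{\tilde{Q}}$ to $\tv{r}{P}{Q}$, as required.

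For part~(\ref{blowuptv1}), I would simply reduce the $r=1$ case of $(\ast)$ modulo $\pi$; using the normalisation $u_i(\tilde{P}) = 0$ established at the start, this yields $u_i(\tilde{Q}_0) = dx_i(\tv{1}{P}{Q})$. There is no substantive obstacle in either part; the only real point of care is the preliminary normalisation $x_i(P)=0$ in part~(\ref{blowuptv1}), without which the cleanest formula would read $u_i(\tilde{Q}_0) - u_i(\tilde{P}_0) = dx_i(\tv{1}{P}{Q})$.
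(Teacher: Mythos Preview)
Your argument is essentially the paper's: apply Lemma~\ref{lem:tangent}(\ref{tr2}) with $f=x_i$ on $\X$ and divide by $\pi$ to obtain your $(\ast)$ (the paper's \eqref{eq:tvx}), then for $r>1$ compare with the same formula for $f=u_i$ on $\tilde{\X}$ (the paper's \eqref{eq:tvu}); for $r=1$ reduce $(\ast)$ modulo $\pi$. One minor slip: the normalisation $x_i\mapsto x_i-x_i(P)$ \emph{does} change the affine coordinate $u_i$, by the translation $u_i\mapsto u_i-\overline{x_i(P)/\pi}$ (since $x_i(P)\in\pi\O_k$ need not lie in the square of the maximal ideal of $\O_{\X,P_0}$), and it is exactly this shift that arranges $u_i(\tilde P_0)=0$ --- an identity the paper also uses, asserting it without comment.
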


\begin{proof}
Work on the affine piece of $\tilde{\X}$ corresponding to $\pi^{(1)}$; then $f$ is defined by $x_i = \pi u_i$ for $1\leq i\leq m$ and~\eqref{eq:tv} yields
\begin{equation}\label{eq:tvx}
\pi u_i(\widetilde{Q})\equiv \pi u_i(\widetilde{P})+\pi^r dx_i(\tv{r}{P}{Q}) \pmod{\pi^{r+1}}.
\end{equation}
Hence, $u_i(\widetilde{Q})\equiv u_i(\widetilde{P})  \pmod{\pi^{r-1}}$ for $1\leq i\leq m$, whereby $\widetilde{Q} \in B(\widetilde{P},r-1)$. For $r>1$,~\eqref{eq:tv} yields
\begin{equation}\label{eq:tvu}
u_i(\widetilde{Q})\equiv u_i(\widetilde{P})+\pi^{r-1} du_i(\tv{r-1}{\widetilde{P}}{\widetilde{Q}}) \pmod{\pi^{r}}.
\end{equation}
and comparing~\eqref{eq:tvx} and~\eqref{eq:tvu} proves~\eqref{blowupdu}.

To prove~\eqref{blowuptv1}, observe that for $r=1$,~\eqref{eq:tvx} gives
\begin{align*}
dx_i(\tv{1}{P}{Q})&\equiv u_i(\widetilde{Q})-u_i(\widetilde{P})\pmod{\pi}\\
&=u_i(\widetilde{Q}_0)-u_i(\widetilde{P}_0)=u_i(\widetilde{Q}_0)
\end{align*}
since $u_i(\widetilde{P}_0)=0$.
\end{proof}

\section{Proof of Theorem~\ref{thm:onelemma_intro}}\label{sec:endgames}
We continue with the setting and notation of Theorem~\ref{thm:onelemma_intro}. In this section, we will prove the following strengthening of Theorem~\ref{thm:onelemma_intro}.

\begin{theorem}\label{thm:onelemma}
Let $k$ be a finite extension of $\Q_p$.
Let $X$ be a smooth, geometrically irreducible variety over $k$, and let $\X \to\Spec \O_k$ be a smooth model of $X$. 
Suppose that the special fibre $Y$ of $\X$ is geometrically irreducible.
Let $n>0$, 
let $\A\in\fil_n \Br X$, and let $\rsw_{n}(\A) = [\alpha, \beta]_{\pi,n}$ for some $(\alpha,\beta)\in \Omega^2_F \oplus \Omega^1_F$.
Let $P\in \X(\O_k)$, and let $P_0 \in Y(\F)$ be the reduction of $P$.
Then \begin{enumerate}
\item\label{onelemma_reg} $\alpha$ and $\beta$ are regular at $P_0$. 
\end{enumerate}
Moreover, we have the following description of the evaluation map $\evmap{\A} \colon \X(\O_k) \to \Br k$.  
\begin{enumerate}[resume]
\item \label{onelemma_1} For $Q \in B(P,n)$,
\[
\inv \A(Q) = \inv \A(P) + \frac{1}{p}\Tr_{\F/\F_p} \beta_{P_0}(\tv{n}{P}{Q}).
\]
In particular, if $\beta_{P_0}\neq 0$ then $\evmap{\A}$ takes $p$ distinct values on $B(P,n)$. 

\item\label{rla} If $\beta=0$ and $n>2$ then there exists $\gamma\in\Omega^1_Y|_{P_0}$ such that, for $Q\in B(P,1)$ and $R\in B(Q,n-1)$, 
\[
\inv \A(R) = \inv \A(Q) - \frac{1}{p} \Tr_{\F/\F_p} \alpha_{P_0}(\tv{1}{P}{Q},\tv{n-1}{Q}{R}) + \frac{1}{p}\Tr_{\F/\F_p}(\gamma(\tv{n-1}{Q}{R})).
\]
If, furthermore, $p$ is odd and $2< n< e'+2+te$ for some $t\in\Z_{\geq 0}$ with $[k( \mmu_{p^{t+1}}):k]$ coprime to $p$ then the following holds. For any integer $s$ with $1 \le s  < n/2$, $Q\in B(P,s)$ 
and $R \in B(Q,r)$ with $r=n-s$, we have
\[
\inv \A(R) = \inv \A(Q) - \frac{s}{p} \Tr_{\F/\F_p} \alpha_{P_0}(\tv{s}{P}{Q},\tv{r}{Q}{R}) + g_{\A,Q}(R),
\]
where $g_{\A,Q}: B(Q,r)\to p^{-(t+1)}\Z/\Z$ is a continuous function satisfying $g_{\A,Q}(R)=\frac{1}{p}\Tr_{\F/\F_p}(\gamma(\tv{n-1}{Q}{R}))$ 
for all $R\in B(Q,n-1)$.

\item \label{surj2} If $\beta=0$ and $\alpha_{P_0}\neq 0$ then there exists $Q \in B(P,1)$ such that $\evmap{\A}$ takes $p$ distinct values on $B(Q,n-1)$. 

\item \label{surj3} Suppose $\A$ has order $p^{t+1}$ in $\Br X$ for some $t\in\Z_{\geq 0}$. Suppose either that $n> e'+(t-1)e$, or that $n=e'+(t-1)e$ and $C(\alpha)=C(\beta)
=0$, where $C$ denotes the Cartier operator. 
\begin{enumerate}[(i)]
\item If $\beta_{P_0}\neq 0$ then $\evmap{\A}\colon B(P,n-te)\to \Br k[p^{t+1}] \cong p^{-(t+1)}\Z/\Z$ is surjective. 
\item \label{surj3ii} If $\beta=0$ and $\alpha_{P_0}\neq 0$ then, under the additional assumptions that $n>e'+2$ and $n\geq te+3$, there exists a point $Q \in B(P,1)$ such that $\evmap{\A}:B(Q,n-te-1)\to \Br k[p^{t+1}] \cong p^{-(t+1)}\Z/\Z$ is surjective.
\end{enumerate}
\end{enumerate}
\end{theorem}

\begin{remark} 
\begin{enumerate}
\item As remarked in the introduction, case~\eqref{surj2} is only possible if $p\mid n$. In fact, Lemma~\ref{lem:dbna} shows that if $p\nmid n$ and $\beta=0$ then $\alpha=0$. 
\item Lemma~\ref{lem:p=n=2} provides a complement to~\eqref{rla} in the case where $\beta=0$ and $p=n=2$.
\item The function $g_{\A,Q}(R)$ in~\eqref{rla} satisfies $g_{\A,Q}(R)=\inv \mathcal{D}(R)-\inv \mathcal{D}(Q)$ for some $\mathcal{D}\in \fil_{n-1} \Br(\O_{\X,P_0}[p^{-1}])$: see the proof on page~\pageref{onelemma-proof}. 
\item The condition in~\eqref{rla} that the Swan conductor of $\A$ be smaller than $e'+2+te$ for some $t\in\Z_{\geq 0}$ with $[k( \mmu_{p^{t+1}}):k]$ 
coprime to $p$ is automatically satisfied (with $t=0$) if $p^2$ does not divide the order of $\A$ in $\Br X$, see~\cite[Proposition~4.1(1)]{K} and Proposition~\ref{prop:kato61}(1).
\item The condition in~\eqref{surj3}\ref{surj3ii} that $n\geq te+3$ is only needed if $p=e=2$. In all other cases where $t\geq 1$, it follows from $n\geq e'+(t-1)e$ combined with $p\mid n$ (Lemma~\ref{lem:dbna}) and $p\mid e$ (Remark~\ref{remark:ea0}). The result for $t=0$ is~\eqref{surj2}.
\end{enumerate}
\end{remark}

We begin the proof by noting that Theorem~\ref{thm:onelemma}\eqref{onelemma_reg} follows from Theorem~\ref{thm:7.1} applied to the local ring $\O_{\X,P_0}$. (Note that $\A$ lying in $\Br X\subset \H^2(K)$ is equivalent to $\A$ being unramified on $X$ in the sense of \S~\ref{sec:unram} by \cite[Corollary~2.5]{K} and purity, see \cite[Theorem~3.7.3]{CTSbook}, for example.)

Now let $f \colon \widetilde{\X} \to \X$ be the blowup of $\X$ at $P_0$ and let $E$ be the exceptional divisor, isomorphic to $\P^m_\F$. Let $\widetilde{Y},\widetilde{P},\widetilde{Q},\widetilde{R}$ denote the strict transforms of $Y,P,Q,R$ and let $Z = E \cap \widetilde{Y}$. Let $\widetilde{P}_0,\widetilde{Q}_0,\widetilde{R}_0\in E(\F)$ be the reductions of $\widetilde{P},\widetilde{Q},\widetilde{R}$, respectively. Let $K$ denote the function field of $X$. 
Let $\m$ be the maximal ideal of $\O_{\X,P_0}$.  Extend $\pi$ to a basis $\pi, x_1, \dotsc, x_m$ for $\m/\m^2$.
Then the restrictions of $\pi^{(1)}, x_1^{(1)}, \dotsc, x_m^{(1)} \in \H^0(\widetilde{\X}, \O(1))$ give a system of homogeneous coordinates on $E \cong \P^m_\F$, in which $Z \subset E$ is cut out by $\pi^{(1)}=0$.
On $\widetilde{\X}$ we have $\pi x_1^{(1)} = x_1 \pi^{(1)}$, giving $\pi = \pi_{\widetilde{Y}} \pi_E$ where
\[
\pi_{\widetilde{Y}} = \pi^{(1)} / x_1^{(1)}, \qquad \pi_E = f^*(x_1)
\]
are local parameters at the divisors $\widetilde{Y}$ and $E$ respectively.

Our first two results in this section relate the Swan conductor and refined Swan conductor at $E$ of $f^*\A$ to those of $\A$.  We recall the definitions of $\sw_E(f^* \A)$ and $\rsw_{E,n}(f^* \A)$ from Section~\ref{sec:geomswan}. We make use of the homomorphisms 
\begin{align*}
\psi&\colon \Omega^1_{Y/\F}|_{P_0}\to\H^0(Z, \O(1)), \\
\varphi&\colon \Omega^2_{Y/\F}|_{P_0}\to\H^0(Z, \Omega^1_{Z/\F}(2)), \\
\rho&\colon \H^0(E,\Omega^1(\log Z)(Z))\to\H^0(Z, \O(1)),
\end{align*}
defined in \eqref{eq:psi}, \eqref{eq:varphi} and \eqref{eq:rho}, respectively.

\begin{lemma}\label{lem:downby1}
Let $\A \in \Br X$ with $\sw(\A)=n>1$ and $\rsw_{n}(\A)=[\alpha,\beta]_{\pi,n}$.
Then $\sw_E(f^* \A) \le n-1$ and $\rsw_{E,n-1}(f^* \A) = [\alpha_E, \beta_E]_{\pi,n-1}$
where $\beta_E$ is the unique element of $\H^0(E,\Omega^1(\log Z)(Z))$ satisfying $\rho(\beta_E) = -\psi(\beta_{P_0})$. Explicitly, if $\beta_{P_0}=\sum_i a_i dx_i$ for some $a_i\in\F$ then $\beta_E = \sum_i a_i du_i$ where $u_i = x_i^{(1)} / \pi^{(1)}$. In particular, if $\beta_{P_0} \neq 0$ then $\sw_E(f^* \A) = n-1$.
\end{lemma}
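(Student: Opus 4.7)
The plan is to pull back the defining formula $\{\A, 1 + \pi^n T\} = \lambda_\pi(T\alpha, T\beta)$ for $\rsw_n(\A)$ from the DVR at $Y$ to the DVR at $E$, and combine with Lemma~\ref{lem:blowup} to identify the resulting log-forms. The two Henselian DVRs $\O_{\X,Y}^h$ and $\O_{\tilde\X, E}^h$ are not related by a finite field extension, but both localise the common two-dimensional regular local ring $\O_{\tilde\X, Z}^h$, inside which $\pi_{\tilde Y}, \pi_E$ form a regular system of parameters with product $\pi$. In $\O_{\tilde\X, E}^h$ the element $\pi_{\tilde Y}$ is a unit whose reduction $\bar\pi_{\tilde Y} \in \kappa(E)$ is a local equation for $Z \subset E$.

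For the Swan conductor bound, I would show that the pullback of $\lambda_\pi(T\alpha, T\beta)$ to $V^3(\O_{\tilde\X, E}^h[T])$ vanishes: the induced residue-field map $\bar\phi \colon F \to \kappa(E)$ factors through $\F$ (every coordinate $x_i$ is divisible by $\pi_E$ in $\O_{\tilde\X, E}^h$), killing all differentials. Combined with $\pi^n = \pi_{\tilde Y}^n \pi_E^n$ and the change of variable $T \mapsto \pi_{\tilde Y}^n T$, this gives $\{\A, 1 + \pi_E^n T\} = 0$, so $\sw_E(f^*\A) \le n-1$. To compute $\rsw_{E, n-1}(f^*\A)$, I would apply a change-of-uniformizer argument analogous to Lemma~\ref{lem:rsw-basechange} in the common ring $\O_{\tilde\X, Z}^h$, producing the formal level-$n$ expression $[\bar\pi_{\tilde Y}^{-n}(\alpha + \beta \wedge d\log\bar\pi_{\tilde Y}), \bar\pi_{\tilde Y}^{-n}\beta]_{\pi_E, n}$, then use Lemma~\ref{lem:blowup}\eqref{it:rswZ} to recognise that the order-$n$ formal pole along $Z$ is the artefact of a genuine log-pole of order only one for $(f^*\beta)/\pi_E$, so the class descends to level $n-1$.

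The explicit description $\beta_E = \sum a_i du_i$ (for $\beta_{P_0} = \sum a_i dx_i$) follows from Lemma~\ref{lem:logdiff}\eqref{logdiff1}: the basis element $du_i = d(x_i^{(1)}/\pi^{(1)})$ has residue $-x_i^{(1)}|_Z$ under $\rho$, matching $-\psi(\beta_{P_0}) = -\sum a_i x_i^{(1)}|_Z$ from Lemma~\ref{lem:blowup}\eqref{it:rswZ}. The final assertion is then immediate: if $\beta_{P_0} \neq 0$, then $\beta_E \neq 0$ in $\H^0(E, \Omega^1(Z + \log Z))$ (since $\rho$ is an isomorphism), so $\rsw_{E, n-1}(f^*\A) \neq 0$, forcing $\sw_E = n-1$; Lemma~\ref{lem:blowup}\eqref{it:swZ} provides the alternative geometric justification, showing that $(f^*\beta)/\pi_E^r \notin \Omega^1(\log Z)$ for $r > 1$ prevents any further drop. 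The main obstacle will be rigorously deriving the level-$n$ change-of-uniformizer formula at $E$ without a finite field extension in hand; I would accomplish this by working in $\O_{\tilde\X, Z}^h[T]$ and invoking the injectivity of $V^q$ under Henselisation from~\cite[1.8.1]{K} to transfer the defining formula from the $\tilde Y$-side to the $E$-side.
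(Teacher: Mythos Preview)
Your plan has a real gap at its foundation: there is no ring homomorphism $\O_{\X,Y}^h \to \O_{\tilde\X,E}^h$, and hence no residue-field map $\bar\phi\colon F \to \kappa(E)$, along which to ``pull back'' the defining identity $\{\A,1+\pi^n T\}=\lambda_\pi(T\alpha,T\beta)$. The blowup sends the generic point of $E$ to the closed point $P_0$, not to the generic point of $Y$; concretely, $1/x_1 \in \O_{\X,Y}$ but $x_1=\pi_E$ has positive valuation at $E$. So the Lemma~\ref{lem:rsw-basechange}-style formula $[\bar\pi_{\tilde Y}^{-n}(\alpha+\beta\wedge\dlog\bar\pi_{\tilde Y}),\bar\pi_{\tilde Y}^{-n}\beta]_{\pi_E,n}$ you write down is not meaningful as stated: $\alpha,\beta$ live in $\Omega^\bullet_F$, and there is no canonical way to transport them to $\Omega^\bullet_{\kappa(E)}$. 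Working in $\O_{\tilde\X,Z}^h[T]$ and invoking the injectivity of~\cite[1.8.1]{K} does not help, since injectivity lets you detect vanishing, not compare two $V^q$-groups over \emph{different} residue fields. A related point: even if $\sw_E(f^*\A)\le n-1$, the level-$n$ refined Swan conductor of $f^*\A$ at $E$ is zero by definition, so it cannot encode $\rsw_{E,n-1}$; you genuinely have to compute $\{f^*\A,1+\pi^{n-1}T\}$ at $E$, not reinterpret a level-$n$ expression.

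The paper's proof avoids all of this by working on the $\tilde Y$ side (where $f$ \emph{is} an isomorphism of local rings) and then invoking Kato's two-dimensional comparison machinery: \cite[Theorem~8.1]{K} gives $\sw_E(f^*\A)<n$ because $\A$ is not strongly clean over the perfect field $\F$; \cite[Theorem~7.1]{K} gives the integrality $\pi_E^{-r}f^*\beta\in\Omega^1_{\O_{\tilde Y,Z}}(\log Z)$, which together with Lemma~\ref{lem:blowup}\eqref{it:swZ} forces $r=1$ when $\beta_{P_0}\neq 0$; and finally \cite[Proposition~7.3]{K} is precisely the tool that matches the $\dlog\pi_{\tilde Y}\wedge\dlog\pi_E$ coefficients of $\rsw_{\tilde Y,n}$ and $\rsw_{E,n-1}$ across the two divisors, yielding $\rho(\beta_E)=-\psi(\beta_{P_0})$. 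Your identification of $\beta_E$ via Lemma~\ref{lem:logdiff}\eqref{logdiff1} and the final uniqueness/injectivity step are correct once this comparison is in hand, but the comparison itself needs Kato's Proposition~7.3, not a direct pullback.
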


\begin{proof}
The blowup $f$ induces an isomorphism $\O_{\X,Y} \to \O_{\widetilde{\X},\widetilde{Y}}$, whereby $\sw_{\widetilde{Y}}(f^*\A) = \sw_Y \A$ and
\[
\rsw_{\widetilde{Y},n}(f^*\A) = \pi^{-n} \Big( f^* \alpha + f^* \beta \wedge \dlog\pi\Big).
\]

Since $\F$ is perfect, $\Omega^1_{\F}=\Omega^2_\F=0$. In particular, this implies that the images of $\alpha$ and $\beta$ in $\Omega^2_\F$ and $\Omega^1_\F$, respectively, are zero. Thus,
$\A$ is not strongly clean with respect to $\O_{\X,P_0}$ (see Definition~\ref{def:7.4}).
Consequently, Theorem~\ref{thm:8.1} shows $\sw_E(f^* \A)<n$.  Write $\sw_E(f^* \A)=n-r$ for some $r\ge 1$.

Writing $\pi = \pi_{\widetilde{Y}} \pi_E$ gives
\begin{equation}\label{eq:EY1}
\rsw_{\widetilde{Y},n}(f^*\A) = (\pi_{\widetilde{Y}}^n \pi_E^{n-r})^{-1} \pi_E^{-r} \Big( f^* \alpha + f^* \beta \wedge \dlog\pi_E + f^* \beta \wedge \dlog\pi_{\widetilde{Y}} \Big).
\end{equation}
Let $R_{\widetilde{Y}}=\O_{\widetilde{\X},Z}/\pi_{\widetilde{Y}}=\O_{\widetilde{Y},Z}$ and $R_E=\O_{\widetilde{\X},Z}/\pi_E=\O_{E,Z}$.
Now Theorem~\ref{thm:7.1} gives $\pi_E^{-r}f^*\beta\in \Omega^1_{R_{\widetilde{Y}}}(\log Z)$. 

Suppose $\beta_{P_0} \neq 0$.
Applying Lemma~\ref{lem:blowup}(\ref{it:swZ}) shows that $r=1$, that is, $\sw_E(f^*\A) = n-1$.
On the other hand, using $\pi$ as a uniformiser on $E$, we have 
\[
\rsw_{E,n-1}(f^*\A) = (\pi^{n-1})^{-1}\Big(\alpha_E+\beta_E\wedge \dlog\pi \Big)
\]
with $(\alpha_E, \beta_E)\in\Omega^2_{\kappa(E)}\oplus\Omega^1_{\kappa(E)}$. 
Writing $\pi=\pi_{\widetilde{Y}}\pi_{E}$ gives
\begin{equation}\label{eq:EY2}
\rsw_{E,n-1}(f^*\A) = (\pi_{\widetilde{Y}}^n\pi_E^{n-1})^{-1}\pi_{\widetilde{Y}}\Big(\alpha_E+\beta_E\wedge \dlog\pi_{\widetilde{Y}}+\beta_E \wedge \dlog\pi_E \Big).
\end{equation}
Applying~Theorem~\ref{thm:7.1} to all local rings of $E$ gives $\beta_E \in \Omega^1_E(\log Z)(Z)$.

We now apply~Proposition~\ref{prop:7.3}, which states that the terms in $\dlog\pi_{\widetilde{Y}} \wedge \dlog\pi_E$ in~\eqref{eq:EY1} and~\eqref{eq:EY2} coincide, after restricting to $\kappa(Z)$.
In other words, the residue of $-\pi_E^{-1} f^*\beta$ along $Z \subset \widetilde{Y}$ and the residue of $\pi_{\widetilde{Y}} \beta_E$ along $Z \subset E$ are equal.
By Lemma~\ref{lem:blowup}(\ref{it:rswZ}) and~Lemma~\ref{lem:logdiff}(\ref{logdiff1}), this is equivalent to $\rho(\beta_E) = -\psi(\beta_{P_0})$.

If, on the other hand, we have $\beta_{P_0}=0$, then either $r=1$ and the above calculation gives $\beta_E = 0$; or $r>1$ and $\alpha_E,\beta_E$ both vanish; in either case, the claimed equation for $\beta_E$ holds.
\end{proof}

Taking $s=0$ in Lemma~\ref{lem:downby2} below shows that when $\beta=0$ the Swan conductor drops further and the refined Swan conductor at $E$ of $f^*\A$ is related to $\alpha_{P_0}$.

\begin{lemma}\label{lem:downby2} Suppose $n>0$ and $p\mid n$. Let $s\geq 0$ and suppose
$\sw(\A) = n-2s$ and $\rsw_{n-2s}(\A)=[\alpha_s,\beta_s]_{\pi,n-2s}$ with $\alpha_s=\sum_{i>j} a_{ij}dx_{i}\wedge dx_{j}$ and $\beta_s=s \sum_{i>j} a_{ij}x_{j}^2 d(x_{i}/x_{j})$ for some $a_{ij}\in\O_{Y,P_0}$.
 Then $\sw_{E}( f^* \A)\leq n-2(s+1)$ and $\rsw_{E,n-2(s+1)}(f^*\A)=[\alpha_E,\beta_E]_{\pi,n-2(s+1)}$ with \begin{align*}
\alpha_E&=\sum_{i>j} a_{ij}(P_0)du_{i}\wedge du_{j}\\
\beta_E&\equiv (s+1)\sum_{i>j} a_{ij}(P_0)u_{j}^2 d(u_{i}/u_{j})\bmod \H^0(E, \Omega^1(\log Z)(Z)).
\end{align*}
In particular, if $\alpha_{s,P_0}\neq 0$ then $\sw_{E}( f^* \A)= n-2(s+1)$.
\end{lemma}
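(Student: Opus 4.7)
The plan is to follow the approach of Lemma~\ref{lem:downby1} closely, using $\O_{\X,Y}\cong\O_{\tilde\X,\tilde Y}$ to get $\rsw_{\tilde Y, n-2s}(f^*\A) = [f^*\alpha_s, f^*\beta_s]_{\pi, n-2s}$, and exploiting the special form of $\alpha_s,\beta_s$ to force the Swan conductor at $E$ to drop by $2$ rather than just by $1$. First I would carry out the pullback computation on the chart of $\tilde\X$ where $\pi^{(1)}\ne 0$ (coordinates $\pi, u_1,\ldots,u_m$ with $x_i=\pi u_i$). Short direct calculations give $f^*(dx_i\wedge dx_j) = \pi^2[du_i\wedge du_j + (u_j du_i - u_i du_j)\wedge\dlog\pi]$ and $f^*(x_j dx_i - x_i dx_j) = \pi^2(u_j du_i - u_i du_j)$. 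Combining these and using the hypothesis $\beta_s = s\sum a_{ij}(x_j dx_i - x_i dx_j)$ yields the crucial identity
\[f^*\alpha_s + f^*\beta_s\wedge\dlog\pi = \pi_E^2\Big[\textstyle\sum_{i>j} f^*a_{ij}\, du_i\wedge du_j + (s+1)\sum_{i>j} f^*a_{ij}\,u_j^2 d(u_i/u_j)\wedge\dlog\pi\Big],\]
where the coefficient $s+1$ is the sum of contributions from $f^*\alpha_s$ and $f^*\beta_s\wedge\dlog\pi$. The factor $\pi_E^2$ (not just $\pi_E$) is the structural reason for the extra drop.

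Rewriting $\rsw_{\tilde Y, n-2s}(f^*\A)$ in the two-dimensional Henselian local ring at $Z$ via $\pi=\pi_{\tilde Y}\pi_E$, the $\pi_E^2$ factor immediately gives $\rsw_{\tilde Y, n-2s}(f^*\A) = (\pi_{\tilde Y}^{n-2s}\pi_E^{n-2(s+1)})^{-1}W$ for a $2$-form $W\in\Omega^2_{R_{\tilde Y}}(\log Z)$, from which $\sw_E(f^*\A)\le n-2(s+1)$ follows. If moreover $\alpha_{s,P_0}\ne 0$, I would apply Lemma~\ref{lem:blowup}(\ref{it:rswZ2})(\ref{it:swZ2}) to the induced blowup $f|_{\tilde Y}\colon\tilde Y\to Y$ at $P_0$ (with local parameter $x_1$) to show that $\pi_E^{-2}f^*\alpha_s|_{\tilde Y}$ has nonzero residue $\varphi(\alpha_{s,P_0})/(x_1^{(1)})^2$ along $Z\subset\tilde Y$; this prevents any further drop and forces $\sw_E(f^*\A)=n-2(s+1)$.

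To determine $\alpha_E$ and $\beta_E$ explicitly, I would write $\rsw_{E,n-2(s+1)}(f^*\A) = [\alpha_E,\beta_E]_{\pi,n-2(s+1)}$ and re-express it at $Z$ as $(\pi_{\tilde Y}^{n-2s}\pi_E^{n-2(s+1)})^{-1}\pi_{\tilde Y}^2(\alpha_E + \beta_E\wedge\dlog\pi_{\tilde Y} + \beta_E\wedge\dlog\pi_E)$, and then invoke Kato's Proposition~7.3 to match this against $W$ at $\kappa(Z)$. A direct computation on the piece-(2) chart shows that the candidate $\alpha_E = \sum a_{ij}(P_0)\,du_i\wedge du_j$ and $\beta_E = (s+1)\sum a_{ij}(P_0)\,u_j^2 d(u_i/u_j)$, when multiplied by $\pi_{\tilde Y}^2$ and added to the appropriate $\dlog$ terms, reproduces $W|_Z$ exactly. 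By Lemma~\ref{lem:logdiff}(\ref{logdiff3}) this determines $\alpha_E\in\H^0(E,\Omega^2(2Z+\log Z))$ uniquely, and by Lemma~\ref{lem:logdiff}(\ref{logdiff2}) it determines $\beta_E$ modulo $\H^0(E,\Omega^1(Z+\log Z))$. I expect the main obstacle to be this last step: justifying that Kato's Proposition~7.3, which in the proof of Lemma~\ref{lem:downby1} was used merely to match a single $\dlog\pi_{\tilde Y}\wedge\dlog\pi_E$ coefficient, is strong enough here to pin down both $\alpha_E$ and the class of $\beta_E$. The resolution is that Kato's Theorem~7.1 at $Z$ forces $\alpha_E$ and $\beta_E$ to lie in controlled subspaces with prescribed pole orders in $\pi_{\tilde Y}$, and under these constraints the matching at $\kappa(Z)$ furnished by Proposition~7.3 extracts the full residue data in $\H^0(Z,\Omega^1_Z(2))$ needed for both pieces.
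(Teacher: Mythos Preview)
Your explicit computation of $f^*\alpha_s + f^*\beta_s\wedge\dlog\pi$ on the $\pi^{(1)}\neq 0$ chart is correct and illuminating, and it does exhibit the $\pi_E^2$ factor that is morally responsible for the extra drop. However, the step ``from which $\sw_E(f^*\A)\le n-2(s+1)$ follows'' is a genuine gap. Kato's Theorem~7.1 bounds the pole order of $\rsw_{\tilde Y}$ along $E$ \emph{from above} by $\sw_E(f^*\A)$; it does not say that a small pole order forces a small Swan conductor. So knowing that $\rsw_{\tilde Y,n-2s}(f^*\A)$ has a pole of order $\le n-2(s+1)$ at $E$ does not by itself yield $\sw_E\le n-2(s+1)$.

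The paper closes this gap by an indirect argument. First, Lemma~\ref{lem:downby1} (which ultimately relies on Kato's Theorem~8.1, not just~7.1) gives $\sw_E(f^*\A)\le n-2s-1$, and since $\beta_{s,P_0}=0$ also $\rsw_{E,n-2s-1}(f^*\A)=[\alpha',0]$. One then \emph{assumes} $\sw_E=n-2s-1$ and uses Kato~7.1 and~7.3 to force $\alpha'\in\H^0(E,\Omega^2_E)$; since $E\cong\P^m_\F$ this group vanishes, giving $\alpha'=0$ and a contradiction. The vanishing $\H^0(\P^m,\Omega^2)=0$ is the extra input your direct approach is missing. Once $\sw_E\le n-2(s+1)$ is established, your plan for extracting $\alpha_E$ and the class of $\beta_E$ via Kato~7.1 and~7.3 is on the right track; the paper carries this out by several separate applications of Proposition~7.3 (matching the $\dlog\pi_{\tilde Y}\wedge\dlog\pi_E$, $\dlog\pi_E$, and $\dlog\pi_{\tilde Y}$ coefficients in turn), which is what is needed to pin down both $\alpha_E$ and $\beta_E\bmod\H^0(E,\Omega^1(Z+\log Z))$.
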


\begin{remark}
By Lemma~\ref{lem:logdiff}\eqref{logdiff2}, the statement concerning $\beta_E$ is equivalent to $\beta_E|_Z=(s+1)\varphi(\alpha_{P_0})$, where $\varphi$ is defined as in~\eqref{eq:varphi}.
\end{remark}

\begin{proof}
By Lemma~\ref{lem:downby1} and its proof, $\sw_{E}(f^* \A)\leq n-2s-1$ and $\rsw_{E,n-2s-1}(f^*\A)=[\alpha',0]_{\pi_E, n-2s-1}$ for some $\alpha'\in\Omega^2_{F}$, since all the $x_{i}$ vanish at $P_{0}$ and hence $\beta_{s,P_{0}}=0$. Suppose for contradiction that $\sw_E f^*\A= n-2s-1$. Then~\eqref{eq:EY1} gives
\begin{equation}\label{eq:EY1again}
\rsw_{\widetilde{Y},n-2s}(f^*\A) = (\pi_{\widetilde{Y}}^{n-2s} \pi_E^{n-2s-1})^{-1} \pi_E^{-1} \Big( f^* \alpha_s + f^* \beta_s \wedge \dlog\pi_E + f^* \beta_s \wedge \dlog\pi_{\widetilde{Y}} \Big).
\end{equation}
Since $\beta_{s,P_{0}}=0$, Lemma~\ref{lem:blowup} shows that $\pi_{E}^{-1}f^*\beta_s\in \Omega^1(\widetilde{Y})$ and $\pi_E^{-1}f^*\alpha_s\in \Omega^2(\widetilde{Y})$, with $(\pi_E^{-1}f^*\alpha_s)|_Z=0$. 
Moreover, \eqref{eq:EY2} becomes 
\begin{equation}\label{eq:EY2again}
\rsw_{E,n-2s-1}(f^*\A) = (\pi_{\widetilde{Y}}^{n-2s}\pi_E^{n-2s-1})^{-1}\pi_{\widetilde{Y}}\alpha'.
\end{equation}
Applying~Theorem~\ref{thm:7.1} to all local rings of $E$ gives $\alpha' \in \Omega^2_E(\log Z)(Z)$. Now we apply~Proposition~\ref{prop:7.3} twice: comparing the $\dlog\pi_E$ terms in~\eqref{eq:EY1again} and~\eqref{eq:EY2again} gives $(\pi_E^{-1}f^*\beta_s)|_Z=0$; comparing the remaining terms gives $\alpha' \in \Omega^2(E)$. Since $E\cong \P^m_\F$, we have $\Omega^2(E)=0$ and hence $\alpha'=0$. Therefore, $\rsw_{E,n-2s-1}(f^*\A) =0$, which contradicts the assumption that $\sw_E f^*\A= n-2s-1$. Hence $\sw_Ef^*\A\leq n-2(s+1)$. 

It remains to prove the claims concerning the refined Swan conductor at level $n-2(s+1)$. Write $\sw_Ef^*\A= n-2(s+1)-r$ for some $r\geq 0$. Then~\eqref{eq:EY1} gives
\begin{multline}\label{eq:EY1again2}
\rsw_{\widetilde{Y},n-2s}(f^*\A) \\ = (\pi_{\widetilde{Y}}^{n-2s} \pi_E^{n-2(s+1)-r})^{-1} \pi_E^{-2-r} \Big( f^* \alpha_s + f^* \beta_s \wedge \dlog\pi_E + f^* \beta_s \wedge \dlog\pi_{\widetilde{Y}} \Big).
\end{multline}
By~Theorem~\ref{thm:7.1}, $\pi_E^{-2-r}  f^* \beta_s \in \Omega^1_{\O_{\widetilde{Y},Z}}(\log Z)$ and $\pi_E^{-2-r}  f^* \alpha_s \in \Omega^2_{\O_{\widetilde{Y},Z}}(\log Z)$, whereupon Lemma~\ref{lem:blowup} shows that if $r>0$ then $\alpha_{s, P_0}= 0$. This implies that all the $a_{ij}(P_0)$ are zero, which proves the claimed equality in the case $r>0$, since then $\alpha_E$ and $\beta_E$ are zero by definition of the refined Swan conductor at level $n-2(s+1)$.

Let us assume henceforth that $\alpha_{s, P_0}\neq 0$ and hence $\sw_E(f^*\A)= n-2(s+1)$. Now~\eqref{eq:EY2} becomes
\begin{multline}\label{eq:EY2again2}
\rsw_{E,n-2(s+1)}(f^*\A) \\ = (\pi_{\widetilde{Y}}^{n-2s}\pi_E^{n-2(s+1)})^{-1}\pi_{\widetilde{Y}}^2\Big(\alpha_E+\beta_E\wedge \dlog\pi_{\widetilde{Y}}+\beta_E \wedge \dlog\pi_E \Big).
\end{multline}
Applying~Theorem~\ref{thm:7.1} to all local rings of $E$ gives $\beta_E\in \Omega^1_E(\log Z)(2Z)$ and $\alpha_E \in \Omega^2_E(\log Z)(2Z)$. Now we apply~Proposition~\ref{prop:7.3} several times. Comparing the $\dlog\pi_E\wedge \dlog\pi_{\widetilde{Y}}$ terms in~\eqref{eq:EY1again2} and~\eqref{eq:EY2again2} shows that the residue of $\pi_{\widetilde{Y}}^2\beta_E$ along $Z\subset E$ equals the residue of $-\pi_E^{-2}f^*\beta_s$ along $Z\subset \widetilde{Y}$. We now show that this residue is zero. Recalling that $f^*x_1=\pi_E$, we have
\begin{equation}
-\pi_E^{-2}f^*\beta_s=s\sum_{i>j}f^*a_{ij}u_i^2d(u_j/u_i)=s\sum_{i>j}f^*a_{ij}(u_idu_j-u_jdu_i),
\end{equation}
which has zero residue along $Z\subset \widetilde{Y}$. Therefore, $\pi_E^{-2}f^*\beta_s\in \Omega^1_{\O_{\widetilde{Y},Z}}$ and $\beta_E\in \Omega^1_E(2Z)$. Comparing the $\dlog\pi_E$ terms in~\eqref{eq:EY1again2} and~\eqref{eq:EY2again2} shows that $(\pi_{\widetilde{Y}}^2\beta_E)|_Z$ equals the sum of the residue of $\pi_E^{-2}  f^* \alpha_s$ along $Z\subset \widetilde{Y}$
and $\pi_E^{-2}f^*\beta_s|_Z$. Lemma~\ref{lem:blowup} shows that the residue of $\pi_E^{-2}  f^* \alpha_s$ along $Z\subset \widetilde{Y}$ is 
\[\varphi(\alpha_{s,P_0})/(x^{(1)})^2=\varphi\Bigl(\sum_{i>j}a_{ij}(P_0)dx_i\wedge dx_j\Bigr)/(x^{(1)})^2.\] 
Moreover, 
\[\pi_E^{-2}f^*\beta_s|_Z=s\varphi\Bigl(\sum_{i>j}a_{ij}(P_0)dx_i\wedge dx_j\Bigr)/(x^{(1)})^2.\]
Hence, $(\pi_{\widetilde{Y}}^2\beta_E)|_Z=(s+1)\varphi\Bigl(\sum_{i>j}a_{ij}(P_0)dx_i\wedge dx_j\Bigr)/(x^{(1)})^2$. Multiplying by $(x^{(1)})^2$ shows that 
\[\beta_E|_Z=(s+1)\varphi\Bigl(\sum_{i>j}a_{ij}(P_0)dx_i\wedge dx_j\Bigr).\] 
By Lemma~\ref{lem:logdiff}\eqref{logdiff2}, this proves the claim regarding $\beta_E$.

Now compare the $\dlog\pi_{\widetilde{Y}}$ terms in~\eqref{eq:EY1again2} and~\eqref{eq:EY2again2} to see that $\pi_E^{-2}f^*\beta_s|_Z$ is equal to the sum of the residue of $\pi_{\widetilde{Y}}^2\alpha_E$ along $Z\subset E$ and $(\pi_{\widetilde{Y}}^2\beta_E)|_Z$. By our calculations above, this implies that the residue of $\pi_{\widetilde{Y}}^2\alpha_E$ along $Z\subset E$ equals $-\varphi\Bigl(\sum_{i>j}a_{ij}(P_0)dx_i\wedge dx_j\Bigr)/(x^{(1)})^2$. Now Lemma~\ref{lem:logdiff}\eqref{logdiff3} shows that this residue is equal to $\rho(\alpha_E)/(x^{(1)})^2$ and hence $\alpha_E=\sum_{i>j}a_{ij}(P_0)du_i\wedge du_j$.
\end{proof}

The next two results deal with the endgame, where the Swan conductor at $E$ is zero and our task is to compute the residue $\partial_E(f^*\A)$.

\begin{lemma}\label{lem:sw0}
Let $\A \in \Br X$ with $\sw(\A)=1$ and $\rsw_{1}(\A)=[\alpha,\beta]_{\pi,1}$.
Then $\sw_E(f^*\A)=0$.
Let $F \in \H^0(E,\O(1))$ be any linear form restricting to $\psi(\beta_{P_0})$ on $Z$.
The residue $\partial_E(f^*\A) \in \H^1(\kappa(E),\Q/\Z)$ lies in $\H^1(E \setminus Z, \Q/\Z)$ and coincides, modulo $\H^1(\F,\Q/\Z)$, with the image of $F/\pi^{(1)}$ under the map \[\kappa(E) \to \H^1(\kappa(E),\Z/p)\xrightarrow{\frac{1}{p}} \H^1(\kappa(E),\Q/\Z)\] induced by the Artin--Schreier sequence.
\end{lemma}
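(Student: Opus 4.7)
The plan is to handle the three claims in order, leaning on the refined-Swan-conductor machinery of Sections~\ref{sec:compare-rsw}--\ref{sec:Pn}. The first claim is immediate from the opening of the proof of Lemma~\ref{lem:downby1}: since $\F$ is perfect, $\A$ is not strongly clean with respect to $\O_{\X,P_0}$, so~\cite[Theorem~8.1]{K} forces $\sw_E(f^*\A) < \sw(\A) = 1$, hence $\sw_E(f^*\A) = 0$ and the residue $\partial_E(f^*\A) \in \H^1(\kappa(E),\Q/\Z)$ is defined.

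For the containment $\partial_E(f^*\A) \in \H^1(E\setminus Z, \Q/\Z)$ I would proceed point by point. For each codimension-one point $\eta_W$ of $E$ with $W \neq Z$, the local ring $A = \O^h_{\tilde\X, \eta_W}$ is two-dimensional regular, with regular parameters $\pi_E$ and a lift $g$ of a local equation for $W$ in $E$. Since $\tilde Y \cap E = Z$, the divisor $\tilde Y$ avoids $\eta_W$, so $f^*\A$ is unramified at both $\p_1 = (\pi_E)$ and $\p_2 = (g)$. Lemma~\ref{lem:new7.1} applied with $j=1$ and $n_1 = n_2 = 0$ then gives $\sw_{\bar\p_2}(\chi_2) = 0$, i.e.\ $\partial_E(f^*\A)$ is unramified along $W$.

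The main step is to compute the refined Swan conductor at $Z$. I would apply Lemma~\ref{lem:new7.3} to $A = \O^h_{\tilde\X, \eta_Z}$ with $\p_1 = (\pi_E)$ and $\p_2 = (\pi_{\tilde Y})$, using $\sw_{\p_1}(f^*\A) = 0$ and $\sw_{\p_2}(f^*\A) = 1$. For this I must rewrite
\[
\rsw_{\tilde Y, 1}(f^*\A) = \pi^{-1}\bigl(f^*\alpha + f^*\beta \wedge \dlog\pi\bigr) = \pi_{\tilde Y}^{-1}\pi_E^{-1}\bigl(f^*\alpha + f^*\beta \wedge \dlog\pi_{\tilde Y} + f^*\beta \wedge \dlog\pi_E\bigr)
\]
(derived as in the proof of Lemma~\ref{lem:downby1}) in the form $[\eta + \omega \wedge \dlog \bar\pi_E,\, \eta' + \omega' \wedge \dlog \bar\pi_E]_{\pi_{\tilde Y}, 1}$ required by Lemma~\ref{lem:new7.3}. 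Working in the affine chart $\{x_1^{(1)} \neq 0\}$ of $\tilde\X$ with coordinates $u_i = x_i/x_1$, and writing $\beta = \sum_{i\ge 1} a_i\, dx_i$ with $a_i \in \O_{Y,P_0}$, a short calculation gives $\pi_E^{-1} f^*\beta = \sum_{i\ge 2} a_i\, du_i + (\sum_{i\ge 1} a_i u_i)\dlog\pi_E$ with the convention $u_1 := 1$, so $\omega' = \sum_i a_i u_i$; a parallel calculation for $\alpha$ shows $\omega \equiv \sum_{i\ge 2} a_i\, du_i \pmod{\pi_E \Omega^1_{R_2}}$. Reducing modulo $\p_1$ and using $a_i|_Z = a_i(P_0)$, Lemma~\ref{lem:new7.3} then yields
\[
\rsw_{\bar\p_2, 1}(\chi_2) = \bigl[-\textstyle\sum_{i\ge 2} a_i(P_0)\, du_i,\ \sum_i a_i(P_0) u_i\bigr]_{\bar\pi_2, 1},
\]
where $\bar\pi_2$ is the image of $\pi_{\tilde Y}$ in $\O_{E,Z}$ and coincides with $X_0/X_1$ on this chart.

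To match with the Artin--Schreier expression I would invoke Lemma~\ref{lem:H1EminusZ}: any lift $F = c_0 X_0 + \sum_{i\ge 1} a_i(P_0) X_i$ of $\psi(\beta_{P_0}) = \sum_{i\ge 1} a_i(P_0) X_i|_Z$ (with $c_0 \in \F$ free) satisfies $\bar F/X_1 = \sum_i a_i(P_0) u_i$ in the chart, and Lemma~\ref{lem:H1EminusZ} delivers
\[
\rsw_1\bigl(\tfrac{1}{p}\delta_1(F/X_0)\bigr) = \bigl[-d(\bar F/X_1),\ \bar F/X_1\bigr]_{X_0/X_1, 1},
\]
which agrees with the expression above for $\rsw_{\bar\p_2, 1}(\chi_2)$. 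By the injectivity of the refined Swan conductor on $\fil_1/\fil_0$, the difference $\partial_E(f^*\A) - \frac{1}{p}\delta_1(F/X_0)$ lies in $\fil_0 \H^1(E\setminus Z, \Q/\Z)$, which is $\H^1(\F,\Q/\Z)$ by Lemma~\ref{lem:H1EminusZ}\eqref{it:fil0EZ}. The main obstacle is the bookkeeping in the third step: correctly identifying which portions of $f^*\alpha$ and $f^*\beta$ contribute to $\omega$ versus $\omega'$ after the factorisation $\pi = \pi_{\tilde Y}\pi_E$, and recognising that the $a_1 u_1$ term hidden in $\omega'$ (due to $u_1 = 1$) corresponds exactly to the ambiguity $c_0 \in \F$ in the choice of lift $F$, which is the ambiguity modulo $\H^1(\F,\Q/\Z)$.
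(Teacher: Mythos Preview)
Your proof follows the same route as the paper's: obtain $\sw_E(f^*\A)=0$ from~\cite[Theorem~8.1]{K}, apply Lemma~\ref{lem:new7.3} at $\eta_Z$ to compute $\rsw_{Z,1}(\chi_2)$, and then match against Lemma~\ref{lem:H1EminusZ}. Two small points are worth flagging.

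For the containment $\chi_2 \in \H^1(E\setminus Z,\Q/\Z)$, the paper simply invokes Proposition~\ref{prop:tame}, applied to the smooth $\O_k$-model $\tilde\X \setminus \tilde Y$ of $X$, whose special fibre is $E \setminus Z$. Your point-by-point application of Lemma~\ref{lem:new7.1} yields only $\sw_W(\chi_2)=0$ for each $W \neq Z$; this is indeed equivalent to ``unramified along $W$'' for the $p$-primary part of $\chi_2$ (since $\fil_0 \H^1_{p^r}$ coincides with the unramified classes), but says nothing about any prime-to-$p$ component, for which the Swan conductor vanishes automatically. Proposition~\ref{prop:tame} handles both at once and is the cleaner route.

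For the refined Swan conductor at $Z$, your explicit coordinate computation of $\bar\omega'$ is exactly the content of Lemma~\ref{lem:blowup}(\ref{it:rswZ}), which the paper cites directly. Your additional computation of $\bar\omega$ is correct but unnecessary: since the map $\bar F \mapsto \tfrac{1}{p}\delta_1(F/X_0)$ of Lemma~\ref{lem:H1EminusZ} is an isomorphism onto $\gr_1 \H^1(E\setminus Z,\Q/\Z)$ and is already detected by the second component of the refined Swan conductor, knowing $\beta' = \bar\omega' = \psi(\beta_{P_0})/x_1^{(1)}$ suffices to pin down $\chi_2$ modulo $\fil_0 = \H^1(\F,\Q/\Z)$.
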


\begin{proof}
We begin as in the proof of Lemma~\ref{lem:downby1} to obtain $\sw_E(f^*\A)=0$ and
\begin{equation*}
\rsw_{\widetilde{Y},1}(f^*\A) = (\pi_{\widetilde{Y}})^{-1} \pi_E^{-1} \Big( f^* \alpha + f^* \beta \wedge \dlog\pi_E + f^* \beta \wedge \dlog\pi_{\widetilde{Y}}\Big).
\end{equation*}
Write $\chi_2 = \partial_E(f^*\A) \in \H^1(\kappa(E))$.
By Proposition~\ref{prop:tame}, $\chi_2$ lies in the subgroup $\H^1(E\setminus Z, \Q/\Z)$. 
Denote by $\kappa(E)_Z$ the field of fractions of the Henselisation of $\O_{E,Z}$.
To determine $\chi_2$, we consider Kato's filtration on $\H^1(\kappa(E)_Z)$.
Lemma~\ref{lem:new7.1} gives $\sw_Z(\chi_2) \le 1$, and Lemma~\ref{lem:new7.3} gives
\[
\rsw_{Z,1}(\chi_2) = [\alpha' , \beta' ]_{\bar{\pi}_{\widetilde{Y}},1}
\]
in which $\beta'$ is equal to the residue of $\pi_E^{-1} f^* \beta$ at $Z$, that is, 
\[
\beta' = \psi(\beta_{P_0})/x_1^{(1)}
\]
by Lemma~\ref{lem:blowup}.
The proof is completed by Lemma~\ref{lem:H1EminusZ}.
\end{proof}

\begin{lemma}\label{lem:sw02}
Suppose $p=2$. Let $\A \in \Br X$ with $\sw(\A)=2$ and $\rsw_{2}(\A)=[\alpha,0]_{\pi,2}$.
Then $\sw_E(f^*\A)=0$. Furthermore, let $\kappa(E)_Z$ be the field of fractions of the Henselisation of $\O_{E,Z}$ and and consider the filtration on $\H^1(E\setminus Z,\Q/\Z)$ obtained by pulling back $\{\fil_n \H^1(\kappa(E)_Z )\}$. 
Let $F \in \H^0(E,\O(2))$ be any quadratic form such that $dF$ restricts to $\varphi(\alpha_{P_0})$ on $Z$. 
The residue $\partial_E(f^*\A) \in \H^1(\kappa(E),\Q/\Z)$ lies in $\H^1(E \setminus Z, \Q/\Z)$ and coincides, modulo $\fil_1\H^1(E\setminus Z,\Q/\Z)$, with the image of $F/(\pi^{(1)})^2$ under the map $\kappa(E) \to \H^1(\kappa(E),\Z/p)\xrightarrow{\frac{1}{p}} \H^1(\kappa(E),\Q/\Z)$ induced by the Artin--Schreier sequence.
\end{lemma}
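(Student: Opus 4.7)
The plan is to adapt the strategy of Lemma~\ref{lem:sw0} to our situation, where now $\sw(\A)=2$ and $\beta=0$. First I would apply Lemma~\ref{lem:downby2} with $s=0$, $\alpha_0=\alpha$, $\beta_0=0$ (its hypothesis $p\mid n$ is met since $p=n=2$) to conclude $\sw_E(f^*\A)\le 0$, hence $\sw_E(f^*\A)=0$. Set $\chi_2 := \partial_E(f^*\A)$; by Proposition~\ref{prop:tame}, $\chi_2$ lies in $\H^1(E\setminus Z,\Q/\Z)$.

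Next, Lemma~\ref{lem:new7.1} applied in $\O_{\tilde\X, Z}$ with $\p_1=E$, $\p_2=\tilde{Y}$ and $j=1$ (so $n_1=0$, $n_2=2$) yields $\sw_Z(\chi_2)\le 2$. To identify $\rsw_{Z,2}(\chi_2)$, I would compute $\rsw_{\tilde Y,2}(f^*\A)=\pi_{\tilde Y}^{-2}\pi_E^{-2}f^*\alpha$ (using $\beta=0$). By Lemma~\ref{lem:blowup}\eqref{it:rswZ2}, $\pi_E^{-2}f^*\alpha\in\Omega^2_{\O_{\tilde Y,Z}}(\log Z)$ and the decomposition $\pi_E^{-2}f^*\alpha=\eta+\omega\wedge\dlog\pi_E$ has $\omega|_Z=\varphi(\alpha_{P_0})/(x_1^{(1)})^2$. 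Lemma~\ref{lem:new7.3} then gives
\[
\rsw_{Z,2}(\chi_2) = \big[-\varphi(\alpha_{P_0})/(x_1^{(1)})^2,\; 0\big]_{\bar\pi_2,2}.
\]

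To compare with $\tfrac{1}{p}\delta_1(F/X_0^2)$, I would invoke Kato's refined Swan conductor formula (as used in the proof of Lemma~\ref{lem:H1EminusZ}, via~\cite[Lemma~3.7]{K}), which is valid for all $n$ and produces $\rsw_2(\tfrac{1}{p}\delta_1(F/X_0^2))=[-d(\bar F/X_1^2),\,2\bar F/X_1^2]_{\bar\pi_2,2}$; in characteristic~$2$ the second component vanishes. A brief calculation---$\varphi(dx_i\wedge dx_j)=X_j\,dX_i - X_i\,dX_j = d(X_iX_j)$, together with $d(X_1^{-2})=0$---then shows $d(\bar F/X_1^2)=\varphi(\alpha_{P_0})/(x_1^{(1)})^2$ whenever $dF$ restricts to $\varphi(\alpha_{P_0})$ on $Z$ in the sense of Lemma~\ref{lem:logdiff}\eqref{logdiff2}. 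Modifying $F$ by squares or by $X_0$-multiples changes $\tfrac{1}{p}\delta_1(F/X_0^2)$ only by elements of $\fil_1$ (using the Artin--Schreier relation $\delta_1(Y^2)=\delta_1(Y)$), so the statement is independent of the particular lift.

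Combining the two refined Swan conductor computations and using injectivity of $\rsw_{Z,2}$ on $\fil_2/\fil_1$, I would conclude $\chi_2\equiv\tfrac{1}{p}\delta_1(F/X_0^2) \bmod \fil_1\H^1(E\setminus Z,\Q/\Z)$. The main obstacle is confirming that Kato's formula for $\rsw_n(\tfrac{1}{p}\delta_1(\cdot))$ remains valid when $p\mid n$: the proof of Lemma~\ref{lem:H1EminusZ} invoked a surjectivity result only known for $p\nmid n$, but here what is actually needed is the underlying symbol computation, which is insensitive to this divisibility.
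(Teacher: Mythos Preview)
Your proposal is correct and follows essentially the same route as the paper: reduce $\sw_E$ to zero via Lemma~\ref{lem:downby2}, use Proposition~\ref{prop:tame} and Lemma~\ref{lem:new7.1} to place $\chi_2$ in $\fil_2\H^1(E\setminus Z,\Q/\Z)$, compute $\rsw_{Z,2}(\chi_2)$ via Lemma~\ref{lem:new7.3} and Lemma~\ref{lem:blowup}\eqref{it:rswZ2}, match it with $\rsw_{Z,2}(\tfrac{1}{p}\delta_1(F/X_0^2))$ using Kato's formula~\cite[Lemma~3.7]{K}, and conclude by injectivity of $\rsw$ on the graded piece. Your closing concern is well judged: the paper likewise invokes~\cite[Lemmas~3.6 and~3.7]{K} directly, since the refined Swan conductor computation there does not require $p\nmid n$ (only the surjectivity statement in Lemma~\ref{lem:H1EminusZ} does).
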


\begin{proof}
We begin as in the proof of Lemma~\ref{lem:downby2} to obtain $\sw_E(f^*\A)=0$ and
\begin{equation*}
\rsw_{\widetilde{Y},2}(f^*\A) =  \pi_{\widetilde{Y}}^{-2}  \pi_E^{-2} f^* \alpha=[\pi_E^{-2} f^* \alpha, 0]_{\pi_{\widetilde{Y}},2}.
\end{equation*}
Write $\chi_2 = \partial_E(f^*\A) \in \H^1(\kappa(E))$. By Proposition~\ref{prop:tame}, $\chi_2\in \H^1(E\setminus Z, \Q/\Z)$. Lemma~\ref{lem:new7.1} gives $\sw_Z(\chi_2) \le 2$, and Lemma~\ref{lem:new7.3} gives
\begin{align}\label{eq:rswchi22}
\rsw_{Z,2}(\chi_2) = [-\bar{\omega}, 0 ]_{\bar{\pi}_{\widetilde{Y}},2}
\end{align}
in which $\bar{\omega}$ is equal to the residue of $\pi_E^{-2} f^* \alpha$ at $Z$, that is
\begin{align}\label{eq:omegares}
\bar{\omega}=\varphi(\alpha_{P_0})/(x_1^{(1)})^2
\end{align}
by Lemma~\ref{lem:blowup}. 

Let $F \in \H^0(E,\O(2))$ be any quadratic form such that $dF$ restricts to $\varphi(\alpha_{P_0})$ on $Z$. 
To see that such an $F$ exists, write $\alpha = \sum_{i > j} a_{ij} dx_i \wedge dx_j$ with $a_{ij} \in \O_{Y,P_0}$. Then 
\begin{align*}
\varphi(\alpha_{P_0})&=\sum_{i > j} a_{ij}(P_0)(x_j^{(1)})^2 d(x_i^{(1)} / x_j^{(1)})\\
&=\sum_{i > j} a_{ij}(P_0) d(x_i^{(1)} x_j^{(1)})
\end{align*}
since we are in characteristic $p=2$. 

Now let $\delta_1: \kappa(E)_Z \to \H^1_p(\kappa(E)_Z)$ denote the Artin--Schreier map. Note that $\delta_1(F/(\pi^{(1)})^2)$ is unramified outside $Z$ and hence $\frac{1}{p}\delta_1(F/(\pi^{(1)})^2)\in \H^1(E\setminus Z,\Q/\Z)$.
By \cite[Lemmas~3.6 and~3.7]{K}, $\frac{1}{p}\delta_1(F/(\pi^{(1)})^2)\in \H^1(\kappa(E)_Z)$ has refined Swan conductor
\begin{align}\label{eq:rsw3.7}
\rsw_{Z,2}\bigl(\delta_1(F/(\pi^{(1)})^2)\bigr)=[-dF/(x_1^{(1)})^2,0]_{\pi_{\widetilde{Y}},2}=[-\varphi(\alpha_{P_0})/(x_1^{(1)})^2,0]_{\pi_{\widetilde{Y}},2}.
\end{align}
The result now follows from \eqref{eq:rswchi22}, \eqref{eq:omegares}, \eqref{eq:rsw3.7} and the injectivity of the refined Swan conductor on the graded pieces of Kato's filtration by Swan conductor.
\end{proof}

We now prove Theorem~\ref{thm:onelemma}\eqref{onelemma_1} in the case $n=1$. This will form the basis for a proof of Theorem~\ref{thm:onelemma}\eqref{onelemma_1} by induction.

\begin{lemma}\label{lem:B1}
Let $\A\in\fil_1 \Br X$, and let $\rsw_{1}(\A) = [\alpha, \beta]_{\pi, 1}$.
Let $P\in \X(\O_k)$, and let $P_0 \in Y(\F)$ be the reduction of $P$. For $Q \in B(P,1)$,
\[
\inv \A(Q) = \inv \A(P) + \frac{1}{p}\Tr_{\F/\F_p} \beta_{P_0}(\tv{1}{P}{Q}).
\]
In particular, if $\beta_{P_0}\neq 0$ then $\evmap{\A}$ takes $p$ distinct values on $B(P,1)$.
\end{lemma}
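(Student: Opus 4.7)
If $\sw(\A)=0$, then $\A \in \fil_0 \Br X$ and $\beta = 0$, so the claim reduces to the statement that $\inv \A(Q) = \inv \A(P)$ whenever $Q \equiv P \pmod{\pi}$, which follows from Corollary~\ref{cor:tame}\eqref{constantmodp}. So we may assume $\sw(\A)=1$. The plan is to reduce to the residue map on a blowup, where Proposition~\ref{prop:tame} and Lemma~\ref{lem:inv_trace} compute invariants explicitly, and then use Lemma~\ref{lem:blowuptv} to read the tangent vector off from the coordinates on the exceptional divisor.

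Concretely, let $f \colon \tilde{\X} \to \X$ be the blowup at $P_0$, with exceptional divisor $E \cong \P^m_\F$ and $Z = E \cap \tilde{Y}$. Pick local parameters $\pi, x_1, \dotsc, x_m$ at $P_0$, write $\beta_{P_0} = \sum_i b_i\, dx_i$ with $b_i \in \F$, and set $u_i = x_i^{(1)}/\pi^{(1)}$ on the affine piece $U = E \setminus Z$. Since $Q \equiv P \pmod \pi$, the strict transform $\tilde Q$ is a well-defined $\O_k$-point of $\tilde\X$ whose reduction $\tilde Q_0$ lies in $U(\F)$; by Lemma~\ref{lem:blowuptv}\eqref{blowuptv1} we have $u_i(\tilde Q_0) = dx_i(\tv{1}{P}{Q})$, so the function $F/\pi^{(1)} = \sum_i b_i u_i$ on $U$ satisfies
\[
(F/\pi^{(1)})(\tilde Q_0) \;=\; \sum_i b_i\, dx_i(\tv{1}{P}{Q}) \;=\; \beta_{P_0}(\tv{1}{P}{Q}),
\]
where $F = \sum_i b_i x_i^{(1)} \in \H^0(E,\O(1))$ restricts to $\psi(\beta_{P_0})$ on $Z$.

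Next, apply Lemma~\ref{lem:sw0}: $f^*\A$ lies in $\fil_0 \Br(\tilde\X \setminus \tilde Y)$ at $E$ and
\[
\partial_E(f^*\A) \;\equiv\; \tfrac{1}{p}\delta_1(F/\pi^{(1)}) \pmod{\H^1(\F,\Q/\Z)}
\]
as elements of $\H^1(E\setminus Z, \Q/\Z)$. Combining Proposition~\ref{prop:tame}\eqref{tame} applied to $\tilde\X$ at the point $\tilde Q$ with Lemma~\ref{lem:inv_trace} then yields
\[
\inv \A(Q) \;=\; \inv f^*\A(\tilde Q) \;=\; \tfrac{1}{p}\Tr_{\F/\F_p}\bigl(\beta_{P_0}(\tv{1}{P}{Q})\bigr) + C,
\]
where the constant $C$ comes from the $\H^1(\F,\Q/\Z)$ ambiguity and is independent of $Q \in B(P,1)$. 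Setting $Q = P$ gives $\tv{1}{P}{P} = 0$, forcing $C = \inv \A(P)$, which proves the formula. Finally, if $\beta_{P_0} \neq 0$ then the $\F$-linear map $v \mapsto \Tr_{\F/\F_p}\beta_{P_0}(v)$ is a nonzero $\F_p$-linear functional on $T_{P_0}Y$, so $v \mapsto \tfrac{1}{p}\Tr_{\F/\F_p}\beta_{P_0}(v)$ is surjective onto $\tfrac{1}{p}\Z/\Z$, and by Lemma~\ref{lem:tangent}\eqref{tr1} the map $Q \mapsto \tv{1}{P}{Q}$ from $q_2(B(P,1))$ to $T_{P_0}Y$ is a bijection, so $\evmap{\A}$ attains all $p$ values on $B(P,1)$.

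The technical heart of the argument is already packaged in Lemma~\ref{lem:sw0} (which identifies $\partial_E(f^*\A)$ via Kato's Proposition~7.3 together with the calculations of Section~\ref{sec:compare-rsw}) and Lemma~\ref{lem:blowuptv} (which matches the affine coordinates on $U$ with the tangent vector $\tv{1}{P}{Q}$); the only subtle point in the present argument is that the undetermined constant in Lemma~\ref{lem:sw0} is harmless because it is cancelled by specialising to $Q=P$, which pins down $C = \inv\A(P)$.
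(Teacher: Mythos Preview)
Your proof is correct and follows essentially the same approach as the paper's own proof: blow up at $P_0$, apply Lemma~\ref{lem:sw0} to identify $\partial_E(f^*\A)$ up to a constant class, then use Proposition~\ref{prop:tame} and Lemma~\ref{lem:inv_trace} to compute invariants, and Lemma~\ref{lem:blowuptv}\eqref{blowuptv1} to match affine coordinates on $U$ with the tangent vector. The only cosmetic difference is that the paper cancels the $\H^1(\F,\Q/\Z)$ ambiguity by forming the difference $A = f^*\A(\tilde Q) - f^*\A(\tilde P)$ at the outset (and noting $u_i(\tilde P_0)=0$), whereas you carry an undetermined constant $C$ and determine it afterwards by specialising to $Q=P$; these are equivalent.
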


\begin{proof}
We have
\begin{align*}
\inv \A(Q) - \inv \A(P) = \inv A,
\end{align*}
where $A=f^*\A(\widetilde{Q}) - f^*\A(\widetilde{P})\in \Br k$. By Lemma~\ref{lem:sw0}, Proposition~\ref{prop:tame} and Lemma~\ref{lem:inv_trace}, $A\in\Br k[p]$ and
\[\inv A=\frac{1}{p}\Tr_{\F/\F_p}\Bigl(F/\pi^{(1)}(\widetilde{Q}_0)-F/\pi^{(1)}(\widetilde{P}_0)\Bigr),\]
where $F \in \H^0(E,\O(1))$ is any linear form restricting to $\psi(\beta_{P_0})$ on $Z$. Write $\beta_{P_0}=\sum_i a_i dx_i$. Then we can take $F=\sum_i a_i x_i^{(1)}$ and hence $F/\pi^{(1)}=\sum_i a_i u_i$, where $u_i=x_i^{(1)}/\pi^{(1)}$.
Since $u_i(\widetilde{P}_0)=0$ for $1\leq i\leq m$, we have $F/\pi^{(1)}(\widetilde{P}_0)=0$ and 
\[\inv A=\frac{1}{p}\Tr_{\F/\F_p}\Bigl(F/\pi^{(1)}(\widetilde{Q}_0)\Bigr).\]
Now the result follows from Lemma~\ref{lem:blowuptv}\eqref{blowuptv1}.
\end{proof}

We now prove Theorem~\ref{thm:onelemma}\eqref{onelemma_1} by induction. 

\begin{proof}[Proof of Theorem~\ref{thm:onelemma}\eqref{onelemma_1}]
Let $N\geq 1$ and suppose that we have proved Theorem~\ref{thm:onelemma}\eqref{onelemma_1} for all $n\leq N$. Our task is to prove it for $n=N+1$. Suppose that $\A\in\fil_{N+1}\Br X$, $\rsw_{N+1}\A=[\alpha,\beta]_{\pi, N+1}$, $Q\in B(P, N+1)$. 
By Lemma~\ref{lem:downby1}, $\sw_E(f^* \A) \le N$ and $\rsw_{E,N}(f^* \A) = [\alpha_E, \beta_E]_{\pi,N}$
where $\beta_E$ is the unique element of $\H^0(E,\Omega^1(\log Z)(Z))$ satisfying $\rho(\beta_E) = -\psi(\beta_{P_0})$. Now we apply Theorem~\ref{thm:onelemma}\eqref{onelemma_1} to $f^*\A\in \Br X$, $\widetilde{P}\in\widetilde{\X}(\O_k), \widetilde{Q}\in B(\widetilde{P}, N)$ to obtain
\begin{align}
\inv f^*\A(\widetilde{Q}) = \inv f^* \A(\widetilde{P}) + \frac{1}{p}\Tr_{\F/\F_p} \Bigl(\beta_{E,\widetilde{P}_0}\Bigl(\tv{N}{\widetilde{P}}{\widetilde{Q}}\Bigr)\Bigr).
\end{align}
Lemma~\ref{lem:blowuptv}\eqref{blowupdu} shows that $\beta_{E,\widetilde{P}_0}\Bigl(\tv{N}{\widetilde{P}}{\widetilde{Q}}\Bigr)=\beta_{P_0}\Bigl(\tv{N+1}{P}{Q}\Bigr)$.
Noting that $f^*\A(\widetilde{Q})=\A(Q)$ and $f^*\A(\widetilde{P})=\A(P)$ completes the proof of Theorem~\ref{thm:onelemma}\eqref{onelemma_1}.
\end{proof}

Now we turn our attention to the proof of Theorem~\ref{thm:onelemma}\eqref{rla} and~\eqref{surj2}. We begin with the first statement of Theorem~\ref{thm:onelemma}\eqref{rla}, which is the content of the next lemma.

\begin{lemma}\label{lem:B2s1}
Let $n>2$ and let $\A\in\fil_n\Br X$ with  $\rsw_{n}\A=[\alpha,0]_{\pi,n}$. There exists $\gamma\in\Omega^1_Y|_{P_0}$ such that the following holds: for any $Q\in B(P,1)$ and $R \in B(Q,n-1)$, we have
\[
\inv \A(R) = \inv \A(Q) - \frac{1}{p} \Tr_{\F/\F_p} \Bigl(\alpha_{P_0}(\tv{1}{P}{Q},\tv{n-1}{Q}{R}) + \gamma(\tv{n-1}{Q}{R})\Bigr).
\]
Furthermore, if $\alpha_{P_0}\neq 0$ then there exists $Q\in B(P,1)$ such that $\evmap{\A}$ takes $p$ distinct values on $B(Q,n-1)$.
\end{lemma}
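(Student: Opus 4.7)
The plan is to blow $\X$ up at $P_0$, transfer to the pullback $f^*\A$ on $\tilde{\X}$ whose Swan conductor at the exceptional divisor $E$ drops by two, and then invoke case~\eqref{onelemma_intro_1} of Theorem~\ref{thm:onelemma_intro} (just established by induction) at base point $\tilde Q$.

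Since $\beta = 0$, Lemma~\ref{lem:dbna} gives $n\alpha = 0$, forcing $p \mid n$ whenever $\alpha \neq 0$. Lemma~\ref{lem:downby2} with $s = 0$ then yields $\sw_E(f^*\A) \le n-2$ and
\[
\rsw_{E,n-2}(f^*\A) = [\alpha_E, \beta_E]_{\pi, n-2}
\]
with $\beta_E \equiv \sum_{i>j} a_{ij}(P_0) u_j^2 d(u_i/u_j) \pmod{\H^0(E, \Omega^1(Z+\log Z))}$, writing $\alpha = \sum_{i>j} a_{ij}\, dx_i \wedge dx_j$. Because $\beta_E$ is well-defined, the discrepancy
\[
\omega := \beta_E - \sum_{i>j} a_{ij}(P_0) u_j^2 d(u_i/u_j) \in \H^0(E, \Omega^1(Z+\log Z))
\]
depends only on $\A$ and $P_0$. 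By Lemma~\ref{lem:logdiff}\eqref{logdiff1} we may write $\omega = \sum_i c_i\, du_i$ with $c_i \in \F$, and set $\gamma := -\sum_i c_i\, dx_i \in \Omega^1_Y|_{P_0}$; this $\gamma$ is crucially independent of the choice of $Q$ below.

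Now fix $Q \in B(P,1)$ and $R \in B(Q,n-1)$. Lemma~\ref{lem:blowuptv}\eqref{blowuptv1} gives $u_i(\tilde Q_0) = dx_i(\tv{1}{P}{Q})$, and since $n-1 > 1$, Lemma~\ref{lem:blowuptv}\eqref{blowupdu} gives $\tilde R \in B(\tilde Q, n-2)$ with $du_i(\tv{n-2}{\tilde Q}{\tilde R}) = dx_i(\tv{n-1}{Q}{R})$. Applying Theorem~\ref{thm:onelemma_intro}\eqref{onelemma_intro_1} to $f^*\A$ at base $\tilde Q$ yields
\[
\inv \A(R) - \inv \A(Q) = \tfrac{1}{p} \Tr_{\F/\F_p}\bigl(\beta_{E,\tilde Q_0}(\tv{n-2}{\tilde Q}{\tilde R})\bigr).
\]
Expanding $u_j^2 d(u_i/u_j) = u_j\, du_i - u_i\, du_j$ and substituting the two identities above, the main part evaluates to $-\alpha_{P_0}(\tv{1}{P}{Q}, \tv{n-1}{Q}{R})$, while the $\omega$ part becomes $-\gamma(\tv{n-1}{Q}{R})$. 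This gives the claimed formula.

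For the final assertion, assume $\alpha_{P_0} \neq 0$. The $\F$-linear map $L \colon T_{P_0}Y \to \Omega^1_Y|_{P_0}$, $v \mapsto \alpha_{P_0}(v, -)$, is nonzero, so the set $\{v : L(v) + \gamma = 0\}$ is either empty or a proper coset of $\ker L$; hence there exists $v^{(1)} \in T_{P_0}Y$ with $L(v^{(1)}) + \gamma \neq 0$. Let $Q \in B(P,1)$ be the lift corresponding to $v^{(1)}$ via Lemma~\ref{lem:tangent}. Then the $\F$-linear form $w \mapsto \alpha_{P_0}(v^{(1)}, w) + \gamma(w)$ on $T_{P_0}Y$ is nonzero, hence surjective, and composition with the surjective trace $\Tr_{\F/\F_p}$ makes $\evmap{\A}$ attain $p$ distinct values on $B(Q, n-1)$. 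The main subtlety of the argument is the $Q$-independence of $\gamma$, which is secured by the uniqueness of $\beta_E$ in the refined Swan conductor representation of $f^*\A$.
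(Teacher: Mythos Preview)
Your proof is correct and follows essentially the same approach as the paper: blow up at $P_0$, apply Lemma~\ref{lem:downby2} with $s=0$ to obtain $\sw_E(f^*\A)\le n-2$ and the form of $\beta_E$, isolate the $\H^0(E,\Omega^1(Z+\log Z))$-part to define $\gamma$, and then apply Theorem~\ref{thm:onelemma_intro}\eqref{onelemma_intro_1} at the base point $\tilde{Q}$ combined with Lemma~\ref{lem:blowuptv}. Your treatment of the second assertion via the linear map $L(v)=\alpha_{P_0}(v,-)$ is a clean repackaging of the paper's argument, and your explicit remark on the $Q$-independence of $\gamma$ is a point the paper leaves implicit.
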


\begin{proof}
By Lemma~\ref{lem:downby2},  $\sw_E(f^* \A) \leq n-2$ and $\rsw_{E,n-2}(f^* \A) = [\alpha_E, \beta_E]_{\pi,n-2}$
where $\beta_E \in \H^0(E,\Omega^1(2Z))$ satisfies $\beta_E|_Z = \varphi(\alpha_{P_0})$. Writing $\alpha_{P_0} = \sum a_{ij} dx_i \wedge dx_j$ for some $a_{ij}\in\F$ and letting $u_i = x_i^{(1)} / \pi^{(1)}$, this means $\beta_E = \beta_1+\beta_2$ where $\beta_1=\sum a_{ij} u_j^2 d(u_i/u_j)$ and $\beta_2\in \H^0(E,\Omega^1(\log Z)(Z))$. Lemma~\ref{lem:logdiff}\eqref{logdiff1} shows we can write $\beta_2=\sum b_i du_i$ for some $b_i\in\F$. Note that $\widetilde{R}\in B(\widetilde{Q},n-2)$. Now Theorem~\ref{thm:onelemma}\eqref{onelemma_1} gives
\[
\inv \A(R) - \inv \A(Q) = \inv f^*\A(\widetilde{R}) - \inv f^*\A(\widetilde{Q}) = \frac{1}{p} \Tr_{\F/\F_p} \beta_{E,\widetilde{Q}_0}(\tv{n-2}{\widetilde{Q}}{\widetilde{R}}).
\]
It remains to check that $\beta_{1,\widetilde{Q}_0}(\tv{n-2}{\widetilde{Q}}{\widetilde{R}})=-\alpha_{P_0}(\tv{1}{P}{Q},\tv{n-1}{Q}{R})$ and set $\gamma=-\sum b_i dx_i\in \Omega^1_Y|_{P_0}$. We have
\[\beta_{1,\widetilde{Q}_0}(\tv{n-2}{\widetilde{Q}}{\widetilde{R}})=\sum a_{ij} \big( u_j(\widetilde{Q}_0) du_i(\tv{n-2}{\widetilde{Q}}{\widetilde{R}})-u_i(\widetilde{Q}_0)du_j(\tv{n-2}{\widetilde{Q}}{\widetilde{R}}) \big) \]
Now the claimed formula follows from Lemma~\ref{lem:blowuptv}.

For the second claim, it suffices to show the existence of tangent vectors $v,w\in T_{P_0}(Y)$ such that $\Tr_{\F/\F_p} \Bigl(\alpha_{P_0}(v,w) + \gamma(w)\Bigr)\neq 0$, since one can then multiply $w$ by scalars in $\F_p$. Since $\alpha_{P_0}\neq 0$, it is easily seen that there exist $v',w'\in T_{P_0}(Y)$ such that $\alpha_{P_0}(v',w') + \gamma(w')\neq 0$. The non-degeneracy of the trace form implies that there exists $\lambda\in \F$ such that 
\begin{align*}
\Tr_{\F/\F_p} \Bigl(\lambda\bigl(\alpha_{P_0}(v',w') + \gamma(w')\bigr)\Bigr)\neq 0.
\end{align*}
Now take $v=v'$ and $w=\lambda w'$.
\end{proof}

The following lemma for the case $p=n=2$ completes the proof of Theorem~\ref{thm:onelemma}\eqref{surj2}.

\begin{lemma}\label{lem:p=n=2}
Suppose $p=2$.
Let $\A\in\fil_2 \Br X$, and suppose that $\rsw_{2}(\A) = [\alpha, 0]_{\pi, 2}$. Write $\alpha_{P_0}=\sum a_{ij} dx_i\wedge dx_j$ for some $a_{ij}\in\F$.
Let $P\in \X(\O_k)$, and let $P_0 \in Y(\F)$ be the reduction of $P$. There exists $\gamma\in\Omega^1_Y|_{P_0}$ such that for $Q \in B(P,1)$,
\[
\inv \A(Q) = \inv \A(P) + \frac{1}{2}\Tr_{\F/\F_2}\Bigl(\sum a_{ij}dx_i(\tv{1}{P}{Q})dx_j(\tv{1}{P}{Q}) +\gamma(\tv{1}{P}{Q}) \Bigr).
\]
Furthermore, if $\alpha_{P_0}\neq 0$ then $\evmap{\A}$ takes $2$ distinct values on $B(P,1)$.
\end{lemma}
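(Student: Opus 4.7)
My plan is to follow closely the approach of Lemma~\ref{lem:B2s1}, substituting Lemma~\ref{lem:sw02} for Lemma~\ref{lem:B1} to handle this special case $p=2$, $\sw(\A)=2$ after blowing up $\X$ at $P_0$. Concretely, Lemma~\ref{lem:sw02} gives $\sw_E(f^*\A)=0$ and describes $\partial_E(f^*\A)\in\H^1(E\setminus Z,\Q/\Z)$, modulo $\fil_1\H^1(E\setminus Z,\Q/\Z)$, as $\tfrac{1}{2}\delta_1(F/(\pi^{(1)})^2)$ for any quadratic form $F\in\H^0(E,\O(2))$ whose differential restricts on $Z$ to $\varphi(\alpha_{P_0})$. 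The natural choice is $F=\sum a_{ij}x_i^{(1)}x_j^{(1)}$: in characteristic~$2$ the formula for $\varphi$ from Section~\ref{sec:blowups} reduces to $\varphi(dx_i\wedge dx_j)=d(x_i^{(1)}x_j^{(1)})$, so indeed $dF|_Z=\varphi(\alpha_{P_0})$.

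The next step is to pin down the $\fil_1$ ambiguity using Lemma~\ref{lem:H1EminusZ}: $\fil_1\H^1(E\setminus Z,\Q/\Z)$ is generated by the constants $\H^1(\F,\Q/\Z)$ together with the classes $\tfrac{1}{2}\delta_1(G/\pi^{(1)})$ for linear forms $G\in\H^0(E,\O(1))$. Hence there exist scalars $b_i\in\F$ and a constant class $\chi_0\in\H^1(\F,\Q/\Z)$ such that $\partial_E(f^*\A)=\tfrac{1}{2}\delta_1(F/(\pi^{(1)})^2)+\tfrac{1}{2}\delta_1((\sum_i b_i x_i^{(1)})/\pi^{(1)})+\chi_0$. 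Proposition~\ref{prop:tame} then reduces the computation of $\inv\A(Q)-\inv\A(P)=\inv f^*\A(\tilde Q)-\inv f^*\A(\tilde P)$ to the evaluation of $\partial_E(f^*\A)$ at the reductions $\tilde P_0,\tilde Q_0$, both lying in the affine chart on $E$ where $\pi^{(1)}\neq 0$ with coordinates $u_i=x_i^{(1)}/\pi^{(1)}$. By construction $u_i(\tilde P_0)=0$, and Lemma~\ref{lem:blowuptv}\eqref{blowuptv1} gives $u_i(\tilde Q_0)=dx_i(\tv{1}{P}{Q})$, so Lemma~\ref{lem:inv_trace} yields the stated formula with $\gamma=\sum_i b_i\, dx_i\in\Omega^1_Y|_{P_0}$.

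For the surjectivity claim when $\alpha_{P_0}\neq 0$, I would set $f(v)=\sum a_{ij}dx_i(v)dx_j(v)+\gamma(v)$ on $T_{P_0}(Y)$; a direct expansion in characteristic~$2$ gives $f(v+w)-f(v)-f(w)=\alpha_{P_0}(v,w)$. If $\Tr_{\F/\F_2}\!\circ f$ were constant on $T_{P_0}(Y)$, then fixing $w$ and varying $v$ would force the $\F_2$-linear map $v\mapsto\Tr_{\F/\F_2}\alpha_{P_0}(v,w)$ to vanish identically, and hence by non-degeneracy of the trace pairing $\alpha_{P_0}(\cdot,w)=0$ for all $w$, contradicting $\alpha_{P_0}\neq 0$. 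The main subtlety of the argument lies in controlling the ``lower-order'' contribution coming from the $\fil_1$ ambiguity in $\partial_E(f^*\A)$: it is precisely the description of $\gr_1\H^1(E\setminus Z,\Q/\Z)$ supplied by Lemma~\ref{lem:H1EminusZ} (together with the fact that $p\nmid 1$, so that this graded piece is identified with $\H^0(Z,\O(1))$) that guarantees this contribution is captured by a single element $\gamma\in\Omega^1_Y|_{P_0}$.
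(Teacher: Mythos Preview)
Your proposal is correct and follows essentially the same approach as the paper: blow up at $P_0$, invoke Lemma~\ref{lem:sw02} to get $\sw_E(f^*\A)=0$ together with the description of $\partial_E(f^*\A)$ modulo $\fil_1$, use Lemma~\ref{lem:H1EminusZ} to write the $\fil_1$ part as a linear form plus a constant (the constant cancelling in the difference $\A(Q)-\A(P)$), then apply Proposition~\ref{prop:tame}, Lemma~\ref{lem:inv_trace}, and Lemma~\ref{lem:blowuptv}\eqref{blowuptv1} with the choice $F=\sum a_{ij}x_i^{(1)}x_j^{(1)}$ and $\gamma=\sum b_i\,dx_i$. For the non-constancy claim, your contradiction argument via the polarisation identity $f(v+w)-f(v)-f(w)=\alpha_{P_0}(v,w)$ is the same device the paper uses (there phrased as a direct construction of $v,w$ with $\Tr_{\F/\F_2}B(v,w)\neq 0$); the key point in both is that $v\mapsto\alpha_{P_0}(v,w)$ is $\F$-linear and hence surjective onto $\F$ whenever it is nonzero, so non-degeneracy of the trace applies.
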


\begin{proof}
We have
\begin{align*}
\inv \A(Q) - \inv \A(P) = \inv A,
\end{align*}
where $A=f^*\A(\widetilde{Q}) - f^*\A(\widetilde{P})\in \Br k$. By Proposition~\ref{prop:tame}, Lemma~\ref{lem:sw02} and Lemma~\ref{lem:H1EminusZ}, $A\in\Br k[2]$ and
\begin{align*}
\partial (A) &= \partial_E (f^*\A)(\widetilde{Q}_0) -  \partial_E (f^*\A)(\widetilde{P}_0)\\
&= \frac{1}{2}\delta_1\bigl(F/(\pi^{(1)})^2+G/\pi^{(1)}\bigr)(\widetilde{Q}_0)- \frac{1}{2}\delta_1\bigl(F/(\pi^{(1)})^2+G/\pi^{(1)}\bigr)(\widetilde{P}_0)
\end{align*}
where $F \in \H^0(E,\O(2))$ is any quadratic form such that $dF$ restricts to $\varphi(\alpha_{P_0})$ on $Z$, $G\in  \H^0(E,\O(1))$ is a linear form, and $\delta_1:\kappa(E)_Z\to \H^1_p(\kappa(E)_Z)$ is the Artin--Schreier map. Recalling that the characteristic is $2$, we can take $F=\sum a_{ij} x_i^{(1)}x_j^{(1)}$, whereby $F/(\pi^{(1)})^2=\sum a_{ij} u_iu_j$.
Now Lemma~\ref{lem:inv_trace} gives
\begin{align*}
\inv (A) &= \frac{1}{2}\Tr_{\F/\F_2}\Bigl(\bigl(F/(\pi^{(1)})^2+G/\pi^{(1)}\bigr)(\widetilde{Q}_0)- \bigl(F/(\pi^{(1)})^2+G/\pi^{(1)}\bigr)(\widetilde{P}_0)\Bigr)\\
&=\frac{1}{2}\Tr_{\F/\F_2}\Bigl(\bigl(F/(\pi^{(1)})^2+G/\pi^{(1)}\bigr)(\widetilde{Q}_0)\Bigr),
\end{align*}
since $u_i(\widetilde{P}_0)=0$ for $1\leq i\leq m$ and therefore $(F/(\pi^{(1)})^2)(\widetilde{P}_0)=(G/\pi^{(1)})(\widetilde{P}_0)=0$.
Recall from Lemma~\ref{lem:blowuptv}\eqref{blowuptv1} that $u_i(\widetilde{Q}_0)=dx_i(\tv{1}{P}{Q})$. By Lemma~\ref{lem:H1EminusZ}, we can write $G=\sum b_i x_i^{(1)}$ for some $b_i\in\F$. Hence we set $\gamma=\sum b_i dx_i\in \Omega^1_Y|_{P_0}$.

For the second claim, let $Q(v)=\sum a_{ij}dx_i(v)dx_j(v)$ and let the associated bilinear form be 
\[B(v,w)=Q(v+w)-Q(v)-Q(w)=\sum a_{ij}(dx_i(v)dx_j(w)+dx_i(w)dx_j(v)).\]
Note that $B(v,w)=(Q(v+w)+\gamma(v+w))-(Q(v)+\gamma(v))-(Q(w)+\gamma(w))$. It suffices to show the existence of $v,w\in T_{P_0}(Y)$ such that $\Tr_{\F/\F_2}(B(v,w))\neq 0$, since then at least one of $\Tr_{\F/\F_2}(Q(v+w)+\gamma(v+w)), \Tr_{\F/\F_2}(Q(v)+\gamma(v)), \Tr_{\F/\F_2}(Q(w)+\gamma(w))$ is non-zero. Since $\alpha_{P_0}\neq 0$, there exist $v',w'\in T_{P_0}(Y)$ such that $B(v',w')\neq 0$. Now the non-degeneracy of the trace form shows the existence of $\lambda\in\F$ such that $\Tr_{\F/\F_2}(\lambda B(v',w'))\neq 0$, whence the result.
\end{proof}

Now we complete the proof of Theorem~\ref{thm:onelemma}\eqref{rla}.

\begin{proof}[Proof of Theorem~\ref{thm:onelemma}\eqref{rla}]\label{onelemma-proof}
The first statement of Theorem~\ref{thm:onelemma}\eqref{rla} has been proved in Lemma~\ref{lem:B2s1}. We will now prove the second statement. Note that if $\alpha_{P_0}=0$ then the second statement follows from the first by setting $g_{\A, Q}(R)$ equal to $\inv\A(R)-\inv\A(Q)$. Henceforth we will assume $\alpha_{P_0}\neq 0$.

We first introduce notation for successive blowups. Let $\X_1\xrightarrow{f_1} \X$ be the blowup of $\X$ at $P_0$ with exceptional divisor $E_1$ and let $Z_1=E_1\cap\widetilde{Y}$. Let $P_1\in\X_1(\O_k)$ be the section lifting $P$ and let $\X_2\xrightarrow{f_2} \X_1$ be the blowup at the closed point $P_{1,0}$ of $P_1$. Iterating this construction gives a sequence of blowups
\[\dots \xrightarrow{f_4} \X_3\xrightarrow{f_3}\X_2\xrightarrow{f_2} \X_1\xrightarrow{f_1} \X.\]
Write $E_i$ for the exceptional divisor of the $i$th blowup and let $P_i, Q_i, R_i\in \X_i(\O_k)$ be the sections lifting $P,Q,R$, respectively. Let $\m$ be the maximal ideal of $\O_{\X, P_{0}}$ and let $\pi, x_1,\dots, x_m$ be a basis for $\m/\m^2$. Let $u_{1,j}=x_j^{(1)}/\pi^{(1)}$ so that $u_{1,1},\dots ,u_{1, m}$ restrict to a system of affine coordinates on $E_1\setminus Z_1\cong \mathbb{A}^m_\F$. Now let $u_{2,j}=u_{1,j}^{(1)}/\pi^{(1)}$ so that $u_{2,1},\dots ,u_{2,m}$ restrict to a system of affine coordinates on $E_2\setminus Z_2$, and so on and so forth.

Write $\alpha=\sum_{i>j} a_{ij} dx_i\wedge dx_j$ for some $a_{ij}\in \O_{Y,P_0}$.
By Lemma~\ref{lem:downby2}, we have $\sw_{E_1}(f_1^*\A)=n-2$ 
and, furthermore, $\rsw_{E_1, n-2}(f_1^*\A)=[\alpha_{E_1},\beta_{E_1}+\beta'_{E_1}]_{\pi, n-2}$ where $\alpha_{E_1}=\sum_{i>j} a_{ij}(P_0)du_{1,i}\wedge du_{1,j}$, $\beta_{E_1}=\sum_{i>j} a_{ij}(P_0)u_{1,j}^2 d(u_{1,i}/u_{1,j})$ and $\beta'_{E_1}\in \H^0(E_1, \Omega^1(\log Z_1)(Z_1))$. 

Let $\mathcal{C}_1\in\Br V_1$ (where $V_1$ is the generic fibre of some open neighbourhood $\V_1$ of $P_0$ in $\X$) be the explicit Brauer group element constructed in Lemma~\ref{lem:rsw_exist} below with $\sw_{E_1}(\mathcal{C}_1)\leq n-2$ and $\rsw_{E_1,n-2}(\mathcal{C}_1)=[0,\beta'_{E_1}]_{\pi,n-2}$. 
(To apply Lemma~\ref{lem:rsw_exist} we must check that $n-2$ is not of the form $e'+\ell e$ for any integer $\ell$.  If $n-2$ were of this form, then we should have $e' \in \Z$ and hence $p \mid e'$.  But we also have $p \mid n$ by Lemma~\ref{lem:dbna} and $p \mid e$ by Remark~\ref{remark:ea0} so this would give $p \mid 2$, a contradiction.)
Let $\mathcal{B}_1=f_1^* \A-\mathcal{C}_1\in\Br V_1$ and note that $\rsw_{E_1,n-2}(\mathcal{B}_1)=[\alpha_{E_1}, \beta_{E_1}]_{\pi,n-2}$.

Now we blow up at $P_{1,0}=Q_{1,0}$. Applying Lemma~\ref{lem:downby2} to $\mathcal{B}_1$ and repeating the argument above, we can write $f_2^*\mathcal{B}_1=\mathcal{B}_2+\mathcal{C}_2$ in $\Br V_2$, where $V_2$ is the generic fibre of some open neighbourhood $\V_2$ of $P_{1,0}$ in $\X_1$,
$\sw_{E_2}(\mathcal{B}_2)=n-4$, $\sw_{E_2}(\mathcal{C}_2)\leq n-4$, $\rsw_{E_2,n-4}(\mathcal{B}_2)=[\alpha_{E_2},\beta_{E_2}]_{\pi,n-4}$ with $\alpha_{E_2}=\sum_{i>j} a_{ij}(P_0)du_{2,i}\wedge du_{2,j}$ and $\beta_{E_2}=2\sum_{i>j} a_{ij}(P_0)u_{2,j}^2 d(u_{2,i}/u_{2,j})$, and $\rsw_{E_2,n-4}(\mathcal{C}_2)=[0,\beta'_{E_2}]_{\pi,n-4}$ for some $\beta'_{E_2}\in  \H^0(E_2, \Omega^1(\log Z_2)(Z_2))$. Thus, 
\[f_2^*f_1^*\A=\mathcal{B}_2+\mathcal{C}_2+f_2^*\mathcal{C}_1.\]
Continuing in this way, after $s$ blowups we obtain an equality
\[f_s^*\dots f_2^*f_1^*\A=\mathcal{B}_s+\mathcal{D}_s\]
of elements in $\Br V_s$, where $V_s$ is the generic fibre of some open neighbourhood $\mathcal{V}_s$ of $P_{s,0}$ in $\X_s$, $\sw_{E_s}(\mathcal{B}_s)= n-2s$ and $\rsw_{E_s,n-2s}(\mathcal{B}_s)=[\alpha_{E_s},\beta_{E_s}]_{\pi,n-2s}$ with $\alpha_{E_s}=\sum_{i>j} a_{ij}(P_0)du_{s,i}\wedge du_{s,j}$ and $\beta_{E_s}=s\sum_{i>j} a_{ij}(P_0)u_{s,j}^2 d(u_{s,i}/u_{s,j})$. We have gathered all the terms coming from the $\mathcal{C}_i$'s into $\mathcal{D}_s$. 

Note that $R_s\in B(Q_s, n-2s)$ so Theorem~\ref{thm:onelemma}\eqref{onelemma_1} gives
\begin{equation}
\inv \mathcal{B}_s(R_s) = \inv \mathcal{B}_s(Q_s) + \frac{1}{p}\Tr_{\F/\F_p} \beta_{E_s, Q_{s,0}}(\tv{n-2s}{Q_s}{R_s}).
\end{equation}
A calculation shows that $\beta_{E_s, Q_{s,0}}(\tv{n-2s}{Q_s}{R_s})=-s\alpha_{P_0}(\tv{s}{P}{Q},\tv{n-s}{Q}{R})$.
Finally, 
\begin{align*}
\inv\A(R)-\inv\A(Q)&=\inv f_s^*\dots f_2^*f_1^*\A(R_s)-\inv f_s^*\dots f_2^*f_1^*\A(Q_s)\\
&=\inv \mathcal{B}_s(R_s) - \inv \mathcal{B}_s(Q_s) +\inv \mathcal{D}_s(R_s) - \inv \mathcal{D}_s(Q_s)\\
&=\frac{-s}{p}\Tr_{\F/\F_p}\alpha_{P_0}(\tv{s}{P}{Q},\tv{n-s}{Q}{R})+\inv \mathcal{D}_s(R_s) - \inv \mathcal{D}_s(Q_s).
\end{align*}
Define $g_{\A,Q}(R)$ to be $\inv \mathcal{D}_s(R_s) - \inv \mathcal{D}_s(Q_s)$. We claim that this does not depend on the choice of a suitable $s$. In other words, suppose that $Q\in B(P,s)$ for some $s$ with $1<s<n/2$. Then we can also consider $Q$ as lying in $B(P,s-1)$. Our claim is that for $R\in B(Q,n-s+1)$ we have $\inv \mathcal{D}_s(R_s) - \inv \mathcal{D}_s(Q_s)=\inv \mathcal{D}_{s-1}(R_{s-1}) - \inv \mathcal{D}_{s-1}(Q_{s-1})$. To prove the claim, write $\mathcal{D}_s=f_s^*\mathcal{D}_{s-1}+\mathcal{C}_s$, with $\sw_{E_s}\mathcal{C}_s\leq n-2s$. For $R\in B(Q,n-s+1)$ we have 
\begin{align*}
\inv \mathcal{D}_s(&R_s) - \inv \mathcal{D}_s(Q_s) \\ &=\inv f_s^*\mathcal{D}_{s-1}(R_s)+\inv \mathcal{C}_s(R_s)-\inv f_s^*\mathcal{D}_{s-1}(Q_s)-\inv \mathcal{C}_s(Q_s)\\
&=\inv\mathcal{D}_{s-1}(R_{s-1})-\inv \mathcal{D}_{s-1}(Q_{s-1})+\inv \mathcal{C}_s(R_s)-\inv \mathcal{C}_s(Q_s).
\end{align*}
For $R\in B(Q,n-s+1)$, we have $R_s\in B(Q_s,n-2s+1)$, whereby $\tv{n-2s}{Q_s}{R_s}=0$. Therefore, Theorem~\ref{thm:onelemma}\eqref{onelemma_1} shows that $\inv \mathcal{C}_s(R_s)=\inv \mathcal{C}_s(Q_s)$, whence the claim.

For $s=1$, we have $\mathcal{D}_1=\mathcal{C}_1$ and Theorem~\ref{thm:onelemma}\eqref{onelemma_1} gives
\begin{align*}
\inv \mathcal{C}_1(R_1) - \inv \mathcal{C}_1(Q_1)=\frac{1}{p}\Tr_{\F/\F_p} \beta'_{E_1, Q_{1,0}}(\tv{n-2}{Q_1}{R_1}).
\end{align*}
Now let $U_1=E_1\setminus Z_1$ and let $\gamma\in\Omega^1_Y|_{P_0}$ be the image of $\beta'_{E_1, Q_{1,0}}$ under the isomorphism $\Omega^1_{U_1}|_{Q_{1,0}}\to \Omega^1_Y|_{P_0}$ (see Section~\ref{sec:tangentblowup}). Then $\beta'_{E_1, Q_{1,0}}(\tv{n-2}{Q_1}{R_1})=\gamma(\tv{n-1}{Q}{R})$, which completes the proof.
\end{proof}

\begin{lemma}\label{lem:rsw_exist}
Let $e$ be the absolute ramification index of $k$, and set $e'=ep/(p-1)$.
Let $\beta\in \H^0(E, \Omega^1(\log Z)(Z))$ and suppose that either $0<n<e'$ or $e'+(t-1)e<n< e'+te$ for some $t\in\Z_{\geq 1}$ with $[k( \mmu_{p^{t+1}}):k]$ coprime to $p$. There exists an open neighbourhood $\V$ of $P_0$ in $\X$ with generic fibre $V$ and an element $\mathcal{C}\in \fil_n\Br V[p^{t+1}]$ such that $\rsw_{E, n}(\mathcal{C})=[0,\beta]_{\pi,n}$. Explicitly, writing $K$ for the function field of $X$, we can take
\[\mathcal{C}=\begin{cases}
\cores_{K(\mmu_{p^{t+1}})/K}\sum_i\Bigl(1+\frac{p^t\tilde{b}_iu_i}{\varepsilon (n-te)\pi^n }(\zeta_p-1)^{p}, u_i\Bigr)_{p^{t+1}} & \textrm{ if } p\nmid n-te,\\
\cores_{K(\mmu_{p^{t+1}})/K}\sum_i\Bigl( 1+\frac{p^t\tilde{b}_i u_i}{\varepsilon \pi^n}(\zeta_p-1)^{p}, \pi \Bigr)_{p^{t+1}} & \textrm{ if } p\mid n-te,
\end{cases}\]
where $\varepsilon = [k(\mmu_{p^{t+1}}):k]$, $\beta=\sum_i b_i du_i$ with $b_i \in \F$, and $\tilde{b}_i$ is an arbitrary lift of $b_i$ to $\O_k$.
\end{lemma}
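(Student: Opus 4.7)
The plan is to reduce to a single basis element of $\H^0(E,\Omega^1(Z+\log Z))$ and then directly verify, using the defining property~\eqref{eq:rsw} of the refined Swan conductor, that the explicit symbol has the desired image. By Lemma~\ref{lem:logdiff}(\ref{logdiff1}) we may write $\beta=\sum_i b_i\,du_i$ with $b_i\in\F$; fix lifts $\tilde b_i\in\O_{\X,P_0}$ and take $\V$ to be an affine open neighbourhood of $P_0$ on which all the $\tilde b_i$ and the $x_j$ are regular and the $x_j$ generate the maximal ideal at $P_0$. By $\F$-linearity of $\rsw_n$ on the graded piece $\fil_n/\fil_{n-1}$, it suffices to treat each summand separately.

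Set $K'=K(\mmu_{p^{t+1}})$ and $d=[k(\mmu_{p^{t+1}}):k]$, which is coprime to $p$ by hypothesis. The identity $\cores\circ\res=d$ shows that $d$ acts as a unit on $\Br V\{p\}$, so by Lemma~\ref{lem:rsw-basechange} it is enough to compute the refined Swan conductor over $K'$ of each summand
\[
\mathcal{C}'_i=\Bigl(1+\tfrac{p^t\tilde b_iu_i}{\varepsilon(n-te)\pi^n}(\zeta_p-1)^p,\,u_i\Bigr)_{p^{t+1}}\quad\text{or}\quad\Bigl(1+\tfrac{p^t\tilde b_iu_i}{\varepsilon\pi^n}(\zeta_p-1)^p,\,\pi\Bigr)_{p^{t+1}},
\]
depending on whether $p\nmid n-te$ or $p\mid n-te$. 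Here $\varepsilon$ is the unit from Lemma~\ref{lem:rsw-same}, chosen so that $p^t(\zeta_p-1)^p/\varepsilon$ has precisely the valuation required to place the argument of the symbol in $1+\pi^n\O_{K'}^\times$; the assumption $n<e'+te$ is what ensures that this argument is indeed a principal unit at the correct level.

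The refined Swan conductor is then obtained by cupping $\mathcal{C}'_i$ with $1+\pi^nT$ and unwinding via $\lambda_\pi$. For a principal unit of the form $1+\pi^nv$ with $v\in\O_{K'}^\times$, the identity $\dlog(1+\pi^nv)\equiv \pi^n\,dv+n\pi^{n-1}v\,d\pi\pmod{\pi^{2n}}$, together with the compatibility of $\lambda_\pi$ with symbols from~\cite[\S1.4]{K}, expresses $\{\mathcal{C}'_i,1+\pi^nT\}$ in the form $\lambda_\pi(0,T\cdot\gamma)$ for some $\gamma\in\Omega^1_F$. A direct calculation, parallel to Lemma~\ref{lem:rsw-same}(\ref{rswsame}) and the proof of~\cite[Proposition~6.5]{K}, shows that when $p\nmid n-te$ the resulting $\gamma$ is exactly $\tilde b_i\,du_i\bmod\pi$, because the factor $(n-te)$ in the denominator cancels the scalar produced by Lemma~\ref{lem:dbna} applied to the $p^t$-divided class. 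When $p\mid n-te$ this cancellation is impossible, and the $du_i$-part must instead arise by cupping with $\pi$, which is the reason for the second form of the symbol.

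The main obstacle is this last step: the explicit computation of $\rsw_{E,n}$ of the symbols requires careful bookkeeping of the valuations of $(\zeta_p-1)$, $\varepsilon$, and $\pi$, and a separate argument in the two subcases. Once this is established for each summand, summing over $i$, applying $\cores_{K'/K}$, and using the transformation formula of Lemma~\ref{lem:rsw-basechange} to move back to $K$ (absorbing the resulting unit $d\in(\Z/p^{t+1})^\times$ into $\varepsilon$) yields $\rsw_{E,n}(\mathcal{C})=[0,\beta]_{\pi,n}$, while the containment $\mathcal{C}\in\fil_n\Br V[p^{t+1}]$ is then automatic from the definition.
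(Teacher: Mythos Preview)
Your proposal has a genuine gap in the passage between $K'$ and $K$. You invoke Lemma~\ref{lem:rsw-basechange} twice to ``move back to $K$'' after applying $\cores_{K'/K}$, but that lemma describes how the refined Swan conductor behaves under \emph{restriction}, not corestriction. Knowing $\rsw_{n}(\mathcal{C}')$ over $K'$ does not, via Lemma~\ref{lem:rsw-basechange}, yield $\rsw_n(\cores\mathcal{C}')$ over $K$; at best it would let you compute $\rsw$ of $\res_{K'/K}(\cores_{K'/K}\mathcal{C}')=\sum_\sigma\sigma(\mathcal{C}')$, which is a different element involving all Galois conjugates of $\zeta_p$. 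The paper handles this correctly by proving a separate corestriction formula: it shows directly that $\cores_{(K')^h/K^h}\circ\lambda_{\pi'}(\alpha,\beta)=\lambda_{N_{k'/k}\pi'}(\varepsilon\alpha,\beta)$, where $\varepsilon=[k':k]$, and deduces from this an explicit formula for $\rsw_n(\cores\mathcal{C}')$ in terms of $\rsw_{\varepsilon n}(\mathcal{C}')$. This is the essential new ingredient, and your $\cores\circ\res=d$ observation does not substitute for it.

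There are two further issues. First, you misidentify $\varepsilon$: in the statement it is the degree $[k(\mmu_{p^{t+1}}):k]$, not a unit from Lemma~\ref{lem:rsw-same}. Second, your treatment of the range $e'\le n<e'+te$ is too vague. The actual computation of $\rsw$ for the explicit symbols uses~\cite[Proposition~4.1 and Lemma~4.3]{K}, which only apply in the range $0<m<e'$; for larger $n$ one must instead compute $\rsw_{n-te}(p^t\mathcal{C}')$, which lands in that range, and then lift back via Lemma~\ref{lem:rsw-same} (noting that $\beta$ is exact, hence killed by the Cartier operator, so the formula in part~\eqref{rswe'} simplifies). Your sketch gestures at this but does not carry it out.
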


\begin{proof}
By Lemma~\ref{lem:logdiff}\eqref{logdiff1}, $\beta=\sum_i b_i du_i=d(\sum_i b_i u_i)$ for some $b_i\in\F$, whereby $d\beta=0$. Take arbitrary lifts $\tilde{b}_i$ of the $b_i$ to $k$. Let $k'=k( \mmu_{p^{t+1}})$, let $\varepsilon =[k':k]$ and let $\pi'$ be a uniformiser of $k'$. Let $K'=Kk'$. Let $F$ denote the common residue field of $K$ and $K'$ and let $K^h$ and $(K')^h$ be their respective Henselisations. Let $w=(\pi')^{-\varepsilon }N_{k'/k}\pi'$ and let $\bar{w}\in \F^\times$ be its reduction. The Brauer group element $\mathcal{C}$ will be constructed via corestriction. We begin by proving the following claims.
\begin{enumerate}
\item \label{eq:coresformula}
We have $\cores_{(K')^h/K^h}(\lambda_{\pi'}(\cdot,\cdot))=\lambda_{N_{k'/k}\pi'}(\varepsilon \cdot,\cdot).$
\item \label{eq:corrsw} Suppose $\mathcal{C'}\in\fil_{\varepsilon n}\Br (K')^h$ for $n>0$ and $\rsw_{\varepsilon n}(\mathcal{C}')=[\alpha',\beta']_{\pi',\varepsilon n}$. Then $\cores_{(K')^h/K^h}(\mathcal{C'})\in \fil_n\Br K^h$ and 
\begin{equation}\label{eq:rswC'}
\rsw_{ n}(\cores_{(K')^h/K^h}\mathcal{C}')=[\varepsilon\bar{w}^n\alpha',\bar{w}^n\beta']_{N_{k'/k}\pi',n}.
\end{equation}
\end{enumerate}

To prove~\eqref{eq:coresformula}, consider the isomorphism $\nu':\gr^0(M_{r,\O_{(K')^h}}^{q-1}) \to W_r\Omega^{q-1}_{F,\log} \oplus  W_r\Omega^{q-2}_{F,\log}$ of~\eqref{eq:gr0}, the inverse of which is used to define $\lambda_{\pi'}$. By~\cite[p.~113]{BK}, the sheaf $M_{r,\O_{(K')^h}}^{q-1}$ is generated by symbols and $\nu$ can be defined as follows. Let $x_1, \dots , x_{q-1}\in F^\times$ and let $\tilde{x}_1, \dots , \tilde{x}_{q-1}$ be lifts of the $x_i$ to $\O_{K^h}\subset \O_{(K')^h}$. 
Then 
\begin{align*}
\nu'(\{\tilde{x}_1,\dots, \tilde{x}_{q-1}\})&=(\dlog x_1\wedge\dots\wedge\dlog x_{q-1},0)\ \textrm{and}\\
 \nu'(\{\tilde{x}_1,\dots, \tilde{x}_{q-2},\pi'\})&=(0,\dlog x_1\wedge\dots\wedge\dlog x_{q-2}).
 \end{align*}
   Now let $\nu:\gr^0(M_{r,\O_{K^h}}^{q-1}) \to W_r\Omega^{q-1}_{F,\log} \oplus  W_r\Omega^{q-2}_{F,\log}$ be the isomorphism from~\eqref{eq:gr0} whose inverse is used to define $\lambda_{N_{k'/k}\pi'}$. It is defined by 
   \begin{align*}
   \nu(\{\tilde{x}_1,\dots, \tilde{x}_{q-1}\})&=(\dlog x_1\wedge\dots\wedge\dlog x_{q-1},0)\ \textrm{and}\\
   \nu(\{\tilde{x}_1,\dots, \tilde{x}_{q-2},N_{k'/k}\pi'\})&=(0,\dlog x_1\wedge\dots\wedge\dlog x_{q-2}).
   \end{align*}
     Therefore, $\nu\circ\cores_{(K')^h/K^h}(\{\tilde{x}_1,\dots, \tilde{x}_{q-2},\pi'\})=\nu'(\{\tilde{x}_1,\dots, \tilde{x}_{q-2},\pi'\})$. Since $\tilde{x}_1,\dots, \tilde{x}_{q-1}\in \O_{K^h}$, we have $\cores_{(K')^h/K^h}(\{\tilde{x}_1,\dots, \tilde{x}_{q-1}\})=\varepsilon\{\tilde{x}_1,\dots,\tilde{x}_{q-1}\}$ and hence $\nu\circ\cores_{(K')^h/K^h}(\{\tilde{x}_1,\dots, \tilde{x}_{q-1}\})=\varepsilon\nu'(\{\tilde{x}_1,\dots,\tilde{x}_{q-1}\})$. Therefore, the isomorphisms $\nu,\nu'$ satisfy the following commutative diagram.
     
\[     
\begin{CD}
\gr^0(M^{q-1}_{r,\O_{(K')^h}}) @>{\nu'}>> W_r \Omega^{q-1}_{F,\log} \oplus W_r \Omega^{q-2}_{F,\log} \\
@V{\cores_{(K')^h/K^h}}VV @VV{(\alpha,\beta) \mapsto (\varepsilon\alpha,\beta)}V \\
\gr^0(M^{q-1}_{r,\O_{K^h}}) @>{\nu}>> W_r \Omega^{q-1}_{F,\log} \oplus W_r \Omega^{q-2}_{F,\log} \\
\end{CD}
\]

Now the definitions of $\lambda_\pi$ and $\delta_r$ (Definitions~\ref{def:lambda} and~\ref{def:delta}) yield the commutative diagram below for all $q\geq 2$ and $r\geq 1$
\begin{equation}\label{eq:coresdiag}
\begin{tikzcd}[column sep=huge]
W_r \Omega^{q-1}_{F} \oplus W_r \Omega^{q-2}_{F} \arrow[r,"\delta_r"] \ar[d,"{(\alpha,\beta)\mapsto(\varepsilon\alpha,\beta)}"]
&
\H^q_{p^r}(F) \oplus \H^{q-1}_{p^r}(F) \arrow[r,"\lambda_{\pi'}"]\arrow[d,"{(\alpha,\beta)\mapsto (\varepsilon\alpha,\beta)}"]
& V^q_{p^r}(\O_{(K')^h})\arrow[d, "\cores_{(K')^h/K^h}"] \\
W_r \Omega^{q-1}_{F} \oplus W_r \Omega^{q-2}_{F} \arrow[r,"\delta_r"]
& \H^q_{p^r}(F) \oplus \H^{q-1}_{p^r}(F) \arrow[r,"\lambda_{N_{k'/k}\pi'}"] 
& V^q_{p^r}(\O_{K^h}) 
\end{tikzcd}
\end{equation}
proving~\eqref{eq:coresformula}. 

Now we prove~\eqref{eq:corrsw}. Suppose $\mathcal{C'}\in\fil_{\varepsilon n}\Br (K')^h$ for $n>0$ and $\rsw_{\varepsilon n}(\mathcal{C}')=[\alpha',\beta']_{\pi',\varepsilon n}$. That $\cores_{(K')^h/K^h}(\mathcal{C'})\in \fil_n\Br K^h$ follows from the compatibility of corestriction with cup products and~\cite[Proposition~6.3(ii)]{K}. Recall that the definition of the refined Swan conductor (Definition~\ref{def:rsw}) involves an abuse of notation wherein we write $\lambda_\pi$ for the composition $\lambda_\pi\circ\delta_1$. With this convention, the compatibility of corestriction with cup products and~\eqref{eq:coresformula} yield
\begin{align*}
\{\cores_{(K')^h/K^h}\mathcal{C'},1+(N_{k'/k}\pi')^nT\}&=\cores_{(K')^h/K^h}(\{\mathcal{C}',1+(N_{k'/k}\pi')^nT\})\\
&=\cores_{(K')^h/K^h}(\{\mathcal{C}', 1+(\pi')^{\varepsilon n}w^nT\})\\
&=\cores_{(K')^h/K^h}(\lambda_{\pi'}(\alpha' \bar{w}^nT ,\beta'\bar{w}^nT ))\\
&=\lambda_{N_{k'/k}\pi'}(\varepsilon\alpha' \bar{w}^nT ,\beta'\bar{w}^nT),
\end{align*}
proving~\eqref{eq:corrsw}.

Now let $a=\pi/(\pi')^\varepsilon$ and let $\bar{a}$ denote the image of $a$ in $F^\times$. The definition of the refined Swan conductor (Definition~\ref{def:rsw}) shows $[0,\beta]_{\pi,n}=[0,\bar{w}^n\bar{a}^{-n}\beta]_{N_{k'/k}\pi',n}$. Therefore, by~\eqref{eq:corrsw}, our task is to construct $\mathcal{C}'\in\fil_{\varepsilon n}\Br K'$ with 
\[\rsw_{ \varepsilon n}(\mathcal{C}')=[0,\bar{a}^{-n}\beta]_{\pi',\varepsilon n}.\]
Let $\zeta_{p^{t+1}}\in k'$ be a fixed choice of primitive $p^{t+1}$th root of unity in $k'$. The choice of $\zeta_{p^{t+1}}$ yields choices of primitive $p^s$th roots of unity for $1\leq s\leq t+1$ by setting $\zeta_{p^s}=\zeta_{p^{t+1}}^{p^{t+1-s}}$. For $1\leq s\leq t+1$ and $x,y \in (K')^\times$, let $(x,y)_{p^s} \in \Br K'[p^s]$ denote the class of the corresponding cyclic algebra, which depends on our chosen primitive $p^s$th root of unity.
Alternatively, as in~\cite[\S XIV.2]{Serre}, $(x,y)_{p^s}$ can be constructed as a cup product as follows: let $\delta \colon (K')^\times / (K')^{\times p^s} \to \H^1(K',\mmu_{p^s})$ be the Kummer isomorphism, and take the image of $(\delta(x),\delta(y))$ under the composition
\begin{multline*}
\H^1(K',\mmu_{p^s}) \times \H^1(K',\mmu_{p^s}) \\
\xrightarrow{\cup} \H^2(K',\mmu_{p^s}^{\otimes 2}) \xrightarrow{\cong} \H^2(K', \mmu_{p^s}) \to \H^2(K',(\bar{K'})^\times) = \Br K',
\end{multline*}
where we have used the choice of $\zeta_{p^s}$ to give an isomorphism $\mmu_{p^s}^{\otimes 2} \cong \mmu_{p^s}$. Note that $(x,y)_{p^s}={p^{t+1-s}}(x,y)_{p^{t+1}}$.

First suppose $0<n< e'$ and $p\nmid n$. Let 
\[\mathcal{C'}=\sum_i\Bigl(1+\frac{\tilde{b}_iu_i}{\varepsilon n\pi^n }(\zeta_p-1)^{p}, u_i\Bigr)_p\in \Br K'[p].\]
 By~\cite[Proposition~4.1 and Lemma~4.3]{K} we have $\mathcal{C'}\in\fil_{\varepsilon n}\Br K'$ and 
$\rsw_{ \varepsilon n}(\mathcal{C}')=[0,\bar{a}^{-n}\beta]_{\pi',\varepsilon n},$
as desired.

Now suppose $0<n< e'$ and $p\mid n$. Then~\cite[Proposition~4.1 and Lemma~4.3]{K} show that
\[\mathcal{C'}=\sum_i\Bigl( 1+\frac{\tilde{b}_i u_i}{\varepsilon \pi^n}(\zeta_p-1)^{p}, \pi \Bigr)_p\in \Br K'[p]\]
has the desired Swan conductor and refined Swan conductor.

So far, we have proved the lemma for $0<n<e'$. Now suppose $e'+(t-1)e< n <e'+te$ for some $t\geq 1$. Recall that our task is to construct $\mathcal{C}'\in\fil_{\varepsilon n}\Br K'$ with 
\[\rsw_{\varepsilon n}(\mathcal{C}')=[0,\bar{a}^{-n}\beta]_{\pi',\varepsilon n}.\]
By Lemma~\ref{lem:rsw-same}, it suffices to construct $\mathcal{C}'\in\Br K'$ such that $p^t\mathcal{C}'\in \fil_{\varepsilon(n-te)}\Br K'$ and
\[\rsw_{\varepsilon (n-te)}(p^t\mathcal{C}')=\Bigl[0,\frac{p^t}{(\pi')^{\varepsilon et}}\bar{a}^{-n}\beta\Bigr]_{\pi',\varepsilon (n-te)},\]
where we are abusing notation by writing $p^t/(\pi')^{\varepsilon et}$ for its image in $F^\times$.

If $p\nmid n-te$, let
\[\mathcal{C'}=\sum_i\Bigl(1+\frac{p^t\tilde{b}_iu_i}{\varepsilon (n-te)\pi^n }(\zeta_p-1)^{p}, u_i\Bigr)_{p^{t+1}}\in \Br K'[p^{t+1}].\]
Then
\[p^t\mathcal{C'}=\sum_i\Bigl(1+\frac{p^t\tilde{b}_iu_i}{\varepsilon (n-te)\pi^n }(\zeta_p-1)^{p}, u_i\Bigr)_{p}\in \Br K'[p]\]
and~\cite[Proposition~4.1 and Lemma~4.3]{K} show that $p^t\mathcal{C}'$ has the desired properties.

If $p\mid n-te$, let 
\[\mathcal{C'}=\sum_i\Bigl( 1+\frac{p^t\tilde{b}_i u_i}{\varepsilon \pi^n}(\zeta_p-1)^{p}, \pi \Bigr)_{p^{t+1}}\in \Br K'[p^{t+1}]\]
and apply ~\cite[Proposition~4.1 and Lemma~4.3]{K} to $p^t\mathcal{C}'$.
 \end{proof}

Finally, we prove the surjectivity results of Theorem~\ref{thm:onelemma}\eqref{surj3}.

\begin{proof}[Proof of Theorem~\ref{thm:onelemma}\eqref{surj3}]
First suppose that $\beta_{P_0}\neq 0$. We proceed by induction on $t$. The case $t=0$ follows from Theorem~\ref{thm:onelemma}\eqref{onelemma_1} and Theorem~\ref{thm:onelemma}\eqref{surj2}.
Now suppose the statement of Theorem~\ref{thm:onelemma}\eqref{surj3} holds for some $t_0\in\Z_{\geq 0}$. Let $\A$ have order $p^{t_0+2}$ in $\Br X$ and suppose that either $n> e'+t_0e$ or $n=e'+t_0e$ and $C(\alpha)=C(\beta)=0$. 
By Lemma~\ref{lem:rsw-same}, $\sw(p\A)=n-e$ and $\rsw_{n-e}(p\A)=[\bar{u}\alpha,\bar{u}\beta]_{\pi, n-e}$, where $\bar{u}$ is the image of $p/\pi^e$ in $\F^\times$. 
Consider the following exact sequence coming from multiplication by $p$:
\[
\begin{tikzcd}[column sep=small]
0\arrow[r]&p^{-1}\Z/\Z\arrow[r]& p^{-(t_0+2)}\Z/\Z\arrow[r,"p"]&p^{-(t_0+1)}\Z/\Z\arrow[r]& 0.
\end{tikzcd}
\]

Let $I$ denote the image of $\inv\circ\evmap{\A}:B(P, n-(t_0+1)e)\to p^{-(t_0+2)}\Z/\Z$. By the induction hypothesis, $\inv\circ\evmap{p\A}: B(P, n-(t_0+1)e)\to p^{-(t_0+1)}\Z/\Z$ is surjective. Therefore, for each $x\in p^{-(t_0+1)}\Z/\Z$, $I$ contains at least one preimage of $x$ under multiplication by $p$. Now Theorem~\ref{thm:onelemma}\eqref{onelemma_1} shows that in fact $I$ contains all preimages of elements of $p^{-(t_0+1)}\Z/\Z$ under multiplication by $p$, i.e.~$I=p^{-(t_0+2)}\Z/\Z$, as required.

Now suppose that $\beta=0$, $\alpha_{P_0}\neq 0$, $n>e'+2$ and $n\geq te+3$. Lemma~\ref{lem:B2s1} shows that there exists $\gamma\in \Omega^1_Y|_{P_0}$ such that for $Q\in B(P,1)$ and $R \in B(Q,n-1)$,
\begin{equation}\label{eq:gamma}
\inv \A(R) = \inv \A(Q) - \frac{1}{p} \Tr_{\F/\F_p} \Bigl(\alpha_{P_0}(\tv{1}{P}{Q},\tv{n-1}{Q}{R}) + \gamma(\tv{n-1}{Q}{R})\Bigr).
\end{equation}
The proof of Lemma~\ref{lem:B2s1} shows that $\gamma$ is constructed as follows: let $\rsw_{E,n-2}(f^* \A) = [\alpha_E, \beta_E]_{\pi,n-2}$ and write $\alpha_{P_0}=\sum a_{ij}dx_i\wedge dx_j$ for some $a_{ij}\in\F$. Then 
\[\beta_E-\sum a_{ij}u_j^2d(u_i/u_j)=\sum b_idu_i\] for some $b_i\in\F$ and we set $\gamma=-\sum b_idx_i$. The proof of Lemma~\ref{lem:B2s1} goes on to show the existence of $v,w\in T_{P_0}Y$ such that 
\begin{equation}\label{eq:nonzero}
 \Tr_{\F/\F_p} \Bigl(\alpha_{P_0}(v,w) + \gamma(w)\Bigr)\neq 0.
 \end{equation}
Let $Q_\A\in B(P,1)$ be such that $\tv{1}{P}{Q_\A}=v$. Then $\evmap{\A}$ takes $p$ distinct values on $B(Q_\A,n-1)$. We will show that $\inv\circ\evmap{\A}\colon B(Q_\A, n-te-1)\to p^{-(t+1)}\Z/\Z$ is surjective. The case $t=0$ follows immediately from~\eqref{eq:gamma} and~\eqref{eq:nonzero}. Now suppose we have proved the result for $t_0$ and we want to prove it for $t_0+1$. By Lemma~\ref{lem:rsw-same}, $\sw(p\A)=n-e$ and $\rsw_{n-e}(p\A)=[\bar{u}\alpha,\bar{u}\beta]_{\pi, n-e}$. Applying Lemma~\ref{lem:rsw-same} to $f^* \A$ shows that $\sw(f^* (p\A))=n-2-e$ and $\rsw_{E,n-2-e}(f^* (p\A))=[\bar{u}\alpha_E, \bar{u}\beta_E]_{\pi,n-2-e}$. The construction of $\gamma$ above shows that for $Q\in B(P,1)$ and $R \in B(Q,n-e-1)$
\begin{align*}\label{eq:pgamma}
\inv (p\A)(R) &= \inv (p\A)(Q) - \frac{1}{p} \Tr_{\F/\F_p} \Bigl(\bar{u}\alpha_{P_0}(\tv{1}{P}{Q},\tv{n-e-1}{Q}{R}) +\bar{u} \gamma(\tv{n-e-1}{Q}{R})\Bigr).
\end{align*}
The induction hypothesis states that if $v',w'\in T_{P_0}Y$ are such that 
\begin{equation}\label{eq:pnonzero}
 \Tr_{\F/\F_p} \Bigl(\bar{u}\alpha_{P_0}(v',w') + \bar{u}\gamma(w')\Bigr)\neq 0
 \end{equation}
and $Q_{p\A}\in B(P,1)$ is such that $\tv{1}{P}{Q_{p\A}}=v'$ then $\inv\circ\evmap{p\A}\colon B(Q_{p\A}, n-(t_0+1)e-1)\to p^{-(t_0+1)}\Z/\Z$ is surjective. By~\eqref{eq:nonzero}, we can take $w'=w/\bar{u}$, $v'=v$ and $Q_{p\A}=Q_\A$. Let $I$ denote the image of $\inv\circ\evmap{\A}\colon B(Q_{\A}, n-(t_0+1)e-1)\to p^{-(t_0+2)}\Z/\Z$. The result for $p\A$ shows that for each $x\in p^{-(t_0+1)}\Z/\Z$, $I$ contains at least one preimage of $x$ under multiplication by $p$. To show that $I$ contains all preimages of $x$ under multiplication by $p$, we will show that for any $S\in B(Q_{\A}, n-(t_0+1)e-1)$, the map $\inv\circ\evmap{\A}$ gives a surjection from $B(S, n-1)$ to $p^{-1}\Z/\Z$. Since $n\geq (t_0+1)e+3$, we have $\tv{1}{P}{S}=\tv{1}{P}{Q_\A}=v$, whereby the result follows from~\eqref{eq:gamma} and~\eqref{eq:nonzero}. 
\end{proof}

\section{Proof of Theorem~\ref{thm:main}}\label{sec:main}

We now prove Theorem~\ref{thm:main}. The notation and assumptions of that theorem will be in force throughout this section, so $k$ denotes a finite extension of $\Q_p$, $X/k$ is a smooth, geometrically irreducible variety with smooth model $\X \to\Spec \O_k$, and the special fibre $Y$ of $\X$ is assumed to be geometrically irreducible.

For ease of notation we define a modified version of Kato's filtration as follows.
\begin{align*}
\film_{-2} \Br  \Kh  &=\{\A\in\fil_0\Br  \Kh\mid \partial\A=0 \}; \\
\film_{-1} \Br  \Kh &=\{\A\in\fil_0\Br  \Kh\mid \partial\A\in \H^1(\F,\Q/\Z) \}; \\
\film_{0} \Br  \Kh &=\fil_0\Br K^h; \\
\film_n \Br  \Kh &= \{\A\in \fil_{n+1}\Br  \Kh \mid \rsw_{n+1}(\A)\in [\Omega^2_F, 0]_{\pi,n+1}\} \text{ for } n \ge 1.
\end{align*}

For the purposes of the definition, $ \Kh$ could be replaced by any Henselian discrete valuation field of characteristic zero. Pulling back from $\Br \Kh$ to $\Br X$ gives a filtration on $\Br X$ whose pieces we denote by $\film_n \Br X$.

\begin{lemma}\label{lemma:main1}
For $n\geq -2$, we have  $\film_n \Br X \subset \file_n \Br X$.   
\end{lemma}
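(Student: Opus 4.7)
The plan is to split the argument according to the value of $n$. The three ``tame'' cases $n=-2,-1,0$ are handled together using the residue map and the commutative diagram of Proposition~\ref{prop:tame}. Indeed, if $\A\in\film_{-2}\Br X$ then $\partial\A=0$, so Corollary~\ref{cor:tame}(i) gives $\evmap{\A}=0$; if $\A\in\film_{-1}\Br X$ then $\partial\A\in\H^1(\F,\Q/\Z)$ is a constant class on $Y$, so by Proposition~\ref{prop:tame}(ii) the value $\inv\A(P)=P_0^{*}(\partial\A)$ is independent of $P_0\in Y(\F)$; and if $\A\in\film_0\Br X=\fil_0\Br X$, Corollary~\ref{cor:tame}(ii) shows $\A(P)$ depends only on $P_0$, which is exactly constancy on $B(P,1)$. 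To get the corresponding statements over $\X(\O_{k'})$, I would observe via Lemma~\ref{lem:lambda-basechange} that $\res(\A)$ still lies in $\fil_0\Br X_{k'}$ and that $\partial(\res(\A))=e\,\bar{\phi}_{*}(\partial\A)$, so the three vanishing/unramifiedness hypotheses propagate and the tame argument is run over $k'$.

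For $n\ge 1$ the input is $\A\in\fil_{n+1}\Br X$ with $\rsw_{n+1}(\A)=[\alpha,0]_{\pi,n+1}$. I would first dispose of the easy subcase in which $\A$ already lies in $\fil_n$: by Lemma~\ref{lem:rsw-basechange} applied at level $n$, $\res(\A)\in\fil_{en}\Br X_{k'}$, and applying Theorem~\ref{thm:onelemma_intro}\eqref{onelemma_intro_1} at level $en$ to $\res(\A)$ shows that the evaluation difference $\inv\res(\A)(Q)-\inv\res(\A)(P)$ is controlled by $\tv{en}{P}{Q}$; but for any $Q\in B(P,en+1)\subset B(P,en)$ this tangent vector vanishes by Lemma~\ref{lem:tangent}, giving constancy on $B(P,en+1)$ for free.

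The substantive case is $\A\in\fil_{n+1}\setminus\fil_n$ with $\alpha\ne0$. For $k'=k$ (and more generally for unramified $k'$, where $e=1$), Lemma~\ref{lem:rsw-basechange} gives $\rsw_{n+1}(\res\A)=[\bar{\phi}_{*}\alpha,0]$ and Theorem~\ref{thm:onelemma_intro}\eqref{onelemma_intro_1} at level $n+1$ directly yields constancy on $B(P,n+1)=B(P,en+1)$. For a ramified $k'/k$ with $e\ge 2$, the base-change formula yields $\res(\A)\in\fil_{e(n+1)}\Br X_{k'}$ with $\rsw_{e(n+1)}(\res\A)=[\bar{a}^{-(n+1)}\alpha,0]$, and Theorem~\ref{thm:onelemma_intro}\eqref{onelemma_intro_1} only gives constancy on the smaller ball $B(P,e(n+1))=B(P,en+e)$ rather than on the larger ball $B(P,en+1)$ that we need.

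The main obstacle is precisely this gap in the ramified wild case. I would bridge it by mimicking the blow-up strategy used to prove Theorem~\ref{thm:onelemma_intro} itself: blow up $\X_{\O_{k'}}$ at $P_0$ and apply Lemma~\ref{lem:downby2} (valid because the second component of $\rsw_{e(n+1)}(\res\A)$ is zero), which drops the Swan conductor along the exceptional divisor by at least $2$. Iterating the blow-up lowers the Swan conductor by a further $2$ at each step, and after $\lceil (e-1)/2\rceil$ blow-ups one reaches Swan conductor $\le en+1$, at which point Theorem~\ref{thm:onelemma_intro}\eqref{onelemma_intro_1} can be applied at a level matching $en+1$ on the blown-up model. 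The tangent-vector bookkeeping of Lemma~\ref{lem:blowuptv} then translates constancy on the appropriate balls on the blown-up scheme back to constancy on $B(P,en+1)\subset\X(\O_{k'})$. This iterated blow-up step is the technical heart of the argument, and it is the place where the condition $\rsw_{n+1}(\A)\in[\Omega^2_F,0]$, rather than mere membership in $\fil_{n+1}$, is essential.
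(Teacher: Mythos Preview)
Your treatment of the tame cases $n=-2,-1,0$ and your handling of the unramified and ``Case 1'' parts of $n\ge 1$ are correct and coincide with what the paper's one-line proof intends: applying Theorem~\ref{thm:onelemma_intro}\eqref{onelemma_intro_1} at level $n+1$ (where $\beta=0$) gives constancy on $B(P,n+1)$, which is exactly $B(P,e(k'/k)n+1)$ when $e(k'/k)=1$; and your observation that $\tv{en}{P}{Q}=0$ for $Q\in B(P,en+1)$ disposes of all $k'$ when $\A$ already lies in $\fil_n$. The paper says nothing more than this.

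You are right to flag the remaining case $\sw(\A)=n+1$ with $e=e(k'/k)\ge 2$: Lemma~\ref{lem:rsw-basechange} gives $\sw_{k'}(\res\A)=e(n+1)$ with vanishing $\beta$-component, and Theorem~\ref{thm:onelemma_intro}\eqref{onelemma_intro_1} at that level only yields constancy on $B(P,en+e)$, not on the larger ball $B(P,en+1)$. The paper's proof does not address this gap, so your analysis is more careful than the paper's here.

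However, your proposed blow-up fix is incomplete as written. Lemma~\ref{lem:downby2} only determines $\beta_E$ modulo $\H^0(E,\Omega^1(Z+\log Z))$, so after one blow-up you pick up an extra term $\sum b_i\,du_i$. At the reduction $\tilde P_0$ of the strict transform of $P$ the ``known'' part $\sum a_{ij}u_j^2\,d(u_i/u_j)$ vanishes (all $u_i(\tilde P_0)=0$), but this extra term need not, and via Lemma~\ref{lem:blowuptv} it contributes $\tfrac{1}{p}\Tr\,\gamma(\tv{en+1}{P}{Q})$ to $\inv\A(Q)-\inv\A(P)$, exactly the $\gamma$-term of Theorem~\ref{thm:onelemma_intro}\eqref{rla-intro}. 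So iterating Lemma~\ref{lem:downby2} does not by itself produce constancy on $B(P,en+1)$; you still have to show these $\gamma$-terms vanish (or absorb them into an inductive argument on the Swan conductor over $k'$, subtracting auxiliary elements as in Lemma~\ref{lem:rsw_exist} and the proof of Theorem~\ref{thm:onelemma_intro}\eqref{rla-intro}, and checking the resulting error lies in a level where your Case~1 trick applies). That step is the real content missing from both your sketch and the paper's terse citation.
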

\begin{proof}
This follows from Proposition~\ref{prop:tame} for $n=-2,-1,0$ and from Theorem~\ref{thm:onelemma_intro}\eqref{onelemma_intro_1} and Lemma~\ref{lem:rsw-basechange} for $n \ge 1$.
\end{proof}

The reverse inclusions will be given by the following lemmas.

\begin{lemma}\label{lem:main2}
Let $n \ge 1$, and let $\A$ be an element of $\film_n \Br X \setminus \film_{n-1} \Br X$.
Then there exists a finite unramified extension $k'/k$ and $P \in \mathcal{X}(\O_{k'})$ such that $\evmap{\A}$ takes $p$ distinct values on $B(P,n)$. In particular, $\A\notin \file_{n-1} \Br X$. 
\end{lemma}
\begin{proof}
Since $\A\in \film_n \Br X$, we have $\A\in \fil_{n+1}\Br X $ and $\rsw_{n+1}(\A)=[\alpha,0]_{\pi, n+1}$ for some $\alpha\in\Omega^2_F$. By Theorem~\ref{thm:onelemma}\eqref{onelemma_reg}, $\alpha$ lies in $\Omega^2(Y)$. Suppose first that $\alpha\neq 0$.
Let $Z \subset Y$ be the zero locus of $\alpha$, which by assumption is a strict closed subset of $Y$, and set $U = Y \setminus Z$.
By the Lang--Weil estimates~\cite{LW}, there is a finite extension $\F'/\F$ such that $U(\F')$ is non-empty.
Let $k'/k$ be the unramified extension of $k$ having residue field $\F'$.
Choose any $P_0 \in U(\F')$ and lift it (by Hensel's Lemma) to a point $P \in \mathcal{X}(\O_{k'})$. By Lemma~\ref{lem:rsw-basechange} we have $\res_{k'/k} \A \in \fil_n \Br X_{k'} $ and $\rsw_{n+1}(\res_{k'/k} \A) = \rsw_{n+1}(\A)$. Since $\alpha_{P_0}\neq 0$, Theorem~\ref{thm:onelemma_intro}\eqref{surj2-intro}  shows that there exists $Q \in B(P,1)$ such that $\evmap{\A}$ takes $p$ distinct values on $B(Q,n)$. It follows that $\A\notin \file_{n-1} \Br X$.

Now suppose that $\alpha=0$. Then $\A\in \fil_{n}\Br X $. Let $\rsw_{n}(\A)=[\alpha',\beta']_{\pi,n}$ for some $(\alpha',\beta')\in\Omega^2(Y)\oplus \Omega^1(Y) $. Note that $\beta'\neq 0$ since $\A\notin \film_{n-1} \Br X$. Then by the same argument as above, there exists a finite extension $\F'/\F$ and a point $P_0 \in Y(\F')$ satisfying $\beta'_{P_0} \neq 0$. Let $k'/k$ be the unramified extension with residue field $\F'$ and lift $P_0$ to a point $P \in \X(\O_{k'})$. Now Theorem~\ref{thm:onelemma_intro}\eqref{onelemma_intro_1} shows that $\evmap{\A}$ takes $p$ distinct values on $B(P,n)$, whereby $\A\notin \file_{n-1} \Br X$. 
\end{proof}

\begin{lemma}\label{lem:main3}
For $n \ge 0$, we have 
\[
\begin{tikzcd}
\file_n \Br X \arrow[d, phantom, sloped, "\subset"]\\
\{\A\in \Br X\mid \forall\; k'/k \textrm{ finite unramified, } \forall\; P\in \X(\O_{k'}),  \evmap{\A} \textrm{ is constant on } B(P,n+1)\}\arrow[d, phantom, sloped, "\subset"]\\
    \film_n \Br X.
\end{tikzcd}    
  \]
\end{lemma}
\begin{proof}
The first containment is obvious. Now take $\A \in\Br X$ such that  for all finite unramified extensions $k'/k$ and all $P\in \X(\O_{k'})$,  $\evmap{\A}$ is constant on $B(P,n+1)$. Let $r$ be the smallest non-negative integer such that $\A \in \film_r \Br X$. By Lemma~\ref{lem:main2}, $r\leq n$, whence the result.
\end{proof}

\begin{corollary}\label{cor:Evnr}
For $n \ge 0$, we have 
\[
\begin{tikzcd}
\file_n \Br X \arrow[d, phantom, sloped, "="]\\
\{\A\in \Br X\mid \forall\; k'/k \textrm{ finite unramified, } \forall\; P\in \X(\O_{k'}),  \evmap{\A} \textrm{ is constant on } B(P,n+1)\}\arrow[d, phantom, sloped, "="]\\
    \film_n \Br X.
\end{tikzcd}    
  \]
\end{corollary}

\begin{proof}
Immediate from Lemmas~\ref{lemma:main1} and~\ref{lem:main3}.
\end{proof}

\begin{lemma}\label{lem:-1p}
For every $r \ge 1$, there are inclusions 
\begin{align}\label{eq:-2p}
\file_{-2} \Br X &\subset  \{\A\in \Br X\mid \forall\; k'/k \textrm{ finite unramified, } \evmap{\A} \textrm{ is zero on }\X(\O_{k'})\}\\
&\subset \film_{-2} \Br X \nonumber
 \end{align}
 and 
 \begin{align}\label{eq:-1p}
 \file_{-1} \Br X &\subset  \{\A\in \Br X\mid \forall\; k'/k \textrm{ finite unramified, } \evmap{\A} \textrm{ is constant on }\X(\O_{k'})\}\\
 &\subset \film_{-1} \Br X.\nonumber
 \end{align}
\end{lemma}
\begin{proof}
The first inclusions in~\eqref{eq:-2p} and~\eqref{eq:-1p} are clear.
By Lemma~\ref{lem:main3}, 
\[\{\A\in \Br X\mid \forall\; k'/k \textrm{ finite unramified, } \evmap{\A} \textrm{ is constant on }\X(\O_{k'})\}\subset \film_0 \Br X \]
and so the statement only concerns elements of $\film_0 \Br X= \fil_0 \Br X$.
Suppose that $\A \in \fil_0 \Br X$ satisfies $\partial(\A) \neq 0$.
We will prove the existence of a finite unramified extension $k'/k$ such that $ \evmap{\A}$ is non-zero on $\X(\O_{k'})$ and, furthermore, if $\partial(\A)$ does not lie in $\H^1(\F,\Q/\Z)$ then we will show that $ \evmap{\A}$ is non-constant on $\X(\O_{k'})$.
The argument we use is the same as that used in~\cite[\S 5]{bad} for elements of order prime to $p$. 

Write $\bar{Y}$ for the base change of $Y$ to an algebraic closure of $\F$.
Since $Y$ is geometrically connected, the Hochschild--Serre spectral sequence gives a short exact sequence
\[
0 \to \H^1(\F, \Q/\Z) \to \H^1(Y, \Q/\Z) \to \H^1(\bar{Y}, \Q/\Z).
\]
If $\partial(\A)$ lies in $\H^1(\F, \Q/\Z)$ and $\X(\O_k)$ is non-empty, then Proposition~\ref{prop:tame} shows that the corresponding evaluation map $\X(\O_k) \to \Br k$ is constant and non-zero, as desired.
If $\partial(\A)$ lies in $\H^1(\F, \Q/\Z)$ and $\X(\O_k)$ is empty, we can use Lang--Weil to pass to an unramified extension $k'/k$ of degree prime to the order of $\A$ where $\X(\O_{k'})$ is non-empty, and we obtain the same result.

On the other hand, suppose that $\partial(\A)$ does not lie in $\H^1(\F,\Q/\Z)$.
To prove that $\evmap{\A}$ is non-constant, we may change $\A$ by a constant algebra.
Write $\bar{\partial}(\A)$ for the image of $\partial(\A)$ in $\H^1(\bar{Y}, \Q/\Z)$.
Let $m$ be the order of $\bar{\partial}(\A)$; then $m \partial(\A)$ lies in $\H^1(\F, \Q/\Z)$, which is isomorphic to $\Q/\Z$.
Therefore there exists $\alpha \in \H^1(\F,\Q/\Z)$ satisfying $m \alpha = m \partial(\A)$.
The map $\partial \colon \Br k \to \H^1(\F,\Q/\Z)$ is an isomorphism; let $\A' \in \Br k$ satisfy $\partial(\A') = \alpha$.
Replacing $\A$ by $\A-\A'$, we reduce to the case where $\partial(\A)$ and $\bar{\partial}(\A)$ have the same order $m$.

The class $\partial(\A)$ lies in the subgroup $\H^1(Y,\Z/m) \subset \H^1(Y,\Q/\Z)$.
Let $T \to Y$ be a $\Z/m$-torsor representing this class; since its image in $\H^1(\bar{Y},\Z/m)$ also has order $m$, \cite[Lemma~5.15]{bad} shows that the variety $T$ is geometrically connected.  As it is smooth, it is also geometrically irreducible.
The image of $T(\F) \to Y(\F)$ consists of those points $P_0 \in Y(\F)$ such that $\partial(\A)$ maps to $0$ under the induced map $P_0^* \colon \H^1(Y,\Z/m) \to \H^1(\F,\Z/m)$.
Similarly, for any $a \in \H^1(\F,\Z/m)$, let $T_a \to Y$ be a torsor representing the class $\partial(\A) - a$; then the image of $T_a(\F) \to Y(\F)$ consists of those $P_0$ satisfying $P_0^*(\partial(\A)) = a$.
For any fixed $a$, it follows from Lang--Weil that $T_a$ has points over any sufficiently large extension of $\F$.
Therefore, for some extension $\F'/\F$, there exist $P_0,Q_0 \in Y(\F')$ satisfying $P_0^*(\partial(\A)) \neq Q_0^*(\partial(\A))$ in $\H^1(\F',\Z/m)$.
Let $k'/k$ be the unramified extension with residue field $\F'$, and let $P,Q$ be lifts of $P_0,Q_0$ to $\X(\O_{k'})$.
By Proposition~\ref{prop:tame}, we have $\A(P) \neq \A(Q)$ in $\Br k'$, and our proof is complete.
\end{proof}

\begin{remark}
Lemma~\ref{lem:-1p} was already known for elements of order prime to $p$: for instance, the statement $\file_{-1}\Br X \subset \film_{-1} \Br X$ for the prime-to-$p$ parts is the main result of~\cite{CTS}, and we believe that the proof there also gives the slightly stronger result of Lemma~\ref{lem:-1p} for the prime-to-$p$ parts.
\end{remark}

\begin{corollary}\label{cor:Evnr2}
We have
\begin{align*}\file_{-2} \Br X&= \{\A\in \Br X\mid \forall\; k'/k \textrm{ finite unramified, } \evmap{\A} \textrm{ is zero on }\X(\O_{k'})\}\\
&= \film_{-2} \Br X
\end{align*}
and
\begin{align*}
\file_{-1} \Br X&= \{\A\in \Br X\mid \forall\; k'/k \textrm{ finite unramified, } \evmap{\A} \textrm{ is constant on }\X(\O_{k'})\}\\
&= \film_{-1} \Br X.
\end{align*}
\end{corollary}

\begin{proof}
Immediate from Lemmas~\ref{lemma:main1} and~\ref{lem:-1p}.
\end{proof}

This completes the proof of Theorem~\ref{thm:main}. The following corollary of Theorem~\ref{thm:main} describes how the evaluation filtration behaves under base field extension.

\begin{corollary}\label{cor:scale}
Let $\A\in \Br X$ and let $k'/k$ be a finite extension with ramification index $e$. 
\begin{enumerate}
\item For $-2\leq n\leq 0$ we have
\[\A\in \file_n\Br X\implies \res_{k'/k}(\A)\in  \file_n\Br X_{k'}.\]
\item\label{eq:scale} Now let $n\geq 1$, suppose $\A\in \file_n\Br X\setminus\file_{n-1}\Br X$ and write $\rsw_{n+1}(\A)=[\alpha,0]_{\pi, n+1}$. If $\alpha=0$, write $\rsw_{n}(\A)=[\alpha',\beta']_{\pi, n}$. We have
\[ \res_{k'/k}(\A)\in\begin{cases}
\file_{e(n+1)-1}\Br X_{k'}\setminus\file_{e(n+1)-2}\Br X_{k'} & \textrm{ if } \alpha\neq 0;\\
\file_{en}\Br X_{k'}\setminus\file_{en-1}\Br X_{k'} & \textrm{ if } \alpha= 0 \textrm{ and } p\nmid e;\\
\file_{en-1}\Br X_{k'}\setminus\file_{en-2}\Br X_{k'} & \textrm{ if } \alpha= 0 , \alpha'\neq 0\textrm{ and } p\mid e;\\
\file_{en-1}\Br X_{k'}& \textrm{ if } \alpha= \alpha'= 0\textrm{ and } p\mid e.
\end{cases} \]
\end{enumerate}
\end{corollary}

\begin{proof}
\begin{enumerate}
\item The statements for $n=-1,-2$ are clear from the definitions. For $n=0$, use Theorem~\ref{thm:main}(\ref{it:tame0}) and Proposition~\ref{prop:kato61}(1).
\item If $\alpha\neq 0$ then $\A\in\fil_{n+1}\Br X\setminus \fil_n\Br X$. Lemma~\ref{lem:rsw-basechange} shows $\res_{k'/k}(\A)\in\fil_{e(n+1)}\Br X_{k'}$ and
\[\rsw_{e(n+1)}(\res_{k'/k}(\A)) = [\bar{a}^{-(n+1)}\alpha,0]_{\pi', e(n+1)},\] where $\pi'$ is a uniformiser of $k'$ and $a=\pi(\pi')^{-e}$. This shows $\res_{k'/k}(\A)\in\file_{e(n+1)-1}\Br X_{k'}$. Since $\bar{a}\in \F^\times$, $\bar{a}^{-(n+1)}\alpha$ is non-zero and $\res_{k'/k}(\A)\notin\fil_{e(n+1)-1}\Br X_{k'} $. Since $\file_{e(n+1)-2}\Br X_{k'}\subset \fil_{e(n+1)-1}\Br X_{k'} $, it follows that $\res_{k'/k}(\A)\notin\file_{e(n+1)-2}\Br X_{k'}$.

Henceforth, suppose that $\alpha=0$ and therefore $\A\in \fil_n \Br X$. 
Lemma~\ref{lem:rsw-basechange} shows that $\res_{k'/k}(\A)\in\fil_{en}\Br X_{k'}$ and
\begin{equation}\label{eq:rswres}
\rsw_{en}(\res_{k'/k}(\A)) = [\bar{a}^{-n}\alpha',\bar{a}^{-n}e\beta']_{\pi', en}.
\end{equation}
Since $\fil_{n-1}\Br X\subset \file_{n-1}\Br X$ and $\A\notin  \file_{n-1}\Br X$, we have $\rsw_{n}(\A)\neq 0$, whereby at least one of $\alpha',\beta'$ is non-zero. If $p\nmid e$, it follows that $\rsw_{en}(\res_{k'/k}(\A))\neq 0$ and hence $\res_{k'/k}(\A)\in \file_{en}\Br X_{k'}\setminus\file_{en-1}\Br X_{k'}$.

If $p\mid e$ then~\eqref{eq:rswres} becomes $\rsw_{en}(\res_{k'/k}(\A)) = [\bar{a}^{-n}\alpha',0]_{\pi', en}$, whereby $\res_{k'/k}(\A)\in \file_{en-1}\Br X_{k'}$. If $\alpha'\neq 0$ then $\bar{a}^{-n}\alpha'\neq 0$ and $\res_{k'/k}(\A)\notin \fil_{en-1}\Br X_{k'}$, implying that $\res_{k'/k}(\A)\notin \file_{en-2}\Br X_{k'}$. 
\end{enumerate}
\end{proof}

\begin{remark}
Recall from Lemma~\ref{lem:dbna} that if $p\nmid n+1$ then $\alpha =0$ in Corollary~\ref{cor:scale}\eqref{eq:scale}.
\end{remark}

\section{Comparison with other filtrations}\label{sec:comparisons}

Throughout this section, let $K$ denote a Henselian discrete valuation field of characteristic zero with residue field $F$.

There are several other constructions in the literature which give rise to filtrations on $\Br K$, and the question naturally arises as to whether our filtration $\{ \film_n \Br K \}$, as defined at the beginning of Section~\ref{sec:main}, coincides with any of these.
In this section we look at the relationships between several existing filtrations and ours.
We consider two sources of filtrations: existing filtrations on $\H^1(K)$, which give rise to filtrations on $\Br K$ via the cup product; and ramification filtrations on the absolute Galois group of $K$, which give rise to filtrations on $\Br K$ by considering those elements in the kernel of restriction to the subgroups in the filtration.

In what follows, we only consider filtrations on $\Br K[p]$. We often exclude the less interesting case in which the filtrations $\{ \fil_n \Br K[p] \}$ and $\{ \film_n \Br K[p] \}$ coincide; this happens if $e'<p$ or if $\Omega^2_F=0$, for example.

\subsection{Filtrations on $\H^1$}\label{sec:filH1}

The most obvious filtration to consider on $\H^1(K) = \H^1(K,\Q/\Z)$ is Kato's filtration. In the case of equal characteristic, Kato shows~\cite[Theorem~3.2(2)]{K} that his filtrations on $\H^q(K)$ for all $q \ge 1$ are induced by the cup product from that on $\H^1(K)$.
When $K$ has characteristic zero, as in our case, this is at least true for the $p$-torsion, assuming that $K$ contains a primitive $p$th root of unity~\cite[Proposition~4.1(6)]{K}.

There is also a modified or ``non-logarithmic'' version of Kato's filtration on $\H^1(K)$, introduced by Matsuda~\cite{Matsuda} in the case of equal characteristic; as shown in~\cite[Proposition~3.2.7]{Matsuda}, it can be obtained by modifying Kato's filtration on $\H^1(K)$ in exactly the same way that we modify the filtration on $\H^2(K)$.

The Proposition~\ref{prop:different} below shows our modified version of Kato's filtration on $\H^2_p(K) = \Br K[p]$ is not induced in general by any filtration on $\H^1(K)$, even if we omit $\film_{-1}$ and $\film_{-2}$. We begin with a lemma.

\begin{lemma}\label{lem:cancup}
Suppose that $K$ contains a primitive $p$th root of unity, and that the residue field $F$ of $K$ is not perfect.  Let $\chi \in \H^1_p(K)$ satisfy $\sw(\chi)=n$.  Then there exists $y \in \O_K^\times$ such that $\sw(\{\chi, y\}) = n$ and, if $n>0$, we can choose $y$ so that $\{\chi, y\}\notin \film_{n-1} \Br K[p]$.
\end{lemma}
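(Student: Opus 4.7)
The plan is to derive an explicit formula for $\rsw_n(\{\chi, y\})$ in terms of $\rsw_n(\chi)$ and $\dlog \bar{y}$, and then exploit the imperfection of $F$ to choose $y$ making this refined conductor detect the second ($\Omega^1_F$-valued) coordinate. Write $\rsw_n(\chi) = [\alpha, \beta]_{\pi,n}$ with $(\alpha, \beta) \in \Omega^1_F \oplus F$; the pair is nonzero since $\chi \notin \fil_{n-1} \H^1_p(K)$.

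The first step is to verify, for any $y \in \O_K^\times$ with reduction $\bar{y} \in F^\times$, the identity
\begin{equation*}
\rsw_n(\{\chi, y\}) = \pm [\alpha \wedge \dlog \bar{y},\; \beta \cdot \dlog \bar{y}]_{\pi,n}
\end{equation*}
in $\m_K^{-n} \otimes_{\O_K} \omega^2_F$. This follows by taking the cup product of both sides of the defining relation $\{\chi, 1 + \pi^n T\} = \lambda_\pi(T\alpha, T\beta)$ with $\{y\}$, using graded anti-commutativity to swap $\{y\}$ and $\{1+\pi^n T\}$, and then applying the compatibility of $\iota^q$ with cup products together with the identity $\delta_1(\omega) \cup \{\bar{y}\} = \delta_1(\omega \wedge \dlog \bar{y})$ from~\cite[\S1.3]{K}.

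With this formula in hand, the case $n=0$ is immediate: cup product with $y \in \O_K^\times$ preserves the filtration, so $\{\chi, y\} \in \fil_0 \Br K[p]$ and hence $\sw(\{\chi, y\}) = 0$ for any choice of $y$. For $n > 0$, observe that $\{\chi, y\} \notin \film_{n-1} \Br K[p]$ is equivalent to the second component $\beta \cdot \dlog \bar{y}$ of the above formula being nonzero in $\Omega^1_F$; and if this holds, the nonvanishing of the refined Swan conductor at level $n$ simultaneously forces $\sw(\{\chi, y\}) = n$. Both conclusions thereby reduce to finding $\bar{y} \in F^\times$ with $\beta \cdot \dlog \bar{y} \neq 0$. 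Imperfection of $F$ guarantees the existence of some $\bar{y} \in F^\times$ with $\dlog \bar{y} \neq 0$ in $\Omega^1_F$, and any lift $y \in \O_K^\times$ of such a $\bar{y}$ works, provided that $\beta \neq 0$ in $F$.

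The main obstacle, and the only place where the hypothesis $\mmu_p \subset K$ is used in an essential way, is verifying that $\beta \neq 0$ for every $\chi \in \H^1_p(K)$ with $\sw(\chi) > 0$. Via Kummer theory, $\chi$ is represented by a class $[a] \in K^\times / K^{\times p}$, and the unit-filtration analysis underlying~\cite[Proposition~4.1]{K} shows that a ramified Kummer class admits a representative $1 + \pi^m u$ with $u \in \O_K^\times$ and $m$ in the appropriate range below $e'$, whose refined Swan conductor has second component proportional to $\bar{u}$, hence nonzero. Together with the construction of $\bar{y}$ furnished in the previous paragraph, this produces the required $y$.
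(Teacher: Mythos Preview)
Your formula $\rsw_n(\{\chi,y\}) = \pm[\alpha\wedge\dlog\bar y,\;\beta\,\dlog\bar y]_{\pi,n}$ for $y\in\O_K^\times$ is correct, and the strategy of forcing the second component to be nonzero is the right one. The gap is in the final step: the assertion that $\beta\neq 0$ whenever $\sw(\chi)>0$ is false.

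Concretely, take $0<n<e'$ with $p\mid n$. A class $\chi$ of Swan conductor $n$ corresponds under Kummer to $1+x\pi^{e'-n}$ with $x\in\O_K^\times$, and the explicit computation in~\cite[Proposition~4.1, Lemma~4.3]{K} gives $\rsw_n(\chi)=[\bar c\,d\bar x,\,n\bar c\,\bar x]_{\pi,n}$ (up to normalisation). Since $p\mid n$, the second component $\beta=n\bar c\,\bar x$ vanishes in $F$, while $\alpha=\bar c\,d\bar x\neq 0$ (this is what forces $\sw(\chi)=n$ in the $p\mid n$ case). Your formula then gives $\rsw_n(\{\chi,y\})=[\bar c\,d\bar x\wedge\dlog\bar y,\,0]_{\pi,n}$ for every unit $y$, so $\{\chi,y\}\in\film_{n-1}\Br K[p]$ regardless of $y\in\O_K^\times$. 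The same difficulty arises at $n=e'$, which your representative $1+\pi^m u$ does not cover.

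The paper's proof handles the $p\mid n$ case (and part of the $n=e'$ case) by taking $y=\pi$ rather than a unit: cupping with $\pi$ moves the $\alpha$-component into the $\beta$-slot, giving $\rsw_n(\{\chi,\pi\})=[0,\bar c\,d\bar x]_{\pi,n}$. This is why the proof succeeds, and incidentally why the statement should really read $y\in K^\times$ (as is used in the application, Proposition~\ref{prop:different}). Your approach is cleaner where it applies, but it cannot avoid the case split on $p\mid n$.
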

\begin{proof} 
We use Bloch--Kato's explicit description of the graded pieces of the filtration, as described in~\cite[Theorem~4.1(6)]{K}.
Fixing a primitive $p$th root of unity in $K$ gives an isomorphism $\H^1_p(K) \cong K^\times/(K^\times)^p$, under which Kato's filtration on $\H^1_p(K)$ corresponds to the reverse of the natural filtration on $K^\times$.
There are now several cases to consider.
\begin{itemize}
\item If $n=0$, then $\chi \in \fil_0 \H^1_p(K)$ and it follows that $\sw(\{\chi, y\})=0$ for all $y \in K^\times$.
\item If $0<n<e'$, then $\chi$ corresponds to an element $(1 + x \pi^{e'-n}) \in K^\times/(K^\times)^p$ with $x \in \O_K^\times$.  Let $\bar{x} \in F^\times$ be the reduction of $x$.
First suppose that $p \nmid n$. Let $y \in \O_K^\times$ be an element satisfying $d\bar{y} \neq 0$; such an element exists since $F$ is not perfect.  Then $\bar{x} \frac{d\bar{y}}{\bar{y}} \in \Omega^1_F$ is non-zero, and by the first isomorphism of~\cite[(4.2.2)]{K} the element $\{1 + x \pi^{e'-n}, y \}$ has Swan conductor $n$. Moreover,~\cite[Proposition~4.1 and Lemma~4.3]{K} show that $\rsw_{n}(\{1 + x \pi^{e'-n}, y \})=[\bar{c}d(\bar{x}\frac{d\bar{y}}{\bar{y}}),n\bar{c}\bar{x}\frac{d\bar{y}}{\bar{y}}]_{\pi, n}$, where $c=\pi^{e'}/(\zeta-1)^p$. Since $n\bar{c}\bar{x}\frac{d\bar{y}}{\bar{y}}\neq 0$, it follows that $\{1 + x \pi^{e'-n}, y \}\notin\film_{n-1} \Br K[p]$.
Now suppose that $p \mid n$. Then the second isomorphism of~\cite[(4.2.2)]{K} shows firstly (using $q=1$) that $d\bar{x} \neq 0$, and then (using $q=2$) that $\{ 1 + x \pi^{e'-n}, \pi \}$ has Swan conductor $n$. Furthermore,~\cite[Proposition~4.1 and Lemma~4.3]{K} show that $\rsw_{n}(\{1 + x \pi^{e'-n}, \pi \})=[0,\bar{c}d\bar{x}]_{\pi, n}$ so $\{1 + x \pi^{e'-n}, \pi \}\notin \film_{n-1} \Br K[p]$.
\item If $n=e'$, then isomorphism~\cite[(4.2.1)]{K} with $q=1$ shows that $\chi$ corresponds to $x \pi^i \in K^\times/(K^\times)^p$, with $x \in \O_K^\times$ and either $p \nmid i$ or $d\bar{x} \neq 0$.  First suppose that $d\bar{x} \neq 0$. Then~\cite[(4.2.1)]{K} with $q=2$ shows $\sw(\{ x \pi^i, \pi \})=\sw(\{x,\pi\})=e'$ (the term $\{\pi^i,\pi\}$ lies in $\fil_0 \H^2(K)$ and so does not contribute). More precisely,~\cite[Proposition~4.1 and Lemma~4.3]{K} show that $\rsw_{e'}(\{ x \pi^i, \pi \})=\rsw_{e'}(\{x,\pi\})=[0,\bar{c}\frac{d\bar{x}}{\bar{x}}]_{\pi,e'}$. Now suppose that $d\bar{x}=0$ and $p\nmid i$. Let $y \in \O_K^\times$ be an element satisfying $d\bar{y} \neq 0$; we claim that $\{ x \pi^i, y \}$ has the desired properties. Write $\{ x \pi^i, y \}=\{x,y\}-i\{y,\pi\}$. Then~\cite[(4.2.1)]{K} with $q=2$ shows that $\sw(\{x,y\})\leq e'-1$ and $\sw(\{y,\pi\})=e'$. Furthermore,~\cite[Proposition~4.1 and Lemma~4.3]{K} show that $\rsw_{e'}(\{ x \pi^i, y \})=-i\rsw_{e'}(\{y,\pi\})=[0,-i\bar{c}\frac{d\bar{y}}{\bar{y}}]_{\pi,e'}$.
\end{itemize}
\end{proof}

\begin{proposition}\label{prop:different}
Suppose that $K$ contains a primitive $p$th root of unity and that the filtrations $\{ \film_n \Br K[p] \}$ and $\{ \fil_n \Br K[p] \}$ do not coincide for $n \ge 0$.  
Then there is \textbf{no} increasing filtration $\{ \Fil_n \H^1(K,\Z/p) \}$ on $\H^1(K,\Z/p)$ such that, for all $n\ge 0$, $\film_n \Br K[p]$ is generated by $\{ \Fil_n \H^1(K,\Z/p) , K^\times \}$.
\end{proposition}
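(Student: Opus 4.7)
The plan is to suppose such an increasing filtration $\{\Fil_n \H^1(K,\Z/p)\}$ exists and derive a contradiction by showing it would force $\film_n \Br K[p] = \fil_n \Br K[p]$ for every $n \geq 0$. Note first that the hypothesis that the two filtrations differ forces the residue field $F$ to be imperfect: otherwise $\omega^2_F = 0$, so $\rsw_n$ vanishes identically, and injectivity of $\rsw$ on graded pieces collapses the filtrations together. In particular, we are in the setting where Lemma~\ref{lem:cancup} applies.

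The central step is to prove $\Fil_n \subset \fil_n \H^1(K,\Z/p)$ for every $n \geq 0$. Take $\chi \in \Fil_n$ and set $m = \sw(\chi)$; if $m \geq n+1$, then $m > 0$, so Lemma~\ref{lem:cancup} supplies $y \in \O_K^\times$ with $\sw(\{\chi,y\}) = m$ and $\{\chi,y\} \notin \film_{m-1}\Br K[p]$. But by hypothesis $\{\chi,y\} \in \film_n \Br K[p] \subset \film_{m-1}\Br K[p]$, a contradiction. Hence $m \leq n$, as required.

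On the other hand, Kato's filtration already satisfies $\{\chi,y\} \in \fil_n \Br K[p]$ whenever $\chi \in \fil_n \H^1_p(K)$ and $y \in K^\times$: by Definition~\ref{def:Swan}, the vanishing of $\{\chi, 1 + \pi^{n+1}T\}$ in $V^2(\O_K[T])$ implies the vanishing of $\{\{\chi,y\}, 1 + \pi^{n+1}T\} = -\{\chi, 1+\pi^{n+1}T, y\}$ in $V^3(\O_K[T])$. Combined with $\Fil_n \subset \fil_n \H^1$, this places the subgroup $\film_n \Br K[p]$, generated by $\{\Fil_n, K^\times\}$, inside $\fil_n \Br K[p]$; together with the automatic reverse inclusion, $\film_n = \fil_n$ for every $n \geq 0$, contradicting the hypothesis. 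The only real work is contained in Lemma~\ref{lem:cancup}, which produces the witness $y$ separating cup products of high Swan conductor from the $\film_{m-1}$-piece; everything else is formal unpacking.
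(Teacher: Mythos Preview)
Your proof is correct and follows essentially the same route as the paper: assume the filtration exists, use Lemma~\ref{lem:cancup} to force $\Fil_n \subset \fil_n \H^1_p(K)$, and then deduce $\film_n \Br K[p] \subset \fil_n \Br K[p]$ via the cup product. You add two explicit justifications the paper leaves implicit --- that the hypothesis forces $F$ to be imperfect (so Lemma~\ref{lem:cancup} applies), and that $\{\fil_n \H^1_p(K), K^\times\} \subset \fil_n \Br K[p]$ directly from the definition of the Swan conductor --- both of which are welcome.
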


\begin{proof}
Suppose for contradiction that such a filtration $\{ \Fil_n \H^1(K,\Z/p) \}$ exists. 

First, we claim that $\Fil_n \H^1_p(K,\Z/p) \subset \fil_n \H^1_p(K)$ for all $n\geq 0$. Here, $\fil_n \H^1_p(K)$ denotes Kato's filtration. To prove the claim, let $\alpha \in \Fil_n \H^1_p(K,\Z/p)$. Suppose for contradiction that $\alpha\notin \fil_n \H^1_p(K)$. Then $\sw(\alpha)>n$ and, by Lemma~\ref{lem:cancup}, there exists $b\in K^\times/K^{\times p}$ such that $\{\alpha,b\}\notin \film_n \Br K[p]$, which gives the desired contradiction. 

Now we complete the proof of the proposition. Let $\A\in\film_{n}\Br K[p]$ for $n\geq 0$. Then $\A$ is in the subgroup generated by the image of the map $\Fil_{n} \H^1_p(K,\Z/p) \times K^\times/K^{\times p}\to \Br K[p]$. Since $\Fil_{n} \H^1_p(K ,\Z/p)\subset \fil_{n} \H^1_p(K)$, we deduce that $\A\in \fil_{n}\Br K[p]$. This implies that $\film_{n}\Br K[p]=\fil_{n}\Br K[p]$. 
\end{proof}

\subsection{Ramification filtrations}

Let $\bar{K}$ be a separable closure of $K$, and let $G = \Gal(\bar{K}/K)$ be the absolute Galois group.
Given a descending filtration $(G^i)_{i \ge 0}$ on $G$, we can obtain an ascending filtration on $\H^q_n(K)$ by taking the kernels of the restriction maps $\H^q_n(K) = \H^q(G,(\Z/n)(q-1)) \to \H^q(G^i, (\Z/n)(q-1))$.

In the case of perfect residue field, the ramification groups with the upper numbering give a well-studied filtration on $G$: see~\cite[Ch.~IV]{Serre}.
In the general setting, Abbes and Saito~\cite{AS} made two definitions of ramification groups, $(G^a)_{a \in \Q_{\ge 0}}$ and $(G^a_{\log})_{a \in \Q_{\ge 0}}$, called ``non-logarithmic'' and ``logarithmic''.
In the case of perfect residue field, these coincide (up to a shift in numbering) but in general they are different.

Each of these ramification filtrations gives a filtration on $\H^1(K) = 
\Hom(G,\Q/\Z)$, and one might naturally ask whether those filtrations are related to those described in Section~\ref{sec:filH1}.
This is indeed the case: Kato and Saito~\cite{KS} have proved that Kato's filtration on $\H^1(K)$ coincides with that induced by the logarithmic ramification filtration; and Saito~\cite{Saito} has proved in the case of positive characteristic that Matsuda's non-logarithmic variant of Kato's filtration on  $\H^1(K)$ coincides with that induced by the non-logarithmic ramification filtration.
We will show that our modified Kato filtration on $\Br K[p] = \H^2_p(K)$ is not induced by either of the Abbes--Saito filtrations (where the numbering of the non-logarithmic filtration is shifted by $1$).

Given $\chi \in \H^q_p(K)$, define
\begin{align*}
f_K(\chi) &= \inf \{ a \in \Q_{> 0} \mid \chi \in \ker(\H^q_p(K) \to \H^q(G^a, (\Z/p)(q-1)) \}, \\
f_K^{\log}(\chi) &= \inf \{ a \in \Q_{> 0} \mid \chi \in \ker(\H^q_p(K) \to \H^q(G^a_{\log}, (\Z/p)(q-1)) \}.
\end{align*}
For $q=1$, this is what Abbes and Saito call the (logarithmic) \emph{conductor} of the field extension corresponding to $\chi$: see~\cite[Proposition~6.4 and Proposition~9.5]{AS}.
We have
\[
\film_0 \H^q_p(K) = \{ \chi \in \H^q_p(K) \mid f_K(\chi) \le 1 \}
= \{ \chi \in \H^q_p(K) \mid f_K^{\log}(\chi) \le 0 \}.
\]

We first prove a positive result for the case $q=1$.
In the case of positive characteristic, this follows from~\cite[Corollary~3.3]{Saito}.

\begin{proposition}\label{prop:as-calc}
Suppose that $K$ contains a primitive $p$th root of unity.  Let $\chi \in \H^1_p(K)$.  Then, for all $n \ge 0$,
\[
f_K(\chi) \le n+1 \Longleftrightarrow \Big(
\chi \in \fil_{n+1} \H^1_p(K) \text{ and } \rsw_{n+1}(\chi) = [\alpha,0]_{\pi,n+1} \text{ with } \alpha \in \Omega^1_F \Big) . 
\]
\end{proposition}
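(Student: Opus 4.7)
The plan is an explicit Kummer-theoretic computation on the graded pieces of Kato's filtration. Using the assumption $\mu_p \subset K$, fix a primitive $p$-th root of unity $\zeta_p$ to identify $\H^1_p(K) \cong K^\times/(K^\times)^p$. Under this identification, Bloch--Kato~\cite[Theorem~5.1]{BK}, as reviewed in~\cite[Proposition~4.1]{K}, describes Kato's filtration as: for $0\le m\le e'-1$, the image of $U^{(m)}=1+\pi^m\O_K$ in $K^\times/(K^\times)^p$ is $\fil_{e'-m}\H^1_p(K)$ modulo $(K^\times)^p$, and $\fil_{e'}\H^1_p(K) = \H^1_p(K)$. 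In particular, every $\chi$ with exact Swan conductor $n+1<e'$ admits a representative of the form $a = 1+x\pi^{e'-n-1}$ with $x\in\O_K^\times$.

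First I would translate the condition $\rsw_{n+1,\pi}(\chi)\in\Omega^1_F\oplus 0$ into an arithmetic condition on such a representative. Tracking the cup-product formulas underlying the calculations in the proof of Lemma~\ref{lem:cancup} (via~\cite[Proposition~4.1(3)]{K} and~\cite[Lemma~4.3]{K}) yields
\[
\rsw_{n+1,\pi}(\chi) \;=\; [\,\bar{c}\,d\bar{x},\; (n+1)\bar{c}\bar{x}\,]_{\pi,\,n+1},
\]
where $\bar{c}\in F^\times$ is the reduction of $\pi^{e'}/(\zeta_p-1)^p$. Since $\bar{x}\neq 0$, the $\beta$-component vanishes exactly when $p\mid n+1$. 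The edge cases $n+1=e'$, $n+1=0$, and the situation where one can replace the representative to drop $\chi$ into $\fil_n$ are handled by induction combined with the boundary cases of Bloch--Kato's description.

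Second I would compute the non-log Abbes--Saito conductor of the Kummer extension $L=K(\sqrt[p]{a})$. The Kato--Saito theorem~\cite{KS} identifies the logarithmic AS conductor with the Kato Swan conductor, giving $f_K^{\log}(\chi) = n+1$. The sandwich $f_K^{\log}(\chi)\le f_K(\chi)\le f_K^{\log}(\chi)+1$ of~\cite[Proposition~9.6]{AS} then reduces the question to deciding whether $f_K(\chi)$ equals $n+1$ or $n+2$. An explicit Abbes--Saito dilatation calculation on the integral Kummer model $\Spec\O_K[T]/(T^p-a)\to\Spec\O_K$ shows that $f_K(\chi)=n+1$ precisely when $p\mid n+1$, matching the condition produced in the previous step.

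The main obstacle is this last dilatation computation on the Kummer model. One way to circumvent it is by descent to equal characteristic: the analogous statement in characteristic $p$ is~\cite[Corollary~3.3]{Saito}, which identifies Matsuda's non-log filtration with the non-log Abbes--Saito filtration. Because the equivalence is local on each graded piece $\fil_{n+1}/\fil_n$, and because the graded pieces of both Kato's and Matsuda's filtrations on $\H^1$ are described by the same $F$-modules built from $\Omega^1_F$ (with or without the $\dlog\pi$-direction), the characteristic-$p$ comparison propagates to the mixed characteristic setting using the functoriality of AS ramification under the morphism from $\Spec\O_K$ to the Artin--Schreier cover of the residue field obtained after reducing mod $(\zeta_p-1)^p$. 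Combining the refined-Swan and AS computations on each exact-Swan-conductor stratum yields the claimed equivalence, and the proposition follows.
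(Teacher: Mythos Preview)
Your overall architecture matches the paper's: fix a Kummer generator for $\chi$, compute the refined Swan conductor on one side, and compute the Abbes--Saito conductor on the other. Your refined-Swan computation is correct, and your conclusion that the $\beta$-component of $\rsw_{n+1,\pi}(\chi)$ vanishes precisely when $p\mid n+1$ (for $\chi$ of exact Swan conductor $n+1$, $0<n+1<e'$) agrees with the paper.

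The gap is in the Abbes--Saito step. You assert that a dilatation calculation on the model $\Spec\O_K[T]/(T^p-a)$ gives the answer, but you do not do the calculation, and the model you name is generally \emph{not} the one on which~\cite[Lemma~6.6]{AS} applies: $\O_K[a^{1/p}]$ is typically not the ring of integers of $L=K(a^{1/p})$. For $a=1+x\pi^m$ with $p\nmid m$, the paper finds that $\varpi^r\pi^s$ is a uniformiser of $L$ (where $\varpi=a^{1/p}-1$ and $rm+sp=1$); for $p\mid m$ the extension is ferociously ramified and one uses $(\varpi)/\pi^{m/p}$. Only after identifying these generators can one apply the Abbes--Saito lemma to read off $f_K(\chi)$. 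The paper carries this out case by case in the four lemmas following Proposition~\ref{prop:as-calc}, and the numerical outcome is exactly what you claimed: $f_K(\chi)=n+2$ when $p\nmid(n+1)$ and $f_K(\chi)=n+1$ when $p\mid(n+1)$ (using $p\mid e'$). So your target is right, but the work has not been done.

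Your proposed workaround via ``descent to equal characteristic'' does not stand up. There is no scheme morphism from $\Spec\O_K$ to a characteristic-$p$ base that transports Abbes--Saito ramification in the way you describe; the phrase ``morphism \dots\ to the Artin--Schreier cover of the residue field obtained after reducing mod $(\zeta_p-1)^p$'' does not define one. Passing between mixed and equal characteristic in this context requires a serious construction (field of norms, perfectoid-style tilting) and a separate verification that both Kato's graded pieces and the Abbes--Saito conductor are preserved; none of this follows from naive functoriality. Drop this paragraph and do the conductor computation directly, as the paper does: find a generator of $\O_L$ over $\O_K$ in each Kummer case and apply~\cite[Lemma~6.6]{AS}.
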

\begin{proof}
Since $K$ contains a primitive $p$th root of unity, Kato's filtration on $\H^1_p(K)$ coincides with that of Bloch--Kato (see~\cite[Proposition~4.1(6)]{K}).
This gives explicit generators for the graded pieces of the right-hand filtration, so it is just a case of calculating the conductors of the corresponding cyclic extensions.  This is accomplished in the following series of lemmas by finding the minimal polynomial of a generator for the ring of integers in each extension and applying~\cite[Lemma~6.6]{AS}.
\end{proof}

The calculations in the following lemmas are standard and probably well known.
\begin{lemma}
Suppose that $K$ contains a primitive $p$th root of unity. 
Let $\chi \in \H^1(K,\Z/p)$ correspond to the extension $K(\sqrt[p]{\pi})/K$.  Then $f_K(\chi)=e'+1$. 
\end{lemma}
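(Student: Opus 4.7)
The plan is to compute $f_K(\chi)$ directly from Abbes--Saito's definition of the non-logarithmic ramification filtration. First I would note that $\pi_L := \sqrt[p]{\pi}$ is a uniformiser of the totally ramified degree-$p$ extension $L = K(\pi_L)$, that $\O_L = \O_K[\pi_L]$, and that $\pi_L$ has Eisenstein minimal polynomial $f(X) = X^p - \pi$ over $K$. This is exactly the setup in which \cite[Lemma~6.6]{AS} expresses $f_K(\chi)$ in terms of the polynomial $f$.

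Next I would analyse the family of tubular neighbourhoods $\{x\in\bar K : v_K(f(x))\ge a\}$ that defines the filtration, and identify the critical value of $a$ at which this closed disc breaks up into $p$ connected components, one around each root $\zeta_p^i\pi_L$ of $f$. Setting $x = \pi_L + y$ reduces this to a Newton polygon calculation for $f(\pi_L+Y)/Y$, whose roots are the nonzero differences $(\zeta_p^i-1)\pi_L$ for $1\le i \le p-1$, all of the common valuation $v_K(\zeta_p-1)+v_K(\pi_L) = \tfrac{e}{p-1}+\tfrac{1}{p}$. A short calculation of the slope then produces the critical value $a = e'+1$.

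Finally I would invoke \cite[Lemma~6.6]{AS} to conclude $f_K(\chi) = e'+1$. The main obstacle is essentially bookkeeping: verifying that Abbes--Saito's conductor coincides with the critical slope just identified, and matching the normalisations of valuations throughout (relative versus absolute, and whether the filtration index is shifted). The underlying geometric content---that all nonzero roots of $f(X+\pi_L)$ lie on a single circle in $\bar K$---makes the calculation immediate once those conventions are lined up.
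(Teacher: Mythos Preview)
Your approach is correct and essentially identical to the paper's: the paper's proof consists of the two observations $\O_L=\O_K[\sqrt[p]{\pi}]$ and ``now apply \cite[Lemma~6.6]{AS}'', and your proposal simply unpacks what that application involves. The Newton-polygon description of the tubular neighbourhood and the valuation computation $v_K((\zeta_p^i-1)\pi_L)=\tfrac{e}{p-1}+\tfrac1p$ are exactly the ingredients that Lemma~6.6 packages, so there is no substantive difference in method.
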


\begin{proof}
Let $L=K(\sqrt[p]{\pi})$. Then $\O_L=\O_K[\sqrt[p]{\pi}]$. Now apply~\cite[Lemma~6.6]{AS}.
\end{proof}

\begin{lemma}\label{lem:pthrootofx}
Suppose that $K$ contains a primitive $p$th root of unity. 
Let $x \in \O_K^\times$ be such that $\bar{x} \in F$ is not a $p$th power.
Let $\chi \in \H^1(K,\Z/p)$ correspond to the extension $K(\sqrt[p]{x})/K$.  Then $f_K(\chi)=e'$. 
\end{lemma}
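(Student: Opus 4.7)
The plan is to mirror the proof of the preceding lemma by writing $\O_L$ as a monogenic $\O_K$-algebra and applying \cite[Lemma~6.6]{AS} to the minimal polynomial of a generator. Note that we cannot shortcut via Proposition~\ref{prop:as-calc}, since the present lemma is itself one of the calculations that go into proving that proposition.

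Set $L = K(\sqrt[p]{x})$ and $y = \sqrt[p]{x}$. The first step is to show that $\O_L = \O_K[y]$. Since $\bar{x}$ is not a $p$th power in $F$, the polynomial $T^p - \bar{x}$ is irreducible over $F$, so $\bar{y}$ generates a purely inseparable degree-$p$ extension $F(\bar{y})/F$. As $[L:K] = p$, this forces $e(L/K) = 1$ with residue degree equal to $p$. In particular $\pi$ remains a uniformiser of $\O_L$, and the subring $\O_K[y] \subseteq \O_L$ is a local ring with uniformiser $\pi$ and residue field $F(\bar{y})$, so $\O_K[y] = \O_L$.

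The minimal polynomial of $y$ over $K$ is $f(T) = T^p - x$, whose roots in $\bar K$ are the conjugates $y_i := \zeta^i y$ for $0 \le i \le p-1$, where $\zeta$ is the given primitive $p$th root of unity. Since $v_L$ restricts to $v_K$ on $K$ (as $e(L/K) = 1$) and $v_K(y) = 0$, we have
\[
v_K(y - \zeta^i y) = v_K(1 - \zeta^i) = \frac{e}{p-1}
\]
for each $i \ne 0$, where $(p-1) \mid e$ is forced by $\zeta \in K$ and $1 - \zeta^i$ has the same valuation as $1 - \zeta$ for $p \nmid i$. Feeding the polynomial $f$ into \cite[Lemma~6.6]{AS} then yields
\[
f_K(\chi) \;=\; [L:K] \cdot \max_{i \ne j} v_K(y_i - y_j) \;=\; p \cdot \frac{e}{p-1} \;=\; e',
\]
as a sanity check: the same formula applied to the preceding lemma, with $y = \sqrt[p]{\pi}$ of valuation $1/p$, produces the conductor $p \cdot (1/p + e/(p-1)) = e'+1$, matching the value asserted there.

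The main obstacle is verifying that \cite[Lemma~6.6]{AS} applies unchanged in the residually inseparable case: in the preceding lemma the extension is totally ramified, whereas here it is residually inseparable with $e(L/K) = 1$. One must check that the Abbes--Saito computation of the non-logarithmic conductor in terms of the minimal polynomial requires only monogenicity of $\O_L$ over $\O_K$, and not any hypothesis on the type of ramification. Once this is confirmed, everything else is a direct valuation calculation.
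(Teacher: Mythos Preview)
Your approach is essentially the same as the paper's. The paper's proof is a two-liner: it observes that $L/K$ is ferociously ramified (your ``residually inseparable with $e(L/K)=1$''), records $\O_L=\O_K[\sqrt[p]{x}]$, and then simply says ``apply~\cite[Lemma~6.6]{AS}'' without writing out the valuation computation. You have filled in those details correctly, and your concern in the final paragraph is one the paper takes for granted---indeed, the paper applies~\cite[Lemma~6.6]{AS} directly to this ferociously ramified extension (and again to the next lemma, which is also ferociously ramified), so monogenicity of $\O_L$ over $\O_K$ is all that is being assumed.
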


\begin{proof}
Let $L=K(\sqrt[p]{x})$. Then $L/K$ is ferociously ramified (i.e.\ the residue field of $L$ is a purely inseparable degree $p$ extension of $F$) and $\O_L=\O_K[\sqrt[p]{x}]$. Now apply~\cite[Lemma~6.6]{AS}.
\end{proof}

\begin{lemma}
Suppose that $K$ contains a primitive $p$th root of unity. 
Let $\chi \in \H^1(K,\Z/p)$ correspond to the extension $K(\sqrt[p]{1+x\pi^m})/K$, where $x \in \O_K^\times$, $0<m<e'$ and $p \nmid m$.  Then $f_K(\chi)=e'+1-m$. 
\end{lemma}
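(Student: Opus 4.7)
The plan is to mimic the pattern of the preceding two lemmas: identify an explicit generator of $\O_L$ (where $L = K(\sqrt[p]{1+x\pi^m})$), compute its minimal polynomial, and apply~\cite[Lemma~6.6]{AS}. The key difference is that $\alpha = \sqrt[p]{1+x\pi^m}$ is not itself a uniformizer of $L$, so we have to build one from the quantity $\beta = \alpha - 1$.

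First I would check that $L/K$ is totally ramified of degree $p$. From $(1+\beta)^p = 1 + x\pi^m$ we get
\[
\beta^p + \sum_{k=1}^{p-1}\binom{p}{k}\beta^k = x\pi^m.
\]
Since $L/K$ has degree $p$, analyzing valuations with the hypothesis $0<m<e'$ (so that the term $\beta^p$ dominates) forces $e(L/K)=p$ and $v_L(\beta) = m$. Because $\alpha \equiv 1 \pmod{\m_L}$, the residue extension is trivial, so $L/K$ is totally ramified. Using $\gcd(m,p)=1$, choose integers $a,b$ with $0<a<p$ and $am-bp=1$; then $\gamma := \beta^a/\pi^b$ satisfies $v_L(\gamma)=1$, hence is a uniformizer of $L$, and $\O_L = \O_K[\gamma]$.

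Next I would compute $v_L(\sigma(\gamma)-\gamma)$ for a generator $\sigma$ of $\Gal(L/K)$ sending $\alpha$ to $\zeta\alpha$ (with $\zeta$ a fixed primitive $p$-th root of unity in $K$). We have $\sigma(\beta) = \zeta\beta + (\zeta-1)$, and $v_L(\zeta-1) = e'$, which exceeds $m$ by assumption. Expanding via the binomial theorem,
\[
\sigma(\beta)^a - \beta^a = (\zeta^a-1)\beta^a + \sum_{j=1}^{a}\binom{a}{j}\zeta^{a-j}(\zeta-1)^j\beta^{a-j},
\]
the term of smallest $L$-valuation is the $j=1$ contribution $a\zeta^{a-1}(\zeta-1)\beta^{a-1}$, of valuation $e' + (a-1)m$ (note $p\nmid a$ so $v_L(\zeta^a-1)=e'$, and comparing with the leading term $e'+am$ confirms dominance). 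Hence
\[
v_L(\sigma(\gamma)-\gamma) = e'+(a-1)m - bp = e' + am - m - bp = e' + 1 - m,
\]
using $am-bp=1$.

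Finally, I would invoke~\cite[Lemma~6.6]{AS} for the totally ramified extension $L/K$ of degree $p$ with uniformizer $\gamma$ satisfying $\O_L=\O_K[\gamma]$: this identifies $f_K(\chi)$ with $v_L(\sigma(\gamma)-\gamma)$ when $G = \Gal(L/K)$ is generated by $\sigma$ (the same recipe as in the previous two lemmas of this subsection). This gives $f_K(\chi) = e' + 1 - m$. The main obstacle is the bookkeeping at the binomial-expansion step: one must be careful to verify that the $j=1$ contribution is genuinely dominant (which uses both $0<m<e'$ and $0<a<p$, so that no cancellation of leading terms occurs), and that no hidden contributions from the $\pi^{-b}$ denominator spoil the valuation count.
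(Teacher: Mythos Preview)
Your proof is correct and follows essentially the same route as the paper: set $\varpi=\beta=\sqrt[p]{1+x\pi^m}-1$, use $\gcd(m,p)=1$ to build the uniformiser $\gamma=\beta^a\pi^{-b}$ (the paper writes $\varpi^r\pi^s$ with $rm+sp=1$, so $a=r$, $b=-s$), conclude $\O_L=\O_K[\gamma]$, and invoke \cite[Lemma~6.6]{AS}. The only difference is cosmetic: the paper simply says ``apply Lemma~6.6'', whereas you carry out the computation explicitly via the Galois action, evaluating $v_L(\sigma(\gamma)-\gamma)$; for a totally ramified cyclic extension of prime degree this is exactly the quantity that Lemma~6.6 computes (since the roots of the minimal polynomial are the Galois conjugates and all differences have the same valuation), so the two routes coincide.
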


\begin{proof}
Let $L=K(\sqrt[p]{1+x\pi^m})$ and let $\varpi=\sqrt[p]{1+x\pi^{m}}-1$. Write $1=rm+sp$ for $r,s\in\mathbb{Z}$. Considering the terms of smallest valuation in the minimal polynomial of $\varpi$ shows that $\varpi^r\pi^s$ is a uniformiser for $L$ and hence $\O_L=\O_K[\varpi^r\pi^s]$. Now apply~\cite[Lemma~6.6]{AS}.
\end{proof}

\begin{lemma}
Suppose that $K$ contains a primitive $p$th root of unity. 
Let $x \in \O_K^\times$ be such that $\bar{x} \in F$ is not a $p$th power.
Let $\chi \in \H^1(K,\Z/p)$ correspond to the extension $K(\sqrt[p]{1+x\pi^{np}})/K$, where $0<np<e'$.  Then $f_K(\chi)=e'-np$. 
\end{lemma}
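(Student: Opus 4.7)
The plan is to follow the pattern of the preceding lemmas in this series: construct an explicit generator for $\O_L$ over $\O_K$, and then invoke~\cite[Lemma~6.6]{AS}.

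Set $L = K(\sqrt[p]{1+x\pi^{np}})$ and $\varpi = \sqrt[p]{1+x\pi^{np}}-1$. First I would note that the hypothesis $0 < np < e'$ is equivalent to $0 < n(p-1) < e$. I would then consider the element $y = \varpi/\pi^n \in L$; substituting $\varpi = \pi^n y$ into the relation $(1+\varpi)^p = 1+x\pi^{np}$ and dividing through by $\pi^{np}$ exhibits $y$ as a root of the monic polynomial
\[
g(T) = T^p + \sum_{k=1}^{p-1}\binom{p}{k}\pi^{n(k-p)}T^k - x.
\]
The key observation is that for $1 \le k \le p-1$ one has $v_K\bigl(\binom{p}{k}\pi^{n(k-p)}\bigr) = e - n(p-k) \ge e - n(p-1) > 0$, so $g(T)\in \O_K[T]$ and the reduction $\bar{g}(T) = T^p - \bar{x} \in F[T]$ is purely inseparable and hence irreducible, $\bar{x}$ not being a $p$th power.

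From this I conclude that $L/K$ is ferociously ramified, with $\O_L = \O_K[y]$: the residue field extension is generated by $\bar{y}$ and satisfies $\bar{y}^p = \bar{x}$, so has degree $p = [L:K]$, forcing $e(L/K)=1$ and $\bar{y}$ to generate the residue extension (whence $\O_L = \O_K[y]$). The computation of $f_K(\chi)$ then reduces to evaluating $v_L(g'(y))$, where the dominant contribution comes from the $k=1$ term, giving $v_L(g'(y)) = e - n(p-1)$; applying~\cite[Lemma~6.6]{AS} (consistent with how the formula was used in the earlier ferociously ramified case of Lemma~\ref{lem:pthrootofx}) yields $f_K(\chi) = (e - n(p-1))\cdot p/(p-1) = e' - np$.

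The main obstacle is really bookkeeping: verifying the inequality $e - n(p-k) > 0$ to ensure $g \in \O_K[T]$, and confirming that the reduction is exactly $T^p - \bar{x}$ so that ferocious ramification and the identification $\O_L = \O_K[y]$ can be deduced. Once that is in place, the argument is entirely parallel to the cases already treated.
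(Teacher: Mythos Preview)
Your proposal is correct and follows essentially the same approach as the paper: the paper also sets $u=(\sqrt[p]{1+x\pi^{np}}-1)/\pi^n$, observes that $L/K$ is ferociously ramified with $\O_L=\O_K[u]$, and invokes~\cite[Lemma~6.6]{AS}. Your write-up simply supplies the details (the explicit minimal polynomial and the valuation bookkeeping) that the paper leaves implicit.
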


\begin{proof}
Let $L=K(\sqrt[p]{1+x\pi^{np}})$ and let $u=(\sqrt[p]{1+x\pi^{np}}-1)/\pi^n$. Then $L/K$ is ferociously ramified and $\O_L=\O_K[u]$.
Now apply~\cite[Lemma~6.6]{AS}.
\end{proof}

We now move to $q=2$ and show that the filtration $\{ \film_n \Br K[p] \}$ is not in general induced by either of the Abbes--Saito ramification filtrations, beginning with the non-logarithmic filtration.

\begin{proposition}
Suppose that $K$ contains a primitive $p$th root of unity and that the residue field $F$ of $K$ is not perfect.  Then it is \textbf{not} true that, for all $n \ge 0$,
\[
\film_n \Br K[p] = \{ \chi \in \Br K[p] \mid f_K(\chi) \le n+1 \}.
\]
\end{proposition}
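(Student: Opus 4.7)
The plan is to construct an explicit Brauer class $\chi \in \Br K[p]$ satisfying $f_K(\chi) \le n$ but $\chi \notin \film_{n-1}\Br K[p]$, which directly contradicts the claimed identification at level $n-1$. Fix an integer $n$ with $0 < n < e'$ and $p \mid n$; such an $n$ exists whenever $e' > p$, and the degenerate range $e' \le p$ can be handled by a parallel construction at the edge $n = e'$ using the third case of Lemma~\ref{lem:cancup}. Using that $F$ is not perfect, pick $x \in \O_K^\times$ with $d\bar{x} \neq 0$ in $\Omega^1_F$, and consider $\chi := \{1 + x\pi^{e'-n}, \pi\} \in \Br K[p]$, which factors as the cup product $\chi_1 \cup u(\pi)$, where $\chi_1 := u(1 + x\pi^{e'-n}) \in \H^1_p(K)$ and $u \colon K^\times \to \H^1(K,\mmu_p)$ is the Kummer map.

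Kato's explicit description of the graded pieces of the filtration on $\H^1_p(K)$ (recalled in the proof of Lemma~\ref{lem:cancup}) shows that $\chi_1 \in \fil_n \H^1_p(K)$ and that the $F$-component of $\rsw_n(\chi_1)$ is proportional to $n$, and hence vanishes since $p \mid n$. Thus $\chi_1 \in \film_{n-1}\H^1_p(K)$, and Proposition~\ref{prop:as-calc} yields $f_K(\chi_1) \le n$. The key general property I would invoke next is that $f_K$ is monotone under cup products: because cup product commutes with restriction to the ramification subgroup $G^a$, for any $\alpha \in \H^i_p(K)$ and $\beta \in \H^j_p(K)$ one has $f_K(\alpha \cup \beta) \le \min(f_K(\alpha), f_K(\beta))$. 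Applied to $\chi = \chi_1 \cup u(\pi)$, this gives $f_K(\chi) \le f_K(\chi_1) \le n$.

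On the other hand, the explicit computation in the second case of the proof of Lemma~\ref{lem:cancup} gives $\sw(\chi) = n$ and $\rsw_n(\chi) = [0, \bar{c}\, d\bar{x}]_{\pi,n}$, whose $\Omega^1_F$-component $\bar{c}\, d\bar{x}$ is nonzero; hence $\chi \notin \film_{n-1}\Br K[p]$ by the very definition of the modified Kato filtration. Together with $f_K(\chi) \le n$, this contradicts the supposed equality $\film_{n-1}\Br K[p] = \{\chi' : f_K(\chi') \le n\}$. The main obstacle will be verifying the cup product inequality for $f_K$ cleanly from the Abbes--Saito definition of the ramification groups $G^a$, and handling uniformly the small residual cases where no suitable $n$ with $p \mid n$ and $0 < n < e'$ is available.
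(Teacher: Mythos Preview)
Your argument is essentially correct and uses the same underlying mechanism as the paper, but your packaging is more convoluted than necessary. The paper fixes a single level and a single element: take $\psi \in \H^1_p(K)$ corresponding to $K(\sqrt[p]{\tilde{x}})/K$ with $\bar{x} \notin F^p$; then $\{\psi,\pi\} \notin \film_{e'-1}\Br K[p]$ by Kato's explicit formula, while $f_K(\psi)=e'$ (Lemma~\ref{lem:pthrootofx}) gives $f_K(\{\psi,\pi\}) \le e'$. That is exactly your ``parallel construction at the edge $n=e'$'', which in fact works uniformly and renders the intermediate case $p\mid n$, $0<n<e'$ superfluous. So the case split you introduce buys nothing, and since $\mu_p\subset K$ forces $(p-1)\mid e$ and hence $e'\ge p$, the only ``degenerate'' case is $e'=p$, which is already covered by the edge construction.

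Two minor points. First, the ``main obstacle'' you flag---that $f_K(\alpha\cup\beta)\le f_K(\alpha)$---is immediate: restriction in group cohomology commutes with cup product, so if $\alpha$ dies on $G^a$ then so does $\alpha\cup\beta$. The paper uses this without comment. Second, your claim that the $F$-component of $\rsw_n(\chi_1)$ is ``proportional to $n$'' needs a citation to Kato's explicit formulas for $\H^1_p$; it is true, but it is simpler (and what the paper's lemmas actually do) to compute $f_K(\chi_1)$ directly from the minimal polynomial of a suitable integral generator, bypassing the refined Swan conductor of $\chi_1$ entirely.
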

\begin{proof}
We will show that the equality does not hold for $n=e'$.
Let $x$ be an element of $F \setminus F^p$, let $\tilde{x} \in \O_K$ be a lift of $x$ and let $\psi\in  \H^1(K,\Z/p)$ correspond to the extension $K(\sqrt[p]{\tilde{x}})/K$.
By~\cite[Proposition~4.1 and Lemma~4.3]{K}, the element $\{ \psi,\pi \}$ lies in $\film_{e'} \Br K[p]$ but not in $\film_{e'-1} \Br K[p]$.
On the other hand, by Lemma~\ref{lem:pthrootofx}, we have $f_K(\psi)=e'$, and so $f_K(\{ \psi, \pi \}) \le e'$. 
\end{proof}

Now we treat the logarithmic filtration, by showing that its behaviour under field extension differs from that of our filtration.
For each finite extension $L$ of $K$ contained in $\bar{K}$, let $\{G^a_{L,\log}\}$ be the logarithmic filtration on $G_L = \Gal(\bar{K}/L)$.

\begin{proposition}
Suppose $\Omega^2_F \neq 0$.
It is \textbf{not} true that, for all finite extensions $L/K$, we have
\[
\film_n \H^2_p(L) = \{ \chi \in \H^2_p(L) \mid f_L^{\log}(\chi) \le n \}.
\]
\end{proposition}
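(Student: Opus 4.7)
Assume for contradiction that the stated equality holds for every finite extension $L/K$ and every $n\geq 0$. The plan is to construct a witness Brauer class over an appropriate tame extension $L_1/K$ lying in $\film_{n-1}$ but not in $\fil_{n-1}$, restrict it to a further tame extension $L_2/L_1$, and exploit the fact that the two filtrations behave incompatibly under this restriction.

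First, I would enlarge $K$ to a finite tame extension $L_1/K$ that contains a primitive $p$-th root of unity and has absolute ramification index $e_{L_1}>p-1$; take for instance $L_1=K(\zeta_p,\sqrt[r]{\pi_K})$ for a suitable $r$ coprime to $p$. Since $L_1/K$ is tame, the residue field extension $F_{L_1}/F$ is separable, so $\Omega^2_{F_{L_1}}\neq 0$ follows from $\Omega^2_F\neq 0$ (absolute differentials are preserved under separable base extension). Using Kato's explicit Bloch--Kato description of $\gr_n \H^2_p(L_1)$ (\cite[Proposition~4.1 and Lemma~4.3]{K}), combined with Lemma~\ref{lem:dbna}, I construct a symbol of the form $\A=\{1+\tilde x\pi_{L_1}^{e'_{L_1}-n},\tilde y\}$ for some positive integer $n$ with $p\mid n$ and $0<n<e'_{L_1}$, where $\tilde x,\tilde y\in\O_{L_1}^\times$ are lifts of $\bar x,\bar y\in F_{L_1}$ satisfying $d\bar x\wedge d\bar y\neq 0$. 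Then $\A\in\fil_n\Br L_1[p]\setminus\fil_{n-1}\Br L_1[p]$ with $\rsw_n(\A)=[\alpha,0]_{\pi_{L_1},n}$ and $\alpha$ a nonzero multiple of $d\bar x\wedge d\bar y/\bar y$ (the second component of $\rsw_n$ vanishes precisely because $p\mid n$). By definition, $\A\in\film_{n-1}\Br L_1[p]$.

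Next, set $L_2=L_1(\sqrt[m]{\pi_{L_1}})$ for some integer $m\geq 2$ coprime to $p$, a totally tamely ramified extension of degree $m$. By Lemma~\ref{lem:rsw-basechange}, $\res_{L_2/L_1}(\A)\in\fil_{mn}\Br L_2[p]$ with refined Swan conductor $[\alpha,0]_{\pi_{L_2},mn}$, still nonzero; hence $\sw(\res_{L_2/L_1}(\A))=mn>n$, and in particular $\res_{L_2/L_1}(\A)\notin\fil_n\Br L_2[p]$. On the other hand, the assumed equality at $L_1$ gives $f^{\log}_{L_1}(\A)\leq n-1$, and the standard compatibility $G^a_{L_2,\log}=G_{L_2}\cap G^a_{L_1,\log}$ of the logarithmic ramification filtration under tame base change (\cite{AS}) yields $f^{\log}_{L_2}(\res_{L_2/L_1}(\A))\leq n-1$. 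The assumed equality at $L_2$ then places $\res_{L_2/L_1}(\A)$ inside $\film_{n-1}\Br L_2[p]\subseteq\fil_n\Br L_2[p]$, a contradiction.

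The main obstacle is locating and verifying the exact form of the tame base-change compatibility $G^a_{L_2,\log}=G_{L_2}\cap G^a_{L_1,\log}$ of the Abbes--Saito logarithmic filtration; while this is the expected behaviour (and known in the non-logarithmic case and for the classical case of perfect residue field), some care is needed to deduce the statement in our setting of imperfect residue fields. The remaining ingredients—the construction of $\A$ with the prescribed refined Swan conductor, and the transformation of $\rsw$ under tame base change—are direct applications of Kato's explicit formulas and Lemma~\ref{lem:rsw-basechange}.
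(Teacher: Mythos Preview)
Your approach---construct a class $\A$ with $\rsw_n(\A)=[\alpha,0]$ and $\alpha\neq 0$, pass to a ramified extension, and compare how the Swan conductor and the logarithmic conductor scale---is exactly the paper's strategy. However, the tame base-change formula you state is wrong: for a totally tamely ramified extension $L_2/L_1$ of degree $m$, the Abbes--Saito logarithmic filtration satisfies $G^{ma}_{L_2,\log}=G_{L_2}\cap G^a_{L_1,\log}$, not $G^a_{L_2,\log}=G_{L_2}\cap G^a_{L_1,\log}$. The logarithmic conductor scales by the ramification index under \emph{any} finite extension, just as Kato's Swan conductor does (this is the same inclusion $G^{ea}_{L,\log}\subset G^a_{K,\log}$ the paper invokes). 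The identity you wrote holds only for unramified extensions, where the Swan conductor is also unchanged and no contradiction arises.

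Fortunately the error is harmless: with the correct scaling you get $f^{\log}_{L_2}(\res_{L_2/L_1}\A)\le m(n-1)$, hence $\res_{L_2/L_1}\A\in\film_{m(n-1)}\Br L_2[p]\subset\fil_{m(n-1)+1}\Br L_2[p]$, while Lemma~\ref{lem:rsw-basechange} gives $\sw_{L_2}(\res_{L_2/L_1}\A)=mn>m(n-1)+1$ for $m\ge 2$, so the contradiction survives. The paper's execution is a little cleaner: it takes $\A=(\tilde x,\tilde y)_p$ directly at level $n=e'$ (so no preliminary enlargement of $K$ beyond adjoining $\zeta_p$), then passes to a \emph{wildly} ramified extension $L/K$ of degree $p$, using $G^{pa}_{L,\log}\subset G^a_{K,\log}$ to obtain $f^{\log}_L(\A)\le p(e'-1)$ and hence $\res\A\in\film_{p(e'-1)}\Br L[p]$, contradicting $\sw_L(\res\A)=e'_L=pe'$.
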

\begin{proof} 
Suppose for contradiction that the statement is true.
We may assume that $K$ contains a primitive $p$th root of unity. 
Let $x,y \in F$ be such that $\omega = \frac{dx}{x} \wedge \frac{dy}{y} \neq 0$, and let $\tilde{x},\tilde{y} \in \O_K$ be lifts of $x,y$ respectively.
Define $\A = (\tilde{x},\tilde{y})_p \in \Br K$.
By~\cite[Proposition~4.1 and Lemma~4.3]{K}, we have $\A \in \fil_{e'} \Br K[p]$, and $\rsw_{e'}(\A) = [\bar{c} \omega, 0]_{\pi,e'}$ where $\bar{c} \in \F$ is non-zero.
Therefore $\A$ lies in $\film_{e'-1} \Br K[p]$, and by assumption $f_K^{\log}(\A) \le e'-1$.

Let $L/K$ be any wildly ramified extension of degree $p$.
The inclusion $G_{L,\log}^{pa} \subset G_{\log}^a$ for all $a \ge 0$ (see~\cite{AS})
implies $f_L^{\log}(\res_{L/K}\A) \le p(e'-1)$,
and so the image of $\A$ in $\Br L$ lies in $\film_{p(e'-1)} \Br L[p]$.
However, the same calculation as before shows that $\rsw_{e'_L}(\res_{L/K}\A)=[\bar{c}_L \omega, 0]_{\pi_L, e'_L}$, where $\pi_L$ is a uniformiser of $L$, $e'_L = pe'$ and $\bar{c}_L$ is some non-zero element of $\F$. So the image of $\A$ lies in $\film_{pe'-1} \Br L[p]$ but not $\film_{pe'-2} \Br L[p]$, giving a contradiction.
\end{proof}

\section{Applications to the Brauer--Manin obstruction}\label{sec:BMO}

Let $V$ be a smooth, proper, geometrically irreducible variety over a number field $L$ such that $V(\mathbb{A}_L)\neq\emptyset$.
The surjectivity results described in Theorem~\ref{thm:onelemma_intro} have implications for the existence of Brauer--Manin obstructions to the Hasse principle and weak approximation
on $V$, as follows.
Suppose that $\B$ has order $n$ in $\Br V$, and that
$\p$ is a finite place of $L$ such that the evaluation map $\evmap{\B}: V(L_\p)\to\Br L_\p[n]$ is surjective. Write $V(\mathbb{A}_L)^\B$ for the subset of adelic points of $V$ that are orthogonal to $\B$ under the Brauer--Manin pairing; this contains $V(\mathbb{A}_L)^{\Br}$.
Our freedom to adjust the value taken by $\B$ at the place $\p$ shows that
\[\emptyset\neq V(\mathbb{A}_L)^\B\subsetneq V(\mathbb{A}_L).\]
In other words, $\B$ does not obstruct the Hasse principle on $V$, but it does obstruct weak approximation on $V$. Note that in order to show that $\B$ obstructs weak approximation on $V$, it suffices that $\evmap{\B}: V(L_\p)\to\Br L_\p$ be non-constant. The existence of Brauer group elements with non-constant evaluations at primes of good ordinary reduction is the subject of Theorem~\ref{thm:potential}, which we now prove.

\begin{proof}[Proof of Theorem~\ref{thm:potential}]
Let $V_\p$ be the base change of $V$ to $L_\p$, and
choose a smooth model $\mathcal{V}$ of $V_\p$ over the ring of integers of $L_\p$ such that the special fibre $Y$ is ordinary.
Let $\bar{V}_\p$ denote the base change of $V_\p$ to an algebraic closure of $L_\p$.
The spectral sequences~\cite[0.2]{BK}
\[
E^{s,t}_2 = \H^s(\bar{Y}, \bar{i}^* \R^t \bar{j}_* \Z/p^r) \implies \H^{s+t}(\bar{V}_\p, \Z/p^r)
\]
define decreasing filtrations on $\H^q(\bar{V}_\p, \Z/p^r)$ for all $r$, and also on $\H^q(\bar{V}_\p, \Z_p)$ and $\H^q(\bar{V}_\p, \Q_p)$.
For any of these filtrations, let $\gr^i$ denote the graded pieces.

Since $Y$ is ordinary, $\gr^0 \H^2(\bar{V}_\p, \Q_p) \neq 0$ by~\cite[Theorem~0.7(iii)]{BK}.
Therefore $\gr^0 \H^2(\bar{V}_\p, \Z_p)$ is also non-zero, and so $\gr^0 \H^2(\bar{V}_\p, \Z/p^r)$ is non-zero for some $r \ge 1$.

Let $\bar{L}$ be the algebraic closure of $L$ inside our chosen algebraic closure of $L_\p$, and let $\bar{V}$ be the base change of $V$ to $\bar{L}$.
By proper base change~\cite[Corollary~VI.2.6]{M}, the natural map $\H^2(\bar{V}, \Z/p^r) \to \H^2(\bar{V}_\p, \Z/p^r)$ is an isomorphism.
Let $\alpha \in \H^2(\bar{V}, \Z/p^r)$ have non-zero image in $\gr^0 \H^2(\bar{V}_\p, \Z/p^r)$.
Replacing $L$ by a finite extension, we may assume that $\alpha$ is defined over $L$ and that $L$ contains the $p^r$th roots of unity.
We fix an isomorphism $\Z/p^r \cong (\Z/p^r)(1)$ on $V$, and view $\alpha$ as an element of $\H^2(V, (\Z/p^r)(1))$.

We will show that the image of $\alpha$ in $\Br V_\p$ does not lie in $\fil_0 \Br V_\p$.
Let $\Kh$ be the Henselisation of the function field $K=L_\p(V)$ at the discrete valuation corresponding to $Y$, and let $\Kh_{nr}$ be its maximal unramified extension.
Comparing the spectral sequences of vanishing cycles for $V_\p$ and $\Kh$ gives a commutative diagram
\[
\begin{CD}
\H^2(V_\p, (\Z/p^r)(1)) @>{f}>> \H^0(Y, i^* \R^2 j_* (\Z/p^r)(1))  \\
@VVV @V{g}VV \\
\H^2_{p^r}(\Kh) @>{\res}>> \H^0(\Kh, \H^2_{p^r}(\Kh_{nr}))
\end{CD}
\]
in which $\res$ is the usual restriction map of Galois cohomology, $\gr^0 \H^2(V_\p, (\Z/p^r)(1))$ is the coimage of $f$ (the quotient of $\H^2(V_\p, (\Z/p^r)(1))$ by the kernel of $f$), and $\fil_0 \H^2_{p^r}(\Kh)$ is the kernel of $\res$.
By construction, $f(\alpha)$ is non-zero.  By Lemma~\ref{lem:tauinj}, $g$ is injective, showing that $g(f(\alpha))$ is non-zero.  So the image of $\alpha$ in $\H^2(\Kh)$ does not lie in $\fil_0 \H^2(\Kh)$. 

Let $\A$ be the image of $\alpha$ in $\Br V$.  By Theorem~\ref{thm:main}, after possibly replacing $L$ by a further finite extension, the evaluation map $\evmap{\A} \colon V(L_\p) \to \Br L_\p$ is non-constant, showing that $\A$ obstructs weak approximation on $V$.
\end{proof}

Our final task is to prove Theorem~\ref{thm:app}.
We begin by gathering some criteria which can be used to show that various graded pieces of the filtration on $\Br X$ vanish. Lemma~\ref{lem:O1O2} is not actually used in the proof of Theorem~\ref{thm:app} but is included as a first example of how one can deduce information about $\Br X$ from properties of the special fibre.

\begin{lemma}\label{lem:O1O2}
Suppose that $\H^0(Y, \Omega^1_Y) = \H^0(Y, \Omega^2_Y) = 0$.  Then $\fil_0 \Br X = \Br X$.
\end{lemma}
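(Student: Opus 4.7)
The plan is to prove the result by induction on the Swan conductor. More precisely, I will show that under the hypothesis, every $\A\in\Br X$ with $\sw(\A)=n\geq 1$ must actually lie in $\fil_{n-1}\Br X$, forcing $\sw(\A)=0$ for all $\A\in\Br X$.

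First I would note that the filtration $\{\fil_n\Br X\}_{n\geq 0}$ exhausts $\Br X$: by~\cite[Proposition~6.1(1)]{K} the prime-to-$p$ part of $\Br X$ already lies in $\fil_0 \Br X$, and the $p$-primary part is torsion of order $p^r$ for some $r$, so its image in $\Br \Kh$ has a finite Swan conductor in the sense of Definition~\ref{def:Swan}. Thus any $\A\in\Br X$ has a well-defined $\sw(\A)\in \Z_{\geq 0}$, and it suffices to rule out $\sw(\A)\geq 1$.

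Suppose therefore that $\A\in\Br X$ has $\sw(\A)=n\geq 1$, and write $\rsw_n(\A)=[\alpha,\beta]_{\pi,n}$ for some $(\alpha,\beta)\in\Omega^2_F\oplus\Omega^1_F$. The key input is the regularity statement recorded at the opening of Section~\ref{sec:endgames}: applying Kato's Theorem~7.1 to the local ring $\O_{\X,P_0}$ at each closed point $P_0\in Y$ shows that both $\alpha$ and $\beta$ are regular at every such $P_0$. Since $Y$ is smooth (in particular normal) and $\Omega^1_Y,\Omega^2_Y$ are locally free, regularity at every codimension-$1$ point of $Y$ (equivalently, at every closed point, after passing through a smooth affine cover) guarantees that $\alpha\in\H^0(Y,\Omega^2_Y)$ and $\beta\in\H^0(Y,\Omega^1_Y)$.

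By the standing hypothesis $\H^0(Y,\Omega^1_Y)=\H^0(Y,\Omega^2_Y)=0$, so $\alpha=\beta=0$, whence $\rsw_n(\A)=0$. Since $\rsw_n\colon \fil_n\Br X/\fil_{n-1}\Br X\hookrightarrow \m_K^{-n}\otimes_{\O_K}\omega^2_F$ is injective (the remark after Definition~\ref{def:rsw}), this yields $\A\in\fil_{n-1}\Br X$, contradicting the minimality of $n=\sw(\A)$. Hence $\sw(\A)=0$ and $\A\in\fil_0\Br X$. The only mildly delicate step is the preliminary observation that $\Br X$ is exhausted by the filtration; once that is granted, the argument is a one-line application of Kato's regularity theorem combined with the injectivity of the refined Swan conductor on graded pieces.
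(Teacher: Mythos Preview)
Your proof is correct and follows essentially the same approach as the paper: both use Kato's Theorem~7.1 to show that the refined Swan conductor $(\alpha,\beta)$ lies in $\H^0(Y,\Omega^2_Y)\oplus\H^0(Y,\Omega^1_Y)=0$, and then invoke injectivity of $\rsw_n$ on graded pieces to conclude $\fil_n\Br X=\fil_{n-1}\Br X$ for all $n\ge 1$. Your version is slightly more detailed, in particular making explicit the exhaustion of $\Br X$ by the filtration, which the paper leaves implicit.
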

\begin{proof}
If $\A$ is an element of $\fil_n \Br X$ for $n\geq 1$, then~Theorem~\ref{thm:onelemma}(\ref{onelemma_reg}) shows that $\rsw_n(\A)=[\alpha,\beta]_{\pi,n}$ with $(\alpha,\beta) \in \H^0(Y,\Omega^2_Y) \oplus \H^0(Y,\Omega^1_Y)=0$.
This shows $\fil_n \Br X = \fil_{n-1} \Br X$ for all $n \ge 1$, and so $\fil_0 \Br X = \Br X$.
\end{proof}

\begin{lemma}\label{lem:O1e}
Suppose $\H^0(Y, \Omega^1_Y)=0$ and $e<p-1$.
Then $\fil_0 \Br X = \Br X$.
\end{lemma}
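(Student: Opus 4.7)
The plan is to adapt the argument of Lemma~\ref{lem:O1O2} to the weaker hypothesis $\H^0(Y,\Omega^1_Y)=0$ by using the ramification bound $e<p-1$ to force the $\alpha$-component of the refined Swan conductor to vanish as well. The target is to show $\fil_n\Br X=\fil_{n-1}\Br X$ for every $n\geq 1$, which combined with the exhaustion of the filtration yields $\Br X=\fil_0\Br X$. Since the graded pieces for $n\geq 1$ are $p$-primary, it suffices to consider $\A\in\fil_n\Br X$ of $p$-power order.

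Given such an $\A$, I would apply~\cite[Theorem~7.1]{K} at the local rings of points of $Y$ to write $\rsw_n(\A)=[\alpha,\beta]_{\pi,n}$ with $\alpha\in\H^0(Y,\Omega^2_Y)$ and $\beta\in\H^0(Y,\Omega^1_Y)=0$. Lemma~\ref{lem:dbna} with $q=2$ immediately yields $n\alpha=d\beta=0$. If $p\nmid n$, then $n$ is a unit in $\F$, giving $\alpha=0$ at once, hence $\rsw_n(\A)=0$ and $\A\in\fil_{n-1}\Br X$.

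The main step is the case $p\mid n$. Here the hypothesis $e<p-1$ guarantees $e'=ep/(p-1)<p\leq n$, so $n>e'$, and Remark~\ref{remark:ea0} applies (the condition $\beta\wedge d\bar{u}=0$ is automatic since $\beta=0$), yielding $e\alpha=0$. Because $1\leq e<p-1$, the integer $e$ is invertible in $\F$, forcing $\alpha=0$ and so $\A\in\fil_{n-1}\Br X$ in this case too. Induction on $n$ then gives $\fil_n\Br X=\fil_0\Br X$ for all $n\geq 0$, and exhaustion of Kato's filtration on the $p$-primary part (exactly as in Lemma~\ref{lem:O1O2}) completes the proof.

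The only delicate point is the $p\mid n$ case, and there is no real obstacle beyond recognising that the bound $e<p-1$ in the hypothesis is tailored precisely to make Remark~\ref{remark:ea0} available and to ensure $e$ is a unit in the residue field; once this is in hand the argument is parallel to that of Lemma~\ref{lem:O1O2}.
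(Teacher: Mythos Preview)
Your proof is correct and follows essentially the same route as the paper's. Both arguments handle the case $p\nmid n$ identically via Lemma~\ref{lem:dbna}, and for the case $p\mid n$ both rely on Lemma~\ref{lem:rsw-same}: the paper applies it directly to $p\A$ (noting $p\nmid n-e$ and then invoking the $p\nmid n$ case), while you invoke its packaged consequence Remark~\ref{remark:ea0} to obtain $e\alpha=0$ and then use $p\nmid e$. One small point of exposition: Remark~\ref{remark:ea0} is stated for $\chi$ with $\sw(\chi)=n$, so strictly speaking you should split off the trivial subcase $\sw(\A)<n$ (where $\rsw_n(\A)=0$ automatically) before applying it; this is harmless but worth making explicit.
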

\begin{proof}
It suffices to show $\rsw_n(\A)=0$ for all $\A \in \fil_n \Br X$ with $n \ge 1$.
Suppose $\rsw_n(\A)=[\alpha,\beta]_{\pi,n}$.
If $p \nmid n$, then Lemma~\ref{lem:dbna} shows $n \alpha = d\beta$.
Since $\beta$ lies in $\H^0(Y,\Omega^1_Y)=0$, it follows that $\alpha=\beta=0$, completing the proof in this case.

We have $e' = ep/(p-1) < p$, and so $p \nmid n$ holds for all $n \le e'$.
The remaining case is when $\A \in \Br X$ has $\sw(\A)=n>e'$ with $p \mid n$.  Then Lemma~\ref{lem:rsw-same} shows $\sw(p\A) = n-e$, which is not divisible by $p$; as above, we deduce $\rsw_{n-e}(p\A)=0$ and therefore, by Lemma~\ref{lem:rsw-same} again, $\rsw_n(\A)=0$, contradicting the assumption $\sw(\A)=n$.

Thus we have $\fil_n \Br X = \fil_{n-1} \Br X$ for all $n \ge 1$, and so $\fil_0 \Br X = \Br X$.
\end{proof}

\begin{lemma}\label{lem:H1}
Suppose $\H^1(\bar{Y}, \Z/p)=0$.  Then $\film_{-1} \Br X\{p\} = \fil_0 \Br X\{p\}$.
\end{lemma}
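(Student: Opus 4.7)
The inclusion $\film_{-1}\Br X\{p\}\subset \fil_0\Br X\{p\}$ is built into the definition of $\film_{-1}$, so the content of the lemma is the reverse inclusion. Take $\A\in \fil_0\Br X$ with $p^r\A=0$ for some $r\geq 1$. By Proposition~\ref{prop:tame}\eqref{residue}, the residue $\partial\A$ lies in $\H^1(Y,\Q/\Z)$, and since $\A$ is $p^r$-torsion it actually lies in the subgroup $\H^1(Y,\Z/p^r)\subset \H^1(Y,\Q/\Z)$. To show $\A\in \film_{-1}\Br X$, I only need to verify that $\partial\A$ is in fact a constant class, i.e.\ lies in $\H^1(\F,\Z/p^r)\subset \H^1(\F,\Q/\Z)$.

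The geometric connectedness of $Y$ gives, via the Hochschild--Serre spectral sequence, an exact sequence
\[
0\to \H^1(\F,\Z/p^r)\to \H^1(Y,\Z/p^r)\to \H^1(\bar Y,\Z/p^r)^{G_\F}
\]
so it suffices to prove that the hypothesis $\H^1(\bar Y,\Z/p)=0$ propagates to the vanishing of $\H^1(\bar Y,\Z/p^r)$ for every $r\geq 1$. This I would obtain by induction on $r$, using the short exact sequence of constant sheaves
\[
0\to \Z/p\to \Z/p^r\to \Z/p^{r-1}\to 0
\]
on $\bar Y$: the induced long exact sequence gives
\[
\H^1(\bar Y,\Z/p)\to \H^1(\bar Y,\Z/p^r)\to \H^1(\bar Y,\Z/p^{r-1}),
\]
and the two outer groups vanish by hypothesis and by the inductive step, respectively.

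Thus $\H^1(\bar Y,\Z/p^r)=0$ for every $r$, whence $\H^1(Y,\Z/p^r)=\H^1(\F,\Z/p^r)$, and so $\partial\A$ is a constant class as required. There is no serious obstacle here: the only point to double-check is that the image of $\partial$ on $p^r$-torsion really does land in the $\Z/p^r$-coefficient subgroup rather than merely in $\H^1(Y,\Q/\Z)\{p\}$, which is immediate from the construction of $\partial$ in Section~\ref{sec:residue} via $\lambda_\pi$ with $\Z/p^r$-coefficients. Summing over $r$ gives the statement for the whole $p$-power torsion subgroup.
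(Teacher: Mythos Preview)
Your proof is correct and follows the same approach as the paper's own argument: both deduce the result from Proposition~\ref{prop:tame} together with the Hochschild--Serre sequence, after showing that $\H^1(\bar Y,\Z/p^r)=0$ for all $r$. The only cosmetic difference is that the paper phrases the latter vanishing as ``an abelian $p$-group with trivial $p$-torsion is trivial'' whereas you spell out the induction via $0\to\Z/p\to\Z/p^r\to\Z/p^{r-1}\to 0$; these are equivalent.
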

\begin{proof}
Firstly, the group $\H^1(\bar{Y}, \Z/p^r)$ is trivial for all $r$: it is an Abelian $p$-group and its $p$-torsion subgroup $\H^1(\bar{Y}, \Z/p)$ is trivial.
Now, for every $r$, the Hochschild--Serre spectral sequence gives a short exact sequence
\[
0 \to \H^1(\F,\Z/p^r) \to \H^1(Y,\Z/p^r) \to \H^1(\bar{Y},\Z/p^r),
\]
showing that the natural map $\H^1(\F,\Z/p^r) \to \H^1(Y,\Z/p^r)$ is an isomorphism.  The result then follows from Proposition~\ref{prop:tame}.
\end{proof}

\begin{lemma}\label{lem:H1spec}
Let $\X \to \O_k$ be a smooth proper morphism such that the generic fibre $X$ is geometrically integral.  Let $n$ be a positive integer and suppose $\H^1(\bar{X},\Z/n)=0$.  Then the special fibre $Y$ satisfies $\H^1(\bar{Y},\Z/n)=0$.
\end{lemma}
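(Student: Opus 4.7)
The plan is to deduce this from the surjectivity of the specialization map on étale fundamental groups for smooth proper morphisms, which is a classical result from \cite[Expos\'e~X]{SGA1}.

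First I would pass to a geometric setting. Let $\O_k^{\mathrm{sh}}$ be a strict Henselisation of $\O_k$ (taken inside a fixed algebraic closure of $\F$) with fraction field $K^{\mathrm{sh}}$, the maximal unramified extension of $k$. Base-changing, $\X^{\mathrm{sh}} := \X \times_{\O_k} \O_k^{\mathrm{sh}}$ is smooth and proper over $\Spec \O_k^{\mathrm{sh}}$, with special fibre $\bar Y$ and geometric generic fibre $\bar X$ (since an algebraic closure of $K^{\mathrm{sh}}$ is also an algebraic closure of $k$). Because $\X \to \Spec \O_k$ is smooth and proper with geometrically integral generic fibre, a standard argument using the flatness of $f_* \O_\X$ together with cohomology and base change shows that $Y$ is geometrically connected; combined with smoothness, $Y$ is then geometrically integral, so $\bar X$ and $\bar Y$ are connected schemes and $\H^1(-,\Z/n) = \Hom_{\mathrm{cont}}(\pi_1^{\et}(-),\Z/n)$ for both of them.

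Next I would invoke the specialization map for the smooth proper morphism $\X^{\mathrm{sh}} \to \Spec \O_k^{\mathrm{sh}}$, which produces a continuous homomorphism
\[
\pi_1^{\et}(\bar X) \longrightarrow \pi_1^{\et}(\bar Y)
\]
that is surjective (see \cite[Expos\'e~X]{SGA1}; when $n$ is coprime to the residue characteristic $p$, this already follows from proper smooth base change, so the substantive case is $p \mid n$, where the smoothness of $\X$ is essential). Applying $\Hom_{\mathrm{cont}}(-,\Z/n)$ yields an injection $\H^1(\bar Y,\Z/n) \hookrightarrow \H^1(\bar X,\Z/n)$, and the hypothesis $\H^1(\bar X,\Z/n)=0$ forces $\H^1(\bar Y,\Z/n)=0$. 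Beyond locating the precise form of the specialization theorem in SGA~1 and verifying that smoothness is used where needed, the rest is formal; there is no genuine obstacle to overcome.
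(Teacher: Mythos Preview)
Your argument is correct, and it rests on the same two ingredients as the paper's proof: proper base change over a henselian local base, and the fact that restriction to a dense open subscheme of a normal integral scheme induces an injection on $\H^1$ (equivalently, a surjection on $\pi_1^{\et}$). The difference is only in packaging. You pass once to the strict henselisation and invoke the surjectivity of the SGA1 specialization map $\pi_1^{\et}(\bar X)\twoheadrightarrow\pi_1^{\et}(\bar Y)$, then dualise. The paper instead works at finite levels: for each finite $k'/k$ it uses proper base change to get $\H^1(Y',\Z/n)\cong\H^1(\X',\Z/n)$, then uses normality of $\X'$ to inject this into $\H^1(X',\Z/n)$, and finally takes the colimit over all $k'/k$. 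The paper's version is a touch more self-contained; yours is quicker once one is willing to cite SGA1. One minor remark: the surjectivity of the specialization map on $\pi_1^{\et}$ already holds for proper morphisms with normal total space, so smoothness is not strictly ``essential'' for the $p\mid n$ case---it is only the source of normality here, just as in the paper's argument.
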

\begin{proof}
Let $k'$ be a finite extension of $k$, with ring of integers $\O_{k'}$ and residue field $\F'$.
Let $\X' = \X \times_{\O_k} \O_{k'}$ be the base change and denote its special and generic fibres by $Y'$ and $X'$ respectively.
$\X'$ is proper over $\O_{k'}$, so the proper base change theorem gives an isomorphism $\H^1(Y',\Z/n) \cong \H^1(\X',\Z/n)$.
On the other hand, by \cite[Corollary~I.10.2]{SGA1}, an \'{e}tale cover of a connected normal scheme is uniquely determined by its fibre at the generic point,
so the natural map $\H^1(\X', \Z/n) \to \H^1(X',\Z/n)$ is injective.
We deduce that $\H^1(Y',\Z/n)$ injects into $\H^1(X',\Z/n)$.
Taking the limit over all finite extensions $k'/k$ shows that $\H^1(\bar{Y},\Z/n)$ injects into $\H^1(\bar{X},\Z/n)=0$.
\end{proof}

\begin{proof}[Proof of Theorem~\ref{thm:app}]
Since $V$ is smooth and proper over $L$, there exists a smooth proper model $\V \to \Spec\O_S$ for some finite set $S$ of places of $L$ containing all the infinite places. The assumption that $\Pic\bar{V}$ be torsion-free implies $\H^1(V,\O_V)=0$ and hence, by Hodge theory, $\H^0(V,\Omega^1_V)=0$.
For a finite place $\p \notin S$, denote by $\V(\p)$ the fibre $\V \times_{\O_S} k(\p)$.
Semi-continuity shows that, after possibly enlarging $S$, we have $\H^0(\V(\p), \Omega^1_{\V(\p)})=0$ for all $\p \notin S$.

Let $n$ be any positive integer.
Since $\Pic\bar{V}$ is torsion-free, the Kummer sequence gives $\H^1(\bar{V},\Z/n) \cong \H^1(\bar{V},\mmu_n)=0$. Suppose that $\p$ is a place of $L$ not contained in $S$.
By~\cite[VI.2.6]{M}, we have $\H^1(\bar{V} \times_{\bar{L}} \bar{L}_\p, \Z/n)=0$, and Lemma~\ref{lem:H1spec} applied to $\V \times_{\O_S} \O_{L_\p}$ shows $\H^1(\overline{\V(\p)}, \Z/n)=0$.

We enlarge $S$ to include all finite places $\p$ whose absolute ramification index $e_\p$ satisfies $e_\p \ge p-1$, where $p$ is the residue characteristic of $\p$.  (It is enough to include all primes ramified in $L$ and all primes above $2$.)  
Let $\p$ be a place not in $S$, of residue characteristic $p$.  Lemma~\ref{lem:O1e} and Lemma~\ref{lem:H1} show that, for any $\A \in \Br V\{p\}$, the evaluation map $\evmap{\A} \colon V(L_\p) \to \Br L_\p$ is constant.
\cite[Proposition~2.4]{CTSk} proves the same for $\A \in \Br V$ of order prime to $p$, completing the proof.
\end{proof}

\begin{remark}\label{rmk:fewerplaces}
If, for example, $V$ is a K3 surface, then there is no need to enlarge $S$ to ensure that $\H^0(\V(\p), \Omega^1_{\V(\p)})=0$ for all $\p \notin S$. In other words, there are no places included in the subset~(\ref{omega-1}) of Theorem~\ref{thm:app}.
Indeed, in this case, for any place $\p$ admitting a smooth proper model $\V \to \Spec \O_\p$, the reduction $\V(\p)$ is also a K3 surface, as follows.  Because $\Pic \V \to \Pic V$ is an isomorphism and $\omega_{V/k}$ is trivial, it follows that $\omega_{\V/\O_k}$ is also trivial, and therefore so is $\omega_{\V(\p)/\F_\p}$.
Serre duality then gives $h^2(\V(\p), \O_{\V(\p)}) = h^0(\V(\p), \omega_{\V(\p)})=h^0(\V(\p), \O_{\V(\p)})$.
Since $\V(\p)$ is geometrically connected, one has $h^0(\V(\p), \O_{\V(\p)})=1$, and the fact that the Euler characteristic is constant in flat families gives $h^1(\V(\p), \O_{\V(\p)})=0$, showing that $\V(\p)$ is a K3 surface.
It follows by~\cite[Theorem~9.5.1]{huybrechts} that $\H^0(\V(\p), \Omega^1_{\V(\p)})$ is trivial.
\end{remark}

\bibliographystyle{abbrv}
\bibliography{ferocious}

\end{document}